\documentclass[11pt,reqno,english]{amsart}
\usepackage{color}
\usepackage{amsmath,amssymb,amsfonts,amsthm,bm}
\usepackage{mathrsfs}
\usepackage{graphicx}
\usepackage{enumerate}
\usepackage[numbers, sort&compress]{natbib}
\usepackage[shortlabels]{enumitem}
\usepackage{a4wide}
\usepackage[colorlinks=true]{hyperref}
\usepackage{amssymb}
 
\usepackage{amsfonts,amsthm,amsmath,latexsym,bm,graphics}

\def\theequation{\thesection.\@arabic\c@equation}
\makeatother

\renewcommand{\theequation}{\thesection.\arabic{equation}}
\newtheorem{lemma}{Lemma}[section]

\newtheorem{definition}{Definition}

\newtheorem{proposition}{Proposition}[section]
\newtheorem{corollary}{Corollary}[section]
\newtheorem{remark}{Remark}[section]

\newtheorem{theorem}{Theorem}[section]

\numberwithin{equation}{section}

\makeatletter

\@namedef{subjclassname@2020}{%
	\textup{2020} Mathematics Subject Classification}
 
\begin{document}

\title[Incompressible Oldroyd-B Model]{Sharp Decay Characterization for the Incompressible Oldroyd-B Model in Critical $L^p$ Spaces}
%
%
\author{Zhi Chen}
\address{School of  Mathematics and Statistics, Anhui Normal University, Wuhu 241002, China}
\email{zhichenmath@ahnu.edu.cn}
\author{Mingwen Fei}
\address{School of  Mathematics and Statistics, Anhui Normal University, Wuhu 241002, China}
\email{mwfei@ahnu.edu.cn}
\author{Lvqiao Liu}
\address{School of  Mathematics and Statistics, Anhui Normal University, Wuhu 241002, China}
\email{lvqiaoliu@ahnu.edu.cn}
\author{Jiahong Wu}
\address{Department of Mathematics, University of Notre Dame, Notre Dame, IN 46556, USA}
\email{jwu29@nd.edu}

\subjclass[2020]{35Q35; 35B40}

\keywords{Oldroyd-B model,  Sharp decay characterization, Critical space.}


\begin{abstract}
   This paper establishes a sharp characterization of temporal decay rates for the incompressible Oldroyd-B model in a critical $L^p$ framework, covering the physically relevant and mathematically delicate case where both the fluid viscosity and the stress tensor damping are absent. We prove that an $L^2$-type condition on the low-frequencies part of the initial data $(u_0, \tau_0)$ is almost both necessary and sufficient for obtaining optimal upper and lower bounds on the temporal decay of solutions in critical Besov spaces. A key contribution is a new decomposition of the stress tensor into its incompressible and compressible parts, combined with the introduction of an effective tensor to handle the loss of regularity in the high-frequencies velocity field. This is the first result to reveal such precise two-sided asymptotics for the incompressible Oldroyd-B model without viscosity or damping. 
\end{abstract}


\maketitle

\tableofcontents


\section{Introduction}

The Oldroyd-B(OB) model, first introduced by Oldroyd \cite{MR94085} in 
1958, is one of the most classical and widely used constitutive models 
for describing the motion of viscoelastic fluids. It captures the 
essential competition between viscous dissipation and elastic stress 
relaxation that characterizes non-Newtonian fluid behavior. More detailed physical background and derivations can be found in \cite{Constantin-Kliegl,MR2843021,Brereton_1978}. 

\medskip 
Mathematically, 
the Oldroyd-B model presents significant 
challenges compared to the classical Navier--Stokes equations. The 
stress tensor $\tau$ satisfies a transport-diffusion equation that is 
nonlinearly coupled to the velocity field through the term 
$Q(\tau,\nabla u)$, which is bilinear in $\tau$ and $\nabla u$ and 
does not enjoy the same cancellation properties as the nonlinearity 
in the Navier--Stokes equations. This coupling, together with the 
possible absence of viscosity in the velocity equation ($\nu=0$ in \eqref{GOB}) and 
the absence of damping in the stress equation ($a=0$ in \eqref{GOB}), makes the 
global well-posedness and large-time behavior 
substantially more delicate than for purely viscous flows. 

\medskip 
Understanding the large-time behavior of such flows is central to the mathematical 
theory of non-Newtonian fluid mechanics. This paper is devoted to a sharp 
characterization of the decay rates. 
We establish conditions on the initial data that are almost both necessary 
and sufficient for obtaining optimal upper and lower bounds on the 
temporal decay of solutions in a critical $L^p$ framework.

\subsection{Oldroyd-B model}
In this paper, we consider the following Oldroyd-B system
\begin{equation}\label{OB-1}
		\begin{cases}
			\partial_tu+(u\cdot\nabla)u+\nabla P=  \nu_1\operatorname{div}\tau,\\
			\partial_t\tau+(u\cdot\nabla)\tau-\eta\Delta\tau+\mathrm{Q}(\tau,\nabla u)=\nu_2D(u),\\
			\operatorname{div} u=0,\\
			u(x,0)=u_0(x),~~~\tau(x,0)=\tau_0(x),
		\end{cases}
	\end{equation}
where $u(t,x)=(u^1(t,x),u^2(t,x),\cdots,u^{d}(t,x))$ denotes the velocity, $\tau(t,x)=\left(\tau^{i,j}(t,x)\right)_{d\times d}$ is the non-Newtonian part of the stress tensor ($\tau$ is a $d\times d$ symmetric matrix here and $[\operatorname{div} \tau]^i= \partial_j\tau^{i,j}$\footnote{$i,j=1,2,\cdots,d$. Hereafter, we shall use the Einstein summation convention over repeated indices.})
	and $P$ is a scalar pressure of fluid.
	$D(u)$ is the symmetric part of the velocity gradient,
i.e.,
		$D(u)=\frac{1}{2}(\nabla u+(\nabla u)^\top)$
	with $(\nabla u)_{ij}=\partial_ju^i$ and $(\nabla u)^\top_{ij}=\partial_iu^j$.
	The term  $\mathrm{Q}(\tau,\nabla u)$ is a given bilinear form
\begin{equation*}
		\mathrm{Q}(\tau,\nabla u)=\tau\Omega(u)-\Omega(u)\tau+b\big(D(u)\tau+\tau D(u)\big),
	\end{equation*}
	where $b$ is a parameter in $[-1,1],$ $\Omega(u)$ is the skew-symmetric part of $\nabla u,$ i.e.,
$\Omega(u)=\frac{1}{2}(\nabla u-(\nabla u)^\top)$.   The constants $\nu_1,\nu_2>0$ 
are coupling parameters and $\eta>0$ is the stress diffusion coefficient. 
\par

We note that system \eqref{OB-1} corresponds to the case $\nu=a=0$ 
in the general Oldroyd-B model
	\begin{equation}\label{GOB}
	\begin{cases}
		\partial_tu+(u\cdot\nabla)u-\nu\Delta u+\nabla P= \nu_1 \operatorname{div}\tau,\\
		\partial_t\tau+(u\cdot\nabla)\tau-\eta\Delta\tau+ a\tau+\mathrm{Q}(\tau,\nabla u)=\nu_2D(u),\\
		\operatorname{div} u=0,\\
		u(x,0)=u_0(x),~~~\tau(x,0)=\tau_0(x).
	\end{cases}
\end{equation}
in which $\nu\geq 0$ is the fluid viscosity and $a\geq 0$ is a damping 
coefficient. The absence of viscosity in the velocity equation ($\nu=0$) 
and the absence of damping in the stress equation ($a=0$) make 
system~\eqref{OB-1} considerably more delicate than the general 
model~\eqref{GOB}, and understanding its large-time behavior 
is the central goal of this paper. 
\par
 
Let ($u_L, \tau_L$) be the solution to the corresponding linear system of \eqref{OB-1}
\begin{equation}\label{OB}
		\begin{cases}
			\partial_tu+\nabla P=  \nu_1\operatorname{div}\tau,\\
			\partial_t\tau-\eta\Delta\tau=\nu_2D(u),\\
			\operatorname{div} u=0,\\
			u(x,0)=u_0,~~~\tau(x,0)=\tau_0.
		\end{cases}
	\end{equation}
Moreover, let $(u, \tau)$ be the solution to  \eqref{OB-1} and   $(\tilde{u}, \tilde{\tau})=(u-u_L, \tau-\tau_L)$, then $(\tilde{u}, \tilde{\tau})$ satisfies the error system   
\begin{equation}\label{OBC-4}
		\begin{cases}
			\partial_t\tilde{u}+\nabla \tilde{P}=\nu_1\operatorname{div}\tilde{\tau}-(u\cdot\nabla)u,\\
			\partial_t\tilde{\tau}-\eta\Delta\tilde{\tau}=\nu_2 D(\tilde{u})-(u\cdot\nabla)\tau-\mathrm{Q}(\tau,\nabla u),\\
			\operatorname{div} \tilde{u}=0,\\
			\tilde{u}(x,0)=0,~~~\tilde{\tau}(x,0)=0.
		\end{cases}
	\end{equation}
	
	\subsection{Function spaces and main results}
    	\begin{definition}\label{def1}
		For $s\in\mathbb{R}$ and $1\leq p,r\leq \infty$, the homogeneous Besov space $\dot{B}^s_{p,r}=\dot{B}^s_{p,r}(\mathbb{R}^d)$ is the set of tempered distributions $f\in\mathcal{S}_h'(\mathbb{R}^d)$ with the norm
		\begin{align*}
			\|f\|_{\dot{B}^{s}_{p,r}}=\left\|\Big(2^{sk}
			\|\dot{\Delta}_k f\|_{L^p}\Big)\right\|_{\ell^r}.
		\end{align*}
		In the case where $f$ depends also on the time variable, we shall  consider the Chemin-Lerner space $\tilde{L}^\rho_T(\dot{B}^s_{p,r})=\tilde{L}^\rho(0,T;\dot{B}^s_{p,r})$  $ (1 \leq \rho \leq  \infty)$with the norm
		\begin{align*}
			\|f\|_{\tilde{L}^\rho_T(\dot{B}^s_{p,r})}=
			\left\|\Big(2^{sk}
			\|\dot{\Delta}_k f\|_{L^\rho_T(L^p)}\Big)\right\|_{\ell^r} .
		\end{align*}
	\end{definition}
By Minkowski's inequality, it is easy to see that
\begin{align*}
	\|f\|_{\tilde{L}_T^\rho(\dot{B}^{s}_{p,r})}\lesssim 	\|f\|_{L_T^\rho(\dot{B}^{s}_{p,r})}\ \text{ if } \ \rho\leq r, \qquad
		\|f\|_{\tilde{L}_T^\rho(\dot{B}^{s}_{p,r})}\gtrsim	\|f\|_{L_T^\rho(\dot{B}^{s}_{p,r})}\ \text{ if } \ \rho\geq r.
\end{align*}
Restricting the norms in Definition \ref{def1} to low or high frequencies parts of distributions will be crucial in current work.  More precisely, for some fixed integer $k_0$,   set
\begin{align*}
	f^\ell    =\dot{S}_{k_0} f \quad \text{ and }\  f^h=f-\dot S_{k_0} f
\end{align*}
and\footnote{Here,  we used a small overlap between low and high frequencies norms  due to our  technical reasons. For $1\leq p\leq \infty$, $s,s'\in\mathbb{R}$ with  $s<s'$,  and  $f\in \dot{B}^{s}_{p,1}\cap\dot{B}^{s'}_{p,1}$, it is easy to verity that
    \begin{align*}
        \|f^\ell   \|_{\dot{B}^{s'}_{p,1}}\lesssim \|f^\ell   \|_{\dot{B}^{s}_{p,1}} \quad \text{ and }\quad 	\|f^h\|_{\dot{B}^{s}_{p,1}}\lesssim \|f^h\|_{\dot{B}^{s'}_{p,1}}.
\end{align*}
}  
\begin{align*}
	\|f\|^\ell_{\dot{B}^s_{p,1}}=\sum_{k\leq k_0}2^{ks}\|\dot{\Delta}_kf\|_{L^p},~~~&\qquad \|f\|^h_{\dot{B}^s_{p,1}}=\sum_{k\geq k_0-1}2^{ks}\|\dot{\Delta}_kf\|_{L^p},
\\
	\|f\|^\ell_{\tilde{L}^\infty_T(\dot{B}^s_{p,1})}=\sum_{k\leq k_0}2^{ks}\|\dot{\Delta}_kf\|_{L^\infty_T(L^p)},~~&\qquad 
	\|f\|^h_{\tilde{L}^\infty_T(\dot{B}^s_{p,1})}=\sum_{k\geq k_0-1}2^{ks}\|\dot{\Delta}_kf\|_{L^\infty_T(L^p)},
\end{align*}
where $k_0$ is called threshold between low frequencies and high frequencies which will been chosen in later. More details about Besov spaces can be found in \cite{BCD-2011}.

	
\par
We now introduce  the following  energy functional
\begin{equation}\label{EE}
    E(t)=E^\ell(t)+E^h(t),
\end{equation}
		where $E^\ell(t)$ and $E^h(t)$ are the low and high frequencies pieces, 
        more precisely,
		\begin{align*}
			E^\ell(t)=&\|(u,\tau)\|^\ell_{\tilde{L}^\infty_t({\dot{B}^{\frac{d}{2}-1}_{2,1}})}+\int_{0}^{t}\!\! \|(u,\tau)(t')\|^\ell_{\dot{B}^{\frac{d}{2}+1}_{2,1}} \mathrm{d} t' ,
		\end{align*}
and
		\begin{align*}
			E^h(t)= &\|u\|^h_{\tilde{L}_t^\infty(\dot{B}^{\frac{d}{p}+1}_{p,1})}\!+\!
            \|\tau\|^h_{\tilde{L}_t^\infty(\dot{B}^{\frac{d}{p}}_{p,1})}
			+\!\!\int_0^t \!\! \big(\|u(t')\|^h_{\!\dot{B}^{\frac{d}{p}+1}_{p,1}}\!+
            \!\|\tau(t')\|^h_{\!\dot{B}^{\frac{d}{p}+2}_{p,1}}\big)dt' .
		\end{align*}
 
The necessity of splitting into low and high frequencies stems from a 
fundamental asymmetry in the dissipative structure of 
system~\eqref{OB-1}. Since the velocity equation contains no 
viscosity ($\nu=0$), the dissipation available to $u$ comes entirely 
from the coupling with $\tau$ through the term $\nu_1\,\mathrm{div}\,\tau$. 
To understand the frequencies-dependent behavior of this coupling, 
we consider the linear system~\eqref{OB}. Taking the Fourier 
transform in $x$, one finds that the pair $(\hat{u},\hat{\tau})$ 
satisfies a system whose characteristic roots are
\[
\lambda_\pm = \frac{-\eta|\xi|^2 \pm \sqrt{\eta^2|\xi|^4 
		- 2\nu_1\nu_2|\xi|^2}}{2}.
\]
At low frequencies $|\xi|\ll 1$, both roots satisfy 
$\mathrm{Re}(\lambda_\pm) = -\frac{\eta}{2}|\xi|^2$, so the solution 
decays like the heat kernel at rate $e^{-c|\xi|^2 t}$. In this regime 
the coupling between $u$ and $\tau$ provides effective dissipation of 
order $|\xi|^2$ for both unknowns, and $L^2$-based Besov spaces 
$\dot{B}^s_{2,r}$ are natural and sufficient to close the estimates. At high frequencies $|\xi|\gg 1$, the two roots satisfy $\lambda_+\approx 0$ 
and $\lambda_-\approx -\eta|\xi|^2$ as $|\xi|\to\infty$. The root 
$\lambda_+$ approaching zero means that the velocity $u$ at high 
frequencies receives essentially no dissipation from the 
coupling. It behaves more like a solution of a transport equation than 
a parabolic one. This causes a genuine loss of one derivative for $u$ 
relative to $\tau$ at high frequencies. Therefore, to handle the high frequencies part, we need to impose higher regularity assumptions on $u_0$.

\medskip
Throughout this paper, we focus on the case of $d\geq2$ and $p$ satisfies
        \begin{align}\label{ppp}
			2\leq p\leq\min\Big\{4,\frac{2d}{d-2}\Big\}\mbox{ and, additionally, }p\neq4\mbox{ if }d=2.
		\end{align}
The upper bound $p\leq\min\{4,\frac{2d}{d-2}\}$ ensures that the 
critical Besov product estimates are 
applicable. The exclusion $p\neq 4$ when $d=2$ is a technical 
restriction arising from a borderline embedding in those same estimates, 
where the product law degenerates at this endpoint.

\medskip
This paper aims to establish  an almost necessary and sufficient condition for the sharp time-decay rates of solutions in the critical $L^p$ space, which have been widely used in the literatures (see, e.g., \cite{MR2675485, Xu-2019, Xin-Xu-2021, CD2010}).  Our main results are stated as follows.
\begin{theorem}\label{Thm}
 Assume that initial data $(u_0,\tau_0)$ satisfies
\begin{equation}\label{E-0}	 	
			E_0= \|(u_0,\tau_0)\|^\ell_{\dot{B}^{\frac{d}{2}-1}_{2,1}}+\|u_0\|^h_{{\dot{B}^{\frac{d}{p}+1}_{p,1}}}+\|\tau_0\|^h_{{\dot{B}^{\frac{d}{p}}_{p,1}}}\leq c_0,
		\end{equation}
    for some sufficiently small $c_0>0$ and $p$ satisfies \eqref{ppp}. 
 Let 
 $\sigma_0=\frac{d}{2}-\frac{2d}{p}$. 
 \begin{itemize}
     \item  Then
  for any given time $t_0> 0$, the solution $(u,\tau)$ to the Cauchy problem \eqref{OB-1} satisfy  the following estimates
\begin{align}\label{Theo1}
\|(u,\tau)^\ell(t)\|_{\dot{B}^{\sigma_1}_{2,\infty}}\leq C,~~t>0,\   C>0,
\end{align}
\begin{align}\label{Theo2}
\|(u,\tau)(t)\|_{\dot{\mathbb{B}}^\sigma_{2,p}}\lesssim \langle t \rangle^{-\frac{1}{2}(\sigma-\sigma_1)},~~t>t_0, \ \sigma_1<\sigma\leq \frac{d}{2},
\end{align}
if and only if
\begin{equation}\label{ini}
    (u_0,\tau_0)^\ell\in \dot{B}^{\sigma_1}_{2,\infty}, \ \sigma_0\leq \sigma_1<\frac{d}{2}-1.
\end{equation}
\item Moreover, there exists a time $t_0>0$, if the solutions  $(u,\tau)$ satisfy the following decay estimates 
$$
\langle t \rangle^{-\frac{1}{2}(\sigma-\sigma_1)} \lesssim\|(u,\tau)(t)\|_{\dot{\mathbb{B}}^\sigma_{2,p}}\lesssim \langle t \rangle^{-\frac{1}{2}(\sigma-\sigma_1)},~~t>t_0,
$$
then, we have \begin{equation}\label{ini-2}
    (u_0,\tau_0)^\ell\in \dot{\bf{B}}^{\sigma_1}_{2,\infty},\ \sigma_0<\sigma_1<\frac{d}{2}-1. 
\end{equation}
On the other hand, if $(u_0,{\mathbb{P}\Lambda^{-1}\mathrm{div}\tau}_0)^\ell\in \dot{\bf{B}}^{\sigma_1}_{2,\infty},~~\tau_0^\ell\in \dot{{B}}^{\sigma_1}_{2,\infty}$, we have
\begin{equation}\label{6127}
    \langle t \rangle^{-\frac{1}{2}(\sigma-\sigma_1)} \lesssim\|(u,\tau)(t)\|_{\dot{\mathbb{B}}^\sigma_{2,p}}\lesssim \langle t \rangle^{-\frac{1}{2}(\sigma-\sigma_1)},~~t>t_0,
\end{equation}

 \end{itemize}
where  $\mathbb{P}=I+\nabla (-\Delta)^{-1}\operatorname{div}$ is the Leray projection operator and 
$\dot{\textbf{B}}^{\sigma_1}_{2,\infty}=\{ f\in\dot{B}^{\sigma_1}_{2,\infty}:    \exists    M_0, M>0,       \exists ( j_k)_{k \in \mathbb{N}} \subset \mathbb{Z}$  such that   
     $ j_k\rightarrow-\infty ~~\text{as}~~  k \rightarrow+\infty,  ~|j_k-j_{k+1}|\leq M$~ {and}~$2^{\sigma_1j_k}\|\dot{\Delta}_{j_k}f\|_{L^2}\geq M_0  ~\text{for~ any}~~      k \in \mathbb{N}
 \}$, 
and
$$
\|(u,\tau)\|_{\dot{\mathbb{B}}^\sigma_{2,p}}=\|(u,\tau)^\ell\|_{\dot{B}^{\sigma}_{2,1}}+\|(u,\tau)\|^h_{\dot{B}^{\frac{d}{p}+1}_{p,1}}.
$$
\end{theorem}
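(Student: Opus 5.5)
The plan is to first secure global existence and the uniform bound $E(t)\lesssim E_0$ under \eqref{E-0}. Then I would prove the "if" and "only if" parts of the first item through a Lyapunov-type argument in $\dot B^{\sigma_1}_{2,\infty}$. The two-sided asymptotics in the second item would come from comparing $(u,\tau)$ with the linear solution $(u_L,\tau_L)$ of \eqref{OB}, using the error system \eqref{OBC-4}.

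\textbf{Step 1 (global bounds).} Localize \eqref{OB-1} with $\dot\Delta_k$.

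\emph{Low frequencies.} I will not estimate $\tau$ directly. Instead I decompose $\tau$ into its incompressible part $\mathbb{P}\Lambda^{-1}\mathrm{div}\,\tau$, which is the part coupled to $u$, and the remaining compressible part, which obeys a pure heat equation plus nonlinear terms. For the coupled pair $(u,\mathbb{P}\Lambda^{-1}\mathrm{div}\,\tau)$ I build an $L^2$ energy with a small cross term $-\varepsilon 2^{-2k}\langle \Lambda u,\mathbb{P}\Lambda^{-1}\mathrm{div}\,\tau\rangle$. This produces the heat-like dissipation $2^{2k}\|\dot\Delta_k(u,\tau)\|_{L^2}^2$.

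\emph{High frequencies.} Here $u$ behaves like a transport solution, so I introduce an effective tensor. A natural candidate is $G=\mathbb{P}\mathrm{div}\,\tau+\frac{\nu_2}{2\eta}\,\cdot$(a $\Lambda^{-2}$ correction), chosen so that $\partial_t u$ is damped by $-\frac{\nu_1\nu_2}{2\eta}u$ up to $G$. This gives $u$ damping at order one in $\dot B^{d/p+1}_{p,1}$, while $\tau$ gets parabolic smoothing in $L^p$ with maximal regularity.

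\emph{Nonlinear terms.} These are handled with product and commutator laws in $\dot B^{s}_{p,1}$, valid under \eqref{ppp}. Closing the bootstrap gives $E(t)\le CE_0$.

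\textbf{Step 2 (sufficiency of \eqref{ini}).} Write the low-frequency Duhamel formula. The linear part is bounded in $\dot B^{\sigma_1}_{2,\infty}$ by $\|(u_0,\tau_0)^\ell\|_{\dot B^{\sigma_1}_{2,\infty}}$. For the nonlinear part I use $\|fg\|_{\dot B^{\sigma_0}_{2,\infty}}\lesssim \|f\|_{\dot B^{d/p-1}_{p,1}}\|g\|_{\dot B^{d/p}_{p,1}}$ type laws, which is exactly where $\sigma_1\ge\sigma_0=\frac d2-\frac{2d}{p}$ enters. These yield $\|(u,\tau)^\ell(t)\|_{\dot B^{\sigma_1}_{2,\infty}}\le C$, which is \eqref{Theo1}.

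For \eqref{Theo2}, combine the Lyapunov inequality
$\frac{d}{dt}\mathcal L(t)+c\|(u,\tau)\|^\ell_{\dot B^{d/2+1}_{2,1}}+c\|u\|^h_{\dot B^{d/p+1}_{p,1}}+\dots\le0$
with interpolation between $\dot B^{\sigma_1}_{2,\infty}$ and $\dot B^{d/2+1}_{2,1}$. This gives
$\mathcal L^{1+\frac{2}{d/2-1-\sigma_1}}\lesssim$ dissipation, hence $\mathcal L\lesssim\langle t\rangle^{-\frac12(d/2-1-\sigma_1)}$. Intermediate $\sigma$ then follow by interpolation and a time-weighted low-frequency estimate, and high frequencies decay exponentially modulo low-frequency sources.

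\textbf{Step 3 (necessity and lower bounds).} For the "only if" direction, I would use the Duhamel formula backward: the decay \eqref{Theo2} at $\sigma$ near $\frac d2$ bounds $\sup_t t^{\frac12(\sigma-\sigma_1)}\|\dot\Delta_k(u,\tau)\|$. Choosing $t\sim 2^{-2k}$, where the linear semigroup is comparable to the identity on block $k$ (since $e^{-\frac{\eta}{2}|\xi|^2t}\approx1$), gives $2^{k\sigma_1}\|\dot\Delta_k(u_0,\tau_0)\|\lesssim C$ plus nonlinear terms, which are controlled using Step 1.

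For the lower bound \eqref{6127}, estimate $\|(u_L,\tau_L)(t)\|$ from below on the block $k=j_m$ with $2^{2j_m}t\sim1$. Membership in $\dot{\bf B}^{\sigma_1}_{2,\infty}$, with gaps $|j_k-j_{k+1}|\le M$, guarantees such a block exists for every large $t$. On the coupled part the two roots have real part $-\frac\eta2|\xi|^2$ with an oscillating factor, so I need the incompressible part $\mathbb{P}\Lambda^{-1}\mathrm{div}\,\tau_0$ and $u_0$ to be nondegenerate, which is the stated hypothesis. Then I show the error $(\tilde u,\tilde\tau)$ decays strictly faster, using $\sigma_1>\sigma_0$ and the quadratic nonlinearity, so the linear lower bound survives. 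The converse in \eqref{ini-2} follows by the same block argument read in reverse.

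\textbf{Main obstacle.} I expect the main obstacle to be the high-frequency estimate in Step 1: the loss of one derivative for $u$, together with the term $Q(\tau,\nabla u)$ in $\tau$'s equation, forces the effective tensor construction, and commutator estimates there must close at $p$ up to $\min\{4,\frac{2d}{d-2}\}$. A secondary difficulty is avoiding cancellation from the oscillatory linear factor in the lower bound.
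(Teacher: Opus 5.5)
Your architecture for the second item matches the paper's. You take a lower bound for the linear flow on a block $j_m$ with $2^{2j_m}t\sim1$ (Lemma~\ref{ls10}) and transfer it to $(u,\tau)$ because the error $(\tilde u,\tilde\tau)=(u-u_L,\tau-\tau_L)$ decays strictly faster at low frequencies. The gap is that last claim, which you dispatch with ``$\sigma_1>\sigma_0$ and the quadratic nonlinearity''. It is the technical core of the paper (Proposition~\ref{cob2}, Lemmas~\ref{cl1}--\ref{cl11}), and quadraticity alone does not give it.

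The terms $u\cdot\nabla u$ and $u\cdot\nabla\tau$ are divergences because $\mathrm{div}\,u=0$. They can therefore be measured in $\dot B^{\sigma_1-1}_{2,\infty}$, and the Duhamel kernel gains a full power, $\langle t-t'\rangle^{-\frac12(\sigma-\sigma_1+1)}$. $Q(\tau,\nabla u)$ has no such structure. Its low--high piece $\tau^\ell\nabla\tilde u^h$ can only be placed in $\dot B^{\sigma_1-\sigma'}_{2,\infty}$ with $\sigma'$ small; the paper's product laws allow only $\sigma'\le\frac d2-\frac dp$ for $p>2$. The rates available in the bootstrap are $\|\tau^\ell\|_{\dot B^{d/2-1}_{2,1}}\lesssim\langle t\rangle^{-\frac12(\frac d2-1-\sigma_1)}$ and $\|\nabla\tilde u^h\|_{\dot B^{d/p-1}_{p,1}}\lesssim\langle t\rangle^{-\alpha_*}$. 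With these, the product decays like $\langle t'\rangle^{-\frac12(d-1-2\sigma_1+\sigma_2)}$, which is not integrable when $\sigma_1$ is near $\frac d2-1$. The Duhamel integral then returns only $\langle t\rangle^{-\frac12(\sigma-\sigma_1)-\frac12(\sigma'+\sigma_2+d-3-2\sigma_1)}$. For small $\sigma'$ and $\sigma_1$ close to $\frac d2-1$ this is a loss, not a gain, over the linear rate.

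The missing idea is the splitting $\tau^\ell_{ik}\partial_k\tilde u^h_j=\partial_k(\tau^\ell_{ik}\tilde u^h_j)-\partial_k\tau^\ell_{ik}\,\tilde u^h_j$. The first term is estimated with $\sigma'=1$, the second using the faster decay of $\nabla\tau^\ell$. Beyond that, you still need a time-weighted high-frequency estimate for the error (effective tensor plus a commutator for the transport term, Lemma~\ref{cl10}). The bootstrap on $\tilde D_p$ also cannot close on the smallness of $E_0$ alone, since $\|(u_0,\tau_0)^\ell\|_{\dot B^{\sigma_1}_{2,\infty}}$ is not assumed small. The paper closes it by interpolating $\|(u_L,\tau_L)^\ell\|_{\dot B^{\sigma_1+\sigma_2}_{2,\infty}}$ against $E_0$ and by extracting small factors from a large time $t_L$. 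This, not Step~1 (which the paper imports from earlier work), is where the real obstacle lies.

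There are two further problems in Step~1. First, the part of $\tau$ not seen by $\mathbb P\Lambda^{-1}\mathrm{div}\,\tau$ is not a pure heat flow. Since $\mathrm{curl}\,D(u)=\frac12\mathrm{curl}\,(\nabla u)^\top\neq0$, $w=\Lambda^{-1}\mathrm{curl}\,\tau$ is linearly forced by $u$ (third line of \eqref{OB-2}). At frequency $2^k\ll1$ this forcing has size $2^k\|u_k\|_{L^2}$ against a damping rate $2^{2k}$, so it cannot be treated as a source. You need the exact cancellation used in Lemma~\ref{lem3.1}, or equivalently the full $(u,\tau)$ energy identity, where $\langle\mathbb P\mathrm{div}\,\tau_k,u_k\rangle+\langle D(u_k),\tau_k\rangle=0$. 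Second, the cross term must enter as $\langle\Lambda u_k,\mathbb P\Lambda^{-1}\mathrm{div}\,\tau_k\rangle$ with an $O(1)$ coefficient, as in $\mathcal L_k$. With your factor $2^{-2k}$, the functional is no longer equivalent to $\|(u_k,\tau_k)\|_{L^2}^2$ as $k\to-\infty$.

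This matters for Step~2. The Lyapunov-plus-interpolation route to the upper bound is a legitimate and lighter alternative to the paper's comparison with $(u_L,\tau_L)$. However, it needs a low-frequency functional that dissipates all of $\tau^\ell$, and it gives no information on $(\tilde u,\tilde\tau)$, so it does not spare you the error estimate above. Finally, the ``only if'' of the first item needs no backward Duhamel argument. \eqref{Theo1} for all $t>0$, together with continuity in time of each $\dot\Delta_k(u,\tau)$ in $L^2$, gives \eqref{ini} directly by letting $t\to0^+$.
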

In particular, in the case of $p=2$, we have
\begin{corollary}
There exists a time $t_1>0$ such that for $\sigma_1<\sigma \leq \frac{d}{2}$,  $\sigma_0< \sigma_1<\frac{d}{2}-1$, if the solution
$(u,\tau)$ to the Cauchy problem \eqref{OB-1} satisfies 
$$
\langle t \rangle^{-\frac{1}{2}(\sigma-\sigma_1)} \lesssim\|\Lambda^{\sigma}(u,\tau)(t)\|_{\dot{B}^0_{2,1}}\lesssim \langle t \rangle^{-\frac{1}{2}(\sigma-\sigma_1)},~~t>t_1,
$$
then we have $(u_0,\tau_0)^\ell\in \dot{\bf{B}}^{\sigma_1}_{2,\infty}$. 
\par
On the other hand, if $(u_0,{\mathbb{P}\Lambda^{-1}\mathrm{div}\tau}_0)^\ell\in \dot{\bf{B}}^{\sigma_1}_{2,\infty},~~\tau_0^\ell\in \dot{{B}}^{\sigma_1}_{2,\infty}$, we have
$$
\langle t \rangle^{-\frac{1}{2}(\sigma-\sigma_1)} \lesssim\|\Lambda^{\sigma}(u,\tau)(t)\|_{\dot{B}^0_{2,1}}\lesssim \langle t \rangle^{-\frac{1}{2}(\sigma-\sigma_1)},~~t>t_1.
$$
\end{corollary}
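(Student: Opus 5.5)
The Corollary is the case $p=2$ of Theorem~\ref{Thm}, so the plan is to check that every hypothesis and quantity in the theorem reduces correctly when $p=2$, and then quote the second bullet of the theorem.

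\emph{Admissibility and the range of $\sigma_1$.} The value $p=2$ satisfies \eqref{ppp} for every $d\ge2$: indeed $2\le\min\{4,\frac{2d}{d-2}\}$, and the exclusion $p\neq4$ for $d=2$ is irrelevant. With $p=2$ we get $\sigma_0=\frac d2-d=-\frac d2$. Hence the range $\sigma_0<\sigma_1<\frac d2-1$ in the Corollary is exactly the one in \eqref{ini-2}. We also assume, as in Theorem~\ref{Thm}, that the smallness condition \eqref{E-0} holds, so that the global solution exists.

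\emph{Equivalence of the norms.} This is the only real step. For $p=2$,
\[
\|(u,\tau)\|_{\dot{\mathbb B}^\sigma_{2,2}}=\|(u,\tau)^\ell\|_{\dot B^\sigma_{2,1}}+\|(u,\tau)\|^h_{\dot B^{\frac d2+1}_{2,1}},
\]
whereas the Corollary uses $\|\Lambda^\sigma(u,\tau)\|_{\dot B^0_{2,1}}\simeq\|(u,\tau)\|_{\dot B^\sigma_{2,1}}$, which splits into low and high parts. The low parts coincide. For the high parts, since $\sigma\le\frac d2<\frac d2+1$, the footnote inequality gives $\|f^h\|_{\dot B^\sigma_{2,1}}\lesssim\|f^h\|_{\dot B^{\frac d2+1}_{2,1}}$. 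Therefore the Corollary's norm is bounded by the $\dot{\mathbb B}^\sigma_{2,2}$ norm, and the upper bound in \eqref{6127} transfers directly.

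\emph{Transferring the lower bound and the converse.} Two points need care.

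First, the converse direction. Here the Corollary assumes both bounds in its weaker norm $\|\Lambda^\sigma\cdot\|_{\dot B^0_{2,1}}$. To rerun the necessity argument of the theorem, we need the lower bound only through the low-frequency part. Since the high-frequency part of the solution decays faster than any algebraic rate relevant here (or at least like $\langle t\rangle^{-\frac12(\frac d2+1-\sigma_1)}$, by the theorem's estimates on $E^h$), the lower bound for the full $\dot B^\sigma_{2,1}$ norm implies, for large $t$, a lower bound for $\|(u,\tau)^\ell\|_{\dot B^\sigma_{2,1}}$. The same lower bound therefore holds in the $\dot{\mathbb B}^\sigma_{2,2}$ norm. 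Together with the upper bound, which comes from the first bullet once $(u_0,\tau_0)^\ell\in\dot B^{\sigma_1}_{2,\infty}$ is known, the second bullet of Theorem~\ref{Thm} yields $(u_0,\tau_0)^\ell\in\dot{\mathbf B}^{\sigma_1}_{2,\infty}$. If instead the theorem's proof of necessity only uses low frequencies, which I expect, this step is immediate.

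Second, the forward direction. The lower bound in \eqref{6127} comes from the low-frequency, heat-like behaviour. Its proof gives a lower bound on $\|(u,\tau)^\ell(t)\|_{\dot B^\sigma_{2,1}}$, which is dominated by $\|\Lambda^\sigma(u,\tau)\|_{\dot B^0_{2,1}}$.

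Choosing $t_1$ as the larger of the theorem's $t_0$ and the time after which the high-frequency part is negligible completes the argument. The main obstacle is the bookkeeping for the lower bound: one must confirm that it is genuinely carried by the low frequencies, so that it survives the change of norm.
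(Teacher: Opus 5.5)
Your treatment of the second statement of the Corollary (the two-sided bound under $(u_0,\mathbb P\Lambda^{-1}\mathrm{div}\tau_0)^\ell\in\dot{\mathbf B}^{\sigma_1}_{2,\infty}$, $\tau_0^\ell\in\dot B^{\sigma_1}_{2,\infty}$) is fine. You are also right that the Corollary is not a literal specialization of Theorem~\ref{Thm}, a point the paper passes over. For $p=2$ the high-frequency part of $\dot{\mathbb B}^\sigma_{2,2}$ is $\dot B^{\frac d2+1}_{2,1}$, so $\|\Lambda^\sigma f\|_{\dot B^0_{2,1}}\lesssim\|f\|_{\dot{\mathbb B}^\sigma_{2,2}}$ holds only in that direction. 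The upper bound therefore transfers. The lower bound in \eqref{6127} is, in the paper, a bound on $\|(u,\tau)^\ell\|_{\dot B^\sigma_{2,1}}$: it comes from \eqref{wi4} for $(u_L,\tau_L)^\ell$ together with the faster decay of $(\tilde u,\tilde\tau)^\ell$ in Proposition~\ref{cob2}.

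The converse, however, has a genuine gap.
\begin{itemize}
\item \emph{The upper bound is circular.} To invoke the second bullet you need the upper bound in $\dot{\mathbb B}^\sigma_{2,2}$, i.e.\ decay of $\|(u,\tau)\|^h_{\dot B^{\frac d2+1}_{2,1}}$. The Corollary's hypothesis controls high frequencies only up to $\dot B^{\frac d2}_{2,1}$. You take this bound from the first bullet ``once $(u_0,\tau_0)^\ell\in\dot B^{\sigma_1}_{2,\infty}$ is known'', but that membership is neither assumed nor derived. It is in fact part of the conclusion, since $\dot{\mathbf B}^{\sigma_1}_{2,\infty}\subset\dot B^{\sigma_1}_{2,\infty}$.
\item \emph{The high-frequency decay claim is unsupported.} $E(t)\lesssim E_0$ gives uniform bounds and time integrability, not a pointwise rate. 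There is no reason for super-algebraic decay, since the nonlinear high frequencies are fed by low-frequency interactions. The rate $\langle t\rangle^{-\alpha_*}$ the paper obtains for them comes from Proposition~\ref{cob2}, which presupposes $(u_0,\tau_0)^\ell\in\dot B^{\sigma_1}_{2,\infty}$.
\item \emph{The necessity proof is not purely low-frequency.} Proposition~\ref{pro3.1} uses decay of $\|u^h\|_{\dot B^{\frac dp}_{p,1}}$, $\|\tau^h\|_{\dot B^{\frac dp-1}_{p,1}}$ and $\|u^h\|_{\dot B^{\frac dp+1}_{p,1}}\|\tau^h\|_{\dot B^{\frac dp+1}_{p,1}}$. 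It is also carried out under \eqref{Theo1}, the uniform bound on $\|(u,\tau)^\ell(t)\|_{\dot B^{\sigma_1}_{2,\infty}}$, which is what controls the Duhamel term on $[0,t_0]$ (via Proposition~\ref{bdd-negative-norm}).
\end{itemize}

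So the converse needs that information as an explicit hypothesis, exactly as in the paper's necessity argument. Assume \eqref{Theo1}; letting $t\to0$ gives $(u_0,\tau_0)^\ell\in\dot B^{\sigma_1}_{2,\infty}$. Then your route closes. The first bullet supplies the $\dot{\mathbb B}^\sigma_{2,2}$ upper bound. The lower bound localizes to low frequencies using the Corollary's own hypothesis at $\frac d2$ rather than $E^h$: for any fixed $\sigma\in(\sigma_1,\frac d2)$,
\[
\|(u,\tau)\|^h_{\dot B^{\sigma}_{2,1}}\lesssim\|(u,\tau)\|^h_{\dot B^{\frac d2}_{2,1}}\lesssim\langle t\rangle^{-\frac12(\frac d2-\sigma_1)}=o\big(\langle t\rangle^{-\frac12(\sigma-\sigma_1)}\big).
\]
Hence $\|(u,\tau)^\ell\|_{\dot B^\sigma_{2,1}}\gtrsim\langle t\rangle^{-\frac12(\sigma-\sigma_1)}$ for large $t$. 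One such $\sigma$ suffices for the second bullet, or equivalently for \eqref{wi3} after subtracting $(\tilde u,\tilde\tau)^\ell$ via Proposition~\ref{cob2}. Without such an assumption you would need a new argument bounding the nonlinear Duhamel term on $[0,t_1]$ in $\dot B^{\sigma_1-\sigma'}_{2,\infty}$ with no a priori negative-regularity information. Your proposal does not supply this.
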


\begin{remark}
    From previous works \cite{Chen}, the condition \eqref{E-0} guarantees the global existence and uniqueness of solution to the Cauchy problem \eqref{OB-1} and the solution $(u, \tau)  $ satisfies
    \begin{equation}\label{E-t}
        E(t) \lesssim E_0.
    \end{equation}
\end{remark}

 \begin{remark}
 The space $\dot{\textbf{B}}^{\sigma_1}_{2,\infty} $ has been introduced in \cite{MR3493117} to deal with the decay  characterization to the Navier-Stokes equations.  
 \end{remark}
\subsection{Related literatures}
 The well-posedness of system \eqref{GOB} has been  widely studied in the past decades. 
    For the parameters $\nu,\eta>0$ and $a>0$, Constantin and Kliegl \cite{Constantin-Kliegl} established the global well-posedness of strong solutions in two spatial dimensions.
     In the corotational case  $b=0$,  the existence of global weak solutions was demonstrated by Lions and Masmoudi \cite{MR1763488} and the case $b\neq 0$ is still open up to now.
For the case $\nu> 0$, $\eta=0$ and $a>0$, provided the coupling parameter and initial data are sufficiently small,
    Guillop\'{e} and Saut \cite{Guillope-1,Guillope-2} proved the existence of  global solution, 
     Chemin and Masmoudi \cite{Chemin-Masmoudi} then obtained  global solutions within the framework of critical Besov spaces. 
  Building upon techniques developed for the compressible Navier–Stokes equations  \cite{CD2010,MR2675485}, Zi, Fang and Zhang \cite{Fang-Zhang-Zi} extended these results to the critical $L^p$ framework and removed the smallness assumption for the coupling
parameter. Further developments in this direction can be found in \cite{MR4920797,MR2388753,Chen,Fang-Hieber-Zi,MR4152236,MR3473592,MR3023375,Zi}.

For the case \(\nu = 0\) and \(\eta > 0\), system \eqref{GOB} reduces to a coupling between the forced Euler equations for the velocity field and a kinetic description of the particle dynamics.  
In the presence of a damping mechanism in the equation for \(\tau\) (i.e., \(a > 0\)), Elgindi and Rousset \cite{Elgindi-Rousset} established a   global well-posedness result in the Sobolev space for the two-dimensional Oldroyd-B model with small initial data. The three-dimensional case was subsequently addressed by Elgindi and Liu   \cite{MR3349425} in the Sobolev setting, and later extended by Huang, Liu, and Zi \cite{MR4693221} within the framework of Besov spaces. It is worth emphasizing that the damping term in the \(\tau\)-equation plays an essential role in the analyses presented in \cite{MR3349425,MR4693221,Elgindi-Rousset}. In the absence of damping (i.e., \(a = 0\)),    the linearization of model \eqref{GOB} exhibits a damped wave equation structure. Exploiting this property, Zhu \cite{Zhu} proved the global existence of  strong solution   with $\eta=a=0$.  More recently, Constantin, Wu, Zhao and Zhu \cite{Constantin-3} proved that there exists a unique global-in-time solution in Sobolev space with the fractional Laplacian. 
 A key insight of \cite{Constantin-3} is that the non-Newtonian stress tensor can exert a regularizing effect on the dynamics of viscoelastic fluids, even in the absence of explicit damping.  

The large-time behavior of solutions plays a crucial role in understanding many physical phenomena, and numerous research has been devoted to this topic. The Fourier-splitting approach has been  proven particularly effective for studying the large-time asymptotics of fully dissipative systems (see, e.g., \cite{MR837929, MR1396285, MR1103459}). However, this method is not directly applicable to certain partially dissipative systems when one seeks to establish optimal decay rates. In this context, we mention the work \cite{MR4792291}, which investigates decay characterizations for the compressible Navier–Stokes equations. Moreover, in the presence of a damping mechanism (i.e., $a>0$), Huang, Wang, Wen and Zi \cite{MR4335131} studied the large-time behavior of strong solutions to model \eqref{GOB} with vanishing viscosity in Sobolev spaces. In particular, under the additional assumption $(u_0, \tau_0) \in L^1$, they established the time decay rate of solutions. Subsequently, Huang, Liu and Zi \cite{MR4693221} obtained faster decay rates without imposing further smallness conditions. For related results, we also refer the reader to \cite{MR3909068, MR4374896, MR3927165} and the references therein.

\par
 In the case of \eqref{OB-1},  i.e., $\nu=a=0$ and $\eta>0$ , Wang, Wu, Xu and Zhong \cite{MR4348795} made an extra assumption on the initial data $(u_0,\tau_0)
  $, i.e., $(u_0,\tau_0)\in L^1(\mathbb{R}^d) $, and  with a sufficiently small parameter  $\epsilon>0$  such that
  \begin{align*} 
  \|(u_0,\tau_0)\|_{L^1\cap H^r} \leq \epsilon, \qquad r>1+\frac{d}{2},
 \end{align*}
then the optimal decay rates for the $L^2$-norm of
$u$ and $\nabla u$, the $L^\infty$-norm of $u$ and $\nabla u$ as well as the $L^2$-norm of $\mathbb{P}\nabla \cdot\tau$ are obtained. 
Later, the first author and collaborators removed the smallness of $L^1$-norm in \cite{MR5009527}, and obtained a similar optimal time-decay rates by adding some additional $L^2$ type conditions on the low frequencies of the initial data. {\bf A natural question is whether the low-frequencies assumption \eqref{ini} is sharp or not for the large-time behavior of strong solutions to \eqref{OB-1} in critical $L^p$ space?}

\subsection{Strategy of the proof}
 
In this part, we outline the main steps and innovative techniques for establishing the  sharp large-time decay estimates for solutions to the incompressible Oldroyd-B system \eqref{OB-1} in critical spaces. The central challenge lies in overcoming the loss of regularity in the high frequencies of the velocity field $u$ and handling the non-standard structure of the stress tensor $\tau$.  

 \begin{itemize}
     \item[(1)] { \bf The role of $L^2$-type initial data and sharp decay framework }
     \newline
     Under the $L^2$-type assumption on the initial data specified in \eqref{ini}, we derive sharp upper and lower bounds for the time-decay rates of solutions in critical $L^p$ spaces. Moreover, this condition is shown to be not only sufficient but also necessary for attaining the optimal upper decay estimates. 
     We also point out that, since
     \[
     L^1(\mathbb{R}^d)\hookrightarrow \dot{B}^{0}_{1,\infty}\hookrightarrow \dot{B}^{-\frac{d}{2}}_{2,\infty},
     \]
     the class of initial data considered here differs from the $L^1$ initial data employed in \cite{MR4348795}.
     \vspace{5pt}
     \item[(2)] { \bf Decomposition of the stress tensor and linear estimates }
\newline
     Similar to the compressible Navier-Stokes equations, the high-frequencies part of the velocity $u$ suffers from a loss of regularity.  To circumvent this, we adopt the effective tensor formalism introduced by \cite{MR5009527}.     
     For the low-frequencies analysis of the tensor $\tau $, we employ a  decomposition into its “incompressible part" and “compressible part" to obtain precise estimates for the linear OB system \eqref{OB}. 
     In order to exhibit bounds of the high frequencies part of the solution, we introduce the following {\it effective tensor}  in the OB system
\begin{align*} 
	W=\nabla(-\Delta)^{-1}(\frac{1}{2}u+\operatorname{div}\tau),
\end{align*}
which was motivated by the work \cite{Haspot} to deal with the compressible Navier-stokes equations. 
  \vspace{5pt}
     \item[(3)] { \bf Pointwise lower bound estimates to linear OB system \eqref{OB}}
\newline
     In this work, we establish not only upper bounds but also lower bounds, the derivation of which is technically nontrivial. To address this challenge, we work with the pair $(u, \Lambda^{-1}\operatorname{div}  \tau)$ instead of $(u, \tau)$, where the second component represents a potential function associated with the incompressible part of $\tau$. This formulation reveals a crucial cancellation property (see Lemma \ref{ls10}), which plays a pivotal role in establishing the desired lower bounds and confirming the sharpness of the decay rates.
       \vspace{5pt}
     \item[(4)] { \bf Handling the nonlinear term $Q( \tau, \nabla u)$ }
\newline
To establish the enhanced decay rate for the solution $(\tilde{u},\tilde{\tau})$ of the error system \eqref{OBC-4}, it is necessary to estimate the nonlinear term $Q( \tau, \nabla u)$ in the low-frequencies regime. This task is particularly delicate due to the absence of an incompressibility condition on $\tau$. A natural estimate, such as
 \begin{equation*}
       \|{\tau}^\ell \cdot \nabla\tilde{u}^h\|^\ell_{\dot{B}^{\sigma_1-\sigma'}_{2,\infty}}\lesssim \|\nabla\tilde{ u}^h\|_{\dot{B}^{\frac{d}{p}-1}_{p,1}}\|\tau^\ell\|_{\dot{B}^{\frac{d}{2}-1}}
    \end{equation*}
is not enough to get our desired estimate  because $\|\nabla\tilde{u}^h\|_{\dot{B}^{\frac{d}{p}-1}_{p,1}}$ lacks sufficient temporal decay.
To overcome this difficulty, we exploit the following refined decomposition
\begin{equation*}
       \tau^\ell\cdot \nabla \tilde{u}^h=\tau_{ik}^\ell\partial_k\tilde{u}_j^h=\partial_k(\tau_{ik}^\ell\tilde{u}_j^h )-\partial_k\tau_{ik}^\ell\tilde{u}_j^h.
     \end{equation*}
The two resulting terms are then estimated separately by choosing carefully  regularity indices $\sigma'$. For the first term, we set $\sigma'=1$, which allows us to gain one derivative. For the second term, we exploit the fact that $\|\nabla\tilde{\tau}^\ell\|_{\dot{B}^{\frac{d}{2}-1}_{2,1}}$ exhibits faster temporal decay when $\sigma'$ is taken sufficiently small. 
 \end{itemize}
 
\subsection{Arrangement and notation of the paper}  {

The rest of this paper is organized as follows. In Section~\ref{sec:linear}, 
we study the linear Oldroyd-B system~\eqref{OB} and establish 
pointwise upper and lower bounds for the solution $(u_L,\tau_L)$, 
leading to the decay characterization of Proposition~\ref{prop1}. 
These linear estimates, particularly the lower bounds of 
Lemma~\ref{ls10}, are the foundation for the necessity argument 
in Section~\ref{sec:necessary}. Section~\ref{sec:sufficient} is the 
technical core of the paper. Section~\ref{subsec:sufficient}  is devoted to proving the sufficient 
condition in Theorem~\ref{Thm}(i) by establishing the 
time-weighted functional inequality $\tilde{\mathcal{D}}_p(t)\lesssim 1$ 
of Proposition~\ref{cob2}, which requires careful low and 
high frequencies analysis of the nonlinear terms including the treatment 
of $Q(\tau,\nabla u)$ described in item~(4) above. 
Section~\ref{sec:necessary} establishes the necessary condition in 
Theorem~\ref{Thm}(ii)--(iii), completing the proof of 
Theorem~\ref{Thm}. An appendix collects the Besov space product 
estimates used throughout the paper.

We denote by $C$ the generic constant which may change line by line. $A\lesssim B$ means that there is a generic constant C such that $A \leq  C B$. The notation $A \approx B $ implies that $A\lesssim B$ and $B\lesssim A$.}

\vskip .2in 
\section{Decay characterization for the linear Oldroyd-B system}
\label{sec:linear}

This section is devoted to the decay analysis of the linear Oldroyd-B system 
\eqref{OB}, which forms the analytical foundation for the nonlinear 
results established in Sections \ref{sec:sufficient}. 
We proceed in four steps. 

  \vspace{5pt}
First, in Subsection \ref{sec:pointwise}, we establish pointwise upper and lower 
bounds for the solution $(u_L, \tau_L)$ to the linear system. Specifically, 
Lemma \ref{ls10} provides the lower bound which shows that the solution cannot decay 
faster than $e^{-\eta|\xi|^2 t}$ at low frequencies, while Lemma \ref{ls1} 
establishes the corresponding upper bounds at both low and high frequencies. 
These two lemmas together yield the decay characterization of Proposition 
\ref{wp1}, which identifies the low-frequencies regularity of the initial data as 
the precise condition governing the decay rate.

  \vspace{5pt}
Second, in Subsection \ref{sec:wiegner}, we apply a Wiegner-type argument 
adapted to the incompressible Oldroyd-B structure to derive the global energy 
bounds recorded in Proposition \ref{prop1}. 

  \vspace{5pt}
Third, in Subsection \ref{sec:lowfreq}, we carry out the low-frequencies analysis 
of the linear system. A key difficulty here is the absence of dissipation in 
the velocity equation, which prevents a direct energy estimate. To overcome this, 
we decompose the stress tensor $\tau$ into its incompressible part 
$v = \Lambda^{-1}\operatorname{div}\tau$ and its compressible part 
$w = \Lambda^{-1}\operatorname{curl}\tau$ as in \eqref{tau-v-w}, and work 
instead with the triplet $(u, v, w)$. This decomposition reveals a hidden 
dissipative structure, captured in Lemma \ref{lem3.1}, which yields the key 
differential inequality \eqref{L-k-estimate} controlling the low-frequencies 
evolution. 

  \vspace{5pt}
Finally, in Subsection \ref{sec:highfreq}, we handle the high-frequencies regime. 
Here the main challenge is a genuine loss of one derivative for the velocity 
field $u$ relative to the stress tensor $\tau$, which manifests through the 
characteristic root $\lambda_+ \approx 0$ as $|\xi| \to \infty$. To address 
this, we introduce the effective tensor 
\begin{equation*}
	W = \nabla(-\Delta)^{-1}\!\left(\tfrac{1}{2}u + 
	\operatorname{div}\tau\right),
\end{equation*}
as in \eqref{effective-tesor} and establish the high-frequencies 
bound of Lemma \ref{lem3.2}. Combining all these estimates, we conclude the 
section with the proof of Proposition \ref{prop1}, which gives a complete decay 
characterization in terms of the low-frequencies regularity of the initial data.

\vskip .1in 
\par
\subsection{Frequencies pointwise estimates} 
\label{sec:pointwise}

In the following lemma we give the lower bounds of the low frequencies.
\begin{lemma}\label{ls10}
Let $(u,\tau)$ be the solution to the Cauchy problem  \eqref{OB}, then there exists positive constants 
$C $ and $c$ such that the following estimate holds
\begin{align*}
|\hat{u}(t,\xi)|+|\widehat{\tau}(t,\xi)|\geq Ce^{-\eta|\xi|^2t}(|\hat{u_0}|+|\widehat{\mathbb{P}\Lambda^{-1} div\tau_0}|),~~~|\xi|\leqslant c.
\end{align*}

\end{lemma}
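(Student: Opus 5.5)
The plan is to reduce the linear system \eqref{OB} at the Fourier level to a closed $2\times 2$ system for the pair $(\hat u,\hat v)$, where $v=\mathbb{P}\Lambda^{-1}\operatorname{div}\tau$. I would then get the lower bound from an exact weighted energy identity in which the cross terms cancel. This is the cancellation property alluded to in item (3) of the strategy.

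\textbf{Step 1: closed system for $(u,v)$.} Apply $\mathbb{P}\Lambda^{-1}\operatorname{div}$ to the $\tau$-equation of \eqref{OB}. Since $\operatorname{div}u=0$, we have $\operatorname{div}D(u)=\frac12\Delta u$ and $\mathbb{P}u=u$. This gives
\begin{equation*}
\partial_t v-\eta\Delta v=\tfrac{\nu_2}{2}\Lambda^{-1}\Delta u=-\tfrac{\nu_2}{2}\Lambda u .
\end{equation*}
Applying $\mathbb{P}$ to the velocity equation removes the pressure. Since $\mathbb{P}\operatorname{div}\tau=\Lambda v$, we obtain
\begin{equation*}
\partial_t u=\nu_1\Lambda v .
\end{equation*}
In Fourier variables this reads
\begin{equation*}
\partial_t\hat u=\nu_1|\xi|\hat v,\qquad \partial_t\hat v=-\eta|\xi|^2\hat v-\tfrac{\nu_2}{2}|\xi|\hat u .
\end{equation*}
Its eigenvalues are exactly the $\lambda_\pm$ from the introduction, which is a useful consistency check.

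\textbf{Step 2: weighted energy identity.} Set
\begin{equation*}
\mathcal{E}(t,\xi)=\tfrac{\nu_2}{2}|\hat u|^2+\nu_1|\hat v|^2 .
\end{equation*}
Differentiate $\mathcal{E}$, taking real parts of $\hat u\cdot\bar{\hat v}$ for complex-valued vectors. The cross terms are $\nu_1\nu_2|\xi|\,\mathrm{Re}(\hat v\cdot\bar{\hat u})$ from the first summand and $-\nu_1\nu_2|\xi|\,\mathrm{Re}(\hat u\cdot\bar{\hat v})$ from the second, so they cancel. What remains is
\begin{equation*}
\frac{d}{dt}\mathcal{E}=-2\eta\nu_1|\xi|^2|\hat v|^2\ \ge\ -2\eta|\xi|^2\mathcal{E}.
\end{equation*}
Gr\"onwall's inequality, applied in reverse, gives $\mathcal{E}(t,\xi)\ge e^{-2\eta|\xi|^2t}\mathcal{E}(0,\xi)$. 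Taking square roots and using equivalence of norms yields
\begin{equation*}
|\hat u(t,\xi)|+|\hat v(t,\xi)|\ge C e^{-\eta|\xi|^2t}\big(|\hat u_0|+|\widehat{\mathbb{P}\Lambda^{-1}\operatorname{div}\tau_0}|\big).
\end{equation*}
This bound actually holds for every $\xi$, so the restriction $|\xi|\le c$ is harmless.

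\textbf{Step 3: back to $\tau$.} The symbol of $\mathbb{P}\Lambda^{-1}\operatorname{div}$ is homogeneous of degree $0$ and bounded, so $|\hat v(t,\xi)|\le C'|\hat\tau(t,\xi)|$. Combining this with Step 2 gives the claim.

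The only delicate point is Step 1. One must verify carefully the sign and constant in $\mathbb{P}\Lambda^{-1}\operatorname{div}D(u)=-\frac12\Lambda u$, because the weights $\nu_2/2$ and $\nu_1$ in $\mathcal{E}$ are chosen precisely so that the cross terms cancel. A sign error there would produce an indefinite energy identity instead of the dissipative one.
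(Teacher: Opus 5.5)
Your proof is correct, and it reaches the bound by a different mechanism than the paper.

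\textbf{The paper's route.} It solves the Fourier system explicitly: it passes to the damped wave equation \eqref{FF}, writes out the Green matrix, and factors out $e^{-\frac{\eta}{2}|\xi|^2t}$. It then expands $|\hat u^*|^2+\frac{2\nu_1}{\nu_2}|\hat Z^*|^2$ in trigonometric terms and uses $|\xi|\ll 1$ to absorb the $O(|\xi|)$ remainders. That is where the restriction $|\xi|\le c$ comes from.

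\textbf{Your route.} You use the same weighted form: your $\mathcal{E}$ equals $\frac{\nu_2}{2}e^{-\eta|\xi|^2t}\big(|\hat u^*|^2+\frac{2\nu_1}{\nu_2}|\hat Z^*|^2\big)$. You treat it as a Lyapunov functional with dissipation $-2\eta\nu_1|\xi|^2|\hat v|^2\ge -2\eta|\xi|^2\mathcal{E}$. This is essentially the Fourier-side form of the identity \eqref{uv-k-1}, which the paper itself derives later in Lemma \ref{lem3.1}.

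\textbf{What yours buys.} It is shorter and needs no explicit formulas. It holds at every frequency, with a constant depending only on $\nu_1,\nu_2$. It is also robust to frequency bookkeeping. At $\eta=0$ your identity is an exact conservation law, so the paper's leading-order expression should be exactly $|\hat u_0|^2+\frac{2\nu_1}{\nu_2}|\hat Z_0|^2$. The coefficients such as $\cos^2(bt)+\frac14\sin^2(bt)$ appearing there reflect that $\lambda_\pm=-\frac{\eta}{2}|\xi|^2\pm\frac{i}{2}b$, so the oscillation is really at frequency $b/2$. This slip does not affect their positivity conclusion.

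\textbf{What the paper's buys.} It gives sharpness and reuse. It yields the stronger factor $e^{-\frac{\eta}{2}|\xi|^2t}$ for $|\xi|\le c$, matching $\mathrm{Re}\,\lambda_\pm$. Your step $\nu_1|\hat v|^2\le\mathcal{E}$ ignores the equipartition between $\hat u$ and $\hat v$ and loses a factor $2$ in the exponent. The same formulas for $\mathcal{A},\mathcal{B},\mathcal{C}$ also serve the upper bounds in Lemma \ref{ls1}.

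Since the lemma only asserts $e^{-\eta|\xi|^2t}$, and the constant in the exponent plays no role in the polynomial rates of Proposition \ref{wp1}, your version fully suffices.
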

\begin{proof} 
We rewrite the linear system \eqref{OB} as
\begin{equation}\label{OB-222}
	\begin{cases}
		\partial_tu-\nu_1\Lambda\mathbb{P} v=0,\\
		\partial_t\mathbb{P}v-\eta\Delta \mathbb{P}v+\frac{\nu_2}{2}\Lambda u=0,\\
        u(0,x)=u_0(x),~~v(0,x)=\Lambda^{-1}\operatorname{div}\tau_0(x),
	\end{cases}
\end{equation}
where $v=\Lambda^{-1}\operatorname{div}\tau$.
\par

By differentiating in $t$ to the first equation of \eqref{OB-222} and substituting the resultant equation to the second equation, we find that $u$ satisfies exactly the damped wave equation,
\begin{equation*}
  \partial_{tt}u+\eta\Lambda^2\partial_tu-\frac{\nu_1\nu_2}{2}\Delta u=0.
\end{equation*}
Taking the Fourier transform to above equation with respect to the space variable yields
\begin{equation}\label{FF}
  \partial_{tt}\hat u+\eta|\xi|^2\partial_t \hat u+\frac{\nu_1\nu_2}{2}|\xi|^2\hat u=0.
\end{equation}
The values $\lambda_{\pm}$ are the two roots of the corresponding characteristic equation,
\begin{equation*}
  \lambda^2+\eta|\xi|^2\lambda+\frac{\nu_1\nu_2}{2}|\xi|^2=0,
\end{equation*}
with
\begin{equation}\label{lambda}
     \lambda_{\pm}=\frac{-\eta|\xi|^2\pm\sqrt{\eta^2|\xi|^4-2\nu_1\nu_2|\xi|^2}}{2},
\end{equation}
and
\begin{equation*}
  \lambda_++\lambda_-=-\eta|\xi|^2,\qquad \lambda_+\lambda_-=\frac{\nu_1\nu_2}{2}|\xi|^2.
\end{equation*}
The solution of \eqref{FF} take the form
\begin{equation*}
  \hat{u}(t,\xi) =\frac{\nu_1\widehat{\mathbb{P}\mathrm{div}\tau}_0-\lambda_-\hat{u}_0}{\lambda_+-\lambda_-}e^{\lambda_+t}+\frac{\lambda_+\hat{u}_0-\nu_1\widehat{\mathbb{P}\mathrm{div}\tau}_0}{\lambda_+-\lambda_-}e^{\lambda_-t}.
\end{equation*}
Further, the solution of \eqref{FF} is given by
\begin{equation}\label{FU}
  \hat{u}(t,\xi)=\mathcal{A}(t,\xi)\hat{u}_0+\nu_1\mathcal{B}(t,\xi)\widehat{\mathbb{P}\Lambda^{-1}\mathrm{div}\tau}_0,
\end{equation}
with
\begin{equation*}
  \mathcal{A}(t,\xi)=\frac{\lambda_+e^{\lambda_-t}-\lambda_-e^{\lambda_+t}}{\lambda_+-\lambda_-},~~~
  \mathcal{B}(t,\xi)=\frac{e^{\lambda_+t}-e^{\lambda_-t}}{\lambda_+-\lambda_-}|\xi|.
\end{equation*}

Next, taking the Fourier transform to the space variable of the second equation in system \eqref{OB-222}, we obtain
\begin{equation*}
  \hat{Z}_t+\eta|\xi|^2\hat{Z}=-\frac{\nu_2}{2}|\xi|\hat{u},
\end{equation*}
where $Z=\mathbb{P}\Lambda^{-1}\operatorname{div}\tau$.

By the Duhamel's principle, one has
\begin{equation}\label{ZZ}
  \hat{Z}(t,\xi)=e^{-\eta|\xi|^2t}\hat{Z}_0-\frac{\nu_2}{2}e^{-\eta|\xi|^2t}\int^t_0e^{\eta|\xi|^2t'}|\xi|\hat{u}dt'.
\end{equation}
Using the fact in \eqref{FU} and direct calculation yields
\begin{equation}\label{UH}
\aligned
  &\int^t_0 e^{\eta|\xi|^2t'}(|\xi|\hat{u})dt' \\
 &=\frac{|\xi|\hat{u}_0}{\lambda_+-\lambda_-} \int^t_0\lambda_+e^{(\lambda_-+\eta|\xi|^2)t'}-\lambda_-e^{(\lambda_++\eta|\xi|^2)t'} dt'\\
 & \quad+\frac{\nu_1|\xi|\widehat{\mathbb{P}\mathrm{div}\tau}_0}{\lambda_+-\lambda_-}\int^t_0
 e^{(\lambda_++\eta|\xi|^2)t'}-e^{(\lambda_-+\eta|\xi|^2)t'} dt'\\
 &=\frac{|\xi|\hat{u}_0}{\lambda_+-\lambda_-}\bigg(e^{-\lambda_-t}-e^{-\lambda_+t}\bigg)+\frac{\nu_1|\xi|\widehat{\mathbb{P}\mathrm{div}\tau}_0}{\lambda_+-\lambda_-}
 \bigg(\frac{1}{\lambda_+}e^{-\lambda_+t} -\frac{1}{\lambda_-}e^{-\lambda_-t} +\frac{1}{\lambda_-}-\frac{1}{\lambda_+}\bigg).
\endaligned
\end{equation}
Recall the expression of $\lambda_{\pm}$ in \eqref{lambda}, we can calculate
\begin{equation}\label{u0}
\aligned
e^{-\eta|\xi|^2t} \frac{|\xi|\hat{u}_0}{\lambda_+-\lambda_-}\bigg(e^{-\lambda_-t}-e^{-\lambda_+t}\bigg) 
=&\frac{e^{\lambda_+t}-e^{\lambda_-t}}{\lambda_+-\lambda_-}(|\xi|\hat{u}_0),
\endaligned
\end{equation}
and
\begin{equation}\label{t0}
\aligned
&e^{-\eta|\xi|^2t}\frac{\nu_1|\xi|\widehat{\mathbb{P}\mathrm{div}\tau}_0}{\lambda_+-\lambda_-}
 \bigg(\frac{1}{\lambda_+}e^{-\lambda_+t}-\frac{1}{\lambda_-}e^{-\lambda_-t}+\frac{1}{\lambda_-}-\frac{1}{\lambda_+}\bigg)\\
&=(\frac{\lambda_-e^{\lambda_-t}-\lambda_+e^{\lambda_+t}}{\lambda_+-\lambda_-}+e^{-\eta|\xi|^2t})\frac{(\nu_1|\xi|\widehat{\mathbb{P}\mathrm{div}\tau}_0)}{\frac{\nu_1\nu_2}{2}|\xi|^2}.
\endaligned
\end{equation}
Note the fact
\begin{equation}\label{ZZ0}
\nu_1\frac{(|\xi|\widehat{\mathbb{P}\mathrm{div}\tau}_0)}{\frac{\nu_1\nu_2}{2}|\xi|^2}=\nu_1\frac{(|\xi||\xi|\widehat{Z}_0)}{|\xi|^2}\frac{2}{\nu_1\nu_2}=\widehat{Z}_0\frac{2}{\nu_2}.
\end{equation}
Combining \eqref{UH}-\eqref{ZZ0} with \eqref{ZZ} to conclude
\begin{equation*}
\aligned
  \hat{Z}=-\frac{\nu_2}{2}\mathcal{B}(t,\xi)\widehat{u_0}
  -\mathcal{C}(t,\xi)\hat{Z}_0,
\endaligned
\end{equation*}
with
\begin{equation*}
  \mathcal{C}(t,\xi)=\frac{\lambda_-e^{\lambda_-t}-\lambda_+e^{\lambda_+t}}{\lambda_+-\lambda_-}.
\end{equation*}

We now derive estimates for the low-frequencies component.  Let $b=\sqrt{2\nu_1\nu_2|\xi|^2-\eta^2|\xi|^4}$ and
 direct calculation gives
\begin{align*}
\begin{cases}
\qquad \frac{e^{\lambda_+t}-e^{\lambda_-t}}{\lambda_+-\lambda_-}=e^{-\frac{\eta}{2}|\xi|^2t}\frac{\sin(bt)}{b},\\
\frac{\lambda_+e^{\lambda_-t}-\lambda_-e^{\lambda_+t}}{\lambda_+-\lambda_-}=e^{-\frac{\eta}{2}|\xi|^2t}
\Bigg(\cos(bt)+\frac{\eta}{2}\frac{\sin(bt)}{b}|\xi|^2\Bigg),\\
\frac{\lambda_+e^{\lambda_+t}-\lambda_-e^{\lambda_-t}}{\lambda_+-\lambda_-}=e^{-\frac{\eta}{2}|\xi|^2t}
\Bigg(\cos(bt)-\frac{\eta}{2}\frac{\sin(bt)}{b}|\xi|^2\Bigg).
\end{cases}
\end{align*}

Therefore, we can rewrite the expression of $\hat{u}$ and $\hat{Z}$ by
\begin{align*}
\hat{u}(t,\xi)=e^{-\frac{\eta}{2}|\xi|^2t}\hat{u}^*(t,\xi),
\end{align*}
and
\begin{align*}
\hat{Z}(t,\xi)=e^{-\frac{\eta}{2}|\xi|^2t}\hat{Z}^*(t,\xi),
\end{align*}
with
\begin{align}
\hat{u}^*(t,\xi)=\left(\cos(bt)+\frac{\eta}{2}\frac{\sin(bt)}{b}|\xi|^2\right)\hat{u}_0+\nu_1\frac{\sin(bt)}{b}|\xi|\widehat{Z}_0\label{uz system-1}
\end{align}
and
\begin{align}
\hat{Z}^*(t,\xi)&=-\frac{\nu_2}{2}\frac{\sin(bt)}{b}|\xi|
\hat{u}_0+\Bigg(\cos(bt)-\frac{\eta}{2}\frac{\sin(bt)}{b}|\xi|^2\Bigg)\widehat{Z}_0.\label{uz system-2}
\end{align}

Thus, direct calculations yield
\begin{align*}
|\hat{u}^*(t,\xi)|^2=&|\cos(bt)|^2|\hat{u}_0|^2+\nu^2_1|\xi|^2\frac{|{\sin(bt)}|^2}{b^2}|\widehat{Z}_0|^2+\nu_1\cos(bt)\frac{\sin(bt)}{b}|\xi|\Bigg(\hat{u}_0\overline{\widehat{Z}_0}+\overline{\hat{u}_0}\widehat{Z}_0\Bigg)\\
&+\left( \frac{\eta^2}{4}|\xi|^4\frac{|{\sin(bt)}|^2}{b^2} +   {\eta} \frac{\sin(bt)\cos(bt)}{b}|\xi|^2\right)|\hat{u}_0|^2\\
&+ \nu_1 \frac{\eta}{2}\frac{|\sin(bt)|^2}{b^2}|\xi|^3  \Bigg(\hat{u}_0\overline{\widehat{Z}_0}+\overline{\hat{u}_0}\widehat{Z}_0\Bigg),
\end{align*}
and
\begin{align*}
|\hat{Z}^*(t,\xi)|^2=&\frac{\nu^2_2}{4}\frac{|{\sin(bt)}|^2}{b^2}|\xi|^2
|\widehat{u_0}(0)|^2+(\cos(bt))^2|\widehat{Z}_0(0)|^2\\
&- (\cos(bt)) \frac{\nu_2}{2}\frac{\sin(bt)}{b}|\xi|\Bigg(\hat{u}_0(0)\overline{\widehat{Z}_0(0)}+\overline{\hat{u}_0(0)}\widehat{Z}_0(0) \Bigg)\\
&+\left( \frac{\eta^2}{4}\frac{|{\sin(bt)}|^2}{b^2}|\xi|^4 - {\eta} \frac{\sin(bt)\cos(bt)}{b}|\xi|^2  \right)
|\widehat{Z}_0(0)|^2\\
&+\nu_2 \frac{\eta}{4}\frac{|\sin(bt)|^2}{b^2}|\xi|^3  \Bigg(\hat{u}_0\overline{\widehat{Z}_0}+\overline{\hat{u}_0}\widehat{Z}_0\Bigg).
\end{align*}

Noticing that $b\approx \sqrt{2\nu_1\nu_2}|\xi|$ when $|\xi|<< 1$, a direct calculation implies
\begin{align*}
|\hat{u}^*(t,\xi)|^2+\frac{2\nu_1}{\nu_2}|\hat{Z}^*(t,\xi)|^2 &=(|\cos(bt)|^2+\frac{1}{4}|{\sin(bt)}|^2)|\widehat{u_0}|^2\\
&\quad+(\frac{\nu_1}{2\nu_2}\sin^2(bt)+\frac{2\nu_1}{\nu_2}\cos^2(bt))|\widehat{Z}_0|^2+  o(|\xi|),
\end{align*}
then we get
\begin{equation}\label{ineq}
    |\hat{u}^*(t,\xi)|^2+\frac{2\nu_1}{\nu_2}|\hat{Z}^*(t,\xi)|^2|\geq c(|\widehat{Z}_0  |^2+|\widehat{u_0} |^2), ~~~\text{if}~|\xi|<< 1,
\end{equation}
for some positive constant $c\leq \frac12 \min\{\frac{1}{4} , \frac{\nu_1}{2\nu_2}, \frac{2\nu_1}{\nu_2}\} $.

Recalling that $Z=\mathbb{P}\Lambda^{-1}\operatorname{div}\tau$ and using \eqref{ineq}, we have
 $$
 |\hat{u}(t,\xi)|+|\widehat{\tau}(t,\xi)|\geq Ce^{-\eta|\xi|^2t}(|\hat{u_0}|+|\widehat{\mathbb{P}\Lambda^{-1} div\tau_0}|)
 $$
  with $|\xi|<< 1.$ This completes the proof of Lemma \ref{ls10}.
\end{proof}

Now we show the upper bounds of the low and high frequencies.
  
 \begin{lemma}\label{ls1}
Let $(u,\tau)$  the solution to the Cauchy problem  \eqref{OB}, for any fixed $c>0$, there exists  positive constant $C$ such that  
\begin{align}
|\hat{u}(t,\xi)|+|\hat{\tau}(t,\xi)|\lesssim e^{-Ct}(|\hat{u_0}(t)|+|\hat{\tau_0}(t)|),~~~|\xi|\geqslant c~~\label{1}\\
|\hat{u}(t,\xi)|+|\hat{\tau}(t,\xi)|\lesssim e^{-C|\xi|^2t}(|\hat{u_0}(t)|+|\hat{\tau_0}(t)|),~~~|\xi|\leqslant c~~\label{2}.
\end{align}
\end{lemma}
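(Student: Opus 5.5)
The plan is to work directly on the Fourier side of \eqref{OB} with a frequency-localized Lyapunov functional, rather than with the explicit multipliers $\mathcal{A},\mathcal{B},\mathcal{C}$ from the proof of Lemma \ref{ls10}. The reason is that those formulas only describe $\hat u$ and $\hat Z=\widehat{\mathbb{P}\Lambda^{-1}\operatorname{div}\tau}$, not the whole tensor $\hat\tau$. Recovering $\hat\tau$ by Duhamel,
\begin{equation*}
\hat\tau(t)=e^{-\eta|\xi|^2t}\hat\tau_0+\nu_2\int_0^t e^{-\eta|\xi|^2(t-t')}\widehat{D(u)}(t')\,dt',
\end{equation*}
and inserting a bound $|\hat u|\lesssim e^{-C|\xi|^2t'}$ loses a factor $|\xi|^{-1}$ at low frequencies. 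So a direct estimate is needed that sees the cancellation between $\nu_1\operatorname{div}\tau$ and $\nu_2 D(u)$.

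\textbf{Step 1 (basic energy identity).} In Fourier variables $\widehat{\operatorname{div}\tau}=i\xi\cdot\hat\tau$ and $\widehat{D(u)}=\frac{i}{2}(\xi\otimes\hat u+\hat u\otimes\xi)$. Since $\xi\cdot\hat u=0$ and $\mathbb{P}$ is an orthogonal projection in $\xi$, I will check that
\begin{equation*}
\frac{d}{dt}\Big(\nu_2|\hat u|^2+\nu_1|\hat\tau|^2\Big)+2\eta\nu_1|\xi|^2|\hat\tau|^2=0 .
\end{equation*}
The coupling terms cancel because $\mathrm{Re}\big(\nu_1\nu_2(i\xi\cdot\hat\tau)\cdot\bar{\hat u}\big)+\mathrm{Re}\big(\nu_1\nu_2\widehat{D(u)}:\bar{\hat\tau}\big)=0$, using the symmetry of $\tau$.

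\textbf{Step 2 (hypocoercive correction).} This identity dissipates only $\tau$, so I add the cross term
\begin{equation*}
\mathcal{L}(t,\xi)=\nu_2|\hat u|^2+\nu_1|\hat\tau|^2-\delta\,\theta(\xi)\,\mathrm{Re}\big((i\xi\cdot\hat\tau)\cdot\bar{\hat u}\big),\qquad \theta(\xi)=\min\{1,|\xi|^{-2}\},
\end{equation*}
with $\delta>0$ small. Differentiating the cross term with the equations produces the good term $\frac{\nu_1}{?}\,|\xi\cdot\hat\tau|^2$-type contributions and, crucially, $\frac{\nu_2}{2}|\xi|^2|\hat u|^2$ (from $\xi\cdot\widehat{D(u)}=\frac{i}{2}|\xi|^2\hat u$, again using $\xi\cdot\hat u=0$). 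The remaining terms, namely $\eta|\xi|^2(i\xi\cdot\hat\tau)\cdot\bar{\hat u}$ and $|\xi\cdot\hat\tau|^2$, are absorbed by Young's inequality into $|\xi|^2|\hat\tau|^2$ and the new $|\hat u|^2$ term. The weight $\theta$ is chosen so that at high frequency $\theta|\xi|^2|\xi|^2|\hat\tau|^2\sim|\xi|^2|\hat\tau|^2$, which the $\eta$-dissipation can control, while $\mathcal L\approx|\hat u|^2+|\hat\tau|^2$ uniformly (since $\theta|\xi|\le1$). The outcome should be
\begin{equation*}
\frac{d}{dt}\mathcal{L}+c\,\min\{|\xi|^2,1\}\,\mathcal{L}\le0 .
\end{equation*}

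\textbf{Step 3 (conclusion).} Gronwall and the equivalence $\mathcal{L}\approx|\hat u|^2+|\hat\tau|^2$ give
\begin{equation*}
|\hat u(t,\xi)|+|\hat\tau(t,\xi)|\lesssim e^{-\frac c2\min\{|\xi|^2,1\}t}\big(|\hat u_0|+|\hat\tau_0|\big).
\end{equation*}
For any fixed $c>0$ this yields both estimates. For $|\xi|\ge c$ the exponent is at least $\frac c2\min\{c^2,1\}t$, which gives \eqref{1}. For $|\xi|\le c$, since $\min\{|\xi|^2,1\}\ge\min\{1,c^{-2}\}|\xi|^2$, we get \eqref{2}.

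As a consistency check at high frequency, $\lambda_+=-\frac{\nu_1\nu_2}{2}|\xi|^2/(-\lambda_-)\to-\frac{\nu_1\nu_2}{2\eta}$, which is bounded away from zero. This matches the uniform $e^{-Ct}$ rate and shows it cannot be improved to a $|\xi|$-dependent one.

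\textbf{Main obstacle.} I expect the hardest part to be the bookkeeping in Step 2: computing $\frac{d}{dt}\mathrm{Re}\big((i\xi\cdot\hat\tau)\cdot\bar{\hat u}\big)$ with the pressure removed. I would work with $\mathbb{P}(i\xi\cdot\hat\tau)$ so that the pressure drops out. Then $\delta$ must be chosen uniformly in $\xi$ so that every cross term is absorbed simultaneously in the regimes $|\xi|\le1$ and $|\xi|\ge1$.
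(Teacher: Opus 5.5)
Your route is valid and differs from the paper's, but the corrector in Step 2 has the wrong sign.

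With your convention $a:=i\xi\cdot\hat\tau=\widehat{\operatorname{div}\tau}$, you have $\partial_t a=-\eta|\xi|^2a-\frac{\nu_2}{2}|\xi|^2\hat u$ (using $\xi\cdot\hat u=0$) and $\partial_t\hat u=\nu_1\mathbb{P}(\xi)a$. Hence
\begin{equation*}
\frac{d}{dt}\mathrm{Re}\big(a\cdot\bar{\hat u}\big)=-\frac{\nu_2}{2}|\xi|^2|\hat u|^2+\nu_1|\mathbb{P}(\xi)a|^2-\eta|\xi|^2\,\mathrm{Re}\big(a\cdot\bar{\hat u}\big).
\end{equation*}
So the corrector must enter with a plus sign: $\mathcal{L}=\nu_2|\hat u|^2+\nu_1|\hat\tau|^2+\delta\theta(\xi)\mathrm{Re}(a\cdot\bar{\hat u})$. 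With your minus sign, $\frac{d}{dt}\mathcal{L}$ contains $+\frac{\delta\nu_2}{2}\theta|\xi|^2|\hat u|^2$, and the velocity is anti-damped. Also, $\nu_1|\mathbb{P}(\xi)a|^2$ is not a good term; it is the cost of the corrector. It is absorbed by the $\tau$-dissipation because $|\mathbb{P}(\xi)a|\le|\xi||\hat\tau|$ and $\theta\le1$.

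After this correction your bookkeeping closes. The $\eta$-cross term satisfies $\delta\theta\eta|\xi|^3|\hat\tau||\hat u|\le\frac{\delta\nu_2}{4}\theta|\xi|^2|\hat u|^2+\frac{\delta\eta^2}{\nu_2}\theta|\xi|^4|\hat\tau|^2$, and $\theta|\xi|^4\le|\xi|^2$. Since $\theta|\xi|\le1$, you get $\mathcal{L}\approx|\hat u|^2+|\hat\tau|^2$. A small $\delta=\delta(\eta,\nu_1,\nu_2)$ then yields $\frac{d}{dt}\mathcal{L}+c\min\{|\xi|^2,1\}\mathcal{L}\le0$, and Step 3 goes through.

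The paper instead uses explicit formulas built from the roots $\lambda_\pm$. It does obtain the full tensor $\hat\tau$, not only $\hat u$ and $\hat Z$. The Duhamel integral is computed exactly against $e^{\lambda_\pm t'}$, which expresses $\hat\tau$ through $\mathcal{B}$, $\mathcal{C}+e^{-|\xi|^2t}$ and bounded symbols like $\xi_j\xi_k/|\xi|^2$. So the $|\xi|^{-1}$ loss you anticipated does not arise there. The paper then bounds $\mathrm{Re}\,\lambda_\pm$ and $\mathcal{E}=(e^{\lambda_+t}-e^{\lambda_-t})/(\lambda_+-\lambda_-)$ at low frequency, and takes the high-frequency bound from lemmas in a cited reference.

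What each approach buys:
\begin{itemize}
\item The explicit representation is also what the paper needs for the lower bound in Lemma~\ref{ls10} and the characterization in Proposition~\ref{wp1}. An energy method cannot supply these, since it only gives upper bounds.
\item Your functional treats all frequencies and all components of $\tau$ in one estimate, and it is unaffected by the degenerate frequency where $\lambda_+=\lambda_-$.
\item It proves the high-frequency bound rather than citing it.
\item It is the pointwise-in-$\xi$ analogue of the cross term $\langle\Lambda u_k,v_k\rangle$ that the paper itself uses in Lemma~\ref{lem3.1}.
\end{itemize}
Your consistency check $\lambda_+\to-\frac{\nu_1\nu_2}{2\eta}$ at high frequency is also correct.
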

\begin{proof}
Without loss of generality,  letting $\eta=1, \nu_1=\nu_2=1$ and recalling \eqref{FU}, we have
 \begin{equation*}
  \hat{u}(t,\xi)=\mathcal{A}(t,\xi)\hat{u}_0+\mathcal{B}(t,\xi)\widehat{\mathbb{P}\Lambda^{-1}\mathrm{div}\tau}_0.
\end{equation*}

In the following part, we will give the precise expression of $\hat{\tau}$. By taking the Fourier transform of the second equation in system \eqref{OB}, we obtain
\begin{equation*}
  \hat{\tau}^{j,k}_t+|\xi|^2\hat{\tau}^{j,k}=\frac{1}{2}i(\xi_j\hat{u}^k+\xi_k\hat{u}^j)
\end{equation*}
which together with the Duhamel's principle leads to 
\begin{equation}\label{tau11}
  \hat{\tau}^{j,k}=e^{-|\xi|^2t}\hat{\tau}_0^{j,k}+\frac{1}{2}ie^{-|\xi|^2t}\int^t_0e^{|\xi|^2t'}(\xi_j\hat{u}^k+\xi_k\hat{u}^j)dt'.
\end{equation}
Noting that 
\begin{equation}\label{tau23}
\begin{aligned}
 & \int^t_0  e^{|\xi|^2t'}(\xi_j\hat{u}^k+\xi_k\hat{u}^j) dt'\\
 &=\frac{\xi_j\hat{u}^k_0+\xi_k\hat{u}^j_0}{\lambda_+-\lambda_-} \int^t_0\lambda_+e^{(\lambda_-+|\xi|^2)t'}-\lambda_-e^{(\lambda_++|\xi|^2)t'} dt'\\
&\quad+\frac{\xi_j\widehat{\mathbb{P}\mathrm{div}\tau}^k_0+\xi_k\widehat{\mathbb{P}\mathrm{div}\tau}^j_0}{\lambda_+-\lambda_-}\int^t_0
 e^{(\lambda_++|\xi|^2)t'}-e^{(\lambda_-+|\xi|^2)t'} dt'\\
 &=\frac{\xi_j\hat{u}^k_0+\xi_k\hat{u}^j_0}{\lambda_+-\lambda_-}\bigg(e^{-\lambda_-t}-e^{-\lambda_+t}\bigg)\\&\quad+\frac{\xi_j\widehat{\mathbb{P}\mathrm{div}\tau}^k_0+\xi_k\widehat{\mathbb{P}\mathrm{div}\tau}^j_0}{\lambda_+-\lambda_-}
 \bigg(\frac{1}{\lambda_+}e^{-\lambda_+t}-\frac{1}{\lambda_-}e^{-\lambda_-t}+\frac{1}{\lambda_-}-\frac{1}{\lambda_+}\bigg).
 \end{aligned}
\end{equation}
Recalling the definition \eqref{lambda} of $\lambda_{\pm}$, we obtain
\begin{equation}\label{tau24}
\begin{aligned}
ie^{-|\xi|^2t}\frac{\xi_j\hat{u}^k_0+\xi_k\hat{u}^j_0}{\lambda_+-\lambda_-}\bigg(e^{-\lambda_-t}-e^{-\lambda_+t}\bigg)=i\frac{e^{\lambda_+t}-e^{\lambda_-t}}{\lambda_+-\lambda_-}(\xi_j\hat{u}^k_0+\xi_k\hat{u}^j_0),
\end{aligned}
\end{equation}
and
\begin{equation}\label{tau24-1}
\begin{aligned}
&ie^{- |\xi|^2t}\frac{\xi_j\widehat{\mathbb{P}\mathrm{div}\tau}^k_0+\xi_k\widehat{\mathbb{P}\mathrm{div}\tau}^j_0}{\lambda_+-\lambda_-}
 \bigg(\frac{1}{\lambda_+}e^{-\lambda_+t}-\frac{1}{\lambda_-}e^{-\lambda_-t}+\frac{1}{\lambda_-}-\frac{1}{\lambda_+}\bigg)\\
&= i\frac{\lambda_-e^{-\lambda_+t}-\lambda_+e^{-\lambda_-t}-\lambda_-+\lambda_+}{\lambda_+\lambda_-}(\xi_j\widehat{\mathbb{P}\mathrm{div}\tau}^k_0
+\xi_k\widehat{\mathbb{P}\mathrm{div}\tau}^j_0)\frac{1}{\lambda_+-\lambda_-}e^{(\lambda_++\lambda_-)t}\\
&=2(\frac{\lambda_-e^{\lambda_-t}-\lambda_+e^{\lambda_+t}}{\lambda_+-\lambda_-}+e^{-|\xi|^2t})\frac{i(\xi_j\widehat{\mathbb{P}\mathrm{div}\tau}^k_0
+\xi_k\widehat{\mathbb{P}\mathrm{div}\tau}^j_0)}{|\xi|^2}.
\end{aligned}
\end{equation}

Substituting \eqref{tau23}, \eqref{tau24} and \eqref{tau24-1} into \eqref{tau11}, we  conclude   
\begin{equation*}
\begin{aligned}
  \hat{\tau}^{j,k}&=e^{-|\xi|^2t}\hat{\tau}_0^{j,k}+\frac{1}{2}\mathcal{B}(t,\xi)i(\xi_j\widehat{\Lambda^{-1}u_0}^k+\xi_k\widehat{\Lambda^{-1}u_0}^j)
  \\&\quad+\left(\mathcal{C}(t,\xi)
  +e^{-|\xi|^2t}\right)\frac{i\bigg(\xi_j\widehat{\mathbb{P}\mathrm{div}\tau}^k_0
+\xi_k\widehat{\mathbb{P}\mathrm{div}\tau}^j_0\bigg)}{|\xi|^2}.
\end{aligned}
\end{equation*}

   Recall the definition \eqref{lambda} of $\lambda_{\pm}$ again, one has
    \begin{equation*}
     \lambda_{\pm}=\frac{-|\xi|^2\pm\sqrt{|\xi|^4-2|\xi|^2}}{2}.
\end{equation*}
 Note the Green matrix of the system \eqref{uz system-1}-\eqref{uz system-2}  have the following equality 
\begin{equation*}
\hat{G}=
  \left(
  \begin{array}{cc}
    \frac{\lambda_+e^{\lambda_-t}-\lambda_-e^{\lambda_+t}}{\lambda_+-\lambda_-} & \frac{e^{\lambda_+t}-e^{\lambda_-t}}{\lambda_+-\lambda_-}|\xi| \\
    -\frac{1}{2}\frac{e^{\lambda_+t}-e^{\lambda_-t}}{\lambda_+-\lambda_-}|\xi| & -\frac{\lambda_-e^{\lambda_-t}-\lambda_+e^{\lambda_+t}}{\lambda_+-\lambda_-} \\
  \end{array}
  \right)
  =
  \left(
    \begin{array}{cc}
      \mathcal{A} & \mathcal{ B} \\
      -\frac{1}{2}\mathcal{B} & -\mathcal{C }\\
    \end{array}
\right).
\end{equation*}
Thanks to Lemma 4.1 and Lemma 4.2 in \cite{MR2675485}, we  obtain the estimate \eqref{1}.
The readers may also refer to \cite{Chen}.
\par
 If \(|\xi|^4-2|\xi|^2 \geq 0\), i.e., \(|\xi| \geq \sqrt{2}\), we obtain for $\sqrt{2}\leq|\xi| \leq c$
 \begin{equation}\label{lamb1}
     \lambda_{-} \leq \lambda_{+} = \frac{-|\xi|^2 + \sqrt{|\xi|^4 - 2|\xi|^2}}{2} = \frac{- |\xi|^2}{ |\xi|^2 + \sqrt{|\xi|^4 - 2|\xi|^2}} \leq -c_1|\xi|^2,  
 \end{equation}
 where
    \[
    c_1 = \frac{1}{ c^2 + \max\{0, \sqrt{c^4 - 2c^2}\}}.
    \]
    If \(|\xi|^4-2|\xi|^2 <0\), i.e., \(0 \leq |\xi| < \sqrt{2}\), one has
\begin{equation}\label{lamb2}
    \operatorname{Re}(\lambda_{\pm}) = -\frac{|\xi|^2}{2}.
\end{equation}
Therefore, combining \eqref{lamb1}  with \eqref{lamb2}, we can conclude that for \(c_2 = \min\{c_1, \frac{1}{2}\}\),
    \[
    \operatorname{Re}(\lambda_{\pm}) \leq -c_2|\xi|^2 \ \text{and} \quad |e^{\lambda_{\pm}t}| \leq e^{-c_2|\xi|^2t}.
    \]
    Then for \( 1\leq|\xi| \leq c\) and let \(\mathcal{E}(t, \xi) \triangleq \frac{e^{\lambda_{+}t} - e^{\lambda_{-}t}}{\lambda_{+} - \lambda_{-}}\), one can deduce that
    \[
    \begin{aligned}
    |\mathcal{E}(t, \xi)|  \leq Ce^{-\frac{c_2}{2}|\xi|^2t}.
    \end{aligned}
    \]
\par
For $0<|\xi|<1$, note that $|\lambda_{+}-\lambda_-|\geq |\xi|$, we have
$$
|\mathcal{E}(t, \xi)| \leq \frac{e^{-c|\xi|^2t}}{|\xi|}.
$$
 Finally,  by utilizing  the following equalities
    \[
    \mathcal{A}(t, \xi) = -\lambda_{+}\mathcal{E}(t, \xi) + e^{\lambda_{+}t}, \quad \mathcal{B}(t, \xi) = |\xi|\mathcal{E}(t, \xi), \quad \mathcal{C}(t, \xi) = -\lambda_{+}\mathcal{E}(t, \xi) - e^{\lambda_{-}t},\]
we then finish the proof of the inequality \eqref{2} in Lemma \ref{ls1}.
\end{proof}

{  By applying the similar argument to prove Proposition 3.1 in \cite{MR4792291}, we  can establish the following proposition.}
\begin{proposition}\label{wp1}
Let $\sigma, \sigma_1\in \mathbb{R}$ with $\sigma>\sigma_1$ and $(u_L, \tau_L)$ is a solution to system \eqref{OB} with initial data $(u_0,\tau_0) \in \dot{B}^\sigma_{2,1}$. Then, for any given time $t_L \geq 0$, 
$$
\|(u_L, \tau_L)(t)\|_{\dot{\mathbb{B}}^\sigma_{2,p}}\lesssim \langle t \rangle^{-\frac{1}{2}(\sigma-\sigma_1)},~~t>t_L,
$$
if and only if $(u_0,\tau_0)^\ell\in \dot{B}^{\sigma_1}_{2,\infty}$.

Moreover, if 
$$
\langle t \rangle^{-\frac{1}{2}(\sigma-\sigma_1)} \lesssim\|(u_L, \tau_L)(t)\|_{\dot{\mathbb{B}}^\sigma_{2,p}}\lesssim \langle t \rangle^{-\frac{1}{2}(\sigma-\sigma_1)},~~t>t_L,
$$
we have $(u_0,\tau_0)^\ell\in \dot{\textbf{B}}^{\sigma_1}_{2,\infty}$. 
On the other hand, if $(u_0,{\mathbb{P}\Lambda^{-1}\mathrm{div}\tau}_0)^\ell\in \dot{\textbf{B}}^{\sigma_1}_{2,\infty} $ and $\tau_0^\ell\in \dot{{B}}^{\sigma_1}_{2,\infty}$, we have
$$
\langle t \rangle^{-\frac{1}{2}(\sigma-\sigma_1)} \lesssim\|(u_L, \tau_L)(t)\|_{\dot{\mathbb{B}}^\sigma_{2,p}}\lesssim \langle t \rangle^{-\frac{1}{2}(\sigma-\sigma_1)},~~t>t_L.
$$
\end{proposition}

We now apply Wiener's argument to the linear incompressible Oldroyd-B system \eqref{OB}.
\par
\subsection{Wiegner's argument for the linear system}
\label{sec:wiegner}
\par

\begin{proposition}\label{prop1}
Let $p$ satisfy \eqref{ppp}, there exists a positive constant $C$ such that  
\begin{align}\label{wi1}
&\|(u_L,\tau_L)\|^\ell_{\tilde{L}^\infty_t(\dot{B}^{\frac{d}{2}-1}_{2,1})}+\|(u_L,\tau_L)\|^\ell_{\tilde{L}^1_t(\dot{B}^{\frac{d}{2}+1}_{2,1})}+\|e^{Ct}(\nabla u_L,\tau_L)\|^h_{\tilde{L}^\infty_t(\dot{B}^{\frac{d}{p}}_{p,1})}\nonumber\\
&\ \ \ +\|(u_L,\nabla \tau_L)\|^h_{\tilde{L}^1_t(\dot{B}^{\frac{d}{p}+1}_{p,1})}+\|e^{Ct}\tau_L\|^h_{\tilde{L}^\infty_t(1,t;\dot{B}^{\frac{d}{p}+2}_{p,1})}\lesssim E_{0}.
\end{align}
 Moreover, assume that $\sigma_0\leq \sigma_1<\frac{d}{2}-1$ and $\sigma>\sigma_1$, then 
\begin{align}\label{wi2}
\|(u_L,\tau_L)^\ell(t)\|_{\dot{B}^{\sigma}_{2,1}}\lesssim \langle t \rangle^{-\frac{1}{2}(\sigma-\sigma_1)},~~t>t_L,
\end{align}
if and only if $(u_0,\tau_0)^\ell\in \dot{B}^{\sigma_1}_{2,\infty}$.

Moreover, if 
\begin{align}\label{wi3}
\langle t \rangle^{-\frac{1}{2}(\sigma-\sigma_1)} \lesssim\|(u_L,\tau_L)^\ell(t)\|_{\dot{B}^{\sigma}_{2,1}}\lesssim \langle t \rangle^{-\frac{1}{2}(\sigma-\sigma_1)},~~t>t_L,
\end{align}
one has  $(u_0,\tau_0)^\ell\in \dot{\textbf{B}}^{\sigma_1}_{2,\infty}$. On the other hand, if
 $(u_0,{\mathbb{P}\Lambda^{-1}\mathrm{div}\tau}_0)^\ell\in \dot{\textbf{B}}^{\sigma_1}_{2,\infty}$ and $\tau_0^\ell\in \dot{{B}}^{\sigma_1}_{2,\infty}$, we have
 \begin{align}\label{wi4}
\langle t \rangle^{-\frac{1}{2}(\sigma-\sigma_1)} \lesssim\|(u_L,\tau_L)^\ell(t)\|_{\dot{B}^{\sigma}_{2,1}}\lesssim \langle t \rangle^{-\frac{1}{2}(\sigma-\sigma_1)},~~t>t_L.
\end{align}
\end{proposition}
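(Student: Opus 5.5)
The proof splits into two independent parts. The energy bound \eqref{wi1} is a statement about the linear system alone. The decay characterizations \eqref{wi2}--\eqref{wi4} are low-frequency statements that reduce to the pointwise bounds of Lemmas \ref{ls10} and \ref{ls1}.

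\textbf{Low frequencies in \eqref{wi1}.} I would use the decomposition $\tau\mapsto(v,w)=(\Lambda^{-1}\operatorname{div}\tau,\Lambda^{-1}\operatorname{curl}\tau)$ announced in the outline. Localizing with $\dot\Delta_k$ for $k\le k_0$, I would build a Lyapunov functional
\[
\mathcal{L}_k=\|\dot\Delta_k u\|_{L^2}^2+\tfrac{2\nu_1}{\nu_2}\|\dot\Delta_k \mathbb{P}v\|_{L^2}^2+\|\dot\Delta_k\tau\|_{L^2}^2-\epsilon\,2^{-k}\big(\dot\Delta_k u\,|\,\Lambda\dot\Delta_k\mathbb{P}v\big)-\ldots
\]
The cross term $-\epsilon(\dot\Delta_k u\,|\,\Lambda \dot\Delta_k \mathbb P v)$ is the one that produces dissipation $2^{2k}\|\dot\Delta_k u\|_{L^2}^2$ for $u$, since $\partial_t\mathbb P v=-\frac{\nu_2}{2}\Lambda u+\eta\Delta\mathbb P v$.

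For $\epsilon$ small and $k\le k_0$, this gives $\mathcal{L}_k\approx\|\dot\Delta_k(u,\tau)\|_{L^2}^2$ and
\[
\frac{d}{dt}\mathcal{L}_k+c2^{2k}\mathcal{L}_k\le0 .
\]
Integrating in time, multiplying by $2^{k(\frac d2-1)}$ and summing over $k\le k_0$ gives the low-frequency $\tilde L^\infty_t\cap\tilde L^1_t$ bounds. The same conclusion also follows directly from \eqref{2} in Lemma \ref{ls1} by Plancherel.

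\textbf{High frequencies in \eqref{wi1}.} Here I would work in $L^p$ with the effective tensor $W=\nabla(-\Delta)^{-1}(\frac12u+\operatorname{div}\tau)$. Up to the normalizations $\nu_1=\nu_2=\eta=1$, $W$ satisfies a heat equation $\partial_tW-\Delta W=$ lower-order terms in $u$ at high frequencies. The velocity then satisfies a damped equation $\partial_t u+\frac12 u=\mathbb{P}\,$(terms in $W$), which comes from $\operatorname{div}\tau=-\Delta W\cdot$ minus $\frac12u$. Parabolic $L^p$ maximal estimates for $W$ and $\tau$, together with the damping for $u$ at $|\xi|\ge 2^{k_0}$, give the exponential factor $e^{Ct}$ and the $\tilde L^1$ bounds. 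The remaining coupling terms are absorbed because they carry a factor $2^{-k}$, which is small when $k_0$ is large.

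\textbf{Decay characterization.} For $\sigma>\sigma_1$ and $k\le k_0$, \eqref{2} gives
\[
2^{k\sigma}\|\dot\Delta_k(u_L,\tau_L)(t)\|_{L^2}\lesssim 2^{k(\sigma-\sigma_1)}e^{-c2^{2k}t}\,\|(u_0,\tau_0)^\ell\|_{\dot B^{\sigma_1}_{2,\infty}}.
\]
Summing over $k$ gives $\langle t\rangle^{-\frac12(\sigma-\sigma_1)}$, which is the sufficiency direction of \eqref{wi2}. For necessity I would use time reversal at each block. Choose $t=2^{-2k}$, which is at least $t_L$ for $k$ very negative; the remaining finitely many $k$ are bounded by the $E_0$ bound. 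By the low-frequency form of Lemma \ref{ls10} and the upper bound,
\[
2^{k\sigma_1}\|\dot\Delta_k(u_0,\mathbb P\Lambda^{-1}\operatorname{div}\tau_0)\|_{L^2}\lesssim 2^{k(\sigma_1-\sigma)}\,2^{k\sigma}\|\dot\Delta_k(u_L,\tau_L)(2^{-2k})\|_{L^2}\lesssim 1 .
\]
To pass from $\mathbb P\Lambda^{-1}\operatorname{div}\tau_0$ to all of $\tau_0$, I would use the explicit formula for $\hat\tau$ from the proof of Lemma \ref{ls1}. The $e^{-|\xi|^2t}\hat\tau_0$ term is exactly invertible, and the other terms are controlled by $(u_0,Z_0)$. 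This yields $\tau_0^\ell\in\dot B^{\sigma_1}_{2,\infty}$.

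\textbf{Two-sided bounds \eqref{wi3}--\eqref{wi4}.} For \eqref{wi3}, I would argue by contradiction. If no sequence $j_k$ with bounded gaps carried mass $\ge M_0$, then the upper estimate with small $\dot B^{\sigma_1}_{2,\infty}$ tails on long frequency intervals would violate the assumed lower bound at suitable times $t\sim 2^{-2j}$. For \eqref{wi4}, I would pick $j_k$ nearest to $-\frac12\log_2 t$. Lemma \ref{ls10} then gives
\[
\|(u_L,\tau_L)^\ell(t)\|_{\dot B^\sigma_{2,1}}\ge 2^{j_k\sigma}e^{-C}\,2^{-j_k\sigma_1}M_0\gtrsim t^{-\frac12(\sigma-\sigma_1)},
\]
where the bounded gaps $M$ keep $2^{2j_k}t$ within constants.

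\textbf{Main obstacle.} I expect the hardest step to be the high-frequency $L^p$ estimate via $W$. The difficulty is identifying the commutator and lower-order terms, so that $u$ genuinely gets damping with the $\frac{d}{p}+1$ regularity without losing a derivative. My first attempt there would be to apply the Duhamel formula in $L^p$ to the $(u,W)$ system and absorb the coupling by choosing $k_0$ large.
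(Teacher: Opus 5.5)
Your proposal is correct in substance, and in several places it takes a different route from the paper.

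\textbf{Comparison with the paper.} For \eqref{wi1} the paper uses energy/Lyapunov arguments throughout:
\begin{itemize}
\item at low frequencies, Lemma \ref{lem3.1}, a functional in the triplet $(u,v,w)$ with cross term $2\langle\Lambda u_k,v_k\rangle$;
\item at high frequencies, Lemma \ref{lem3.2}, an $L^p$ energy estimate for $(\nabla u_k,\mathbb P\tau_k,W_k)$ with a small weight $\gamma$ and $k_0$ large;
\item then the cutoff $\chi(t)\tau_L$ and maximal regularity for the $\dot B^{\frac dp+2}_{p,1}$ bound.
\end{itemize}
For \eqref{wi2}--\eqref{wi4} the paper gives no argument and only refers to Proposition \ref{wp1} and \cite{MR4792291}.

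Your low-frequency bound via \eqref{2} and Plancherel is simpler and fully adequate for the linear problem. Closing the high-frequency $(u,W)$ system by blockwise Duhamel also works, with one clarification. The forcing $\mathbb P\operatorname{div}W$ in the $u$-equation is \emph{not} lower order: it costs $2^{2k}\|W_k\|_{L^p}$ in the $\dot B^{\frac dp+1}$ scaling. It must be paid for by the $L^1_t$ parabolic gain of $W$, which is the role of $\gamma$ in the paper. Only after that does the loop $u\to W\to u$ carry the small factor $2^{-2k_0}$.

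Your blockwise treatment of the characterization is correct and makes the proof self-contained where the paper is not. It consists of:
\begin{itemize}
\item testing at $t\approx 2^{-2k}$;
\item inverting via Lemma \ref{ls10} on $(u_0,\mathbb P\Lambda^{-1}\operatorname{div}\tau_0)$;
\item recovering $\tau_0$ from the explicit formula for $\hat\tau$;
\item for \eqref{wi3}, using summability of the kernel $2^{m(\sigma-\sigma_1)}e^{-c2^{2m}}$ to force, for every sufficiently negative $J$, a block within a fixed distance of $J$ with mass at least $M_0$.
\end{itemize}
What the paper's energy formulation buys in exchange is reusability: the same functionals, with source terms and commutators added, drive the nonlinear estimates of Lemma \ref{cl10}.

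\textbf{Two points need fixing.}

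First, the low-frequency Lyapunov functional you display fails as written, so rely on \eqref{2} instead, or repair it. There are two problems:
\begin{itemize}
\item With weight $\epsilon 2^{-k}$, the derivative of the cross term contains $\epsilon\nu_1 2^{-k}\|\Lambda\dot\Delta_k\mathbb Pv\|_{L^2}^2\approx \epsilon 2^{k}\|\dot\Delta_k\mathbb Pv\|_{L^2}^2$. The available dissipation is only $\approx 2^{2k}\|\dot\Delta_k\mathbb Pv\|_{L^2}^2$, so this cannot be absorbed as $k\to-\infty$. Also, since $\partial_t u=\nu_1\Lambda\mathbb Pv$, the sign of the cross term must be $+$.
\item Adding $\|\dot\Delta_k\tau\|_{L^2}^2$ with coefficient $1$ to $\|\dot\Delta_k u\|_{L^2}^2+\frac{2\nu_1}{\nu_2}\|\dot\Delta_k\mathbb Pv\|_{L^2}^2$ leaves an uncancelled coupling $-2\nu_2(\dot\Delta_k u\,|\,\Lambda\dot\Delta_k\mathbb Pv)$. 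This term is of the size of the oscillation, $O(2^k)$, and cannot be absorbed.
\end{itemize}
What does work for $k\le k_0$ is the base energy $\nu_2\|\dot\Delta_k u\|_{L^2}^2+\nu_1\|\dot\Delta_k\tau\|_{L^2}^2$, whose couplings cancel exactly, plus a $k$-independent $+\epsilon(\dot\Delta_k u\,|\,\Lambda\dot\Delta_k\mathbb Pv)$.

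Second, you do not address the last term of \eqref{wi1}, $\|e^{Ct}\tau_L\|^h_{\tilde L^\infty(1,t;\dot B^{\frac dp+2}_{p,1})}$. It needs a smoothing step, because $\tau_0$ is only in $\dot B^{\frac dp}_{p,1}$. You can either use the paper's cutoff $\chi(t)$, or argue directly:
\begin{itemize}
\item apply $\|e^{t\Delta}\dot\Delta_k f\|_{L^p}\lesssim e^{-c2^{2k}t}\|\dot\Delta_k f\|_{L^p}$ to the Duhamel formula for $\tau_L$ with source $D(u_L)$;
\item use $2^{2k}e^{-c2^{2k}t/2}\lesssim 1$ for $t\ge 1$ and $k\ge k_0-1$.
\end{itemize}
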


The inequalities \eqref{wi2}, \eqref{wi3} and \eqref{wi4} follow directly from Proposition \ref{wp1}, while the inequality \eqref{wi1} will be established in the next two subsections.
 
\subsection{Low frequencies estimates of the solution to the linear system}
\label{sec:lowfreq}
\qquad \par \vspace{3pt}

In the low-frequencies estimates, as it has been pointed out by  Wu and Zhao \cite{Wu-Zhao},  we can not adopt the energy estimate directly due to the lack of dissipation in the velocity equation.  In \cite{Wu-Zhao} the authors introduced a new term $\Lambda^{-1}\mathbb{P}\operatorname{div} \tau$ and presented the  estimate of the low-frequencies of  $(u,\Lambda^{-1}\mathbb{P}\operatorname{div} \tau)$.  
  
Different from \cite{Wu-Zhao}, we decompose the stress tensor $\tau$ into two parts, namely,
\begin{align}\label{tau-v-w}
\tau=-\Lambda^{-1}\nabla v-\Lambda^{-1}\operatorname{div} w,
\end{align}
where
\begin{align*}
w=\Lambda^{-1}\operatorname{curl}\tau,\qquad
v=\Lambda^{-1}\operatorname{div}\tau,
\end{align*}
with the components
$
[\operatorname{curl}\tau]^{i,j,m}=\partial_m\tau^{i,j}-\partial_j\tau^{i,m}
$
and
$
[\operatorname{div}\tau]^{i}=\partial_j\tau^{i,j}
$
\footnote{Here and throughout, the indices $i,j,m$ run from $1$ to $d$.}.
This decomposition follows from the identities
\begin{align*}
[\nabla\operatorname{div}\tau]^{i,j}&=\partial_j\partial_m\tau^{i,m},\ \ \ 
[\operatorname{div}(\operatorname{curl}\tau)]^{i,j}=\partial_m\partial_m\tau^{i,j}-\partial_m\partial_j\tau^{i,m},
\end{align*}
which together imply
\begin{align*}
\Delta\tau
=\nabla\operatorname{div}\tau+\operatorname{div}(\operatorname{curl}\tau)
=\Lambda\nabla v+\Lambda\operatorname{div} w,
\end{align*}
and consequently \eqref{tau-v-w} holds.
In fact, we establish a new key identity
that allows us to replace the pair $(u,\Lambda^{-1}\mathbb{P}\operatorname{div}\tau)$ in the low‑frequencies estimates by the triplet $(u,v,w)$. More precisely, we will prove the following lemma.

\begin{lemma}\label{lem3.1}
	Let $(u,\tau)$ be the solution of system \eqref{OB}.  
     Then for any $k_0\in\mathbb{Z}$, 
     there exists a positive constant $C$ such that
    \begin{equation} \label{L-k-estimate}
        \frac{1}{2}\frac{d}{dt}\mathcal{L}_k(t)+C2^{2k}\|(u_k,\tau_k)\|_{L^2} \leq 0,
    \end{equation}
 for all $k\leq k_0$ and  $t\geq 0$, with
\begin{align*}
	\mathcal{L}_k(t)&\triangleq \sqrt{ 2\|u_k\|^2_{L^2}+2\|v_k\|^2_{L^2}+\|w_k\|^2_{L^2}+\|\Lambda u_k\|^2_{L^2}+2\langle \Lambda u_k,v_k\rangle }, \quad k\in\mathbb{Z}
\end{align*}
and $(u_k,v_k,w_k)= (\dot{\Delta}_k u,\dot{\Delta}_k v,\dot{\Delta}_k w)$.
\end{lemma}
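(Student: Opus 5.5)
The plan is to derive, from \eqref{OB}, evolution equations for the triplet $(u,v,w)$ and to run a hypocoercive energy estimate on each dyadic block. First, the equations. Apply $\Lambda^{-1}\operatorname{div}$ to the $\tau$-equation and use $\operatorname{div}D(u)=\frac12\Delta u$ (since $\operatorname{div}u=0$). This gives
$$\partial_t v+\eta\Lambda^2 v=-\tfrac{\nu_2}{2}\Lambda u ,\qquad \partial_t u=\nu_1\mathbb{P}\operatorname{div}\tau=\nu_1\Lambda\mathbb{P}v,$$
which is consistent with \eqref{OB-222}. Applying $\Lambda^{-1}\operatorname{curl}$ gives $\partial_t w+\eta\Lambda^2w=\nu_2\Lambda^{-1}\operatorname{curl}D(u)$. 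I will not estimate $w$ by itself: its forcing is of order $\Lambda u$, and this cannot be absorbed by the $2^{2k}$-dissipation. Instead I use the decomposition \eqref{tau-v-w}. On the Fourier side, the identity $\Delta\tau=\nabla\operatorname{div}\tau+\operatorname{div}\operatorname{curl}\tau$ gives $|\xi|^2|\hat\tau|^2=|\xi\cdot\hat\tau|^2+\frac12\sum|\xi_m\hat\tau^{ij}-\xi_j\hat\tau^{im}|^2$ (up to the normalization of $\operatorname{curl}$). Hence $\|\tau_k\|_{L^2}^2\approx\|v_k\|_{L^2}^2+\|w_k\|_{L^2}^2$, with equality if the factors are chosen consistently, and the $w$-contribution is then handled through the energy of $\tau$.

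Step 1 (basic energy). Normalize $\nu_1=\nu_2=\eta=1$, consistent with the weights in $\mathcal{L}_k$; in general the weights become $\nu_2\|u_k\|^2+\nu_1\|\tau_k\|^2$. Localize with $\dot\Delta_k$ and take the $L^2$ inner product of the $u$-equation with $u_k$ and of the $\tau$-equation with $\tau_k$. Since $\langle\operatorname{div}\tau_k,u_k\rangle+\langle D(u_k),\tau_k\rangle=0$ by symmetry of $\tau$, this yields
$$\tfrac12\tfrac{d}{dt}\big(2\|u_k\|^2+2\|v_k\|^2+\|w_k\|^2\big)+\|\Lambda\tau_k\|^2\le0 .$$
This step controls the dissipation of $v_k$ and $w_k$, of size $2^{2k}(\|v_k\|^2+\|w_k\|^2)$, but gives none for $u_k$.

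Step 2 (cross term and $\|\Lambda u_k\|^2$). Compute
$$\tfrac{d}{dt}\langle\Lambda u_k,v_k\rangle=\langle\Lambda^2\mathbb{P}v_k,v_k\rangle-\tfrac12\|\Lambda u_k\|^2-\langle\Lambda u_k,\Lambda^2v_k\rangle .$$
The middle term supplies the missing dissipation $\sim2^{2k}\|u_k\|^2$. I also use $\tfrac{d}{dt}\|\Lambda u_k\|^2=2\langle\Lambda u_k,\Lambda^2\mathbb{P}v_k\rangle$, and this term is included with a suitable sign so that it partly cancels the last cross term. Adding everything gives $\frac12\frac{d}{dt}\mathcal{L}_k^2+\frac12\|\Lambda u_k\|^2+\|\Lambda\tau_k\|^2\le R_k$. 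The remainders satisfy $|R_k|\lesssim2^{2k}\|v_k\|^2+2^{3k}\|u_k\|\|v_k\|$. By Young's inequality, $2^{3k}\|u_k\|\|v_k\|\le\frac14 2^{2k}\|u_k\|^2+C2^{4k}\|v_k\|^2$.

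Step 3 (coercivity and closing). For $k\le k_0$, Bernstein gives $\|\Lambda u_k\|\lesssim 2^{k_0}\|u_k\|$. Hence $\|\Lambda u_k\|^2+2\langle\Lambda u_k,v_k\rangle+2\|v_k\|^2\ge\|v_k\|^2$, so $\mathcal{L}_k^2\approx\|u_k\|^2+\|\tau_k\|^2$ with constants depending on $k_0$. Once the remainders are absorbed, the dissipation is bounded below by $c2^{2k}\mathcal{L}_k^2$. This gives $\frac{d}{dt}\mathcal{L}_k^2+c2^{2k}\mathcal{L}_k^2\le0$. Dividing by $\mathcal{L}_k$ (regularizing with $\sqrt{\mathcal{L}_k^2+\epsilon}$ and letting $\epsilon\to0$) yields \eqref{L-k-estimate}, with $\|(u_k,\tau_k)\|$ replaced by the equivalent quantity $\mathcal{L}_k$.

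The main obstacle is Step 2/3: absorbing the remainder $C2^{4k}\|v_k\|^2$ into the dissipation $2^{2k}\|v_k\|^2$. Achieving this for every $k_0$ likely requires multiplying the cross term by a small $k_0$-dependent weight, or, equivalently, accepting constants $C=C(k_0)$. I would first try to make the $\|\Lambda u_k\|^2$ term cancel the bad term $\langle\Lambda u_k,\Lambda^2 v_k\rangle$ exactly; where this cancellation is only partial because $\mathbb{P}v$ and $v$ differ, I would use that $(I-\mathbb{P})v$ solves a pure heat equation.
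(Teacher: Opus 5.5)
Your architecture matches the paper's: the same functional, the basic energy, the cross term $\langle\Lambda u_k,v_k\rangle$ and the corrector $\|\Lambda u_k\|^2$. You obtain the basic energy more directly than the paper, from the $(u,\tau)$ energy plus the Plancherel identity $2\|\tau_k\|^2=2\|v_k\|^2+\|w_k\|^2$, instead of the separate $u$--$w$ pairing identity (all norms are $L^2$). That part is fine.

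\textbf{The gap (Step 2).} You treat the cancellation between $\tfrac12\tfrac{d}{dt}\|\Lambda u_k\|^2$ and $-\langle\Lambda u_k,\Lambda^2v_k\rangle$ as only partial. You therefore keep a remainder $K2^{3k}\|u_k\|\|v_k\|$ and identify absorbing $2^{4k}\|v_k\|^2$ as the main obstacle. With only that bound you cannot close for large $k_0$, and neither remedy you propose helps.
\begin{itemize}
\item The weights in $\mathcal{L}_k$ are fixed by the statement, so you may not shrink the cross term.
\item A constant $C=C(k_0)$ cannot rescue an indefinite form. Up to Bernstein constants, the dissipation minus your remainder is $2^{2k}(\tfrac12a^2+b^2)-K2^{3k}ab$ with $(a,b)=(\|u_k\|,\|v_k\|)$. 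Taking $a=b$, this is negative once $K2^k>\tfrac32$.
\end{itemize}

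\textbf{The missing idea.} The cancellation is exact. $\mathbb{P}$ is an $L^2$-orthogonal projection commuting with $\Lambda$ and $\dot\Delta_k$, and $\mathbb{P}u_k=u_k$. Hence
\[
\tfrac12\tfrac{d}{dt}\|\Lambda u_k\|^2=\langle\Lambda u_k,\Lambda^2\mathbb{P}v_k\rangle=\langle\mathbb{P}\Lambda u_k,\Lambda^2 v_k\rangle=\langle\Lambda u_k,\Lambda^2 v_k\rangle ,
\]
and therefore $\tfrac{d}{dt}\bigl(\langle\Lambda u_k,v_k\rangle+\tfrac12\|\Lambda u_k\|^2\bigr)=\|\Lambda\mathbb{P}v_k\|^2-\tfrac12\|\Lambda u_k\|^2$. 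There is no $2^{3k}$ term at all. This is exactly what the paper uses when it adds the $\|\Lambda u_k\|^2$ identity to the cross-term identity, and it makes your heat-equation fallback for $(I-\mathbb{P})v$ unnecessary.

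\textbf{A secondary bookkeeping issue.} The only leftover is $\langle\Lambda^2\mathbb{P}v_k,v_k\rangle=\|\Lambda\mathbb{P}v_k\|^2=\|\Lambda v_k\|^2-\|\operatorname{div}v_k\|^2$. Because the weights are fixed, you cannot afford to discard half of the dissipation in Step 1. With your weights the basic energy is an identity with dissipation $2\|\Lambda\tau_k\|^2=2\|\Lambda v_k\|^2+\|\Lambda w_k\|^2$, not $\|\Lambda\tau_k\|^2$. If you keep only $\|\Lambda\tau_k\|^2$, subtracting $\|\Lambda\mathbb{P}v_k\|^2$ leaves $\|\operatorname{div}v_k\|^2+\tfrac12\|\Lambda w_k\|^2$, which does not control $\mathbb{P}v_k$. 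Keeping the factor $2$ gives exactly the paper's identity
\[
\tfrac12\tfrac{d}{dt}\mathcal{L}_k^2+\tfrac12\|\Lambda u_k\|^2+\|\Lambda v_k\|^2+\|\Lambda w_k\|^2+\|\operatorname{div}v_k\|^2=0 .
\]

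With these two corrections, your Step 3 goes through for every $k_0$. The lower bound $\mathcal{L}_k^2=\|\Lambda u_k+v_k\|^2+2\|u_k\|^2+\|v_k\|^2+\|w_k\|^2$ needs no Bernstein; Bernstein enters only in the upper bound, and you then divide by $\mathcal{L}_k$. The parameter $k_0$ appears only through the equivalence $\mathcal{L}_k\approx\|(u_k,\tau_k)\|_{L^2}$.
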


\begin{proof}
Without loss of generality, we let $\eta=1, \nu_1=\nu_2=1$. Applying the  operators $\Lambda^{-1}\operatorname{div}$ and  $\Lambda^{-1}\operatorname{curl}$ to the second equation of \eqref{OB},  and noticing that $\operatorname{div} \tau=\Lambda v$,
one gets 
\begin{equation}\label{OB-2}
	\begin{cases}
		\partial_tu-\mathbb{P}\Lambda v=0,\\
		\partial_tv-\Delta v+\frac{1}{2}\Lambda u=0,\\
		\partial_tw-\Delta w-\frac{1}{2}\Lambda^{-1}\operatorname{curl}(\nabla u)^\top=0,
	\end{cases}
\end{equation}
where we have used the facts
\begin{align*}
	\Big[\Lambda^{-1}\operatorname{div}\big((\nabla u)^\top\big)\Big]^i=\Lambda^{-1}\partial_j\partial_iu^j =0
\end{align*}
 and
 \begin{align*}
\Big[\Lambda^{-1}\mathrm{curl}\big(\nabla u\big)\Big]^{i,j,m}=\Lambda^{-1}(\partial_m\partial_ju^i-\partial_j\partial_mu^i)=0.
\end{align*}

Then system \eqref{OB-2} can be localized as
\begin{equation}\label{OB-3}
	\begin{cases}
		\partial_t u_k-\mathbb{P}\Lambda v_k=0,\\
		\partial_t v_k-\Delta v_k+\frac{1}{2}\Lambda u_k=0,\\
		\partial_t w_k-\Delta w_k-\frac{1}{2}\Lambda^{-1}\operatorname{curl}(\nabla u)_k^\top=0,
	\end{cases}
\end{equation}

Taking the $L^2$-inner product of the first equation of \eqref{OB-3} with $u_k$ and of the third  with   $w_k$,
it holds that
\begin{equation}\label{u-k-1}
	\frac{1}{2}\frac{d}{dt}\|u_k\|^2_{L^2}-\langle \mathbb{P}\operatorname{div}\tau_k, u_k \rangle=0
\end{equation}
and
\begin{equation}\label{w-k-1}
	\frac{1}{2}\frac{d}{dt}\|w_k\|^2_{L^2}+\|\Lambda w_k\|^2_{L^2}+\langle\operatorname{div}\tau_k, u_k \rangle =0,
\end{equation}
where we have employed 
\begin{align}\label{key-identity}
	\langle -\frac{1}{2}\Lambda^{-1}\operatorname{curl}(\nabla u)_k^\top,w_k\rangle &=\langle -\frac{1}{2}\Lambda^{-1}\operatorname{curl}(\nabla u)_k^\top,\Lambda^{-1}\operatorname{curl} \tau_k\rangle
	\nonumber \\
	&=-\frac{1}{2}\int_{\mathbb{R}^3} (\Lambda^{-2}\partial_m\partial_iu_k^j-\Lambda^{-2}\partial_j\partial_iu_k^m)(\partial_m\tau_k^{i,j}-\partial_j\tau_k^{i,m})\mathrm{d}x
	\nonumber\\
	&=\frac{1}{2}\int_{\mathbb{R}^3}\big( \Lambda^{-2} \partial_m\partial_m\partial_iu_k^j \tau_k^{i,j}+\Lambda^{-2} \partial_j\partial_j\partial_iu_k^m \tau_k^{i,m} \big) \mathrm{d}x
	\nonumber \\
	&=-\int_{\mathbb{R}^3}\tau_k:\nabla u_k\mathrm{d}x\nonumber\\&=\langle \operatorname{div}\tau_k, u_k\rangle,\nonumber
\end{align}
which follows from  integration by parts, the divergence‑free condition $\eqref{OB}_3$ and the symmetry of $\tau$.

Furthermore, applying the definition of the operator $\mathbb{P}$ together with Plancherel’s theorem leads to
\begin{equation}
	\langle \mathbb{P}\operatorname{div} \tau_k, u_k \rangle=\langle \operatorname{div}\tau_k,u_k\rangle.\label{u-k-11}
\end{equation}
It follows from \eqref{u-k-1}, \eqref{w-k-1} and \eqref{u-k-11} that
\begin{equation}\label{uw-k-1}
	\frac{1}{2}\frac{d}{dt}(\|u_k\|^2_{L^2}+\|w_k\|^2_{L^2})+\|\Lambda w_k\|^2_{L^2}=0.
\end{equation}

By taking the $L^2$-inner product of  $\eqref{OB-3}_2$ with $v_k$,  one obtains
\begin{equation}\label{v-k-1}
	\frac{1}{2}\frac{d}{dt}\|v_k\|^2_{L^2}+ \|\Lambda v_k\|^2_{L^2}+\langle \frac{1}{2}\Lambda u_k,v_k\rangle =0,
\end{equation}
which together with \eqref{u-k-1} yields that
\begin{equation}\label{uv-k-1}
	\frac{1}{2}\frac{d}{dt}(2\|v_k\|^2_{L^2}+\|u_k\|^2_{L^2})+2 \|\Lambda v_k\|^2_{L^2}
	=
	0.
\end{equation}

By taking the $L^2$-inner product of  $\eqref{OB-3}_1$ with $\Lambda v_k$,  it holds that
\begin{equation}\label{uv-k-21}
	\langle \frac{d}{dt}u_k,\Lambda v_k\rangle +\|\operatorname{div} v_k\|^2_{L^2}=\|\Lambda v_k\|^2_{L^2},
\end{equation}
where  we have  used the following identities
\begin{equation*}
	\langle \mathbb{P}\Lambda v_k,\Lambda v_k\rangle =\|\Lambda v_k\|^2_{L^2}+\langle \nabla \operatorname{div}v_k,v_k\rangle =\|\Lambda v_k\|^2_{L^2}-\|\operatorname{div} v_k\|^2_{L^2}.
\end{equation*}
Simililarly, the $L^2$-inner product of $\eqref{OB-3}_2$ with $\Lambda u_k$ yields that
\begin{equation}\label{uv-k-22}
	\langle\frac{d}{dt}\Lambda v_k,u_k\rangle-\langle \Delta v_k,\Lambda u_k\rangle +\frac{1}{2}\|\Lambda u_k\|^2_{L^2}=0.
\end{equation}
Combining \eqref{uv-k-21} and \eqref{uv-k-22} together, there holds 
\begin{align}\label{uv-k-23}
	\frac{d}{dt}\langle u_k,\Lambda v_k\rangle +\frac{1}{2}\|\Lambda u_k\|^2_{L^2} +\|\operatorname{div} v_k\|_{L^2}^2
	=&
	\|\Lambda v_k\|^2_{L^2}
+\langle  \Delta v_k,\Lambda u_k\rangle.
\end{align}

Finally, applying the operator  $\Lambda $ on $\eqref{OB-3}_1$ with $\Lambda u_k$,  we obtain
\begin{equation}\label{u-k-2}
	\frac{1}{2}\frac{d}{dt}\|\Lambda u_k\|^2_{L^2}=-\langle \Delta v_k,\Lambda u_k\rangle.
\end{equation}
By combining \eqref{uw-k-1}, \eqref{uv-k-1}, \eqref{uv-k-23} and \eqref{u-k-2} together, one gets that
\begin{align*}
	&\frac{1}{2}\frac{d}{dt} \mathcal{L}_k^2(t)
	+\frac{1}{2}\|\Lambda u_k\|^2_{L^2}+
	\|\Lambda v_k\|^2_{L^2}+ \|\Lambda w_k\|^2_{L^2}
	+ \|\operatorname{div} v_k\|_{L^2}^2
	=0,
\end{align*}
which yields that for all $k\leq k_0$,
\begin{equation}\label{L-k-1}
	\frac{1}{2}\frac{d}{dt}\mathcal{L}^2_k(t)+C2^{2k}\|(u_k,\tau_k)\|^2_{L^2} \leq 0,
\end{equation}
where we have used 
\begin{align*}
	\frac{1}{2}\|\Lambda u_k\|^2_{L^2}+
	\|\Lambda v_k\|^2_{L^2}+\|\Lambda w_k\|^2_{L^2}
	+\|\operatorname{div} v_k\|_{L^2}^2\geq C 2^{2k}\|(u_k,\tau_k)\|^2_{L^2},
\end{align*}
 for some constant $C>0$.
 
Thanks to  the following inequalities
\begin{equation*}
	|2(\Lambda u_k,v_k)|\leq 2\|\Lambda u_k\|_{L^2}\|v_k\|_{L^2}\leq \frac{2}{3}\|\Lambda u_k\|^2_{L^2}+\frac{3}{2}\|v_k\|^2_{L^2},
\end{equation*}
we have
\begin{equation*}\label{identity of L-k}
	\mathcal{L}_k(t)\approx\|(u_k,v_k,w_k,\Lambda u_k)\|_{L^2}\approx\|(u_k,v_k,w_k)\|_{L^2}\approx\|(u_k,\tau_k)\|_{L^2},   \quad k\leq k_0.
\end{equation*}

Combining the above estimate with \eqref{L-k-1}, one gets that \eqref{L-k-estimate}  holds true for all  $k\leq k_0$ and $t\geq0$.
Thus, the proof of  Lemma \ref{lem3.1} is completed.
\end{proof}
\par

\subsection{High frequencies estimates of the solution to the linear system }\label{sec:highfreq}
\qquad \par \vspace{3pt}
 Applying the operator $\mathbb{P}$ on the first equation of  \eqref{OB} and taking  $\eta=1, \nu_1=1, \nu_2=1$ without losing generality,  one obtains
\begin{align}\label{OB-1111}
	\begin{cases}
		\partial_tu-\mathbb{P}\mathrm{div}\ \tau=0,\\
		\partial_t\tau-\Delta\tau-D(u)=0.
	\end{cases}
\end{align}
Invoked by Haspot's method  \cite{Haspot} we introduce  the following {\it effective tensor}
\begin{align}\label{effective-tesor}
	W=\nabla(-\Delta)^{-1}(\frac{1}{2}u+\operatorname{div}\tau).
\end{align}

It follows directly that 
\begin{equation}\label{L1}
	\nabla\operatorname{div}W=\Delta W=-\nabla \big(\frac{1}{2} u+\operatorname{div} \tau\big)~~\text{ and }~~\tau=\mathbb{P}\tau-W+\frac{1}{2}\nabla(-\Delta)^{-1}u.
\end{equation}
 From  \eqref{OB-1}  we obtain that $(u,W, \mathbb{P}\tau)$ satisfies\footnote{One can easy to get that  $\operatorname{div} D(u)=\frac{1}{2}\Delta u $ and $\mathbb{P}D(u)=\frac{1}{2}(\nabla u)^\top$.}
\begin{equation}\label{u-tau-omega}
	\begin{cases}
		\partial_tu+\frac{1}{2}u=-\mathbb{P}\mathrm{div}W,\\
		\partial_t\mathbb{P}\tau-\Delta\mathbb{P}\tau=\frac{1}{2}(\nabla u)^\top,\\
		\partial_tW-\Delta W=-\frac{1}{2}\nabla(-\Delta)^{-1}\mathbb{P}\mathrm{div}W-\frac{1}{4}\nabla(-\Delta)^{-1}u.
	\end{cases}
\end{equation}

Then we can obtain the following lemma. 
\begin{lemma}\label{lem3.2}
	Let $(u,\tau)$ be the solution of system \eqref{OB}, 
     then for large $k_0\in\mathbb{Z}$, there exists two  positive constants $\gamma$ and  $C$ such that
	\begin{equation}\label{eq78'}
	\frac{d}{dt}\mathcal{J}_k(t)+C\|(2^k u_k,2^{2k}\tau_k)\|_{L^p}\leq \mathcal{J}_k(0)
\end{equation}
	for all $t\geq 0$ and $k\geq k_0-1$, where 
	\begin{align*}
		\mathcal{J}_k(t)&\triangleq 2\gamma\|e^{Ct}\nabla u_k\|_{L^p}+\gamma\|e^{Ct}\mathbb{P}\tau_k\|_{L^p}+\|e^{Ct}W_k\|_{L^p}.
	\end{align*}
	
\end{lemma}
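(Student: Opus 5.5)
We work with the localized version of system \eqref{u-tau-omega}. Applying $\dot\Delta_k$ gives
\begin{equation*}
\partial_t u_k+\tfrac12 u_k=-\mathbb{P}\mathrm{div}W_k,\qquad
\partial_t\mathbb{P}\tau_k-\Delta\mathbb{P}\tau_k=\tfrac12(\nabla u_k)^\top,
\end{equation*}
\begin{equation*}
\partial_tW_k-\Delta W_k=-\tfrac12\nabla(-\Delta)^{-1}\mathbb{P}\mathrm{div}W_k-\tfrac14\nabla(-\Delta)^{-1}u_k.
\end{equation*}
In each equation the left side is dissipative: the first is damped with rate $\tfrac12$, and the other two are parabolic. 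For the parabolic ones we use the standard $L^p$ Bernstein-type lower bound for the localized heat operator. For $k\ge k_0-1$ and $2\le p<\infty$ there is $c_p>0$ with
\begin{equation*}
\frac{d}{dt}\|f_k\|_{L^p}+c_p2^{2k}\|f_k\|_{L^p}\le\|g_k\|_{L^p}
\end{equation*}
whenever $\partial_tf_k-\Delta f_k=g_k$. For the first equation, after applying $\nabla$, we get
\begin{equation*}
\frac{d}{dt}\|\nabla u_k\|_{L^p}+\tfrac12\|\nabla u_k\|_{L^p}\lesssim\|\nabla\mathbb{P}\mathrm{div}W_k\|_{L^p}\lesssim 2^{2k}\|W_k\|_{L^p}.
\end{equation*}
This uses the boundedness of $\mathbb{P}$ on $L^p$ for spectrally localized functions.

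Next we bound the coupling terms. In the $\mathbb{P}\tau_k$ equation the source satisfies $\|\nabla u_k\|_{L^p}$. In the $W_k$ equation the sources satisfy $\lesssim\|W_k\|_{L^p}+2^{-k}\|u_k\|_{L^p}\lesssim\|W_k\|_{L^p}+2^{-2k}\|\nabla u_k\|_{L^p}$. We then form the combination $2\gamma\|\nabla u_k\|+\gamma\|\mathbb{P}\tau_k\|+\|W_k\|$ and choose parameters as follows.
\begin{itemize}
\item[(i)] The term $2\gamma C2^{2k}\|W_k\|$ from the $u$ inequality is absorbed by $c_p2^{2k}\|W_k\|$ once $\gamma$ is small.
\item[(ii)] The term $\gamma\|\nabla u_k\|$ from the $\mathbb{P}\tau$ inequality is absorbed by $2\gamma\cdot\tfrac12\|\nabla u_k\|$ up to a factor, which is why the weight is $2\gamma$ versus $\gamma$; if needed we adjust constants slightly.
\item[(iii)] The terms $C\|W_k\|$ and $C2^{-2k}\|\nabla u_k\|$ from the $W$ inequality are absorbed by $c_p2^{2k}\|W_k\|$ and $\gamma\|\nabla u_k\|$, provided $k_0$ is large, i.e.\ $2^{-2k_0}\ll\gamma$.
\end{itemize}
This yields
\begin{equation*}
\frac{d}{dt}\mathcal{Y}_k+2C\big(\|\nabla u_k\|_{L^p}+2^{2k}\|\mathbb{P}\tau_k\|_{L^p}+2^{2k}\|W_k\|_{L^p}\big)\le0,
\end{equation*}
where $\mathcal{Y}_k=2\gamma\|\nabla u_k\|_{L^p}+\gamma\|\mathbb{P}\tau_k\|_{L^p}+\|W_k\|_{L^p}$.

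To recover $\tau_k$, we use \eqref{L1}, which gives $\tau_k=\mathbb{P}\tau_k-W_k+\tfrac12\nabla(-\Delta)^{-1}u_k$. Hence
\begin{equation*}
2^{2k}\|\tau_k\|_{L^p}\lesssim2^{2k}\|(\mathbb{P}\tau_k,W_k)\|_{L^p}+2^{k}\|u_k\|_{L^p}.
\end{equation*}
Also $2^k\|u_k\|_{L^p}\approx\|\nabla u_k\|_{L^p}$ at these frequencies. The dissipation therefore controls $\|(2^ku_k,2^{2k}\tau_k)\|_{L^p}$, which is exactly the quantity in \eqref{eq78'}. Since the dissipation also dominates $C\mathcal{Y}_k$, multiplying by $e^{Ct}$ (with a smaller $C$) gives $\frac{d}{dt}\mathcal{J}_k+C e^{Ct}\|(2^ku_k,2^{2k}\tau_k)\|_{L^p}\le0$. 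This implies the stated inequality, whose right side $\mathcal{J}_k(0)\ge0$ is harmless. Strictly, the $L^p$ norm is not differentiable where it vanishes, so the derivative should be read in the usual integrated or $\sqrt{\|\cdot\|^2+\epsilon}$ sense.

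The main obstacle is the $L^p$ parabolic lower bound $c_p2^{2k}$ for $p\ne2$. It follows from the Danchin-type lemma obtained by multiplying by $|f_k|^{p-2}f_k$, and it is available for $2\le p\le4$. The second delicate point is tuning $\gamma$ and $k_0$ consistently. Because of the $2^{2k}$ coupling in the $u$-equation, $\gamma$ must be chosen before $k_0$.
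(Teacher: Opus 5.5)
Your proposal is correct and follows essentially the same route as the paper. Both localize the effective-tensor system \eqref{u-tau-omega}, run $L^p$ energy estimates with the Bernstein-type lower bound $c_p2^{2k}$ for the two parabolic equations, combine the three inequalities with $\gamma$ chosen small before $k_0$ is taken large, and recover $\tau_k$ through \eqref{L1}. The only difference is cosmetic: you close the unweighted inequality first and then insert $e^{Ct}$, using that the dissipation dominates $\mathcal{Y}_k$, whereas the paper carries $e^{Ct}$ from the start. Incidentally, your bookkeeping matches the stated weight $2\gamma$, while the paper's proof actually works with $4\gamma$.
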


\begin{proof}
	Applying $\dot{\Delta}_k$ to $\eqref{u-tau-omega}_2$, multiplying the resultant equality by  $e^{Cpt}|(\mathbb{P}\tau_k)^{i,j}|^{p-2}(\mathbb{P}\tau_k)^{i,j}$ and integrating over $\mathbb{R}^d$, we arrive at 
	\begin{align}\label{eq-2.10}
		&\frac{1}{p}\frac{d}{dt}\|e^{Ct}(\mathbb{P}\tau_k)^{i,j}\|^p_{L^p}+e^{Cpt}c_p2^{2k}\|\mathbb{P}\tau_k\|_{L^p}^p
        \nonumber\\
		&\leq(\frac{1}{2}\|e^{Ct}(\nabla u_k)^{i,j}\|_{L^p}\|e^{Ct}(\mathbb{P}\tau_k)^{i,j}\|^{p-1}_{L^p}+\frac{1}{p}{Cp}e^{Cpt}\|(\mathbb{P}\tau_k)^{i,j}\|^p_{L^p},
	\end{align}
where we have used 
  \begin{align*}
  -\int_{\mathbb{R}^d}e^{Cpt}\Delta(\mathbb{P}\tau_k)^{i,j}|(\mathbb{P}\tau_k)^{i,j}|^{p-2}(\mathbb{P}\tau_k)^{i,j}\mathrm{d}x
  \geq e^{Cpt}c_p2^{2k}\|\mathbb{P}\tau_k\|_{L^p}^p
  \end{align*}
   for some positive constant $c_p$ dependent of  $p$.

By choosing small $C$ in \eqref{eq-2.10} we immediately deduce that
	\begin{equation}\label{tau-k-h}
		\frac{d}{dt}\|e^{Ct}\mathbb{P}\tau_k\|_{L^p}+c_p2^{2k}\|e^{Ct}\mathbb{P}\tau_k\|_{L^p}\leq\frac{1}{2}\|e^{Ct}\nabla u_k\|_{L^p}.
	\end{equation}
	Similarly, we can estimate  $W$ as follows
	\begin{equation}\label{omega-k-h}
		\frac{d}{dt}\|e^{Ct}W_k\|_{L^p}+c_p2^{2k}e^{Ct}\|W_k\|_{L^p}\leq\|e^{Ct}W_k\|_{L^p}+\|e^{Ct}\nabla(-\Delta)^{-1}u_k\|_{L^p}.
	\end{equation}
    
	On the other hand, applying the operator $\partial_i\dot{\Delta}_k$ ($i=1,2,\cdots,d$) to  $\eqref{u-tau-omega}_1$ yields
	\begin{equation}\label{nabla-u-k}
		\partial_t\partial_iu_k+\frac{1}{2}\partial_iu_k=-\partial_i\mathbb{P}\mathrm{div}W_k.
	\end{equation}
Multiplying \eqref{nabla-u-k} by $e^{Cpt}|\partial_iu_k|^{p-2}\partial_i u_k$ and  integrating over $\mathbb{R}^d$, one has

	\begin{align}\label{nabla-u-k-h}
		&\frac{1}{p}\frac{d}{dt}\|e^{Ct}\partial_iu_k\|^p_{L^p}+\frac{1}{2}e^{Cpt}\|\partial_i u_k\|^p_{L^p}\nonumber\\
      &=\int_{\mathbb{R}^d}e^{Ctp}(-\partial_i\mathbb{P}\mathrm{div}W_k)|\partial_iu_k|^{p-2}\partial_iu_k\mathrm{d}x+Ce^{Cpt}\|\partial_iu_k\|^p_{L^p}
		 \\
&\leq\|e^{Ct}\partial_i\mathbb{P}\mathrm{div}W_k\|_{L^p}\|e^{Ct}\partial_iu_k\|^{p-1}_{L^p}+Ce^{Cpt}\|\partial_iu_k\|^p_{L^p}\nonumber
	\end{align}
	which then implies
	\begin{equation}\label{nabla-u-k-h1}
		\frac{d}{dt}\|e^{Ct}\nabla u_k\|_{L^p}+\frac{1}{4}\|e^{Ct}\nabla u_k\|_{L^p}\leq 2^{2k}\|e^{Ct}W_k\|_{L^p}.
	\end{equation}

	By adding  \eqref{tau-k-h} to $\eqref{nabla-u-k-h1}\times4$, it follows that
	\begin{align}\label{u-tau-k-h}
		&\frac{d}{dt}(4\|e^{Ct}\nabla u_k\|_{L^p}+\|e^{Ct}\mathbb{P}\tau_k\|_{L^p})+\frac{1}{2}\|e^{Ct}\nabla u_k\|_{L^p}+2c_p2^{2k}\|e^{Ct}\mathbb{P}\tau_k\|_{L^p}
		\nonumber\\
		& \leq2^{2k+2}\|e^{Ct}W_k\|_{L^p}.
	\end{align}

	Moreover, multiplying \eqref{u-tau-k-h} by $\gamma>0$ which will be determined  later and then  adding the resulting inequality to  \eqref{omega-k-h}, one gets 
	\begin{align*} 
		&\frac{d}{dt}(4\gamma\|e^{Ct}\nabla u_k\|_{L^p}+\gamma\|e^{Ct}\mathbb{P}\tau_k\|_{L^p}+\|e^{Ct}W_k\|_{L^p})+\frac{1}{2}\gamma\|e^{Ct}\nabla u_k\|_{L^p}\nonumber\\
		&\qquad+2c_p\gamma2^{2k}\|e^{Ct}\mathbb{P}\tau_k\|_{L^p}+c_p2^{2k}\|e^{Ct}W_k\|_{L^p}
		\nonumber\\
		&\leq \gamma2^{2k+2}\|e^{Ct}W_k\|_{L^p}+\|e^{Ct}W_k\|_{L^p}+\|e^{Ct}\nabla(-\Delta)^{-1}u_k\|_{L^p}.
	\end{align*}
    
	Notice that it holds
	\begin{align*}
		\|\nabla(-\Delta)^{-1}u_k\|_{L^p}\leq2^{-2k}\|\nabla u_k\|_{L^p}\leq2^{2-2k_0}\|\nabla u_k\|_{L^p},~~~~\mbox{for all }k\geq k_0-1.
	\end{align*}
	Choosing $\gamma$ sufficiently small and $k_0$ suitable large such that   
	\begin{align}\label{eq56}
		\frac{d}{dt}\mathcal{J}_k(t)&+C(\|e^{Ct}\nabla u_k\|_{L^p}+2^{2k}\|e^{Ct}\mathbb{P}\tau_k\|_{L^p}+2^{2k}\|e^{Ct}W_k\|_{L^p})
		 \leq 0
	\end{align}
     for $k\geq k_0-1.$

	Due to the second equality in \eqref{L1}, we have
	\begin{equation}\label{zz4}
		\|(\nabla u_k,2^{2k}W_k,2^{2k}\mathbb{P}\tau_k)\|_{L^p}\approx	\|(\nabla u_k,2^{2k}\tau_k)\|_{L^p},
	\end{equation}
	for all $k\geq k_0-1$, which together with \eqref{eq56} yields  
\eqref{eq78'}, and  then  the proof of Lemma \ref{lem3.2} is completed.
\end{proof}

\textit{Completeness of the proof of  Proposition \ref{prop1}.}
Now we will finish the proof of  Proposition \ref{prop1}. In view of \eqref{E-0} and Lemma \ref{lem3.2} we have 
$$
\|e^{Ct}(\nabla u_L,\tau_L)\|^h_{\tilde{L}^\infty_t(\dot{B}^{\frac{d}{p}}_{p,1})}\lesssim E_0<<1.
$$
In order to gain regularity and decay altogether for the high frequencies of tensor $\tau_L$, we reformulate the second equation as follows
\begin{align*}
\partial_t(\chi(t)\tau_L)-\Delta(\chi(t)\tau_L)=\chi'(t)\tau_L+\chi(t)D(u_L),
\end{align*}
where $\chi(t)\in C^1(R_+)$ satisfies $\chi(t)=t$ for $0\leq t\leq \frac{1}{2}$ and $\chi(t)=e^{Ct}$ for $t>1$. Then it follows the maximal regularity estimate  {for the parabolic equation (see (3.39) in \cite{BCD-2011})} that
$$
\|\chi(t)\tau_L\|^h_{\tilde{L}^\infty_t(\dot{B}^{\frac{d}{p}+2}_{p,1})}\lesssim \|e^{Ct}(\tau_L,\nabla u_L)\|^h_{\tilde{L}^\infty_t(\dot{B}^{\frac{d}{p}}_{p,1})}\lesssim E_0<<1,
$$
where we used the fact $\chi(t)\tau_L|_{t=0}=0$.  Thus we complete the  proof to Proposition \ref{prop1}.

\vskip .2in
\section{Proof of Theorem 
\ref{Thm}}
\label{sec:sufficient}
\subsection{Sufficient condition}\label{subsec:sufficient}
{ 
This section is devoted to establishing the sufficient condition in Theorem 
\ref{Thm}, namely that the low-frequencies assumption $(u_0,\tau_0)^\ell\in 
\dot{B}^{\sigma_1}_{2,\infty}$ with $\sigma_0<\sigma_1<\frac{d}{2}-1$ implies 
the optimal upper decay bound for the solution $(u,\tau)$ to the Cauchy problem 
\eqref{OB}. The central result of this section is Proposition \ref{cob2}, 
which establishes the time-weighted functional inequality $\widetilde{D}_p(t) 
\lesssim 1$, and from which the upper bound in \eqref{wi2} follows directly. 
\begin{proposition}\label{cob2}
	Assume that  $(u_0,\tau_0)^\ell\in \dot{B}^{\sigma_1}_{2,\infty}$ with $\sigma_0< \sigma_1<\frac{d}{2}-1$, then
	$$
	\tilde{D}_p(t)\lesssim 1,\ t>0,
	$$
where the difference functional $\tilde{D}_p(t)$ is defined by
	\begin{align*}
		\tilde{D}_p(t) =\sup_{\sigma_1<\sigma<\frac{d}{2}}\|\langle s \rangle^{\frac{1}{2}(\sigma-\sigma_1+\sigma_2)}(\tilde{u},\tilde{\tau})\|^\ell_{\tilde{L}^\infty_t(\dot{B}^{\sigma}_{2,1})}+
		\|\langle s \rangle^{\alpha_*}(\nabla \tilde{u}, \tilde{\tau})\|^h_{\tilde{L}^\infty_t(\dot{B}^{\frac{d}{p}}_{p,1})}+\|s^{\alpha_*}\tilde{\tau}\|^h_{\tilde{L}^\infty(1,t;\dot{B}^{\frac{d}{p}+1}_{p,1})}
	\end{align*}
	with $\alpha_*=\frac{1}{2}(\frac{d}{2}-\sigma_1+\sigma_2)-$ and the number $\sigma_2=\min\{\sigma’_2,\sigma''\}\in(0,1]$ is given by
	\begin{align*}
		\sigma'_2=
		\begin{cases}
			1,\qquad\qquad\qquad\qquad\qquad\qquad~~~if~\sigma_1<\sigma\leq \frac{d}{2}-1, \sigma_1<\frac{d}{2}-2,\\
			1-,\qquad\qquad\qquad\qquad\ \quad\qquad~~if~\sigma_1<\sigma\leq \frac{d}{2}-1, \sigma_1=\frac{d}{2}-2,\\
			\frac{d}{2}-1-\sigma_1,\qquad\qquad\qquad\quad~~if~\sigma_1<\sigma\leq \frac{d}{2}-1, \frac{d}{2}-2<\sigma_1<\frac{d}{2}-1,\\
			\min\{1/2,(\frac{d}{2}-1-\sigma_1)-\},~~if~ \frac{d}{2}-1<\sigma<\frac{d}{2},
		\end{cases}
	\end{align*}
	and $\sigma''$ is a  small enough positive constant.
\end{proposition}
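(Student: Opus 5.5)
We will prove $\tilde D_p(t)\lesssim 1$ by a bootstrap built on the Duhamel formulation of the error system \eqref{OBC-4}. Since $(\tilde u,\tilde\tau)$ starts from zero data, the time-weighted norms are governed entirely by the nonlinear sources $N_1=-(u\cdot\nabla)u$ and $N_2=-(u\cdot\nabla)\tau-Q(\tau,\nabla u)$. As a preliminary step we record the decay of the full solution. Proposition \ref{prop1} applied to $(u_L,\tau_L)$ gives $\|(u_L,\tau_L)^\ell(t)\|_{\dot B^{\sigma}_{2,1}}\lesssim\langle t\rangle^{-\frac12(\sigma-\sigma_1)}$, together with exponential decay at high frequencies. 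Writing $(u,\tau)=(u_L,\tau_L)+(\tilde u,\tilde\tau)$, the quantity $\tilde D_p(t)$ then controls the decay of $(u,\tau)$ with the same rates, up to a factor $1+\tilde D_p(t)$. The smallness $E(t)\lesssim E_0\le c_0$ from \eqref{E-t} will absorb the quadratic terms.

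\textbf{Low frequencies.} We localize \eqref{OBC-4} with $\dot\Delta_k$, $k\le k_0$, and apply the Lyapunov inequality of Lemma \ref{lem3.1} with sources. This yields
\[
\|(\tilde u_k,\tilde\tau_k)(t)\|_{L^2}\lesssim\int_0^t e^{-c2^{2k}(t-s)}\|(N_1,N_2)_k(s)\|_{L^2}\,ds .
\]
Multiplying by $2^{k\sigma}$, we estimate the sources in $\dot B^{\sigma_1-\sigma_2}_{2,\infty}$ (for $\sigma_2\le1$) and use the standard heat-kernel bound $\sum_k 2^{k\sigma}e^{-c2^{2k}(t-s)}2^{-k(\sigma_1-\sigma_2)}\lesssim (t-s)^{-\frac12(\sigma-\sigma_1+\sigma_2)}$, splitting the time integral at $t/2$. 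This reduces the problem to showing
\[
\|(N_1,N_2)(s)\|^\ell_{\dot B^{\sigma_1-\sigma_2}_{2,\infty}}\lesssim \langle s\rangle^{-\frac12(\frac d2+1-\sigma_1+\sigma_2)}\bigl(1+\tilde D_p(s)\bigr)E_0 ,
\]
or an equivalent integrable bound, near $s=0$ where $E(t)$ is used instead. Products are decomposed into low--low, low--high and high--high pieces and handled with the appendix product laws $\dot B^{\frac d2-1}_{2,1}\times\dot B^{s}_{2,1}\to \dot B^{s-\frac d2+\sigma_1+\dots}_{2,\infty}$ and their $L^p$ analogues. The condition $\sigma_1>\sigma_0=\frac d2-\frac{2d}{p}$ is exactly what makes the $L^p$--$L^p$ products land in $\dot B^{\sigma_1}_{2,\infty}$. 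The various cases in the definition of $\sigma_2'$ come from the range of $\sigma$ for which the resulting time integral $\int_0^t\langle t-s\rangle^{-\frac12(\sigma-\sigma_1+\sigma_2)}\langle s\rangle^{-\beta}ds$ still decays at the right rate; the losses $1-$ and $(\frac d2-1-\sigma_1)-$ arise from logarithmic borderline cases.

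\textbf{High frequencies.} We use the effective tensor $W$ and Lemma \ref{lem3.2} with the weight $\langle t\rangle^{\alpha_*}$ in place of $e^{Ct}$. The linear part is exponentially damped for $k\ge k_0-1$, so the high-frequency norms inherit the decay of the sources. Those sources, namely $\nabla(u\cdot\nabla u)$ and $(u\cdot\nabla)\tau+Q$ in $\dot B^{\frac dp}_{p,1}$ at high frequency, are bounded by products in which at least one factor is a low-frequency factor decaying like $\langle t\rangle^{-\alpha_*}$. The transport term $u\cdot\nabla\nabla u$ loses a derivative. As in \cite{Chen}, we handle it with a commutator estimate applied to the $\nabla u$ equation, paying only $\|\nabla u\|_{L^\infty}$, which is integrable in time. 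The parabolic smoothing with the cutoff $\chi$, as in the proof of Proposition \ref{prop1}, then gives the bound for $\|s^{\alpha_*}\tilde\tau\|^h_{\tilde L^\infty(1,t;\dot B^{\frac dp+1}_{p,1})}$.

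\textbf{Main obstacle.} The hardest step is the low-frequency bound for $Q(\tau,\nabla u)$ and $u\cdot\nabla\tau$ in the low--high interaction $\tau^\ell\cdot\nabla\tilde u^h$, because $\nabla\tilde u^h$ only carries the weak weight $\langle t\rangle^{-\alpha_*}$ and there is no divergence-free structure on $\tau$. Following item (4) of the strategy, we write $\tau^\ell_{ik}\partial_k\tilde u^h_j=\partial_k(\tau^\ell_{ik}\tilde u^h_j)-\partial_k\tau^\ell_{ik}\tilde u^h_j$. The first term is estimated with $\sigma'=1$, gaining a full derivative from the outer $\partial_k$ at low frequency. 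For the second term we take $\sigma'=\sigma''$ small and exploit the faster decay of $\|\nabla\tau^\ell\|_{\dot B^{\frac d2-1}_{2,1}}$. This is precisely why $\sigma_2$ is capped by the small constant $\sigma''$.

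Combining the two frequency regimes gives $\tilde D_p(t)\lesssim 1+E_0\tilde D_p(t)+\tilde D_p(t)^2 E_0$. Because $E_0\le c_0$ is small and $\tilde D_p$ is continuous in $t$ with $\tilde D_p(0)=0$, a continuity argument closes the bootstrap and yields $\tilde D_p(t)\lesssim1$.
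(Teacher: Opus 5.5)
Your architecture is the paper's: Duhamel for the error system \eqref{OBC-4} with the heat-type bound \eqref{key ine1}, a per-term choice of $\sigma'$ (with $\sigma'=1$ on divergence-form terms), the splitting $\tau^\ell_{ik}\partial_k\tilde u^h_j=\partial_k(\tau^\ell_{ik}\tilde u^h_j)-\partial_k\tau^\ell_{ik}\,\tilde u^h_j$, the effective tensor with time weights and a commutator at high frequencies, and a bootstrap. The step that fails as written is the closing inequality $\tilde D_p\lesssim 1+E_0\tilde D_p+E_0\tilde D_p^2$, that is, your claim that every term involving $\tilde D_p$ carries a factor $E_0$.

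The interactions of $(\tilde u,\tilde\tau)$ with the linear flow need the decay of $(u_L,\tau_L)^\ell$ that comes from the $\dot B^{\sigma_1}_{2,\infty}$ datum. Its size $\mathcal M:=\|(u_0,\tau_0)^\ell\|_{\dot B^{\sigma_1}_{2,\infty}}$ is not small. Suppose you use only the critical bound $\|(u_L,\tau_L)^\ell\|_{\dot B^{\frac d2-1}_{2,1}}\lesssim E_0$ and the heat decay it implies. Then the source $\mathbb P\operatorname{div}(\tilde u\otimes u_L)$ in $\dot B^{\sigma_1-\sigma'}_{2,\infty}$ decays only like $\langle t\rangle^{-1-\frac12(\sigma_2-\sigma')}$. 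Convolving against $\langle t-t'\rangle^{-\frac12(\sigma-\sigma_1+\sigma')}$ cannot return $\langle t\rangle^{-\frac12(\sigma-\sigma_1+\sigma_2)}$ once $\sigma-\sigma_1>2$, whatever $\sigma'$ you pick. So the coefficient of $\tilde D_p$ involves $\mathcal M$; at best it is an interpolated quantity like $\mathcal M^{1-\theta}E_0^{\theta}$, never a bare $E_0$.

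Likewise, your constant ``$1$'' is of order $\mathcal M^2$: already $\mathbb P(u_L\cdot\nabla u_L)$ is $O(\mathcal M^2)$, not $O(E_0)$, so the reduced source bound you state cannot hold. The quadratic terms, such as $\partial_k\tilde\tau^\ell_{ik}\,\tilde u^h_j$, need the decay of both factors and come as $\tilde D_p^2$ with no $E_0$ in front. Hence smallness of $E_0$, used the way you use it, does not close the continuity argument; the small factors have to be produced explicitly.

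The paper produces them as follows.
\begin{itemize}
\item For the terms linear in $\tilde D_p$, it uses Proposition \ref{bdd-negative-norm} and interpolation, $\|(u_L,\tau_L,u,\tau)^\ell\|_{\dot B^{\sigma_1+\sigma_2}_{2,\infty}}\lesssim C^\theta E_0^{1-\theta}$.
\item For the quadratic terms, it keeps the surplus decay $\langle t\rangle^{-\sigma''/2}$ that comes from choosing $\sigma'=2\sigma''>\sigma_2$. This becomes the factor $\langle t_L\rangle^{-\sigma''/4}$ in front of $\tilde D_p^2$ for a large time $t_L$.
\item At high frequencies, it runs the weighted estimate on the full $(u,\tau)$ (Lemma \ref{cl10}, Step I). It must, because in the error equation the term $u\cdot\nabla u_L$ loses a derivative that no commutator recovers ($u_L^h$ gains no smoothing). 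This generates $\|s^{\frac12(\frac d2-\sigma_1)-}u\|_{\tilde L^\infty(1,t;\dot B^{\frac dp+1}_{p,1})}$ with an $O(1)$ coefficient.
\item Lemma \ref{cl11} bounds that quantity by $1+E_0+t_L^{-\sigma_2/4}\tilde D_p$, using the gap $\sigma_2/2$ between this weight and $\alpha_*$.
\end{itemize}
The bootstrap is then closed by taking $t_L$ large. You need an equivalent device: interpolation against the bounded $\dot B^{\sigma_1}_{2,\infty}$ norm and/or large-time smallness extracted from surplus decay. You should also state explicitly that your commutator is applied to the equation for the full velocity, not to the error equation.
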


The proof is organized as follows. We begin in Proposition \ref{bdd-negative-norm} by showing that the low-frequencies norm 
$\|(u,\tau)(t)\|^\ell_{\dot{B}^{\sigma_1}_{2,\infty}}$ remains uniformly bounded 
for all time, which serves as a crucial ingredient throughout the section. We 
then establish the low-frequencies decay estimates for the error term 
$(\tilde{u},\tilde{\tau})=(u-u_L,\tau-\tau_L)$ in Lemmas \ref{cl1}, 
\ref{cl3} and \ref{cl5}, covering the short-time regime $0<t\leq 2$, 
the intermediate regime $t>2$ with $\sigma_1<\sigma\leq\frac{d}{2}-1$, and 
the remaining range $\frac{d}{2}-1<\sigma\leq\frac{d}{2}$ respectively. A 
key technical difficulty in these estimates is the treatment of the nonlinear 
term $Q(\tau,\nabla u)$ in the low-frequencies regime, where the absence of an 
incompressibility condition on $\tau$ prevents a direct estimate. This is 
overcome by the refined decomposition
\begin{equation*}
	\tau^\ell_{ik}\partial_k\tilde{u}^h_j 
	= \partial_k(\tau^\ell_{ik}\tilde{u}^h_j) - \partial_k\tau^\ell_{ik}\tilde{u}^h_j,
\end{equation*}
which allows the two resulting terms to be estimated separately with carefully 
chosen regularity indices. The high-frequencies estimates for $(\tilde{u},\tilde{\tau})$ 
are then carried out in Lemma \ref{cl10}, where the effective tensor formalism 
introduced in Section \ref{sec:highfreq} is again employed to handle the loss of 
one derivative in the velocity field at high frequencies. The required bound on 
the intermediate quantity $\|s^{\frac{1}{2}(\frac{d}{2}-\sigma_1)-}u\|_{\tilde{L}^\infty(1,t;\dot{B}^{\frac{d}{p}+1}_{p,1})}$ 
is obtained in Lemma \ref{cl11}. Collecting all these estimates, the proof of 
Proposition \ref{cob2} is completed in Subsection \ref{ddd} via 
a standard bootstrap argument, exploiting the smallness provided by the factors 
$\langle t_L\rangle^{-\sigma''/4}$ and $t_L^{-\sigma_2/4}$ for sufficiently large $t_L$.
}
\subsubsection{Bounds for the low frequencies}

As shown by Proposition \ref{prop1}, the assumption $(u_0,\tau_0)^\ell\in \dot{B}^{\sigma_1}_{2,\infty}$ is equivalent to the upper bound decay of solutions, i.e. 
$$
\|(u_L,\tau_L)^\ell(t)\|_{\dot{B}^{\sigma}_{2,1}}\lesssim \langle t \rangle^{-\frac{1}{2}(\sigma-\sigma_1)},~~\sigma>\sigma_1, t>0.
$$
According to  Duhamel's principle and Lemma \ref{ls1}, we have 
$$
\|\dot{\Delta}_k(\tilde{u},\tilde{\tau})\|_{L^2}\leq C\int^t_0e^{-c(t-t')2^{2k}}(\|\dot{\Delta}_kF\|_{L^2}+\|\dot{\Delta}_kH\|_{L^2})dt',\ \ k\leq k_0
$$
with
\begin{align}
F =-\mathbb{P}(u\cdot \nabla u),\label{FFF}
\end{align}
and
\begin{align}
H =-u\cdot \nabla \tau-Q(\tau,\nabla u).\label{HHH}
\end{align}
 
On the other hand, for $\sigma>\sigma_1$  there holds
\begin{align*}
&t^{\frac{\sigma-\sigma_1+\sigma'}{2}}\sum_{k\leq k_0}2^{k(\sigma-\sigma_1+\sigma')}2^{k(\sigma_1-\sigma')}e^{-c2^{2k}t}\|\dot{\Delta}_k(F,H)\|_{L^2}\\
&\lesssim \|F,H\|^\ell_{\dot{B}^{\sigma_1-\sigma'}_{2,\infty}}\sum_{k\in \mathbb{Z}}(\sqrt{t}2^k)^{\sigma-\sigma_1+\sigma'}e^{-c2^{2k}t},
\end{align*}
  where  we used the fact
\begin{equation*}
   \sup_{t\geq 0}\sum_{k\in \mathbb{Z}}t^{\frac{s}{2}}2^{ks}e^{-c2^{2k}t}\leq C_s, \qquad s>0.
\end{equation*}
Thus we get
\begin{align}\label{key ine1}
\|(\tilde{u},\tilde{\tau})(t)\|^\ell_{\dot{B}^{\sigma}_{2,1}}\lesssim \int^t_0\langle t-t'\rangle^{-\frac{1}{2}(\sigma-\sigma_1+\sigma')}\|F,H\|^\ell_{\dot{B}^{\sigma_1-\sigma'}_{2,\infty}}dt',\quad \sigma>\sigma_1, \forall \sigma' \in (0,1].
\end{align}

\begin{proposition}\label{bdd-negative-norm}
	Let $p$ satisfy \eqref{ppp} and $(u,\tau)$ be the solution to system \eqref{OB-1}. Assume further that   $(u_0,\tau_0)^\ell\in \dot{B}^{ \sigma_1}_{2,\infty} $ with $ \sigma_0 \leq \sigma_1< \frac{d}{2}-1$ and $\sigma_0=\frac{d}{2}-\frac{2d}{p}$.   Then there exists a positive constant $ {C}$, depending on  $(u_0,\tau_0)$, such that
	\begin{align}\label{negative-bosov-of-utau}
		\|(u,\tau)(t)\|_{\dot{B}^{\sigma_1}_{2,\infty}}^\ell\leq  {C},\quad \text{ for all } t\geq 0.
	\end{align}
	
\end{proposition}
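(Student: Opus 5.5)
We propose to prove \eqref{negative-bosov-of-utau} by a frequency-localized energy argument built on Lemma \ref{lem3.1}, with the nonlinear terms treated as source terms. Localize the full system \eqref{OB-1} with $\dot\Delta_k$ for $k\le k_0$. The linear part is then exactly \eqref{OB-3}, while the forcing consists of $F_k=\dot\Delta_kF$ in the $u$-equation and $H_k=\dot\Delta_k H$ in the $\tau$-equation, with $F,H$ as in \eqref{FFF}--\eqref{HHH}. The $H_k$ term enters the $v$- and $w$-equations through $\Lambda^{-1}\mathrm{div}$ and $\Lambda^{-1}\mathrm{curl}$, both of which are bounded on each dyadic block.

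\textbf{Step 1: the Lyapunov inequality.} Repeat the computation of Lemma \ref{lem3.1} with the sources kept. Since $\mathcal L_k\approx\|(u_k,\tau_k)\|_{L^2}$ and $\|\Lambda u_k\|_{L^2}\lesssim 2^{k_0}\|u_k\|_{L^2}$ at low frequencies, this gives
\begin{equation*}
\frac{d}{dt}\mathcal L_k+c2^{2k}\mathcal L_k\lesssim \|(F_k,H_k)\|_{L^2}.
\end{equation*}
Integrating in time, multiplying by $2^{k\sigma_1}$ and taking the supremum over $k\le k_0$ yields
\begin{equation*}
\|(u,\tau)(t)\|^\ell_{\dot B^{\sigma_1}_{2,\infty}}\lesssim \|(u_0,\tau_0)\|^\ell_{\dot B^{\sigma_1}_{2,\infty}}+\int_0^t\|(F,H)\|^\ell_{\dot B^{\sigma_1}_{2,\infty}}\,dt'.
\end{equation*}
This crude time integral is the simplest route. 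It may be replaced by a version that keeps the kernel $e^{-c2^{2k}(t-t')}$, trading an $\varepsilon$ of regularity for integrable time decay.

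\textbf{Step 2: the nonlinear terms.} Write $F,H$ as sums of products $f\,\nabla g$ with $f,g\in\{u,\tau\}$, and split each factor into its low and high parts. For low$\times$low products use the $L^2$ product law $\dot B^{\sigma_1}_{2,\infty}\times\dot B^{d/2}_{2,1}\to\dot B^{\sigma_1}_{2,\infty}$. This is valid because $-d/2<\sigma_1<d/2-1$; the condition $\sigma_1\ge\sigma_0>-d/2$ holds since $p\le 4$ forces $\sigma_0\ge -d/2$, and the endpoint needs care. It gives the bound $\|(u,\tau)\|^\ell_{\dot B^{\sigma_1}_{2,\infty}}\|(u,\tau)\|^\ell_{\dot B^{d/2+1}_{2,1}}$. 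When a factor is high frequency, use the $L^p$–$L^2$ product laws from the appendix. These map into $\dot B^{\sigma_0}_{2,\infty}$ with $\sigma_0=\frac d2-\frac{2d}{p}$, which is exactly why $\sigma_0$ appears. Because the target is restricted to low frequencies, $\dot B^{\sigma_0}_{2,\infty}$ embeds into $\dot B^{\sigma_1}_{2,\infty}$ for $\sigma_1\ge\sigma_0$. The resulting bound is controlled by products such as $\|u\|^h_{\dot B^{d/p+1}_{p,1}}\|(u,\tau)\|_{\dot B^{d/p}_{p,1}}$ and $\|\tau\|^h_{\dot B^{d/p+2}_{p,1}}\|u\|_{\dot B^{d/p}_{p,1}}$, and each of these has one factor in $L^1_t$ and the other in $L^\infty_t$ by \eqref{E-t}.

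\textbf{Step 3: closing the estimate.} Set $X(t)=\sup_{s\le t}\|(u,\tau)(s)\|^\ell_{\dot B^{\sigma_1}_{2,\infty}}$. Steps 1 and 2 give
\begin{equation*}
X(t)\lesssim X(0)+X(t)\,E(t)+E(t)^2.
\end{equation*}
Since $E(t)\lesssim E_0\le c_0$ is small, the term $X(t)E(t)$ is absorbed into the left-hand side, leaving $X(t)\le C$. A continuity argument justifies this, using finiteness of $X$ on short time intervals.

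\textbf{Main obstacle.} The hard part is the mixed term $\tau^\ell\cdot\nabla u^h$ inside $Q$, together with the high$\times$high to low interactions. Here $\nabla u^h$ has only $\dot B^{d/p}_{p,1}$ regularity, and this norm belongs to $L^1_t$ only through the damping in \eqref{nabla-u-k-h1}, not through parabolic gain. My first attempt is to bound it via $\|u\|^h_{L^1_t(\dot B^{d/p+1}_{p,1})}\lesssim E_0$, which is part of $E(t)$, paired with $\|\tau^\ell\|_{L^\infty_t(\dot B^{d/2-1}_{2,1})}$. This requires a product law landing in $\dot B^{\sigma_0}_{2,\infty}$ from $\dot B^{d/2-1}_{2,1}\times\dot B^{d/p}_{p,1}$. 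If that law fails at the endpoint, I would use the divergence decomposition $\partial_k(\tau_{ik}u_j)-\partial_k\tau_{ik}u_j$ to move a derivative to low frequencies.
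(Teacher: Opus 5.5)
Your proposal is correct and follows essentially the paper's route: a localized $L^2$ estimate giving $\|(u,\tau)(t)\|^\ell_{\dot B^{\sigma_1}_{2,\infty}}\lesssim\|(u_0,\tau_0)\|^\ell_{\dot B^{\sigma_1}_{2,\infty}}+\int_0^t\|(F,H)\|^\ell_{\dot B^{\sigma_1}_{2,\infty}}$, then a split of the nonlinear terms into $D_{p,1}\,X+D_{p,2}$ with both $D_{p,i}\in L^1_t$ by \eqref{E-t}, and finally closing the estimate. The paper uses only the plain energy identity with the cancellation $\langle\mathbb P\operatorname{div}\tau_k,u_k\rangle+\langle D(u_k),\tau_k\rangle=0$ instead of your Lyapunov functional, and closes with Gronwall instead of absorption; your ``first attempt'' for $\tau^\ell\nabla u^h$ is exactly what the paper does via \eqref{equ34}/\eqref{equ56} of Lemma \ref{lemma12}, which only needs $\nabla u^h\in\dot B^{d/p-1}_{p,1}$, so the divergence fallback is unnecessary here. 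A minor point: it is $p\ge 2$, not $p\le 4$, that gives $\sigma_0\ge -d/2$, and the endpoint $\sigma_1=-d/2$ at $p=2$ is covered by \eqref{ap3}.
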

\begin{proof} 
	Noticing that the system \eqref{OB-1} can be localized as follows
	\begin{align}\label{OB-11}
		\begin{cases}
			\partial_t u_k-\mathbb{P}\operatorname{div} \tau_k =F_k,\\
			\partial_t \tau_k -\Delta\tau_k -D(u_k)=H_k,
		\end{cases}
	\end{align}
{ where $F_k$ and $H_k$ are defined by
\begin{align}
		F_k=- \mathbb{P}\dot{\Delta }_k(u\cdot\nabla u),\quad H_k=- \dot{\Delta }_k(u\cdot\nabla \tau)
	-\dot{\Delta}_k
	Q(\tau, \nabla u).\label{fg}
\end{align}
}

Taking $L^2$-inner product of $\eqref{OB-11}_1$ with $u_k$ and of $\eqref{OB-11}_2$ with $\tau_k$, one gets
	\begin{align}\label{uk-11}
			\frac{1}{2}\frac{d}{dt}\|u_k\|^2_{L^2}-\langle \mathbb{P}\operatorname{div}\tau_k,u_k\rangle=\langle F_k,u_k\rangle
	\end{align}
and
\begin{align}\label{tauk-11}
			\frac{1}{2}\frac{d}{dt}\|\tau_k\|^2_{L^2}+\|\Lambda\tau_k\|^2_{L^2}-\langle D(u_k),\tau_k\rangle=\langle H_k,\tau_k\rangle.
	\end{align}
    
	Thanks to the divergence free condition $\operatorname{div}u=0$ and the symmetrical characteristic of $\tau$, it holds that
	\begin{align*}
		\langle \mathbb{P}\operatorname{div}\tau_k,u_k\rangle+\langle D(u_k),\tau_k\rangle =0,
	\end{align*}
	which combining with \eqref{uk-11} and \eqref{tauk-11} yields that
	\begin{align*}
	\frac{1}{2}\frac{d}{dt}\|(u_k,\tau_k)\|^2_{L^2}+\|\Lambda\tau_k\|^2_{L^2}\leq\|(F_k,H_k)\|_{L^2}\|(u_k,\tau_k)\|_{L^2}.
   \end{align*}
   
	It follows from  a routine procedure that
	\begin{equation}\label{utauk-11}
		\|(u,\tau)(t)\|^\ell_{\dot{B}^{\sigma_1}_{2,\infty}}
		\lesssim
		\|(u_0,\tau_0)\|^\ell_{\dot{B}^{\sigma_1}_{2,\infty}}
		+
		\int_0^t\|(F,H)(s)\|^\ell_{\dot{B}^{\sigma_1}_{2,\infty}}
		\mathrm{d}s,\ t\geq 0.
	\end{equation}
for all $t\geq 0$.

In order to  estimate $\|(F,H)\|^\ell_{L^1_t(\dot{B}^{\sigma_1}_{2,\infty})}$, it suffices to consider the three terms $\mathbb{P}(u\cdot\nabla u)$, $u\cdot\nabla\tau$ and $\mathrm{Q}(\tau,\nabla u)$.  Furthermore, we write
	\begin{align*}
		\mathbb{P}(u\cdot\nabla u)&=\mathbb{P}(u\cdot\nabla u^\ell)+\mathbb{P}(u\cdot\nabla u^h),\\ u\cdot\nabla\tau&=u\cdot\nabla\tau^\ell+u\cdot\nabla\tau^h,\\
	\mathrm{Q}(\tau,\nabla u)&=\mathrm{Q}(\tau,\nabla u^\ell)+\mathrm{Q}(\tau,\nabla u^h).
\end{align*}
In what follows we will estimate the above terms respectively.

	According to the product law on Besov spaces (see, e.g., Theorems 2.47 and 2.52 of \cite{BCD-2011}) and Bernstein’s lemma, we can  estimate $\mathbb{P}(u\cdot\nabla u^\ell)$ as
	\begin{align*}
		\|\mathbb{P}(u\cdot\nabla u^\ell)\|^\ell_{\dot{B}^{\sigma_1}_{2,\infty}}
		\lesssim&
		\|u^\ell\cdot\nabla u^\ell\|^\ell_{\dot{B}^{\sigma_1}_{2,\infty}}
		+\|u^h\cdot\nabla u^\ell\|^\ell_{\dot{B}^{\sigma_1}_{2,\infty}}
		\nonumber\\
		\lesssim&\|\nabla u^\ell\|_{\dot{B}^{\frac{d}{p}}_{p,1}}\|u^\ell\|_{\dot{B}^{\sigma_1}_{2,\infty}}+\|u^h\|_{\dot{B}^{\frac{d}{p}}_{p,1}}\|\nabla u^\ell\|_{\dot{B}^{\sigma_1}_{2,\infty}}
		\nonumber\\
		\lesssim&\big(\|u\|^\ell_{\dot{B}^{\frac{d}{2}+1}_{2,1}}+\|u\|^h_{\dot{B}^{\frac{d}{p}+1}_{p,1}}\big)\|u\|^\ell_{\dot{B}^{\sigma_1}_{2,\infty}}.
	\end{align*}
Similarly, we have
\begin{align*}
\|u\cdot\nabla\tau^\ell\|^\ell_{\dot{B}^{\sigma_1}_{2,\infty}}
	\lesssim& \|u^\ell\cdot\nabla\tau^\ell\|^\ell_{\dot{B}^{\sigma_1}_{2,\infty}}
	+\|u^h\cdot\nabla\tau^\ell\|^\ell_{\dot{B}^{\sigma_1}_{2,\infty}}
	\nonumber\\
	\lesssim&
	\|\nabla\tau^\ell\|_{\dot{B}^{\frac{d}{p}}_{p,1}}\|u^\ell\|_{\dot{B}^{\sigma_1}_{2,\infty}}+\|u^h\|_{\dot{B}^{\frac{d}{p}}_{p,1}}\|\nabla\tau^\ell\|_{\dot{B}^{\sigma_1}_{2,\infty}}
	\nonumber\\
	\lesssim&\|\tau\|^\ell_{\dot{B}^{\frac{d}{2}+1}_{2,1}}\|u\|^\ell_{\dot{B}^{\sigma_1}_{2,\infty}}+\|u\|^h_{\dot{B}^{\frac{d}{p}+1}_{p,1}}\|\tau\|^\ell_{\dot{B}^{\sigma_1}_{2,\infty}},
\end{align*}
and
\begin{align*}
	\|\mathrm{Q}(\tau,\nabla u^\ell)\|^\ell_{\dot{B}^{\sigma_1}_{2,\infty}}
	\lesssim&
	\|\mathrm{Q}(\tau^\ell,\nabla u^\ell)\|^\ell_{\dot{B}^{\sigma_1}_{2,\infty}}
	+
	\|\mathrm{Q}(\tau^h,\nabla u^\ell)\|^\ell_{\dot{B}^{\sigma_1}_{2,\infty}}
	\nonumber\\
	\lesssim&
	\|\nabla u^\ell\|_{\dot{B}^{\frac{d}{p}}_{p,1}}\|\tau^\ell\|_{\dot{B}^{\sigma_1}_{2,\infty}}
	+
	\|\tau^h\|_{\dot{B}^{\frac{d}{p}}_{p,1}}\|\nabla u^\ell\|_{\dot{B}^{\sigma_1}_{2,\infty}}
	\nonumber\\
	\lesssim&\|u\|^\ell_{\dot{B}^{\frac{d}{2}+1}_{2,1}}\|\tau\|^\ell_{\dot{B}^{\sigma_1}_{2,\infty}}
	+
	\|\tau\|^h_{\dot{B}^{\frac{d}{p}+2}_{p,1}}\|u\|^\ell_{\dot{B}^{\sigma_1}_{2,\infty}}.
\end{align*}

We now turn to  bound the remaining terms $\mathbb{P}(u\cdot\nabla u^h), u\cdot\nabla \tau^h$ and $\mathrm{Q}(\tau,\nabla u^h)$.
By employing  \eqref{equ34} in Lemma \ref{lemma12}, one obtains that for  $2\leq p\leq d$
	\begin{align*}
			\|\mathbb{P}(u\cdot\nabla u^h)\|^\ell_{\dot{B}^{\sigma_1}_{2,\infty}}&\lesssim
				\|u\cdot\nabla u^h\|^\ell_{\dot{B}^{\sigma_1}_{2,\infty}}
				\lesssim(\|u^\ell\|_{\dot{B}^{\frac{d}{2}-1}_{2,1}}+\|u^h\|_{\dot{B}^{\frac{d}{p}-1}_{p,1}})\|\nabla u^h\|_{\dot{B}^{\frac{d}{p}-1}_{p,1}}\\
			&\lesssim(\|u\|^\ell_{\dot{B}^{\frac{d}{2}-1}_{2,1}}+\|u\|^h_{\dot{B}^{\frac{d}{p}+1}_{p,1}})\|u\|^h_{\dot{B}^{\frac{d}{p}+1}_{p,1}},
	\\
			\|u\cdot\nabla \tau^h\|^\ell_{\dot{B}^{\sigma_1}_{2,\infty}}&\lesssim(\|u^\ell\|_{\dot{B}^{\frac{d}{2}-1}_{2,1}}+\|u^h\|_{\dot{B}^{\frac{d}{p}-1}_{p,1}})\|\nabla \tau^h\|_{\dot{B}^{\frac{d}{p}-1}_{p,1}}\\
			&\lesssim(\|u\|^\ell_{\dot{B}^{\frac{d}{2}-1}_{2,1}}+\|u\|^h_{\dot{B}^{\frac{d}{p}+1}_{p,1}})\|\tau\|^h_{\dot{B}^{\frac{d}{p}+2}_{p,1}},
\\
			\|\mathrm{Q}(\tau,\nabla u^h)\|^\ell_{\dot{B}^{\sigma_1}_{2,\infty}}&\lesssim(\|\tau^\ell\|_{\dot{B}^{\frac{d}{2}-1}_{2,1}}+\|\tau^h\|_{\dot{B}^{\frac{d}{p}-1}_{p,1}})\|\nabla u^h\|_{\dot{B}^{\frac{d}{p}-1}_{p,1}}\\
			&\lesssim(\|\tau\|^\ell_{\dot{B}^{\frac{d}{2}-1}_{2,1}}+\|\tau\|^h_{\dot{B}^{\frac{d}{p}}_{p,1}})\|u\|^h_{\dot{B}^{\frac{d}{p}+1}_{p,1}}.
		\end{align*}
	While by using \eqref{equ56} in Lemma \ref{lemma12},  one gets that for $p\geq d$
	\begin{align*}
			\|\mathbb{P}(u\cdot\nabla u^h)\|^\ell_{\dot{B}^{\sigma_1}_{2,\infty}}&  \lesssim(\|u^\ell\|_{\dot{B}^{\frac{d}{2}-1}_{2,1}}+\|u^h\|_{\dot{B}^{\frac{d}{p}}_{p,1}})\|\nabla u^h\|_{\dot{B}^{\frac{d}{p}-1}_{p,1}}\\
			&\lesssim(\|u\|^\ell_{\dot{B}^{\frac{d}{2}-1}_{2,1}}+\|u\|^h_{\dot{B}^{\frac{d}{p}+1}_{p,1}})\|u\|^h_{\dot{B}^{\frac{d}{p}+1}_{p,1}},\\
			\|u\cdot\nabla \tau^h\|^\ell_{\dot{B}^{\sigma_1}_{2,\infty}}&\lesssim(\|u^\ell\|_{\dot{B}^{\frac{d}{2}-1}_{2,1}}+\|u^h\|_{\dot{B}^{\frac{d}{p}}_{p,1}})\|\nabla \tau^h\|_{\dot{B}^{\frac{d}{p}-1}_{p,1}}\\
			&\lesssim(\|u\|^\ell_{\dot{B}^{\frac{d}{2}-1}_{2,1}}+\|u\|^h_{\dot{B}^{\frac{d}{p}+1}_{p,1}})\|\tau\|^h_{\dot{B}^{\frac{d}{p}+2}_{p,1}},
	\\
			\|\mathrm{Q}(\tau,\nabla u^h)\|^\ell_{\dot{B}^{\sigma_1}_{2,\infty}}&\lesssim(\|\tau^\ell\|_{\dot{B}^{\frac{d}{2}-1}_{2,1}}+\|\tau^h\|_{\dot{B}^{\frac{d}{p}}_{p,1}})\|\nabla u^h\|_{\dot{B}^{\frac{d}{p}-1}_{p,1}}\\
			&\lesssim(\|\tau\|^\ell_{\dot{B}^{\frac{d}{2}-1}_{2,1}}+\|\tau\|^h_{\dot{B}^{\frac{d}{p}}_{p,1}})\|u\|^h_{\dot{B}^{\frac{d}{p}+1}_{p,1}}.
		\end{align*}
Combining the bounds above, it follows that
\begin{align}\label{bound-of-fg}
		\|(F,H)(t)\|^\ell_{\dot{B}^{\sigma_1}_{2,\infty}}&
		\lesssim D_{p,1}(t) \|(u,\tau)(t)\|^\ell_{\dot{B}^{\sigma_1}_{2,\infty}}+
		D_{p,2}(t),
\end{align}
with
\begin{align*}
	D_{p,1}(t)&=\|(u,\tau)\|^\ell_{\dot{B}^{\frac{d}{2}+1}_{2,1}}+\|u\|^h_{\dot{B}^{\frac{d}{p}+1}_{p,1}}+\|\tau\|^h_{\dot{B}^{\frac{d}{p}+2}_{p,1}},
\\
D_{p,2}(t)&=(\|(u,\tau)\|^\ell_{\dot{B}^{\frac{d}{2}-1}_{2,1}}+\|u\|^h_{\dot{B}^{\frac{d}{p}+1}_{p,1}}+\|\tau\|^h_{\dot{B}^{\frac{d}{p}}_{p,1}})(\|u\|^h_{\dot{B}^{\frac{d}{p}+1}_{p,1}}+\|\tau\|^h_{\dot{B}^{\frac{d}{p}+2}_{p,1}}).
\end{align*}

Inserting \eqref{bound-of-fg} into \eqref{utauk-11}, we arrive at 
\begin{align}\label{negative-bosov-of-utau-1}
	\|(u,\tau)(t)\|^\ell_{\dot{B}^{\sigma_1}_{2,\infty}}\lesssim&\|(u_0,\tau_0)\|^\ell_{\dot{B}^{\sigma_1}_{2,\infty}}+\int_0^tD_{p,1}(s)\|(u,\tau)(s)\|^\ell_{\dot{B}^{\sigma_1}_{2,\infty}}\mathrm{d} s
+\int_{0}^{t}
	D_{p,2}(s)\mathrm{d}s.
\end{align}
Therefore, with the help of \eqref{E-t}, one deduces 
\begin{equation*}
	\int^t_0(D_{p,1}(s)+D_{p,2}(s))\mathrm{d}s\leq E(t)+E^2(t)\leq CE_0.
\end{equation*}

Thus we complete the proof of Proposition \ref{bdd-negative-norm}.
\end{proof}

\begin{lemma}\label{cl1}
Let $p$ satisfy \eqref{ppp} and $\sigma_0\leq \sigma_1<\frac{d}{2}-1.$ It holds that
\begin{align*}
 \|(\tilde{u},\tilde{\tau})(t)\|^\ell_{\dot{B}^{\sigma}_{2,1}} 
&\lesssim\int^t_0\langle t-t'\rangle^{-\frac{1}{2}(\sigma-\sigma_1+1)}\|F,u\cdot \nabla \tau\|^\ell_{\dot{B}^{\sigma_1-1}_{2,\infty}}dt'\\
&\ \ \ \ +\int^t_0\langle t-t'\rangle^{-\frac{1}{2}(\sigma-\sigma_1+\sigma')}\|Q(\tau\cdot \nabla u)\|^\ell_{\dot{B}^{\sigma_1-\sigma'}_{2,\infty}}dt'\\
&\lesssim \langle t \rangle^{-\frac{1}{2}(\sigma-\sigma_1+\sigma_2)}E_0,
\end{align*}
for $0< t\leq 2$ and  $\sigma_2\in (0,1]$. 
\end{lemma}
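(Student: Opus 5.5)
The plan is to apply the Duhamel bound \eqref{key ine1} to the error system \eqref{OBC-4}, with different choices of $\sigma'$ for different pieces of the nonlinearity, and then use that on $0<t\leq 2$ all time weights are harmless. Since $\langle t\rangle\approx 1$ and $\langle t-t'\rangle\approx1$ for $0\le t'\le t\le 2$, the target bound $\langle t\rangle^{-\frac12(\sigma-\sigma_1+\sigma_2)}E_0$ is equivalent to $\lesssim E_0$. It therefore suffices to prove
\[
\int_0^2\Big(\|F,u\cdot\nabla\tau\|^\ell_{\dot B^{\sigma_1-1}_{2,\infty}}+\|Q(\tau,\nabla u)\|^\ell_{\dot B^{\sigma_1-\sigma'}_{2,\infty}}\Big)dt'\lesssim E_0 .
\]
The first inequality of the statement is just \eqref{key ine1}. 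We take $\sigma'=1$ for the transport parts $F$ and $u\cdot\nabla\tau$, and $\sigma'\in(0,1]$ for $Q$, with the kernel written accordingly.

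\emph{Transport terms, $\sigma'=1$.} Because $\operatorname{div}u=0$, we can write $u\cdot\nabla u=\operatorname{div}(u\otimes u)$ and $u\cdot\nabla\tau=\operatorname{div}(u\otimes\tau)$. This gains one derivative, so at low frequencies
\[
\|F,u\cdot\nabla\tau\|^\ell_{\dot B^{\sigma_1-1}_{2,\infty}}\lesssim\|(u\otimes u,u\otimes\tau)\|_{\dot B^{\sigma_1}_{2,\infty}} .
\]
We split $u=u^\ell+u^h$ and $\tau=\tau^\ell+\tau^h$ and use the product laws already used in the proof of Proposition \ref{bdd-negative-norm} (Lemma \ref{lemma12}, estimates \eqref{equ34}/\eqref{equ56}, and the $\dot B^{\frac dp}_{p,1}\times\dot B^{\sigma_1}_{2,\infty}$ law, valid since $\sigma_0\le\sigma_1<\frac d2-1$). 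Each product is bounded by a quantity of the form $\|(u,\tau)\|^\ell_{\dot B^{\sigma_1}_{2,\infty}}\cdot D_{p,1}$ plus quadratic terms built from norms in $E(t)$. For instance,
\[
\|u^\ell\otimes u^\ell\|_{\dot B^{\sigma_1}_{2,\infty}}\lesssim\|u^\ell\|_{\dot B^{\frac dp}_{p,1}}\|u^\ell\|_{\dot B^{\sigma_1}_{2,\infty}},
\]
and $u^h\otimes\tau^h$ is controlled by $\|u\|^h_{\dot B^{\frac dp+1}_{p,1}}\|\tau\|^h_{\dot B^{\frac dp}_{p,1}}$. Integrating over $[0,2]$ and using Proposition \ref{bdd-negative-norm} together with \eqref{E-t}, we get a bound by $C(E_0+E_0^2)\lesssim E_0$. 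The $L^\infty_t$ norms are also fine on a bounded interval, since the time interval has length at most $2$.

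\emph{$Q$ term.} Here there is no divergence structure, so we take $\sigma'$ close to $0$, or $\sigma'=\sigma_2$. For $\sigma_1-\sigma'\ge\sigma_0$, the same splitting as in Proposition \ref{bdd-negative-norm} gives
\[
\|Q(\tau,\nabla u)\|^\ell_{\dot B^{\sigma_1-\sigma'}_{2,\infty}}\lesssim D_{p,1}\|(u,\tau)\|^\ell_{\dot B^{\sigma_1-\sigma'}_{2,\infty}}+D_{p,2}.
\]
The factor $\|(u,\tau)\|^\ell_{\dot B^{\sigma_1-\sigma'}_{2,\infty}}$ is bounded by the $\dot B^{\sigma_1}_{2,\infty}$ norm, because lower regularity is controlled at low frequencies. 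The exception is the piece $\tau^\ell\cdot\nabla u^h$, where we use the decomposition $\partial_k(\tau_{ik}^\ell u_j^h)-\partial_k\tau^\ell_{ik}u^h_j$ described in the introduction. Again everything is time-integrable by \eqref{E-t}.

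\emph{Main obstacle.} The delicate step is checking that the product laws apply at the shifted regularity $\sigma_1-\sigma'$ (respectively $\sigma_1-1$ for the divergence form) when $\sigma_1$ is near $\sigma_0$. Any shifted index must remain $\geq\sigma_0=\frac d2-\frac{2d}p$, which is the admissible lower bound of \eqref{equ34}. If it does not, for the high–high interactions we would first try the cruder route: bound them in $\dot B^{\sigma_0}_{2,\infty}$, and then use the low-frequency embedding $\|\cdot\|^\ell_{\dot B^{\sigma_1-\sigma'}_{2,\infty}}\lesssim\|\cdot\|^\ell_{\dot B^{\sigma_0}_{2,\infty}}$, valid whenever $\sigma_1-\sigma'\ge\sigma_0$. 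Otherwise we would reduce $\sigma'$ to the small $\sigma''$. Since only a bound by $E_0$ on a finite interval is needed, any loss of decay is irrelevant here.
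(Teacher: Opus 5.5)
Your reduction is sound: on $0<t\le 2$ all time weights are $\approx 1$, so it suffices to bound the integrands in $L^1(0,2)$. Your treatment of $F$ and $u\cdot\nabla\tau$ in divergence form with $\sigma'=1$ is also sound, and agrees in substance with the paper, which only packages $u\otimes u^\ell$ through \eqref{A6} with the full $u\in\dot B^{\frac dp-1}_{p,1}$. The gap is in the $Q$ term. After bounding $\|Q(\tau,\nabla u)\|^\ell_{\dot B^{\sigma_1-\sigma'}_{2,\infty}}$ by $D_{p,1}\|(u,\tau)\|^\ell_{\dot B^{\sigma_1-\sigma'}_{2,\infty}}+D_{p,2}$, you assert $\|(u,\tau)\|^\ell_{\dot B^{\sigma_1-\sigma'}_{2,\infty}}\lesssim\|(u,\tau)\|^\ell_{\dot B^{\sigma_1}_{2,\infty}}$ because ``lower regularity is controlled at low frequencies''. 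This is backwards. For $k\le k_0$ one has $2^{k(\sigma_1-\sigma')}\|\dot\Delta_k f\|_{L^2}=2^{-k\sigma'}\,2^{k\sigma_1}\|\dot\Delta_k f\|_{L^2}$, and $2^{-k\sigma'}\to\infty$ as $k\to-\infty$. So at low frequencies it is the \emph{higher} index that is controlled by the lower one; this is the paper's footnote, and the direction you use correctly in your last paragraph with $\dot B^{\sigma_0}_{2,\infty}$. Nothing in the hypotheses controls $\|(u,\tau)\|^\ell_{\dot B^{\sigma_1-\sigma'}_{2,\infty}}$. For data in $\dot{\mathbf B}^{\sigma_1}_{2,\infty}$, the case the theorem is about, it is already infinite at $t=0$, since $2^{j_k(\sigma_1-\sigma')}\|\dot\Delta_{j_k}f\|_{L^2}\ge 2^{-j_k\sigma'}M_0\to\infty$. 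The term where this actually matters is the low--low interaction $\tau^\ell\nabla u^\ell$. The law used in Proposition \ref{bdd-negative-norm} places $\tau^\ell$ in $\dot B^{\sigma_1}_{2,\infty}$, and you cannot lower the output index by lowering the index of that factor.

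The missing idea is to let the factor with spare low-frequency room absorb the loss $\sigma'$. The paper applies \eqref{ap3} with $s_1=\frac d2-1$ and $s_2=\sigma_1-\sigma'+1$:
\[
\|\tau^\ell\nabla u^\ell\|^\ell_{\dot B^{\sigma_1-\sigma'}_{2,\infty}}\lesssim\|\tau^\ell\|_{\dot B^{\frac d2-1}_{2,1}}\|u^\ell\|_{\dot B^{\sigma_1+2-\sigma'}_{2,\infty}}\lesssim\|\tau\|^\ell_{\dot B^{\frac d2-1}_{2,1}}\|u\|^\ell_{\dot B^{\sigma_1}_{2,\infty}},
\]
which is valid when $\frac d2+\sigma_1-\sigma'>0$. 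Alternatively, keep $\tau^\ell\in\dot B^{\sigma_1}_{2,\infty}$ and put $\nabla u^\ell$ in $\dot B^{\frac d2-\sigma'}_{2,1}$. The rest of your splitting does survive at index $\sigma_1-\sigma'$. In $\tau^h\nabla u^\ell$ the gradient leaves $\|u^\ell\|_{\dot B^{\sigma_1+1-\sigma'}_{2,\infty}}\lesssim\|u^\ell\|_{\dot B^{\sigma_1}_{2,\infty}}$. The terms containing $\nabla u^h$ can be bounded in $\dot B^{\sigma_0}_{2,\infty}$ via Lemma \ref{lemma12} and then embedded, as you describe.

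In particular, $\tau^\ell\nabla u^h$ is not the exceptional piece. The paper bounds it directly by \eqref{A.3} (for $p>2$, with $\sigma'\le\frac d2-\frac dp$) or \eqref{A.4} (for $p=2$). The splitting $\partial_k(\tau^\ell_{ik}u^h_j)-\partial_k\tau^\ell_{ik}u^h_j$ is only needed for decay in Lemma \ref{cl3}.

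Finally, both your argument and the paper's need $\sigma_1-\sigma'\ge\sigma_0$, i.e.\ $\sigma_1>\sigma_0$ strictly. The endpoint $\sigma_1=\sigma_0$ permitted in the statement is therefore not actually covered by either proof.
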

\begin{proof}

Thanks to   \eqref{key ine1} we have
\begin{align*}
\|(\tilde{u},\tilde{\tau})(t)\|^\ell_{\dot{B}^{\sigma}_{2,1}}\lesssim \int^t_0\langle t-t'\rangle^{-\frac{1}{2}(\sigma-\sigma_1+\sigma')}\|F,H\|^\ell_{\dot{B}^{\sigma_1-\sigma'}_{2,\infty}}dt',~~\sigma>\sigma_1, \sigma'\in(0,1].
\end{align*}

Next we divide into two steps to proceed.

\underline{\bf Estimate on the term $F$.}
In this part
 we will show that
\begin{align}\label{6452}
 \int^t_0\langle t-t'\rangle^{-\frac{1}{2}(\sigma-\sigma_1+\sigma')}\|F\|^\ell_{\dot{B}^{\sigma_1-\sigma'}_{2,\infty}}dt' \lesssim \langle t \rangle^{-\frac{1}{2}(\sigma-\sigma_1+\sigma_2)}E_0.
\end{align}
Now, choosing $\sigma'=1$ and noticing $\frac{d}{2}-1\leq \frac{d}{p}$, we have
\begin{align*}
\|\mathbb{P}(u\cdot \nabla u)\|^\ell_{\dot{B}^{\sigma_1-1}_{2,\infty}}&\leq  \|u\otimes u^\ell\|^\ell_{\dot{B}^{\sigma_1}_{2,\infty}}
+\|u\otimes u^h\|^\ell_{\dot{B}^{\sigma_1}_{2,\infty}}.
\end{align*}
Using the inequality \eqref{A6} of Corollary \ref{app5} , we get
\begin{align*}
\|u\otimes u^\ell\|^\ell_{\dot{B}^{\sigma_1}_{2,\infty}}&\leq \|u\otimes u^\ell\|^\ell_{\dot{B}^{\sigma_1+\frac{d}{p}-\frac{d}{2}}_{2,\infty}}\\
&\leq \|u\|_{\dot{B}^{\frac{d}{p}-1}_{p,1}}\|u^\ell\|_{\dot{B}^{\sigma_1+\frac{d}{p}-\frac{d}{2}+1}_{2,\infty}}\\
&\leq \|u\|_{\dot{B}^{\frac{d}{p}-1}_{p,1}}\|u^\ell\|_{\dot{B}^{\sigma_1}_{2,\infty}}\\
&\lesssim E_0\|u^\ell\|_{L^\infty_t(\dot{B}^{\sigma_1}_{2,\infty})}.
\end{align*}
 
If $2\leq p<d$, it follows from Lemma \ref{lemma12} with $s=\frac{d}{p}-1$ that
\begin{align*}
\|u\otimes u^h\|^\ell_{\dot{B}^{\sigma_1}_{2,\infty}}\leq (\|u\|_{\dot{B}^{\frac{d}{p}-1}_{p,1}}+\|u^\ell\|_{L^{p*}})\|u^h\|_{\dot{B}^{1-\frac{d}{p}}_{p,1}} \lesssim E_0^2.
\end{align*}
If $p=d$, one gets 
\begin{align*}
\|u\otimes u^h\|^\ell_{\dot{B}^{\sigma_1}_{2,\infty}}\leq \|u\otimes u^h\|^\ell_{\dot{B}^{\sigma_0}_{2,\infty}}\lesssim  \|u\otimes u^h\|^\ell_{L^{\frac{d}{2}}}\lesssim \|u\|_{\dot{B}^{0}_{d,1}}\|u^h\|_{\dot{B}^{0}_{d,1}},
\end{align*}
which together with $\dot{B}^{\frac{d}{p}}_{2,1}\hookrightarrow L^{p*}$($\frac{1}{p}+\frac{1}{p_*}=\frac12$) and $\dot{B}^{\frac{d}{p}-1}_{p,1}\hookrightarrow \dot{B}^{0}_{d,1}$ gives
$$
\|u\otimes u^h\|^\ell_{\dot{B}^{\sigma_1}_{2,\infty}}\lesssim (\|u^\ell\|_{\dot{B}^{\frac{d}{2}-1}_{2,1}}+\|u^h\|_{\dot{B}^{\frac{d}{p}-1}_{p,1}})\|u^h\|_{\dot{B}^{\frac{d}{p}-1}_{p,1}}\lesssim E_0^2.
$$
If $p>d$, applying the Lemma \ref{lemma12} with $s=1-\frac{d}{p}$ again implies that
$$
\|u\otimes u^h\|^\ell_{\dot{B}^{\sigma_1}_{2,\infty}}\leq (\|u\|_{\dot{B}^{1-\frac{d}{p}}_{p,1}}+\|u^\ell\|_{L^{p*}})\|u^h\|_{\dot{B}^{\frac{d}{p}-1}_{p,1}}.
$$
By using the embedding $\dot{B}^{1+\sigma_0}_{2,1}\hookrightarrow \dot{B}^{1-\frac{d}{p}}_{p,1}$ with  $\frac{d}{2}-1<1+\sigma_0$ and  $p>d$, we obtain
$$
\|u\otimes u^h\|^\ell_{\dot{B}^{\sigma_1}_{2,\infty}}\lesssim (\|u^\ell\|_{\dot{B}^{\frac{d}{2}-1}_{2,1}}+\|u^h\|_{\dot{B}^{\frac{d}{p}}_{p,1}})\|u^h\|_{\dot{B}^{\frac{d}{p}-1}_{p,1}}\lesssim E_0^2.
$$
Noting that $0<t\leq 2$, we then get the desired estimate \eqref{6452}  by using \eqref{negative-bosov-of-utau}.
\newline
\underline{\bf Estimate on the term $H$.} 
In this part, we will show that
\begin{align}\label{6453}
 \int^t_0\langle t-t'\rangle^{-\frac{1}{2}(\sigma-\sigma_1+\sigma')}\|H\|^\ell_{\dot{B}^{\sigma_1-\sigma'}_{2,\infty}}dt' \lesssim \langle t \rangle^{-\frac{1}{2}(\sigma-\sigma_1+\sigma_2)}E_0.
\end{align}
Recalling the definition of $H$ in \eqref{HHH},
and noting that $-u\cdot \nabla \tau=-\partial_j(u_j\tau_{k,l}),$ 
and using a similar argument as the estimate of $F$, we have
\begin{align}
 \|(u\cdot \nabla \tau^\ell)\|^\ell_{\dot{B}^{\sigma_1-1}_{2,\infty}} 
 \leq \|u\|_{\dot{B}^{\frac{d}{p}-1}_{p,1}}\|\tau^\ell\|_{\dot{B}^{\sigma_1}_{2,\infty}}
\lesssim E_0\|\tau^\ell\|_{L^\infty_t(\dot{B}^{\sigma_1}_{2,\infty})},\label{utau-1}
\end{align}
and
\begin{equation}\label{utau-2}
\|u \cdot \tau^h\|^\ell_{\dot{B}^{\sigma_1}_{2,\infty}}\lesssim (\|u^\ell\|_{\dot{B}^{\frac{d}{2}-1}_{2,1}}+\|u^h\|_{\dot{B}^{\frac{d}{p}}_{p,1}})\|\tau^h\|_{\dot{B}^{\frac{d}{p}-1}_{p,1}}\lesssim E_0^2.
\end{equation}
Together \eqref{utau-1} with \eqref{utau-2} gives
\begin{align}
 \|(u\cdot \nabla \tau )\|^\ell_{\dot{B}^{\sigma_1-1}_{2,\infty}} &
\lesssim E_0^2+ E_0\|\tau^\ell\|_{L^\infty_t(\dot{B}^{\sigma_1}_{2,\infty})}.\label{utau-3}
\end{align}
Moreover, we tackle the term $Q(\tau,\nabla u)$, note
\begin{align*}
&\|Q(\tau, \nabla u)\|^\ell_{\dot{B}^{\sigma_1-\sigma'}_{2,\infty}}\\&\leq \|\tau \nabla u\|^\ell_{\dot{B}^{\sigma_1-\sigma'}_{2,\infty}}\\
&\leq \|\tau^\ell  \nabla u^\ell\|^\ell_{\dot{B}^{\sigma_1-\sigma'}_{2,\infty}}+\|\tau^\ell  \nabla u^h\|^\ell_{\dot{B}^{\sigma_1-\sigma'}_{2,\infty}}
+\|(\tau^h    \nabla u )\|^\ell_{\dot{B}^{\sigma_1-\sigma'}_{2,\infty}} .
\end{align*}
{
In the following part, we will estimate the above three terms respectively. By using the estimate \eqref{ap3} of Lemma \ref{app1}, one gets
\begin{align}
\|\tau^\ell\nabla u^\ell\|^\ell_{\dot{B}^{\sigma_1-\sigma'}_{2,\infty}}&\lesssim\|\tau^\ell\|_{\dot{B}^{\frac{d}{2}-1}_{2,1}}\|\nabla u^\ell\|_{\dot{B}^{\sigma_1-\sigma'+1}_{2,\infty}}\nonumber\\
&\lesssim E^2_0,\label{tauu-1}
\end{align}
where  $\sigma_1-\sigma'+1\leq \frac{d}{2}$, $\frac{d}{2}+\sigma_1-\sigma'>0$.

Next we treat the term $\|\tau^\ell\nabla u^h\|^\ell_{\dot{B}^{\sigma_1-\sigma'}_{2,\infty}}$.
If $p>2$,
using the \eqref{A.3} in Corollary \ref{app3} implies
\begin{align}\label{clfw1}
\|\tau^\ell\nabla u^h\|^\ell_{\dot{B}^{\sigma_1-\sigma'}_{2,\infty}}&\lesssim \|\tau^\ell\nabla u^h\|_{\dot{B}^{\sigma_1+\frac{d}{p}-\frac{d}{2}}_{2,\infty}}\nonumber\\
&\lesssim \|\tau^\ell\|_{\dot{B}^{\frac{d}{p}}_{2,1}}\|\nabla u^h\|_{\dot{B}^{\sigma_1+\frac{d}{p}-\frac{d}{2}}_{p,1}}\nonumber\\
&\lesssim \|\tau^\ell\|_{\dot{B}^{\frac{d}{2}-1}_{2,1}}\|\nabla u^h\|_{\dot{B}^{\frac{d}{p}-1}_{p,1}}\nonumber\\
&\lesssim E^2_0,
\end{align}
where $\frac{d}{2}-1\leq \frac{d}{p}$, $0<\sigma'\leq  \frac{d}{2}-\frac{d}{p}$ and  $\sigma'$ is small enough.
  \quad \newline
If $p=2$, 
note that $\sigma_1>\sigma_0$ and $\sigma'$ is small enough, using \eqref{A.4} of Corollary \ref{app3} to give
\begin{align}\label{clfw2}
&\|\tau^\ell\nabla u^h\|^\ell_{\dot{B}^{\sigma_1-\sigma'}_{2,\infty}}\lesssim \|\tau^\ell\nabla u^h\|^\ell_{\dot{B}^{\sigma_0}_{2,\infty}}\nonumber\\
&\lesssim \|\tau^\ell\|_{\dot{B}^{\frac{d}{2}-1}_{2,1}}\|\nabla u^h\|_{\dot{B}^{1-\frac{d}{2}}_{2,1}}\lesssim \|\tau^\ell\|_{\dot{B}^{\frac{d}{2}-1}_{2,1}}\|\nabla u^h\|_{\dot{B}^{\frac{d}{p}}_{p,1}}\lesssim E_0^2.
\end{align}
Combining \eqref{clfw2} with \eqref{clfw1} yields 
\begin{equation}\label{4257}
    \|\tau^\ell\nabla u^h\|^\ell_{\dot{B}^{\sigma_1-\sigma'}_{2,\infty}} \lesssim E_0^2.
\end{equation}

Finally, we turn to deal with the term $\|\tau^h\nabla u \|^\ell_{\dot{B}^{\sigma_1-\sigma'}_{2,\infty}}$. 
If $2\leq p<d$, using Lemma \ref{lemma12} with $s=\frac{d}{p}-1$ yields
\begin{align}\label{4.28}
\|(\tau^h  \nabla u)\|^\ell_{\dot{B}^{\sigma_1-\sigma'}_{2,\infty}}\leq\|(\tau^h  \nabla u)\|^\ell_{\dot{B}^{\sigma_0}_{2,\infty}}\leq (\|\nabla u\|_{\dot{B}^{\frac{d}{p}-1}_{p,1}}+\|(\nabla u)^\ell\|_{L^{p*}})\|\tau^h\|_{\dot{B}^{1-\frac{d}{p}}_{p,1}} \lesssim E_0^2,
\end{align}
where $\sigma_1>\sigma_0$.
\quad \newline
If $p=d$, one gets
\begin{align}\label{4.29}
\|\tau^h \nabla u\|^\ell_{\dot{B}^{\sigma_1-\sigma'}_{2,\infty}}
\leq \|\tau^h \nabla u\|^\ell_{\dot{B}^{\sigma_0}_{2,\infty}}\lesssim  \|\tau^h \nabla u\|^\ell_{L^{\frac{d}{2}}}\lesssim \|\nabla u\|_{\dot{B}^{0}_{d,1}}\|\tau^h\|_{\dot{B}^{0}_{d,1}},
\end{align}
which together with $\dot{B}^{\frac{d}{p}}_{2,1}\hookrightarrow L^{p*}$ and $\dot{B}^{\frac{d}{p}-1}_{p,1}\hookrightarrow \dot{B}^{0}_{d,1}$ yields
\begin{align}\label{4.30}
\|\tau^h \nabla u\|^\ell_{\dot{B}^{\sigma_1-\sigma'}_{2,\infty}}\lesssim (\|u^\ell\|_{\dot{B}^{\frac{d}{2}}_{2,1}}+\|u^h\|_{\dot{B}^{\frac{d}{p}}_{p,1}})\|\tau^h\|_{\dot{B}^{\frac{d}{p}-1}_{p,1}}
\lesssim E^2_0.
\end{align}
If $p>d$, applying Lemma \ref{app2} with $s=1-\frac{d}{p}$ again implies that
\begin{align}\label{4.31}
\|\tau^h \nabla u\|^\ell_{\dot{B}^{\sigma_0}_{2,\infty}}\leq (\|\nabla u\|_{\dot{B}^{1-\frac{d}{p}}_{p,1}}+\|(\nabla u)^\ell\|_{L^{p*}})\|\tau^h\|_{\dot{B}^{\frac{d}{p}-1}_{p,1}}.
\end{align}
By using the embedding $\dot{B}^{1+\sigma_0}_{2,1}\hookrightarrow \dot{B}^{1-\frac{d}{p}}_{p,1}$ with $\frac{d}{2}-1<1+\sigma_0$  and noting that  $p>d$, we obtain
\begin{align}\label{4.32}
\|\tau^h  \nabla u\|^\ell_{\dot{B}^{\sigma_1-\sigma'}_{2,\infty}}&\lesssim (\|u^\ell\|_{\dot{B}^{\frac{d}{2}}_{2,1}}+\|u^h\|_{\dot{B}^{\frac{d}{p}+1}_{p,1}})\|\tau^h\|_{\dot{B}^{\frac{d}{p}-1}_{p,1}}
\lesssim E^2_0.
\end{align}
Combining \eqref{4.28}-\eqref{4.32} implies
\begin{equation}\label{49677}
    \|\tau^h\nabla u \|^\ell_{\dot{B}^{\sigma_1-\sigma'}_{2,\infty}}\lesssim E_0^2.
\end{equation}
Then,  collecting \eqref{utau-3}, \eqref{tauu-1},\eqref{4257}, \eqref{49677} with  \eqref{negative-bosov-of-utau} gives \eqref{6453}.

Finally Lemma \ref{cl1} follows  immediately from  \eqref{6452} and \eqref{6453}} with the help of $\langle t\rangle \approx 1$ and $\langle t-t'\rangle\approx 1$ for $0< t\leq 2$.
\end{proof}

\begin{lemma}\label{cl3}
Let $p$ satisfy \eqref{ppp} and $\sigma_0\leq \sigma_1<\frac{d}{2}-1.$ It holds that
\begin{align*}
\|(\tilde{u},\tilde{\tau})(t)\|^\ell_{\dot{B}^{\sigma}_{2,1}}\leq&\int^t_0\langle t-t'\rangle^{-\frac{1}{2}(\sigma-\sigma_1+\sigma')}\|F,H\|^\ell_{\dot{B}^{\sigma_1-\sigma'}_{2,\infty}}dt'\\
\leq& \langle t \rangle^{-\frac{1}{2}(\sigma-\sigma_1+\min\{\sigma'_2,\sigma''\})}(E_0 +1)\\
&+\langle t \rangle^{-\frac{1}{2}(\sigma-\sigma_1+\min\{\sigma'_2,\sigma''\})}(E_0+\|(u^\ell_L,\tau^\ell_L)\|_{\tilde L^\infty_t(\dot{B}^{\sigma_1+\sigma_2}_{2,\infty})})\tilde{D}_p(t)  \\&+\langle t \rangle^{-\frac{1}{2}(\sigma-\sigma_1+\sigma'')}\langle t\rangle^{-\frac{\min\{\sigma'_2,\sigma''\}}{2}}(\tilde{D}_p(t)+\tilde{D}^2_p(t)  ),
\end{align*}
for $t>2$, $\sigma_1<\sigma\leq \frac{d}{2}-1$ and
\begin{equation*}
\sigma'_2=
		\begin{cases}
			1,\qquad\qquad\qquad \qquad  ~~~\text{if}~\ ~~\sigma_1<\frac{d}{2}-2,\\
			1-,\ \qquad\quad\quad\quad   \quad~\text{if}~\ ~~~~\sigma_1=\frac{d}{2}-2,\\
			\frac{d}{2}-1-\sigma_1,\qquad\qquad~~\text{if}~~\ ~\frac{d}{2}-2<\sigma_1<\frac{d}{2}-1,
		\end{cases}
	\end{equation*}
    for some sufficient small $\sigma''>0$.
\end{lemma}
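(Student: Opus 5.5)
We start from the Duhamel bound \eqref{key ine1}, valid for every $\sigma'\in(0,1]$, and choose $\sigma'$ term by term. We split the time integral at $t/2$:
\[
\int_0^t=\int_0^{t/2}+\int_{t/2}^t .
\]
On $[0,t/2]$ we have $\langle t-t'\rangle\approx\langle t\rangle$, so the kernel gives the factor $\langle t\rangle^{-\frac12(\sigma-\sigma_1+\sigma')}$. It then suffices to show that $\|(F,H)\|^\ell_{\dot B^{\sigma_1-\sigma'}_{2,\infty}}$ is integrable in time, or grows at most like $\langle t'\rangle^{\frac{\sigma'-\sigma_2}{2}}$. On $[t/2,t]$ we use instead the decay of the nonlinearity itself, coming from the weighted functional $\tilde D_p$ and from the linear decay of Proposition~\ref{prop1}. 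There $\langle t'\rangle\approx\langle t\rangle$, and the kernel is integrated with $\sigma'$ chosen so that
\[
\int_{t/2}^t\langle t-t'\rangle^{-\frac12(\sigma-\sigma_1+\sigma')}\,dt'
\]
is at worst $\langle t\rangle^{1-\frac12(\sigma-\sigma_1+\sigma')}$, or logarithmic.

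\textbf{Splitting the nonlinearity.} Write $u=u_L+\tilde u$ and $\tau=\tau_L+\tilde\tau$, and split each factor further into low and high frequencies. We treat the three blocks $F$, $u\cdot\nabla\tau$ and $Q(\tau,\nabla u)$ separately.

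For $F$ and $u\cdot\nabla\tau$, which are in divergence form, we take $\sigma'=1$ and estimate $u\otimes u$ and $u\otimes\tau$ in $\dot B^{\sigma_1}_{2,\infty}$, exactly as in Lemma~\ref{cl1}.
\begin{itemize}
\item For low$\times$low products we use \eqref{A6}: one factor is bounded by $\|(u,\tau)\|^\ell_{\dot B^{\sigma_1}_{2,\infty}}\le C$ (Proposition~\ref{bdd-negative-norm}), and the other is a $\dot B^{d/p-1}_{p,1}$-type norm. That norm decays like $\langle t\rangle^{-\frac12(\frac d2-1-\sigma_1)}$, using the linear part via Proposition~\ref{prop1} and the error part via $\tilde D_p$.
\item For products involving high frequencies we use Lemma~\ref{lemma12}, with the case split $2\le p<d$, $p=d$, $p>d$. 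The high-frequency factors decay like $\langle t\rangle^{-\alpha_*}$, which is faster than any needed rate.
\end{itemize}
The exponent $\frac12(\frac d2-1-\sigma_1)$ explains the three cases for $\sigma_2'$. After combining with the kernel $\langle t-t'\rangle^{-\frac12(\sigma-\sigma_1+1)}$:
\begin{itemize}
\item if $\frac d2-1-\sigma_1>1$, i.e.\ $\sigma_1<\frac d2-2$, the time integral converges and we get the full rate $\sigma_2'=1$;
\item at the endpoint $\sigma_1=\frac d2-2$ a logarithm appears, giving $1-$;
\item otherwise the product only decays at rate $\frac d2-1-\sigma_1$, which becomes $\sigma_2'$.
\end{itemize}
The contributions are linear in $\tilde D_p$ with coefficients $E_0$ or $\|(u_L^\ell,\tau_L^\ell)\|_{\tilde L^\infty_t(\dot B^{\sigma_1+\sigma_2}_{2,\infty})}$, plus quadratic $\tilde D_p^2$ terms from $\tilde u\otimes\tilde u$. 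This yields the stated form of the bound.

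\textbf{The term $Q(\tau,\nabla u)$ (main obstacle).} This block has no divergence structure. The pieces $\tau^\ell\nabla u^\ell$ and $\tau^h\nabla u$ are handled by \eqref{ap3} and Lemma~\ref{lemma12} with a small $\sigma'=\sigma''$, as in \eqref{tauu-1} and \eqref{49677}; the rates come from the decay of $\nabla u^\ell$ in $\dot B^{\sigma_1-\sigma''+1}_{2,\infty}$ and the exponential-type decay of high frequencies.

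The delicate piece is $\tau^\ell\nabla\tilde u^h$, since $\|\nabla\tilde u^h\|_{\dot B^{d/p-1}_{p,1}}$ does not decay fast enough. We use the decomposition
\[
\tau^\ell_{ik}\partial_k\tilde u^h_j=\partial_k(\tau^\ell_{ik}\tilde u^h_j)-\partial_k\tau^\ell_{ik}\,\tilde u^h_j .
\]
\begin{itemize}
\item For the first term we take $\sigma'=1$ and estimate $\tau^\ell\tilde u^h$ in $\dot B^{\sigma_1}_{2,\infty}$. This uses $\|\tilde u^h\|_{\dot B^{d/p}_{p,1}}\lesssim\langle t\rangle^{-\alpha_*}\tilde D_p$ together with $\|\tau^\ell\|_{\dot B^{d/2-1}_{2,1}}\lesssim E_0$.
\item For the second term we take $\sigma'=\sigma''$ small and put $\nabla\tau^\ell$ in $\dot B^{d/2-1}_{2,1}$, which decays like $\langle t\rangle^{-\frac12(\frac d2-\sigma_1)}$.
\end{itemize}
The product of the two decays produces the extra factor $\langle t\rangle^{-\frac12(\sigma-\sigma_1+\sigma'')}\langle t\rangle^{-\min\{\sigma_2',\sigma''\}/2}$ multiplying $\tilde D_p+\tilde D_p^2$.

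I expect the bookkeeping in the range $p>2$ to be the hardest part, specifically keeping $\sigma''\le\frac d2-\frac dp$ while checking that every time integral of the form
\[
\int_0^t\langle t-t'\rangle^{-a}\langle t'\rangle^{-b}\,dt'
\]
satisfies $a+b>1$, or $a$ or $b>1$, as required by the claimed exponent. Collecting all the pieces gives the lemma.
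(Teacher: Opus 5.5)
There is a genuine gap in your estimate of the divergence-form terms $\mathbb{P}(u\cdot\nabla u)$ and $u\cdot\nabla\tau$ at low$\times$low frequencies.

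You pair the factors via \eqref{A6} and bound one of them by the uniform, non-decaying quantity $\|(u,\tau)\|^\ell_{\dot B^{\sigma_1}_{2,\infty}}\le C$. Then the forcing $\|u^\ell\otimes u^\ell\|^\ell_{\dot B^{\sigma_1}_{2,\infty}}$ decays only like $\langle t\rangle^{-\frac12(\frac d2-1-\sigma_1)}$. This includes the purely linear piece $u^\ell_L\otimes u^\ell_L$, which carries no $\tilde D_p$ and must produce the $(E_0+1)$ term with a genuine gain.

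If the forcing is only known to be $\lesssim\langle t'\rangle^{-b}$, the part $t'\in[t-1,t]$ of the Duhamel integral is already of size $\langle t\rangle^{-b}$. So at $\sigma=\frac d2-1$ you cannot do better than $\langle t\rangle^{-\frac12(\frac d2-1-\sigma_1)}=\langle t\rangle^{-\frac12(\sigma-\sigma_1)}$. That is zero gain, whereas the lemma claims a gain $\min\{\sigma_2',\sigma''\}>0$.

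Your derivation of the three cases for $\sigma_2'$ from the exponent $\frac12(\frac d2-1-\sigma_1)$ drops the factor $\frac12$. By \eqref{key inequality 1}, integrability of $\langle t'\rangle^{-\frac12(\frac d2-1-\sigma_1)}$ requires $\sigma_1<\frac d2-3$, not $\sigma_1<\frac d2-2$. In the non-integrable case the resulting gain is at most $\frac d2-2-\sigma_1$, which is nonpositive on the whole range $\frac d2-2\le\sigma_1<\frac d2-1$. The thresholds in your three cases agree with the lemma only because this slip cancels the lost factor $\langle t\rangle^{-1/2}$.

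The missing idea is that for $t>1$ both low-frequency factors must be placed in decaying norms.

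\begin{itemize}
\item \textbf{Linear piece.} The paper applies \eqref{ap3} with $s_1=\frac d2-1$, $s_2=\sigma_1+1$. This gives $\|u^\ell_L\otimes u^\ell_L\|_{\dot B^{\sigma_1}_{2,\infty}}\lesssim\|u^\ell_L\|_{\dot B^{\frac d2-1}_{2,1}}\|u^\ell_L\|_{\dot B^{\sigma_1+1}_{2,1}}\lesssim\langle t\rangle^{-\frac12(\frac d2-\sigma_1)}$. Comparing $\frac12(\frac d2-\sigma_1)$ with $1$ in \eqref{AA10} is what produces the three values of $\sigma_2'$. The hypothesis $\frac12(\sigma-\sigma_1+1)\le\frac12(\frac d2-\sigma_1)$ of \eqref{key inequality 1} is exactly $\sigma\le\frac d2-1$, which is why the lemma is restricted to that range.
\item \textbf{Mixed pieces.} For products of $\tilde u^\ell$ with $u^\ell_L$ or $u^\ell$, the paper uses $\|u^\ell\|_{\dot B^{\sigma_1+\sigma_2}_{2,\infty}}\|\tilde u^\ell\|_{\dot B^{\frac d2-\sigma_2}_{2,1}}\lesssim\langle t\rangle^{-\frac12(\frac d2-\sigma_1)}\|u^\ell\|_{\dot B^{\sigma_1+\sigma_2}_{2,\infty}}\tilde D_p(t)$. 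This is where the coefficient $\|(u^\ell_L,\tau^\ell_L)\|_{\tilde L^\infty_t(\dot B^{\sigma_1+\sigma_2}_{2,\infty})}$ in the statement comes from. Your $\dot B^{\sigma_1}_{2,\infty}\times\dot B^{\frac dp-1}_{p,1}$ pairing would give only $\langle t\rangle^{-\frac12(\frac d2-1-\sigma_1+\sigma_2)}\tilde D_p(t)$, losing a factor $\langle t\rangle^{-\frac12(1-\sigma_2)}$.
\end{itemize}

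The uniform $\dot B^{\sigma_1}_{2,\infty}$ bound is adequate only on bounded time intervals, as in Lemma~\ref{cl1}. Your treatment of $Q(\tau,\nabla u)$ follows the paper, including the splitting of $\tau^\ell\nabla\tilde u^h$ with $\sigma'=1$ for one piece and small $\sigma'$ for the other. Splitting time at $t/2$ instead of at $t'=1$ is immaterial.
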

\begin{proof}
For $t>2$ and $\sigma'\in(0,1]$, we write
\begin{align*}
 &\int^t_0\langle t-t'\rangle^{-\frac{1}{2}(\sigma-\sigma_1+\sigma')}\|F,H\|^\ell_{\dot{B}^{\sigma_1-\sigma'}_{2,\infty}}dt'\\
&=\int^1_0\langle t-t'\rangle^{-\frac{1}{2}(\sigma-\sigma_1+\sigma')}\|F,H\|^\ell_{\dot{B}^{\sigma_1-\sigma'}_{2,\infty}}dt'+\int^t_1\langle t-t'\rangle^{-\frac{1}{2}(\sigma-\sigma_1+\sigma')}\|F,H\|^\ell_{\dot{B}^{\sigma_1-\sigma'}_{2,\infty}}dt'.
\end{align*}
We here only handle the integral in $[1,t]$ and the case of integral in  $[0,1]$ follows from the estimates in Lemma \ref{cl1}. 

Noticing that $F+H=F_L+\tilde{F}_L+\tilde{F}$ 
with
\begin{align}
F_L=&-\mathbb{P}(u_L\cdot \nabla u_L)-u_L\cdot \nabla \tau_L-Q(\tau_L,\nabla u_L),
\\
    \tilde{F}_L=&-\mathbb{P}(\tilde{u}\cdot \nabla u_L)-\tilde{u}\cdot \nabla \tau_L-Q(\tilde{\tau},\nabla u_L),\label{FF-1}\\
    \tilde{F}=&-\mathbb{P}(u\cdot \nabla\tilde{ u})-u\cdot \nabla \tilde{\tau}-Q(\tau,\nabla \tilde{u}).\label{FF-2}
\end{align}
Then  we will bound the following  three terms respectively,
\begin{equation}
    \begin{aligned}
        \mathcal{I}_1=&\int^t_1\langle t-t'\rangle^{-\frac{1}{2}(\sigma-\sigma_1+\sigma')}\|F_L\|^\ell_{\dot{B}^{\sigma_1-\sigma'}_{2,\infty}}dt',\\
        \mathcal{I}_2=&\int^t_1\langle t-t'\rangle^{-\frac{1}{2}(\sigma-\sigma_1+\sigma')}\|\tilde F_L\|^\ell_{\dot{B}^{\sigma_1-\sigma'}_{2,\infty}}dt',\\
        \mathcal{I}_3=&\int^t_1\langle t-t'\rangle^{-\frac{1}{2}(\sigma-\sigma_1+\sigma')}\|\tilde F \|^\ell_{\dot{B}^{\sigma_1-\sigma'}_{2,\infty}}dt'.
    \end{aligned}
\end{equation}
\underline{\bf Estimate of $\mathcal{I}_1$}.
Let  $\sigma'=1$, 
following a similar estimate as \eqref{4.28}-\eqref{4.32}, we have
\begin{align*}
\|\mathbb{P}(u_L\otimes u_L)\|^\ell_{\dot{B}^{\sigma_1}_{2,\infty}}&\leq \|\mathbb{P}(u^\ell_L\otimes  u^\ell_L)\|^\ell_{\dot{B}^{\sigma_1}_{2,\infty}}
+\|\mathbb{P}(u^h_L\otimes  u^\ell_L)\|^\ell_{\dot{B}^{\sigma_1}_{2,\infty}}+\|\mathbb{P}(u_L\otimes  u^h_L)\|^\ell_{\dot{B}^{\sigma_1}_{2,\infty}}\\
&\leq \|\mathbb{P}(u^\ell_L\otimes u^\ell_L)\|^\ell_{\dot{B}^{\sigma_1}_{2,\infty}}
+\|\mathbb{P}(u^h_L\otimes u^\ell_L)\|^\ell_{\dot{B}^{\frac{d}{p}-\frac{d}{2}+\sigma_1}_{2,\infty}}+\|\mathbb{P}(u_L\otimes  u^h_L)\|^\ell_{\dot{B}^{\sigma_1}_{2,\infty}}\\
&\leq \|u^\ell_L\|_{\dot{B}^{\frac{d}{2}-1}_{2,1}}\|u^\ell_L\|_{\dot{B}^{\sigma_1+1}_{2,1}}+\|u^h_L\|_{\dot{B}^{\frac{d}{p}-1}_{p,1}}\|u^\ell_L\|_{\dot{B}^{\frac{d}{2}-1}_{2,1}}\\
&\ \ \ +
(\|u^\ell_L\|_{\dot{B}^{\frac{d}{2}-1}_{2,1}}+\|u^h_L\|_{\dot{B}^{\frac{d}{p}}_{p,1}})\|u^h_L\|_{\dot{B}^{\frac{d}{p}-1}_{p,1}}\\
&\lesssim (1+E_0)\langle t \rangle^{-\frac{1}{2}(\frac{d}{2}-\sigma_1)},~~~t>1,
\end{align*}
where we have used the fact in \eqref{ap3}, \eqref{A.3}, Proposition \ref{prop1} and the exponential decay of high frequencies of $(u^h_L,\tau^h_L)$. 
\par
Similarly, we have
\begin{align*}
\|u_L\cdot\nabla\tau_L\|^\ell_{\dot{B}^{\sigma_1-1}_{2,\infty}}&=\|u_L\cdot  \tau_L\|^\ell_{\dot{B}^{\sigma_1}_{2,\infty}}\\&\lesssim \|u^\ell_L\|_{\dot{B}^{\frac{d}{2}-1}_{2,1}}\|\tau^\ell_L\|_{\dot{B}^{\sigma_1+1}_{2,1}}
+\|u^h_L\|_{\dot{B}^{\frac{d}{p}-1}_{p,1}}\|\tau^\ell_L\|_{\dot{B}^{\frac{d}{2}-1}_{2,1}}\\
&\ \ \ \ \ +
(\|u^\ell_L\|_{\dot{B}^{\frac{d}{2}-1}_{2,1}}+\|u^h_L\|_{\dot{B}^{\frac{d}{p}}_{p,1}})\|\tau^h_L\|_{\dot{B}^{\frac{d}{p}-1}_{p,1}}\\
&\lesssim (1+E_0)\langle t \rangle^{-\frac{1}{2}(\frac{d}{2}-\sigma_1)},~~~t>1.
\end{align*}
{
Next, we tackle the term  $Q(\tau_L,\nabla u_L)$. 
Using \eqref{ap3} of Lemma \ref{app1} yields
\begin{align*} 
        \|\tau^\ell_L\nabla u^\ell_L\|^\ell_{\dot{B}^{\sigma_1-\sigma'}_{2,\infty}}\lesssim\|\tau^\ell_L\|_{\dot{B}^{\frac{d}{2}-\frac{\sigma'}{2}}_{2,1}}\|\nabla u^\ell_L\|_{\dot{B}^{\sigma_1-\frac{\sigma'}{2}}_{2,\infty}}
\lesssim \langle t \rangle^{-\frac{1}{2}(\frac{d}{2}+1-\sigma_1-\sigma')},
\end{align*}
where $\sigma_1-\frac{\sigma'}{2}\leq \frac{d}{2}$
and $\frac{d}{2}+\sigma_1-\sigma'\geq 0$.
\par
We then handle the term
$\|\tau^\ell_L\nabla u^h_L\|^\ell_{\dot{B}^{\sigma_1-\sigma'}_{2,\infty}}$.
If $p>2$, using the \eqref{A.3} of Corollary \ref{app3}, we get
\begin{align*}
&\|\tau^\ell_L\nabla u^h_L\|^\ell_{\dot{B}^{\sigma_1-\sigma'}_{2,\infty}}\lesssim \|\tau^\ell_L\nabla u^h_L\|_{\dot{B}^{\sigma_1+\frac{d}{p}-\frac{d}{2}}_{2,\infty}} \lesssim \|\tau^\ell_L\|_{\dot{B}^{\frac{d}{2}-1}_{2,1}}\|\nabla u^h_L\|_{\dot{B}^{\frac{d}{p}-1}_{p,1}}\lesssim (1+E_0)\langle t \rangle^{-\frac{1}{2}(\frac{d}{2}-\sigma_1)},
\end{align*}
where    $0<\sigma'\leq  \frac{d}{2}-\frac{d}{p}$ and  $ \sigma'$ is sufficiently small. \quad \newline 
If  $p=2$, noting that $\sigma_1>\sigma_0$,  one has
\begin{align*}
\|\tau^\ell_L\nabla u^h_L\|^\ell_{\dot{B}^{\sigma_1-\sigma'}_{2,\infty}}\lesssim \|\tau^\ell_L\nabla u^h_L\|_{\dot{B}^{\sigma_0}_{2,\infty}}
\lesssim \|\tau^\ell_L\|_{\dot{B}^{\frac{d}{2}-1}_{2,1}}\|\nabla u^h_L\|_{\dot{B}^{1-\frac{d}{2}}_{2,1}}\lesssim (1+E_0)\langle t \rangle^{-\frac{1}{2}(\frac{d}{2}-\sigma_1)},
\end{align*}
where we have used  \eqref{A.4} and the  smallness of $ \sigma'$.
\par
We now turn to estimate
$
\|\tau^h_L\nabla u_L\|^\ell_{\dot{B}^{\sigma_1-\sigma'}_{2,\infty}}.
$
If $2\leq p<d$ and let $ \sigma'=2\sigma''$ is small enough,  using Lemma \ref{app2} with $s=\frac{d}{p}-1$ yields
\begin{align*}
\|(\tau^h_L  \nabla u_L)\|^\ell_{\dot{B}^{\sigma_1-\sigma'}_{2,\infty}}&\leq\|(\tau^h_L  \nabla u_L)\|^\ell_{\dot{B}^{\sigma_0}_{2,\infty}}\\&\leq (\|\nabla u_L\|_{\dot{B}^{\frac{d}{p}-1}_{p,1}}+\|(\nabla u_L)^\ell\|_{L^{p*}})\|\tau^h_L\|_{\dot{B}^{1-\frac{d}{p}}_{p,1}},\\&\lesssim
 (1+E_0)\langle t \rangle^{-\frac{1}{2}(\frac{d}{2}-\sigma_1)}
\end{align*}
where $\sigma_1>\sigma_0$ and $1-\frac{d}{p}\leq \frac{d}{p}+2$.
\newline 
If $p=d$, one get 
\begin{align*}
&\|(\tau^h_L  \nabla u_L)\|^\ell_{\dot{B}^{\sigma_1-\sigma'}_{2,\infty}} 
 \leq \|(\tau^h_L  \nabla u_L)\|^\ell_{\dot{B}^{\sigma_0}_{2,\infty}}\lesssim  \|(\tau^h_L  \nabla u_L)\|^\ell_{L^{\frac{d}{2}}}\lesssim \|\nabla u_L\|_{\dot{B}^{0}_{d,1}}\|\tau^h_L\|_{\dot{B}^{0}_{d,1}}.
\end{align*}

Employing the embedding $\dot{B}^{\frac{d}{p}}_{2,1}\hookrightarrow L^{p*}$  gives
$$
\|(\tau^h_L  \nabla u_L)\|^\ell_{\dot{B}^{\sigma_1}_{2,\infty}}\lesssim (\|u^\ell_L\|_{\dot{B}^{\frac{d}{2}}_{2,1}}+\|u^h_L\|_{\dot{B}^{\frac{d}{p}}_{p,1}})\|\tau^h_L\|_{\dot{B}^{\frac{d}{p}-1}_{p,1}}.
$$
If $p>d$, applying Lemma \ref{app2} with $s=1-\frac{d}{p}$ again implies that
$$
\|(\tau^h_L  \nabla u_L)\|^\ell_{\dot{B}^{\sigma_0}_{2,\infty}}\leq (\|\nabla u_L\|_{\dot{B}^{1-\frac{d}{p}}_{p,1}}+\|(\nabla u_L)^\ell\|_{L^{p*}})\|\tau^h_L\|_{\dot{B}^{\frac{d}{p}-1}_{p,1}},
$$
by using the embedding $\dot{B}^{1+\sigma_0}_{2,1}\hookrightarrow \dot{B}^{1-\frac{d}{p}}_{p,1}$ with $\frac{d}{2}-1<1+\sigma_0$ and noting that $p>d$, we obtain
$$
\|(\tau^h_L  \nabla u_L)\|^\ell_{\dot{B}^{\sigma_1}_{2,\infty}}\lesssim (\|u^\ell_L\|_{\dot{B}^{\frac{d}{2}}_{2,1}}+\|u^h_L\|_{\dot{B}^{\frac{d}{p}+1}_{p,1}})\|\tau^h_L\|_{\dot{B}^{\frac{d}{p}-1}_{p,1}} \lesssim  (1+E_0)\langle t \rangle^{-\frac{1}{2}(\frac{d}{2}-\sigma_1)}.
$$


It is noted that 
\begin{align*}
&\int^t_0\langle t-t'\rangle^{-\frac{1}{2}(\sigma-\sigma_1+2\sigma'')}\langle t' \rangle^{-\frac{1}{2}(\frac{d}{2}+1-\sigma_1-2\sigma'')}dt'\\
&\leq \langle t\rangle^{-\frac{1}{2}(\sigma-\sigma_1+2\sigma'')},  \ \text{if}\  \frac{d}{2}+1-\sigma_1-2\sigma''> 2,
\end{align*}
 and
\begin{align}\label{AA10}
&\int^t_0 \langle t-t'\rangle^{-\frac{1}{2}(\sigma-\sigma_1+1)}\langle t \rangle^{-\frac{1}{2}(\frac{d}{2}-\sigma_1)}dt'\\
&
\lesssim 
\begin{cases}
\langle t \rangle^{-\frac{1}{2}(\sigma-\sigma_1+1)},~~\ \ \qquad \text{if}~\ \frac{1}{2}(\frac{d}{2}-\sigma_1)>1,\nonumber\\
\langle t \rangle^{-\frac{1}{2}(\sigma-\sigma_1+1)-},~~\qquad \text{if}~\ \frac{1}{2}(\frac{d}{2}-\sigma_1)=1,\nonumber\\
\langle t \rangle^{-\frac{1}{2}(\sigma-\sigma_1+\frac{d}{2}-1-\sigma_1)}, ~~\text{if}~\ \frac{1}{2}(\frac{d}{2}-\sigma_1)<1,
\end{cases}
\end{align}
for $\sigma_1< \sigma\leq\frac{d}{2}-1$.
As a result, we obtain the desired upper bound of $\mathcal{I}_1$ by
\begin{equation}\label{upper-1}
    \langle t \rangle^{-\frac{1}{2}(\sigma-\sigma_1+\min\{\sigma'_2,2\sigma''\})}(1+E_0 ).
\end{equation}} 
\underline{\bf Estimate of $\mathcal{I}_2$}.
Fristly, direct calculation shows
\begin{align*}
\| \mathbb{P}(\tilde{u}\cdot  \nabla u_L)\|^\ell_{\dot{B}^{\sigma_1-1}_{2,\infty}}&\leq \|\mathbb{P}(\tilde{u}^\ell\otimes  u^\ell_L)\|^\ell_{\dot{B}^{\sigma_1}_{2,\infty}}
+\|\mathbb{P}(\tilde{u}^\ell\otimes   u^h_L)\|^\ell_{\dot{B}^{\sigma_1}_{2,\infty}}+\|\mathbb{P}(\tilde{u}^h\otimes  u_L)\|^\ell_{\dot{B}^{\sigma_1}_{2,\infty}}.
\end{align*}
Using the Lemma \ref{app1}, we have
\begin{align*}
\|\mathbb{P}(\tilde{u}^\ell\otimes  u^\ell_L)\|^\ell_{\dot{B}^{\sigma_1}_{2,\infty}}\lesssim \|u^\ell_L\|_{\dot{B}^{\sigma_1+\sigma_2}_{2,\infty}}
\|\tilde{u}^\ell\|_{\dot{B}^{\frac{d}{2}-\sigma_2}_{2,1}}\lesssim \langle t \rangle^{-\frac{1}{2}(\frac{d}{2}-\sigma_1)}\|u^\ell_L\|_{\dot{B}^{\sigma_1+\sigma_2}_{2,\infty}}\tilde{D}_p(t).
\end{align*}
Arguing as inequality \eqref{clfw1}  yields
\begin{align*}
\|\mathbb{P}(\tilde{u}^\ell\otimes   u^h_L)\|^\ell_{\dot{B}^{\sigma_1}_{2,\infty}}&\lesssim \| u^h_L\|_{\dot{B}^{\frac{d}{p}-1}_{p,1}}\|\tilde{u}^\ell\|_{\dot{B}^{\frac{d}{2}-1}_{2,1}}\lesssim  \langle t \rangle^{-\frac{1}{2}(\frac{d}{2}-\sigma_1)}E_0\tilde{D}_p(t),
\end{align*}
where we have used $\langle t \rangle^{-\frac{1}{2}(\frac{d}{2}-1-\sigma_1+\sigma_2)}e^{-Ct}\leq \langle t\rangle^{-\frac{1}{2}(\frac{d}{2}-\sigma_1)} $.
\newline
Similar to  the process of  \eqref{4.28}-\eqref{4.32}, one has
\begin{align*}
\|\mathbb{P}(\tilde{u}^h\otimes u_L)\|^\ell_{\dot{B}^{\sigma_1}_{2,\infty}}&\lesssim (\| u^\ell_L\|_{\dot{B}^{\frac{d}{2}-1}_{2,1}}+\| u^h_L\|_{\dot{B}^{\frac{d}{p}}_{p,1}})\|\tilde{u}^h\|_{\dot{B}^{\frac{d}{p}-1}_{p,1}}\\
&\lesssim   \langle t \rangle^{-\frac{1}{2}(\frac{d}{2}-\sigma_1)}E_0\tilde{D}_p(t),
\end{align*}
for $t>1$. We thus obtain
$$
\|\mathbb{P}(\tilde{u}\otimes   u_L)\|^\ell_{\dot{B}^{\sigma_1}_{2,\infty}}\lesssim \langle t \rangle^{-\frac{1}{2}(\frac{d}{2}-\sigma_1)}(E_0+\|u^\ell_L\|_{\dot{B}^{\sigma_1+\sigma_2}_{2,\infty}})\tilde{D}_p(t).
$$

Secondly, we have
\begin{align*}
\|(\tilde{u}\cdot  \tau_L)\|^\ell_{\dot{B}^{\sigma_1}_{2,\infty}}&\leq \|(\tilde{u}^\ell\cdot  \tau^\ell_L)\|^\ell_{\dot{B}^{\sigma_1}_{2,\infty}}
+\|(\tilde{u}^\ell\cdot \tau^h_L)\|^\ell_{\dot{B}^{\sigma_1}_{2,\infty}}+\|(\tilde{u}^h\cdot  \tau_L)\|^\ell_{\dot{B}^{\sigma_1}_{2,\infty}}\\
&\lesssim \langle t \rangle^{-\frac{1}{2}(\frac{d}{2}-\sigma_1)}(E_0+\|\tau^\ell_L\|_{\dot{B}^{\sigma_1+\sigma_2}_{2,\infty}})\tilde{D}_p(t).
\end{align*}
Noticing the fact  in \eqref{AA10}
for $\sigma_1< \sigma\leq\frac{d}{2}-1$. Thus, we  obtain 
\begin{equation}\label{pu87}
    \begin{aligned}
      &\int^t_1\langle t-t'\rangle^{-\frac{1}{2}(\sigma-\sigma_1+\sigma')}\|(\mathbb{P}(\tilde{u}\cdot \nabla u_L),\tilde{u}\cdot \nabla \tau_L)\|^\ell_{\dot{B}^{\sigma_1-\sigma'}_{2,\infty}}dt'\\
     & \lesssim \langle t \rangle^{-\frac{1}{2}(\sigma-\sigma_1+\sigma_2')}(E_0+\|(u^\ell_L,\tau^\ell_L)\|_{\dot{B}^{\sigma_1+\sigma_2}_{2,\infty}})\tilde{D}_p(t) .
    \end{aligned}
\end{equation}

Finally, we treat the term
\begin{align*}
\|Q(\tilde{\tau},\nabla u_L)\|^\ell_{\dot{B}^{\sigma_1-\sigma'}_{2,\infty}}.
\end{align*}
We write the typical term of $Q(\tilde{\tau},\nabla u_L)$ by
\begin{equation*}
    \tilde{\tau}  \nabla u_L= \tilde{\tau}^\ell \nabla u^\ell_L+\tilde{\tau}^\ell \nabla u^h_L+\tilde{\tau}^h\nabla u_L
\end{equation*}
Direct calculation shows
\begin{align*}
\|\tilde{\tau}^\ell\nabla u^\ell_L\|^\ell_{\dot{B}^{\sigma_1-\sigma'}_{2,\infty}}\lesssim\|\tilde{\tau}^\ell\|_{\dot{B}^{\frac{d}{2}-1}_{2,1}}\|\nabla u^\ell_L\|_{\dot{B}^{\sigma_1-\sigma'+1}_{2,\infty}}\lesssim \langle t \rangle^{-\frac{1}{2}(\frac{d}{2}+1-\sigma_1-\sigma')}\tilde{D}_p(t),
\end{align*}
which and  the similar process in \eqref{clfw1} and \eqref{clfw2} implies 
\begin{align*}
\|\tilde{\tau}^\ell\nabla u^h_L\|^\ell_{\dot{B}^{\sigma_1-\sigma'}_{2,\infty}}&\leq \|\tilde{\tau}^\ell\|_{\dot{B}^{\frac{d}{2}-1}_{2,1}}\|\nabla u^h_L\|_{\dot{B}^{\frac{d}{p}-1}_{p,1}}+\|\tilde{\tau}^\ell\|_{\dot{B}^{\frac{d}{2}-1}_{2,1}}\|\nabla u^h_L\|_{\dot{B}^{1-\frac{d}{2}}_{2,1}}\\
&\lesssim \|\tilde{\tau}^\ell\|_{\dot{B}^{\frac{d}{2}-1}_{2,1}}\|\nabla u^h_L\|_{\dot{B}^{\frac{d}{p}-1}_{p,1}},
\end{align*}
and
\begin{align*}
\| \tilde{\tau}^h \nabla u_L \|^\ell_{\dot{B}^{\sigma_1-\sigma'}_{2,\infty}} &\lesssim (\| u^\ell_L\|_{\dot{B}^{\frac{d}{2}}_{2,1}}+\| u^h_L\|_{\dot{B}^{\frac{d}{p}+1}_{p,1}})\|\tilde{\tau}^h\|_{\dot{B}^{\frac{d}{p}-1}_{p,1}}\\
&\lesssim \langle t \rangle^{-\frac{1}{2}(d-2\sigma_1+\sigma_2)}\tilde{D}_p(t)+ \langle t \rangle^{-\frac{1}{2}(d-2\sigma_1+\sigma_2)}E_0.
\end{align*}
As a result, we obtain 
\begin{equation*}
    \|Q(\tilde{\tau},\nabla u_L)\|^\ell_{\dot{B}^{\sigma_1-\sigma'}_{2,\infty}} \lesssim  \langle t \rangle^{-\frac{1}{2}(\frac{d}{2}+1-\sigma_1-\sigma')}\tilde{D}_p(t)+\langle t \rangle^{-\frac{1}{2}(d-2\sigma_1+\sigma_2)}\tilde{D}_p(t)+ \langle t \rangle^{-\frac{1}{2}(d-2\sigma_1+\sigma_2)}E_0.
\end{equation*}
Moreover,  one has   
\begin{align*}
&\int^t_0\langle t-t'\rangle^{-\frac{1}{2}(\sigma-\sigma_1+2\sigma'')}\langle t' \rangle^{-\frac{1}{2}(d-2\sigma_1+\sigma_2)}E_0dt'\\
&\leq \langle t\rangle^{-\frac{1}{2}(\sigma-\sigma_1+2\sigma'')}E_0
\end{align*}
and
\begin{align*}
&\int^t_0\langle t-t'\rangle^{-\frac{1}{2}(\sigma-\sigma_1+2\sigma'')}\langle t' \rangle^{-\frac{1}{2}(\frac{d}{2}+1-\sigma_1-\sigma')}\tilde{D}_p(t)dt'\\
&\leq \langle t\rangle^{-\frac{1}{2}(\sigma-\sigma_1+2\sigma'')}\tilde{D}_p(t).
\end{align*}
Together with \eqref{pu87}, we hence obtain the upper bound of $\mathcal{I}_2$ by
\begin{equation}\label{upper-2}
    \langle t \rangle^{-\frac{1}{2}(\sigma-\sigma_1+\min\{\sigma'_2,\sigma''\})}(E_0 \tilde{D}_p+\langle t\rangle^{-\frac{\sigma''}{2}}\tilde{D}_p+\|(u^\ell_L,\tau^\ell_L)\|_{L^\infty_t(\dot{B}^{\sigma_1+\sigma_2}_{2,\infty})}\tilde{D}_p).
\end{equation}

\underline{\bf Estimate of $\mathcal{I}_3$}.
The treatment of $\tilde{F}$ is similar as that of $\tilde{F}_L$. By using the fact 
\begin{equation*}
    u \otimes\tilde u =u^\ell \otimes \tilde{ u}^\ell+u^h\otimes \tilde{u}^\ell+ u \otimes \tilde{ u}^h
\end{equation*}
then, we can compute
\begin{align*}
\|\mathbb{P}(u\otimes \tilde{ u})\|^\ell_{\dot{B}^{\sigma_1}_{2,\infty}}
\leq& \| u^\ell\|_{\dot{B}^{\sigma_1+\sigma_2}_{2,\infty}}
\|\tilde{u}^\ell\|_{\dot{B}^{\frac{d}{2}-\sigma_2}_{2,1}}+\| u^h\|_{\dot{B}^{\frac{d}{p}-1}_{p,1}}\|\tilde{u}^\ell\|_{\dot{B}^{\frac{d}{2}-1}_{2,1}}\\
& +(\| u^\ell\|_{\dot{B}^{\frac{d}{2}-1}_{2,1}}+\| u^h\|_{\dot{B}^{\frac{d}{p}}_{p,1}})\|\tilde{u}^h\|_{\dot{B}^{\frac{d}{p}-1}_{p,1}}\\
\lesssim& \langle t \rangle^{-\frac{1}{2}(\frac{d}{2}-\sigma_1)}\|u^\ell\|_{\dot{B}^{\sigma_1+\sigma_2}_{2,\infty}}\tilde{D}_p(t)+\langle t \rangle^{-\frac{1}{2}(\frac{d}{2}-\sigma_1)}E_0\tilde{D}_p(t)\\
&+\langle t \rangle^{-\frac{1}{2}(\frac{d}{2}-\sigma_1)}E_0,
\end{align*}
where we used the fact $u^h=u^h_L+\tilde{u}^h$.

Similarly, we have 
\begin{align*}
\|u\cdot  \tilde{\tau}\|^\ell_{\dot{B}^{\sigma_1}_{2,\infty}}&\lesssim \langle t \rangle^{-\frac{1}{2}(\frac{d}{2}-\sigma_1)}\|u^\ell\|_{\dot{B}^{\sigma_1+\sigma_2}_{2,\infty}}\tilde{D}_p(t)+\langle t \rangle^{-\frac{1}{2}(\frac{d}{2}-\sigma_1)}E_0\tilde{D}_p(t) +\langle t \rangle^{-\frac{1}{2}(\frac{d}{2}-\sigma_1)}E_0.
\end{align*}

\par
Finally, we tackle the term
$
\|Q(\tau,\nabla \tilde{u})\|^\ell_{\dot{B}^{\sigma_1-\sigma'}_{2,\infty}}.
$
We write the typical term of $Q(\tau,\nabla \tilde{u})$  by
    \begin{equation*}
        \tau \nabla \tilde{u}=\tau^\ell\nabla \tilde{u}^h+  \tau\nabla \tilde{u}^\ell +  \tau^h \nabla \tilde{u}^h
    \end{equation*}
and proceed with the following three steps.
 
{\bf Step I}.
Note that
$$
\tau^\ell\nabla \tilde{u}^h=\tau_{ik}^\ell\partial_k\tilde{u}_j^h=\partial_k(\tau_{ik}^\ell\tilde{u}_j^h )-\partial_k\tau_{ik}^\ell\tilde{u}_j^h,
$$
similar to \eqref{clfw1}, for the first term, by choosing $\sigma'=1$, we obtain 
\begin{align*}
&\int^t_0\langle t-t'\rangle^{-\frac{1}{2}(\sigma-\sigma_1+1)}\|{\tau}^\ell \cdot \tilde{u}^h\|^\ell_{\dot{B}^{\sigma_1}_{2,\infty}}dt'\\
&\lesssim \int^t_0\langle t-t'\rangle^{-\frac{1}{2}(\sigma-\sigma_1+1)}(\|\tilde{u}^h\|_{\dot{B}^{\frac{d}{p}-1}_{p,1}}\|\tau^\ell\|_{\dot{B}^{\frac{d}{2}-1}_{2,1}})dt'\\
&\lesssim \int^t_0\langle t-t'\rangle^{-\frac{1}{2}(\sigma-\sigma_1+1)}\langle t' \rangle^{-\frac{1}{2}(\frac{d}{2}-\sigma_1)}E_0\tilde{D}_p(t).
\end{align*}
Then, let $ \sigma'=2\sigma''$ be small enough, one has
\begin{align*}
&\int^t_0\langle t-t'\rangle^{-\frac{1}{2}(\sigma-\sigma_1+\sigma')}\|\nabla {\tau}^\ell   \tilde{u}^h\|^\ell_{\dot{B}^{\sigma_1-\sigma'}_{2,\infty}}dt'\\
&\lesssim \int^t_0\langle t-t'\rangle^{-\frac{1}{2}(\sigma-\sigma_1+\sigma')}(\|\nabla{\tau}^\ell\|_{\dot{B}^{\frac{d}{2}-1}_{2,1}}\|\tilde{u}^h\|_{\dot{B}^{\frac{d}{p}-1}_{p,1}})dt'\\
&\lesssim \int^t_0\langle t-t'\rangle^{-\frac{1}{2}(\sigma-\sigma_1+\sigma')}\langle t' \rangle^{-\frac{1}{2}(\frac{d}{2}-\sigma_1)}\langle t' \rangle^{-\frac{1}{2}(\frac{d}{2}-\sigma_1+\sigma_2)}(\tilde{D}_p(t)+\tilde{D}^2_p(t))\\
&\lesssim \langle t\rangle^{-\frac{1}{2}(\sigma-\sigma_1+\sigma'')}\langle t\rangle^{-\frac{\sigma''}{2}}(\tilde{D}_p(t)+\tilde{D}^2_p(t)),
\end{align*}
where $\sigma-\sigma_1+\sigma'\leq d-2\sigma_1+\sigma_2$ and $ d-2\sigma_1+\sigma_2>2$.
\par

{\bf Step II}. Following a similar argument in \eqref{clfw1} and \eqref{clfw2},  one has
\begin{align}\label{aaa1}
&\int^t_0\langle t-t'\rangle^{-\frac{1}{2}(\sigma-\sigma_1+\sigma')}\| {\tau}  \nabla \tilde{u}^\ell\|^\ell_{\dot{B}^{\sigma_1-\sigma'}_{2,\infty}}dt'\nonumber\\
&\lesssim \int^t_0\langle t-t'\rangle^{-\frac{1}{2}(\sigma-\sigma_1+\sigma')}(\|\nabla\tilde{u}^\ell\|_{\dot{B}^{\frac{d}{2}-1}_{2,1}}\|\tau^h\|_{\dot{B}^{\frac{d}{p}-1}_{p,1}}
+\|\tau^\ell\|_{\dot{B}^{\frac{d}{2}-\frac{\sigma'}{2}}_{2,1}}\|\nabla \tilde{u}^\ell\|_{\dot{B}^{\sigma_1-\frac{\sigma'}{2}}_{2,\infty}})dt'\nonumber\\
&\lesssim \int^t_0\langle t-t'\rangle^{-\frac{1}{2}(\sigma-\sigma_1+\sigma')}\big(\langle t' \rangle^{-\frac{1}{2}(\frac{d}{2}-\sigma_1)}\langle t' \rangle^{-\frac{1}{2}(\frac{d}{2}-\sigma_1+\sigma_2)}\tilde{D}_p(t)+\langle t \rangle^{-\frac{1}{2}(\frac{d}{2}+1-\sigma_1-\sigma'+\sigma_2)}\tilde{D}_p(t)\nonumber\\
&\ \ \ \ \ \ \ + \langle t \rangle^{-\frac{1}{2}(\frac{d}{2}+1-\sigma_1-\sigma'+2\sigma_2)} \tilde{D}^2_p(t)\big)dt'\nonumber\\
&\lesssim \langle t\rangle^{-\frac{1}{2}(\sigma-\sigma_1+\sigma'')}\langle t\rangle^{-\frac{\sigma''}{2}}(\tilde{D}_p(t)+\tilde{D}^2_p(t)),
\end{align}
where we have used 
\begin{align*}
\|\tilde{\tau}^\ell\|_{\dot{B}^{\frac{d}{2}-\frac{\sigma'}{2}}_{2,1}}\|\nabla \tilde{u}^\ell\|_{\dot{B}^{\sigma_1-\frac{\sigma'}{2}}_{2,\infty}}&\lesssim \langle t \rangle^{-\frac{1}{2}(\frac{d}{2}+1-\sigma_1-\sigma'+2\sigma_2)}\tilde{D}^2_p(t).
\end{align*}
 
{\bf Step III}. It remains to estimate $\| \tau^h \nabla \tilde{u}^h \|^\ell_{\dot{B}^{\sigma_1-\sigma'}_{2,\infty}}$.
If $2\leq p\leq d$, 
by using  \eqref{A.4} we have
\begin{align*}
\| \tau^h \nabla \tilde{u}^h \|^\ell_{\dot{B}^{\sigma_1-\sigma'}_{2,\infty}}\lesssim \|\nabla \tilde u^h\|_{\dot{B}_{p,1}^{\frac{d}{p}-1}}^\ell \|\tau^h\|_{\dot{B}_{p,1}^{1-\frac{d}{p}}}^\ell\lesssim \|\tilde u\|^h_{\dot{B}_{p,1}^{\frac{d}{p}}}\|\tau\|^h_{\dot{B}_{p,1}^{1-\frac{d}{p}}}.
\end{align*}
If $p>d$, thanks to  Lemma \ref{app2} with $s=1-\frac{d}{p}$ one has
\begin{align*}
\| \tau^h \nabla \tilde{u}^h \|^\ell_{\dot{B}^{\sigma_1-\sigma'}_{2,\infty}}&\leq\|Q(\tau^h,\nabla \tilde{u}^h)\|^\ell_{\dot{B}^{\sigma_0}_{2,\infty}}\\
&\lesssim (\|\nabla \tilde{u}^h\|_{\dot{B}^{1-\frac{d}{p}}_{p,1}}+\|(\nabla \tilde{u}^h)^\ell\|_{L^{p*}})\|\tau^h\|_{\dot{B}^{\frac{d}{p}-1}_{p,1}}\\
&\lesssim\|\tilde{u}^h\|_{\dot{B}^{\frac{d}{p}+1}_{p,1}}\|\tau^h\|_{\dot{B}^{\frac{d}{p}-1}_{p,1}}.
\end{align*}

 Noting the high-frequencies of $(u_L,\tau_L)$ has exponential decay,  we then obtain the upper bound of $\mathcal{I}_3$ by
\begin{equation}\label{upper-3}
    \langle t \rangle^{-\frac{1}{2}(\sigma-\sigma_1+\min\{\sigma'_2,\sigma''\})}(E_0+\langle t\rangle^{-\frac{\sigma''}{2}}\tilde{D}_p+\langle t\rangle^{-\frac{\sigma''}{2}}\tilde{D}_p^2).
\end{equation}

Finally, collecting the upper bounds \eqref{upper-1}, \eqref{upper-2} and \eqref{upper-3}, and then we 
finish the proof of Lemma \ref{cl3}. 
\end{proof}
\begin{lemma}\label{cl5}
Let $p$ satisfy \eqref{ppp} and $\sigma_0\leq \sigma_1<\frac{d}{2}-1.$ Then we have
\begin{align*}
\|(\tilde{u},\tilde{\tau})(t)\|^\ell_{\dot{B}^{\sigma}_{2,1}}
&\lesssim \langle t \rangle^{-\frac{1}{2}(\sigma-\sigma_1+\sigma_2)}(1+E_0)++\langle t\rangle^{-\frac{1}{2}(\sigma-\sigma_1+\sigma'')}\langle t\rangle^{-\frac{\sigma''}{2}}(\tilde{D}^2_p(t)+\tilde{D}_p(t))
\\
&\ \ \ \langle t \rangle^{-\frac{1}{2}(\frac{d}{2}-\sigma_1+\sigma_2-\sigma_3)}(E_0+\|u^\ell_L,\tau^\ell_L\|_{\tilde L^\infty_t(\dot{B}^{\sigma_1+\sigma_3}_{2,\infty})})\tilde{D}_p(t)
\end{align*}
for $t>2$, $\frac{d}{2}-1<\sigma\leq \frac{d}{2}$ and sufficiently small $\sigma''>0$
with $$
\sigma_2 =\min\{\frac{1}{2},(\frac{d}{2}-1-\sigma_1)-,\sigma''\},  \quad  \sigma_3 =\min\{
\frac{d}{2}-\sigma, \frac{d}{2}-1-\sigma_1-\sigma_2\}.
$$

\end{lemma}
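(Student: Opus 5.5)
The plan is to run the same Duhamel scheme as in Lemma \ref{cl3}, now in the range $\frac{d}{2}-1<\sigma\leq\frac{d}{2}$. Start from \eqref{key ine1}, split the time integral into $[0,1]$ and $[1,t]$, and bound the piece over $[0,1]$ exactly as in Lemma \ref{cl1}. Since $\langle t-t'\rangle\approx\langle t\rangle$ there, this piece is $\lesssim\langle t\rangle^{-\frac12(\sigma-\sigma_1+\sigma')}E_0$. On $[1,t]$ use again the splitting $F+H=F_L+\tilde F_L+\tilde F$ from \eqref{FF-1}--\eqref{FF-2}, which produces the three integrals $\mathcal I_1,\mathcal I_2,\mathcal I_3$.

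The nonlinear bounds on $\|\cdot\|^\ell_{\dot B^{\sigma_1-\sigma'}_{2,\infty}}$ obtained in Lemma \ref{cl3} do not depend on $\sigma$, so they carry over unchanged. Concretely:
\begin{itemize}
\item the transport and $\mathbb P(u\otimes u)$ parts, taken with $\sigma'=1$, are $O(\langle t'\rangle^{-\frac12(\frac d2-\sigma_1)})$ times $(1+E_0)$ or times $\tilde D_p$;
\item the $Q$-terms, taken with $\sigma'=2\sigma''$, decay faster;
\item the term $\tau^\ell\nabla\tilde u^h$ is handled by the same integration by parts $\partial_k(\tau^\ell_{ik}\tilde u^h_j)-\partial_k\tau^\ell_{ik}\tilde u^h_j$.
\end{itemize}
The only change is in the time convolutions. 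The analogue of \eqref{AA10} now has kernel exponent $\frac12(\sigma-\sigma_1+1)>\frac12(\frac d2-\sigma_1)$. The source exponent $\frac12(\frac d2-\sigma_1)$ is then the smaller of the two, and it may be less than $1$. The convolution therefore gives $\langle t\rangle^{-\frac12(\frac d2-\sigma_1)}$ when $\frac12(\frac d2-\sigma_1)<1$. Since $\sigma\le\frac d2$, this is $\lesssim\langle t\rangle^{-\frac12(\sigma-\sigma_1+\sigma_2)}$ as soon as $\sigma_2\le\frac d2-\sigma$. That condition fails near $\sigma=\frac d2$, which is why $\sigma_2$ must be capped by $\frac12$ and by $(\frac d2-1-\sigma_1)-$. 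To avoid this loss for the quadratic terms in $\tilde F_L$ and $\tilde F$, I will use the kernel with $\sigma'=1$ only on $[1,t/2]$ and a small $\sigma'$ on $[t/2,t]$, where the source is already evaluated at time $\approx t$.

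The step I expect to be the main obstacle is the term $\mathbb P(\tilde u^\ell\otimes u^\ell_L)$ and its $\tau$-analogues. In Lemma \ref{cl3} these were bounded by $\|u_L^\ell\|_{\dot B^{\sigma_1+\sigma_2}_{2,\infty}}\|\tilde u^\ell\|_{\dot B^{\frac d2-\sigma_2}_{2,1}}$, with $\tilde u^\ell$ measured at regularity $\frac d2-\sigma_2$. That regularity lies inside the range $(\sigma_1,\frac d2)$ controlled by $\tilde D_p$ and decays like $\langle t'\rangle^{-\frac12(\frac d2-\sigma_1)}$. Here I will instead use the product law of Lemma \ref{app1} with the index $\sigma_3$:
\[
\|\tilde u^\ell\otimes u^\ell_L\|^\ell_{\dot B^{\sigma_1}_{2,\infty}}\lesssim\|u^\ell_L\|_{\dot B^{\sigma_1+\sigma_3}_{2,\infty}}\|\tilde u^\ell\|_{\dot B^{\frac d2-\sigma_3}_{2,1}}\lesssim\langle t'\rangle^{-\frac12(\frac d2-\sigma_3-\sigma_1+\sigma_2)}\|u^\ell_L\|_{\dot B^{\sigma_1+\sigma_3}_{2,\infty}}\tilde D_p(t).
\]
The restriction $\sigma_3\le\frac d2-1-\sigma_1-\sigma_2$ makes the convolution with $\langle t-t'\rangle^{-\frac12(\sigma-\sigma_1+1)}$ return exactly the source rate, because this source exponent is $\le\frac12(\sigma-\sigma_1+1)$ and does not exceed $1$ in the bad case. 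This produces the third line of the lemma. The restriction $\sigma_3\le\frac d2-\sigma$ keeps $\frac d2-\sigma_3\geq\sigma$, so that the norm of $\tilde u^\ell$ stays inside the range controlled by $\tilde D_p$.

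Finally, collect the three contributions. The $F_L$ part together with the $E_0$-terms gives $\langle t\rangle^{-\frac12(\sigma-\sigma_1+\sigma_2)}(1+E_0)$. The quadratic terms in $\tilde D_p$ with an extra $\langle t\rangle^{-\sigma''/2}$ come from Steps I--II, exactly as in Lemma \ref{cl3}. The linear-in-$\tilde D_p$ term with the $\sigma_3$ rate comes from the step above.
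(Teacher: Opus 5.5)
\paragraph{Comparison with the paper.} Your architecture matches the paper's:
\begin{itemize}
\item the Duhamel bound \eqref{key ine1} cut at $t'=1$;
\item the decomposition $F+H=F_L+\tilde F_L+\tilde F$;
\item as the one new ingredient, the pairing $\|u^\ell_L\|_{\dot B^{\sigma_1+\sigma_3}_{2,\infty}}\|\tilde u^\ell\|_{\dot B^{\frac d2-\sigma_3}_{2,1}}$, which uses the extra $\sigma_2$-decay of $\tilde u^\ell$ and produces the third line.
\end{itemize}
This pairing is exactly how the paper handles $\mathcal K_2$ and $\mathcal K_3$. One correction: the point of $\sigma_3\le\frac d2-\sigma$ is not to keep $\tilde u^\ell$ in the range of $\tilde D_p$. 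It is that it gives $\frac12(\frac d2-\sigma_1+\sigma_2-\sigma_3)\ge\frac12(\sigma-\sigma_1+\sigma_2)$, which the bootstrap needs.

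Your splitting of the kernel ($\sigma'=1$ on $[1,t/2]$, small $\sigma'$ on $[t/2,t]$) is not in the paper, and it is genuinely needed. In $\mathcal K_1$ the paper asserts
\[
\int_0^t\langle t-t'\rangle^{-\frac12(\sigma-\sigma_1+1)}\langle t'\rangle^{-\frac12(\frac d2-\sigma_1)}dt'\lesssim\langle t\rangle^{-\frac12(\sigma-\sigma_1+\sigma'')}.
\]
But the part $t'\in[t-1,t]$ alone is $\gtrsim\langle t\rangle^{-\frac12(\frac d2-\sigma_1)}$, so this inequality fails for $\sigma>\frac d2-\sigma''$.

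\paragraph{The gap.} You apply this cure only to $\tilde F_L$ and $\tilde F$, and then assert that $F_L$ contributes $\langle t\rangle^{-\frac12(\sigma-\sigma_1+\sigma_2)}(1+E_0)$.
\begin{itemize}
\item Your own computation shows that with $\sigma'=1$ on all of $[1,t]$ this is false for $\sigma>\frac d2-\sigma_2$. Capping $\sigma_2$ by $\frac12$ and $(\frac d2-1-\sigma_1)-$ does not fix this, since the failure condition is $\sigma_2>\frac d2-\sigma$.
\item The $\sigma_3$ device cannot rescue $F_L$ either. Both factors are linear solutions, so with the linear rates at hand every admissible distribution of regularity in the product law gives back the same $\langle t'\rangle^{-\frac12(\frac d2-\sigma_1)}$. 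The $\sigma_3$ trick only gains because $\tilde u^\ell$ decays $\frac{\sigma_2}{2}$ faster than $u_L^\ell$.
\end{itemize}

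So you must split $F_L$ as well. On $[t/2,t]$ you then need small-$\sigma'$ source bounds for the transport parts, and Lemma~\ref{cl3} does not contain these (it used only $\sigma'=1$ there). The same applies to any tilde terms you treat this way. For instance, by \eqref{ap3},
\[
\|\mathbb P\operatorname{div}(u_L\otimes u_L)\|^\ell_{\dot B^{\sigma_1-\sigma'}_{2,\infty}}\lesssim\|u_L^\ell\|_{\dot B^{\frac d2}_{2,1}}\|u_L^\ell\|_{\dot B^{\sigma_1+1-\sigma'}_{2,\infty}}+(\text{exponentially decaying terms})\lesssim\langle t\rangle^{-\frac12(\frac d2-\sigma_1+1-\sigma')},
\]
and likewise for $u_L\otimes\tau_L$ (compare Case II of Proposition~\ref{pro3.1}).

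With these bounds, both pieces are $\lesssim\langle t\rangle^{-\frac12(\sigma-\sigma_1+\sigma_2)}$, using $\sigma\le\frac d2$, $\sigma'+\sigma_2\le 1$ and $\sigma_2<\frac d2-1-\sigma_1$. The last condition is what makes the $[1,t/2]$ piece
\[
\langle t\rangle^{-\frac12(\sigma-\sigma_1+1)}\int_1^{t/2}\langle t'\rangle^{-\frac12(\frac d2-\sigma_1)}dt'
\]
admissible when $\frac12(\frac d2-\sigma_1)<1$. That, not the loss near $\sigma=\frac d2$, is the real role of this cap on $\sigma_2$.
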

\begin{proof} Thanks to \eqref{key ine1}
we write  
\begin{align*}
 \|(\tilde{u},\tilde{\tau})(t)\|^\ell_{\dot{B}^{\sigma}_{2,1}}&\leq\int^t_0\langle t-t'\rangle^{-\frac{1}{2}(\sigma-\sigma_1+\sigma')}\|F,H\|^\ell_{\dot{B}^{\sigma_1-\sigma'}_{2,\infty}}dt'\\
 &=\int^1_0\langle t-t'\rangle^{-\frac{1}{2}(\sigma-\sigma_1+\sigma')}\|F,H\|^\ell_{\dot{B}^{\sigma_1-\sigma'}_{2,\infty}}dt'+\int^t_1\langle t-t'\rangle^{-\frac{1}{2}(\sigma-\sigma_1+\sigma')}\|F,H\|^\ell_{\dot{B}^{\sigma_1-\sigma'}_{2,\infty}}dt'
\end{align*} and then bound the terms 
\begin{equation*}
    \begin{aligned}
        \mathcal{K}_1=&\int^t_1\langle t-t'\rangle^{-\frac{1}{2}(\sigma-\sigma_1+\sigma')}\|F_L\|^\ell_{\dot{B}^{\sigma_1-\sigma'}_{2,\infty}}dt',\\
        \mathcal{K}_2=&\int^t_1\langle t-t'\rangle^{-\frac{1}{2}(\sigma-\sigma_1+\sigma')}\|\tilde F \|^\ell_{\dot{B}^{\sigma_1-\sigma'}_{2,\infty}}dt',
       \\ \mathcal{K}_3=&\int^t_1\langle t-t'\rangle^{-\frac{1}{2}(\sigma-\sigma_1+\sigma')}\|\tilde F_L\|^\ell_{\dot{B}^{\sigma_1-\sigma'}_{2,\infty}}dt'.
    \end{aligned}
\end{equation*}
\underline{\bf Estimate of $\mathcal{K}_1 $}.
Recalling that $$F_L=-\mathbb{P}(u_L\cdot \nabla u_L)-u_L\cdot \nabla \tau_L-Q(\tau_L,\nabla u_L), $$ we will deal with   the three terms respectively.

Firstly, choosing $\sigma'=1,\sigma_1+\sigma_3<\frac{d}{2}$ and  using Lemma \ref{app1},
we arrive at 
\begin{align*}
\|\mathbb{P}(u^\ell_L\otimes  u^\ell_L)\|^\ell_{\dot{B}^{\sigma_1}_{2,\infty}}&\lesssim \|u^\ell_L\|_{\dot{B}^{\sigma_1+\sigma_3}_{2,\infty}}\|u^\ell_L\|_{\dot{B}^{\frac{d}{2}-\sigma_3}_{2,\infty}}
\lesssim \langle t \rangle^{-\frac{1}{2}(\frac{d}{2}-\sigma_1 )}.
\end{align*}

Similarly, using \eqref{A.3} in Corollary \ref{app3},  we have
\begin{align*}
\|\mathbb{P}(u^h_L\otimes   u^\ell_L)\|^\ell_{\dot{B}^{\sigma_1}_{2,\infty}}\lesssim \|u^h_L\|_{\dot{B}^{\frac{d}{p}-1}_{p,1}}\|u^\ell_L\|_{\dot{B}^{\frac{d}{2}-1}_{2,1}}\lesssim \langle t \rangle^{-\frac{1}{2}(\frac{d}{2}-\sigma_1)}.
\end{align*}
Moreover, using the similar process of inequalities \eqref{4.28}-\eqref{4.32}, one has
\begin{align*}
\|\mathbb{P}(u_L\otimes    u^h_L)\|^\ell_{\dot{B}^{\sigma_1}_{2,\infty}}\lesssim (\|u^\ell_L\|_{\dot{B}^{\frac{d}{2}-1}_{2,1}}+\|u^h_L\|_{\dot{B}^{\frac{d}{p}}_{p,1}})\|u^h_L\|_{\dot{B}^{\frac{d}{p}-1}_{p,1}}
\lesssim \langle t \rangle^{-\frac{1}{2}(\frac{d}{2}-\sigma_1)}.
\end{align*}
Collecting the above estimates, we have
\begin{equation*}
    \begin{aligned}
\|\mathbb{P}(u_L\otimes    u_L)\|^\ell_{\dot{B}^{\sigma_1}_{2,\infty}}
&\lesssim \langle t \rangle^{-\frac{1}{2}(\frac{d}{2}-\sigma_1)}.
\end{aligned}
\end{equation*}

Similarly, we can compute
\begin{align*}
\|u_L\cdot \tau_L\|^\ell_{\dot{B}^{\sigma_1}_{2,\infty}}&\lesssim \|u^\ell_L\|_{\dot{B}^{\sigma_1+\sigma_3}_{2,\infty}}\|\tau^\ell_L\|_{\dot{B}^{\frac{d}{2}-\sigma_3}_{2,\infty}}+\|u^h_L\|_{\dot{B}^{\frac{d}{p}-1}_{p,1}}\|\tau^\ell_L\|_{\dot{B}^{\frac{d}{2}-1}_{2,1}}\\
&\quad+
(\|u^\ell_L\|_{\dot{B}^{\frac{d}{2}-1}_{2,1}}+\|u^h_L\|_{\dot{B}^{\frac{d}{p}}_{p,1}})\|\tau^h_L\|_{\dot{B}^{\frac{d}{p}-1}_{p,1}}\\
&\lesssim \langle t \rangle^{-\frac{1}{2}(\frac{d}{2}-\sigma_1)},~~~t>1.
\end{align*}

Secondly,  we turn to  the term $\|Q(\tau_L,\nabla u_L)\|^\ell_{\dot{B}^{\sigma_1-\sigma'}_{2,\infty}}$. For that we write the typical term of $Q(\tau_L,\nabla u_L)$ that 
$$
\tau_L\nabla u_L=\tau^\ell_L\nabla u^\ell_L+\tau^\ell_L\nabla u^h_L+\tau^h_L\nabla u_L
$$
and divide into two steps to proceed.

 {\bf Step I}.
 In this step, we aim to deal with  $\tau^\ell_L\nabla u^\ell_L$ and $\tau^\ell_L\nabla u^h_L$. 
By using Lemma \ref{ap1}, we obtain
\begin{align*}
\|\tau^\ell_L\nabla u^\ell_L\|^\ell_{\dot{B}^{\sigma_1-\sigma'}_{2,\infty}}&\lesssim\|\tau^\ell_L\|_{\dot{B}^{\frac{d}{2}-\frac{\sigma'}{2}}_{2,1}}\|\nabla u^\ell_L\|_{\dot{B}^{\sigma_1-\frac{\sigma'}{2}}_{2,\infty}}\\
&\lesssim \langle t \rangle^{-\frac{1}{2}(\frac{d}{2}+1-\sigma_1-\sigma')},
\end{align*}
where $\sigma_1-\frac{\sigma'}{2}\leq \frac{d}{2}$, $\frac{d}{2}+\sigma_1-\sigma'>0$. 

\par
For the case of $p>2$, we have
\begin{align*}
\|\tau^\ell_L\nabla u^h_L\|^\ell_{\dot{B}^{\sigma_1-\sigma'}_{2,\infty}}\lesssim \|\tau^\ell_L\nabla u^h_L\|_{\dot{B}^{\sigma_1+\frac{d}{p}-\frac{d}{2}}_{2,\infty}}
\lesssim \|\tau^\ell_L\|_{\dot{B}^{\frac{d}{2}-1}_{2,1}}\|\nabla u^h_L\|_{\dot{B}^{\frac{d}{p}-1}_{p,1}},
\end{align*}
where 
$0<\sigma'\leq  \frac{d}{2}-\frac{d}{p}$ and $\frac{d}{p}\geq \frac{d}{2}-1, i.e., p\leq d^*$.
\par
For the case of $p=2$,
\begin{align*}
\|\tau^\ell_L\nabla u^h_L\|^\ell_{\dot{B}^{\sigma_1-\sigma'}_{2,\infty}}\lesssim \|\tau^\ell_L\nabla u^h_L\|_{\dot{B}^{\sigma_0}_{2,\infty}}\lesssim \|\tau^\ell_L\|_{\dot{B}^{\frac{d}{p}-1}_{p,1}}\|\nabla u^h_L\|_{\dot{B}^{1-\frac{d}{p}}_{p,1}},
\end{align*}
where  \eqref{A.4}  and 
  $\sigma_1>\sigma_0$ were used. Thanks to the exponential decay of the high frequencies of $(u^h_L,\tau^h_L)$, the rest of estimates are similar and hence we omit details.

{\bf Step II}.
In this step we treat the term
\begin{align*}
\|\tau^h_L\nabla u_L\|^\ell_{\dot{B}^{\sigma_1-\sigma'}_{2,\infty}}.
\end{align*}

If $2\leq p<d$, according to Lemma \ref{app2} with $s=\frac{d}{p}-1$ we get
\begin{align*}
\|(\tau^h_L  \nabla u_L)\|^\ell_{\dot{B}^{\sigma_1-\sigma'}_{2,\infty}}\leq\|(\tau^h_L  \nabla u_L)\|^\ell_{\dot{B}^{\sigma_0}_{2,\infty}}\leq (\|\nabla u_L\|_{\dot{B}^{\frac{d}{p}-1}_{p,1}}+\|(\nabla u_L)^\ell\|_{L^{p*}})\|\tau^h_L\|_{\dot{B}^{1-\frac{d}{p}}_{p,1}},
\end{align*}
where $\sigma_1>\sigma_0$ and $1-\frac{d}{p}\leq \frac{d}{p}+2$.

If $p=d$, one gets
\begin{align*}
\|(\tau^h_L  \nabla u_L)\|^\ell_{\dot{B}^{\sigma_1-\sigma'}_{2,\infty}}
\leq \|(\tau^h_L  \nabla u_L)\|^\ell_{\dot{B}^{\sigma_0}_{2,\infty}}\lesssim  \|(\tau^h_L  \nabla u_L)\|^\ell_{L^{\frac{d}{2}}}\lesssim \|\nabla u_L\|_{\dot{B}^{0}_{d,1}}\|\tau^h_L\|_{\dot{B}^{0}_{d,1}}
\end{align*}
which together with $\dot{B}^{\frac{d}{p}}_{2,1}\hookrightarrow L^{p*}$ and $\dot{B}^{\frac{d}{p}-1}_{p,1}\hookrightarrow \dot{B}^{0}_{d,1}$ leads to 
$$
\|(\tau^h_L  \nabla u_L)\|^\ell_{\dot{B}^{\sigma_1}_{2,\infty}}\lesssim (\|u^\ell_L\|_{\dot{B}^{\frac{d}{2}}_{2,1}}+\|u^h_L\|_{\dot{B}^{\frac{d}{p}}_{p,1}})\|\tau^h_L\|_{\dot{B}^{\frac{d}{p}-1}_{p,1}}.
$$

If $p>d$, applying  Lemma \ref{lemma12} with $s=1-\frac{d}{p}$ again implies that
$$
\|(\tau^h_L  \nabla u_L)\|^\ell_{\dot{B}^{\sigma_0}_{2,\infty}}\leq (\|\nabla u_L\|_{\dot{B}^{1-\frac{d}{p}}_{p,1}}+\|(\nabla u_L)^\ell\|_{L^{p*}})\|\tau^h_L\|_{\dot{B}^{\frac{d}{p}-1}_{p,1}}.
$$
By using the embedding $\dot{B}^{1+\sigma_0}_{2,1}\hookrightarrow \dot{B}^{1-\frac{d}{p}}_{p,1}$, $\frac{d}{2}-1<1+\sigma_0$ and $p>d$, we obtain
$$
\|(\tau^h_L  \nabla u_L)\|^\ell_{\dot{B}^{\sigma_1}_{2,\infty}}\lesssim (\|u^\ell_L\|_{\dot{B}^{\frac{d}{2}}_{2,1}}+\|u^h_L\|_{\dot{B}^{\frac{d}{p}+1}_{p,1}})\|\tau^h_L\|_{\dot{B}^{\frac{d}{p}-1}_{p,1}}.
$$
 
Thanks to the exponential decay of the high frequencies of $(u^h_L,\tau^h_L)$  and 
\begin{align*}
&\int^t_0\langle t-t'\rangle^{-\frac{1}{2}(\sigma-\sigma_1+1)}\langle t' \rangle^{-\frac{1}{2}(\frac{d}{2} -\sigma_1 )}dt'\\
&\leq \langle t\rangle^{-\frac{1}{2}(\sigma-\sigma_1+\sigma'')},
\end{align*}
and
\begin{align*}
&\int^t_0\langle t-t'\rangle^{-\frac{1}{2}(\sigma-\sigma_1+\sigma')}\langle t' \rangle^{-\frac{1}{2}(\frac{d}{2}+1-\sigma_1-\sigma')}dt'\nonumber\\
&\leq \langle t\rangle^{-\frac{1}{2}(\sigma-\sigma_1+\sigma')},  
\end{align*}
where $0\leq\sigma-\sigma_1+\sigma'\leq\frac{d}{2}+1-\sigma_1-\sigma', \frac{d}{2}+1-\sigma_1-\sigma'>2$ which come from small $\sigma'$,
we can  then  get the desired upper bound of $\mathcal{K}_1$ by
\begin{equation}\label{upp-1}
    \langle t \rangle^{-\frac{1}{2}(\sigma-\sigma_1+\sigma_2)}(1+E_0).
\end{equation}
 \newline
\underline{\bf Estimate of $\mathcal{K}_2 $}. We first deal with the term
$
\|Q(\tau,\nabla \tilde{u})\|^\ell_{\dot{B}^{\sigma_1-\sigma'}_{2,\infty}}.
$ For that we write  the typical term of $Q(\tau,\nabla \tilde{u})$  that 
$$
\tau \nabla \tilde{u}  =\tau^\ell \nabla \tilde u^h + \tau  \nabla \tilde u^\ell +\tau^h \nabla \tilde u ^h
$$
and divide into three steps to proceed.

For the case $\sigma_1 < \sigma \leq \frac{d}{2} - 1$, it holds
\begin{align*}
&\int^t_0\langle t-t'\rangle^{-\frac{1}{2}(\sigma-\sigma_1+1)}\|{\tau}^\ell  \tilde{u}^h\|^\ell_{\dot{B}^{\sigma_1}_{2,\infty}}dt'\\
&\lesssim \int^t_0\langle t-t'\rangle^{-\frac{1}{2}(\sigma-\sigma_1+1)}(\|\tilde{u}^h\|_{\dot{B}^{\frac{d}{p}-1}_{p,1}}\|\tau^\ell\|_{\dot{B}^{\frac{d}{2}-1}_{2,1}})dt'\\
&\lesssim \int^t_0\langle t-t'\rangle^{-\frac{1}{2}(\sigma-\sigma_1+1)}\langle t' \rangle^{-\frac{1}{2}(\frac{d}{2}-\sigma_1)}E_0\tilde{D}_p(t)dt'\\
&\lesssim \langle t \rangle^{-\frac{1}{2}(\sigma-\sigma_1+\sigma'')}E_0\tilde{D}_p(t),
\end{align*}
and
\begin{align*}
&\int^t_0\langle t-t'\rangle^{-\frac{1}{2}(\sigma-\sigma_1+\sigma')}\|\nabla {\tau}^\ell   \tilde{u}^h\|^\ell_{\dot{B}^{\sigma_1-\sigma'}_{2,\infty}}dt'\\
&\lesssim \int^t_0\langle t-t'\rangle^{-\frac{1}{2}(\sigma-\sigma_1+\sigma')}(\|\nabla{\tau}^\ell\|_{\dot{B}^{\frac{d}{2}-1}_{2,1}}\|\tilde{u}^h\|_{\dot{B}^{\frac{d}{p}-1}_{p,1}})dt'\\
&\lesssim \int^t_0\langle t-t'\rangle^{-\frac{1}{2}(\sigma-\sigma_1+\sigma')}\langle t' \rangle^{-\frac{1}{2}(\frac{d}{2}-\sigma_1)-}\langle t' \rangle^{-\frac{1}{2}(\frac{d}{2}-\sigma_1+\sigma_2)}\tilde{D}^2_p(t)\\
& \lesssim \langle t\rangle^{-\frac{1}{2}(\sigma-\sigma_1+\sigma'')}\langle t\rangle^{-\frac{\sigma''}{2}}\tilde{D}^2_p(t),
\end{align*}
where $\sigma-\sigma_1+\sigma'\leq d-2\sigma_1+\sigma_2$.

Consequently, we get
\begin{align*}
&\int^t_0\langle t-t'\rangle^{-\frac{1}{2}(\sigma-\sigma_1+1)}\|{\tau}^\ell  \nabla\tilde{u}^h\|^\ell_{\dot{B}^{\sigma_1}_{2,\infty}}dt' \\
&\lesssim\langle t \rangle^{-\frac{1}{2}(\sigma-\sigma_1+\sigma'')}E_0\tilde{D}_p(t)+\langle t\rangle^{-\frac{1}{2}(\sigma-\sigma_1+\sigma'')}\langle t\rangle^{-\frac{\sigma''}{2}}\tilde{D}^2_p(t).
\end{align*}
Direct calculation leads to
\begin{align*}
&\int^t_0\langle t-t'\rangle^{-\frac{1}{2}(\sigma-\sigma_1+\sigma')}\| {\tau}  \nabla \tilde{u}^\ell\|^\ell_{\dot{B}^{\sigma_1-\sigma'}_{2,\infty}}dt'\\
&\lesssim \int^t_0\langle t-t'\rangle^{-\frac{1}{2}(\sigma-\sigma_1+\sigma')}(\|\nabla\tilde{u}^\ell\|_{\dot{B}^{\frac{d}{2}-1}_{2,1}}\|\tau^h\|_{\dot{B}^{\frac{d}{p}-1}_{p,1}}
+\|\tau^\ell\|_{\dot{B}^{\frac{d}{2}-\frac{\sigma'}{2}}_{2,1}}\|\nabla \tilde{u}^\ell\|_{\dot{B}^{\sigma_1-\frac{\sigma'}{2}}_{2,\infty}})dt'\\
&\lesssim \int^t_0\langle t-t'\rangle^{-\frac{1}{2}(\sigma-\sigma_1+\sigma')}(\langle t' \rangle^{-\frac{1}{2}(\frac{d}{2}-\sigma_1)}\langle t' \rangle^{-\frac{1}{2}(\frac{d}{2}-\sigma_1+\sigma_2)}E_0+\langle t \rangle^{-\frac{1}{2}(\frac{d}{2}+1-\sigma_1-\sigma')}E_0)dt',
\end{align*}
where $ \sigma'=2\sigma''$ is enough small.

The estimates of high frequencies  
$
\| \tau^h \nabla \tilde{u}^h \|^\ell_{\dot{B}^{\sigma_1-\sigma'}_{2,\infty}}
$ is similar.

Secondly,  we will treat the terms $-\mathbb{P}({u}\cdot \nabla \tilde u )$ and $ - {u}\cdot \nabla \tilde\tau $, direct computation shows
\begin{align*}
\|\mathbb{P}(u\otimes\tilde{ u})\|^\ell_{\dot{B}^{\sigma_1}_{2,\infty}}&=\|\mathbb{P}(u^\ell\otimes \tilde{u}^\ell)\|^\ell_{\dot{B}^{\sigma_1}_{2,\infty}}
+\|\mathbb{P}(u^h\otimes \tilde{u}^\ell)\|^\ell_{\dot{B}^{\sigma_1}_{2,\infty}}+\|\mathbb{P}(u\otimes \tilde{ u}^h)\|^\ell_{\dot{B}^{\sigma_1}_{2,\infty}}\\
&\leq \|{u}^\ell\|_{\dot{B}^{\sigma_1+\sigma_3}_{2,\infty}}\|\tilde{u}^\ell\|_{\dot{B}^{\frac{d}{2}-\sigma_3}_{2,1}}+\| u^h\|_{\dot{B}^{\frac{d}{p}-1}_{p,1}}\|\tilde{u}^\ell\|_{\dot{B}^{\frac{d}{2}-1}_{2,1}}\\
&\quad +(\| u^\ell\|_{\dot{B}^{\frac{d}{2}-1}_{2,1}}+\| u^h\|_{\dot{B}^{\frac{d}{p}}_{p,1}})\|\tilde{u}^h\|_{\dot{B}^{\frac{d}{p}-1}_{p,1}}
\\
& \lesssim \langle t \rangle^{-\frac{1}{2}(\frac{d}{2}-\sigma_1+\sigma_2-\sigma_3)}(E_0+\|u^\ell_L\|_{L^\infty_t(\dot{B}^{\sigma_1+\sigma_3}_{2,\infty}}))\tilde{D}_p(t),
\end{align*}
where we used $u^h=u^h_L+\tilde{u}^h$,
and  $\sigma_2, \sigma_3$ will be determined later. 
\par
Then we can also deal with the term $\|u\cdot \tilde{\tau}\|^\ell_{\dot{B}^{\sigma_1}_{2,\infty}}$by a similar argument.
Thus, we have
\begin{align*}
&\int^t_1\langle t-t'\rangle^{-\frac{1}{2}(\sigma-\sigma_1+1)}(\|\mathbb{P}(u\otimes\tilde{ u})\|^\ell_{\dot{B}^{\sigma_1}_{2,\infty}}+\|u\cdot \tilde{\tau}\|^\ell_{\dot{B}^{\sigma_1}_{2,\infty}})dt'\\
&\lesssim \int^t_0 \langle t-t'\rangle^{-\frac{1}{2}(\sigma-\sigma_1+1)}\langle t' \rangle^{-\frac{1}{2}(\frac{d}{2}-\sigma_1+\sigma_2-\sigma_3)}dt' (E_0+\|u^\ell_L,\tau^\ell_L\|_{L^\infty_t(\dot{B}^{\sigma_1+\sigma_3}_{2,\infty}}))\tilde{D}_p(t).
\end{align*}
\par
Collecting the above estimates and noticing  the exponential decay of the high frequencies, the desired upper bound of $\mathcal{K}_2$ is given by
\begin{equation}\label{upp-2}
    \langle t \rangle^{-\frac{1}{2}(\frac{d}{2}-\sigma_1+\sigma_2-\sigma_3)}(E_0+\|u^\ell_L,\tau^\ell_L\|_{\tilde 
L^\infty_t(\dot{B}^{\sigma_1+\sigma_3}_{2,\infty})})\tilde{D}_p(t).
  \end{equation}

\underline{\bf Estimate of $\mathcal{K}_3 $}.
Choosing $\sigma'=1$, we have
\begin{align*}
 \|\mathbb{P}(\tilde{u}^\ell\otimes u^\ell_L)\|^\ell_{\dot{B}^{\sigma_1}_{2,\infty}}&\lesssim \| u^\ell_L\|_{\dot{B}^{\sigma_1+\sigma_3}_{2,\infty}}\|\tilde{u}^\ell\|_{\dot{B}^{\frac{d}{2}-\sigma_3}_{2,1}}\\
 &\lesssim \langle t \rangle^{-\frac{1}{2}(\frac{d}{2}-\sigma_1+\sigma_2-\sigma_3)}\| u^\ell_L\|_{\dot{B}^{\sigma_1+\sigma_3}_{2,\infty}},
\end{align*}
where $\sigma_1+\sigma_3<\frac{d}{2}$ and $\sigma_1+\frac{d}{2}\geq 0$.

Similarly, we can obtain
\begin{align*}
\|\mathbb{P}(\tilde{u}^\ell\otimes u^h_L)\|^\ell_{\dot{B}^{\sigma_1}_{2,\infty}}&\lesssim \|  u^h_L\|_{\dot{B}^{\frac{d}{p}-1}_{p,1}}\|\tilde{u}^\ell\|_{\dot{B}^{\frac{d}{2}-1}_{2,1}}
\lesssim  \langle t \rangle^{-\frac{1}{2}(\frac{d}{2}-\sigma_1+\sigma_2-\sigma_3)}E_0, 
\end{align*}
and
\begin{align*}
\|\mathbb{P}(\tilde{u}^h\otimes u_L)\|^\ell_{\dot{B}^{\sigma_1}_{2,\infty}}&\lesssim (\| u^\ell_L\|_{\dot{B}^{\frac{d}{2}-1}_{2,1}}+\| u^h_L\|_{\dot{B}^{\frac{d}{p}}_{p,1}})\|\tilde{u}^h\|_{\dot{B}^{\frac{d}{p}-1}_{p,1}}
\lesssim \langle t \rangle^{-\frac{1}{2}(\frac{d}{2}-\sigma_1+\sigma_2-\sigma_3)}E_0(\tilde{D}_p(t)+1).
\end{align*}

 Using a similar estimate, direct calculation implies
\begin{align*}
\|(\tilde{u}\cdot  \tau_L)\|^\ell_{\dot{B}^{\sigma_1}_{2,\infty}}&\leq \|(\tilde{u}^\ell\cdot  \tau^\ell_L)\|^\ell_{\dot{B}^{\sigma_1}_{2,\infty}}
+\|(\tilde{u}^\ell\cdot  \tau^h_L)\|^\ell_{\dot{B}^{\sigma_1}_{2,\infty}}+\|(\tilde{u}^h\cdot  \tau_L)\|^\ell_{\dot{B}^{\sigma_1}_{2,\infty}}\\
&\lesssim \langle t \rangle^{-\frac{1}{2}(\frac{d}{2}-\sigma_1+\sigma_2-\sigma_3)}(E_0+\|\tau^\ell_L\|_{\dot{B}^{\sigma_1+\sigma_3}_{2,\infty}})(\tilde{D}_p(t) \\
&\quad+  \langle t \rangle^{-\frac{1}{2}(\frac{d}{2}-\sigma_1+\sigma_2-\sigma_3)}E_0.
\end{align*}

Finally, we turn to the term $\|Q(\tilde{\tau},\nabla u_L)\|^\ell_{\dot{B}^{\sigma_1}_{2,\infty}}.$
Note that the typical term of $Q(\tilde{\tau},\nabla u_L)$ can been written by 
$$
\tilde \tau \nabla u_L  =\tau^\ell \nabla \tilde u_L^\ell + \tau^\ell  \nabla \tilde u_L^h +\tau^h \nabla \tilde u_L
$$

Direct calculation reveals
\begin{align*}
\| \tilde{\tau}^\ell \nabla u^\ell_L \|^\ell_{\dot{B}^{\sigma_1-\sigma'}_{2,\infty}}&\lesssim\|\tau^\ell\|_{\dot{B}^{\frac{d}{2}-\frac{\sigma'}{2}}_{2,1}}\|\nabla u^\ell_L\|_{\dot{B}^{\sigma_1-\frac{\sigma'}{2}}_{2,\infty}}\\
&\lesssim \langle t \rangle^{-\frac{1}{2}(\frac{d}{2}+1-\sigma_1-\sigma')}+\langle t \rangle^{-\frac{1}{2}(\frac{d}{2}+1-\sigma_1-\sigma'+\sigma_2)}\tilde{D}_p(t),
\end{align*}
and
\begin{align*}
&\| \tilde{\tau}^\ell \nabla u^h_L \|^\ell_{\dot{B}^{\sigma_1-\sigma'}_{2,\infty}}\leq \|\tilde{\tau}^\ell\|_{\dot{B}^{\frac{d}{2}-1}_{2,1}}\|\nabla u^h_L\|_{\dot{B}^{\frac{d}{p}-1}_{p,1}}+\|\tilde{\tau}^\ell\|_{\dot{B}^{\frac{d}{2}-1}_{2,1}}\|\nabla u^h_L\|_{\dot{B}^{1-\frac{d}{2}}_{2,1}},
\end{align*}
and
\begin{align*}
\| \tilde{\tau}^h \nabla u_L \|^\ell_{\dot{B}^{\sigma_1-\sigma'}_{2,\infty}} &\lesssim (\| u^\ell_L\|_{\dot{B}^{\frac{d}{2}}_{2,1}}+\| u^h_L\|_{\dot{B}^{\frac{d}{p}+1}_{p,1}})\|\tilde{\tau}^h\|_{\dot{B}^{\frac{d}{p}-1}_{p,1}}\\
&\lesssim \langle t \rangle^{-\frac{1}{2}(d-2\sigma_1+\sigma_2)} \tilde{D}_p(t).
\end{align*}

Finally, in order to obtain the upper bound of $\mathcal{K}_3$, we note that
\begin{align*}
&\int^t_0 \langle t-t'\rangle^{-\frac{1}{2}(\sigma-\sigma_1+1)}\langle t \rangle^{-\frac{1}{2}(\frac{d}{2}-\sigma_1+\sigma_2-\sigma_3)}dt'\lesssim 
\langle t \rangle^{-\frac{1}{2}(\sigma-\sigma_1+\sigma_2)},
\end{align*}
where $$
\sigma_2=\min\{\frac{1}{2},(\frac{d}{2}-1-\sigma_1)-,\sigma''\},
$$
 and $\sigma_3=\min\{
            \frac{d}{2}-\sigma, \frac{d}{2}-1-\sigma_1-\sigma_2\}$.

In fact,
in the case of  $\frac{1}{2}(\sigma-\sigma_1+1)\leq \frac{1}{2}(\frac{d}{2}-\sigma_1+\sigma_2-\sigma_3),$
\begin{align*}
&\int^t_0 \langle t-t'\rangle^{-\frac{1}{2}(\sigma-\sigma_1+1)}\langle t \rangle^{-\frac{1}{2}(\frac{d}{2}-\sigma_1+\sigma_2-\sigma_3)}dt'\\
&
\lesssim 
\begin{cases}
\langle t \rangle^{-\frac{1}{2}(\sigma-\sigma_1+1)},~~ \qquad\qquad \qquad \text{if}~\frac{1}{2}(\frac{d}{2}-\sigma_1+\sigma_2-\sigma_3)>1,\\
\langle t \rangle^{-\frac{1}{2}(\sigma-\sigma_1+1)-},~~ \ \qquad\quad\qquad\text{if}~\frac{1}{2}(\frac{d}{2}-\sigma_1+\sigma_2-\sigma_3)=1,\\
\langle t \rangle^{-\frac{1}{2}(\sigma-\sigma_1+\frac{d}{2}-1-\sigma_1-\sigma_3)}, ~~\qquad \text{if}~\frac{1}{2}(\frac{d}{2}-\sigma_1+\sigma_2-\sigma_3)<1
\end{cases}\\
&
\lesssim 
\langle t \rangle^{-\frac{1}{2}(\sigma-\sigma_1+\sigma_2)}
\end{align*}
and
in the case of $\frac{1}{2}(\sigma-\sigma_1+1)\geq\frac{1}{2}(\frac{d}{2}-\sigma_1+\sigma_2-\sigma_3)$,
\begin{align*}
&\int^t_0 \langle t-t'\rangle^{-\frac{1}{2}(\sigma-\sigma_1+1)}\langle t \rangle^{-\frac{1}{2}(\frac{d}{2}-\sigma_1+\sigma_2-\sigma_3)}dt'\\
&
\lesssim 
\begin{cases}
\langle t \rangle^{-\frac{1}{2}(\frac{d}{2}-\sigma_1+\sigma_2-\sigma_3)},~~\qquad \qquad \qquad \text{if}~\frac{1}{2}(\frac{d}{2}-\sigma_1+1)>1,\\
\langle t \rangle^{-\frac{1}{2}(\frac{d}{2}-\sigma_1+\sigma_2-\sigma_3)-},~~\qquad\quad \ \qquad \text{if}~\frac{1}{2}(\frac{d}{2}-\sigma_1+1)=1,\\
\langle t \rangle^{-\frac{1}{2}(\sigma-\sigma_1+\frac{d}{2}-1-\sigma_1-\sigma_3)}, ~~\qquad \qquad \text{if}~\frac{1}{2}(\frac{d}{2}-\sigma_1+1)<1.
\end{cases}\\
&
\lesssim 
\langle t \rangle^{-\frac{1}{2}(\sigma-\sigma_1+\sigma_2)}
\end{align*}
\par

Moreover, we can easily find
\begin{align*}
&\int^t_0 \langle t-t'\rangle^{-\frac{1}{2}(\sigma-\sigma_1+\sigma_1')}\big(\langle t \rangle^{-\frac{1}{2}(\frac{d}{2}+1-\sigma_1-\sigma')}+\langle t \rangle^{-\frac{1}{2}(\frac{d}{2}+1-\sigma_1-\sigma'+\sigma_2)}\\&\qquad+\langle t \rangle^{-\frac{1}{2}(d-2\sigma_1+\sigma_2)}\big)dt'\\&\lesssim 
\langle t \rangle^{-\frac{1}{2}(\sigma-\sigma_1+2\sigma'')}.
\end{align*}
Accordingly we  obtain the desired upper bound of $\mathcal{K}_3$  by
\begin{equation}\label{upp-3}
\langle t\rangle^{-\frac{1}{2}(\sigma-\sigma_1+\sigma'')}\langle t\rangle^{-\frac{\sigma''}{2}}(\tilde{D}^2_p(t)+\tilde{D}_p(t)).  
\end{equation}

Collecting \eqref{upp-1}, \eqref{upp-2} and \eqref{upp-3}, the proof of  Lemma \ref{cl5} is completed. 
\end{proof}

\subsubsection{Bounds for the high frequencies}
In order to obtain the estimates for high frequencies of $(\tilde{u}, \tilde{ \tau})$,  we firstly establish the following lemma .
\begin{lemma}\label{cl10}
Let $p$ satisfy \eqref{ppp} and $\sigma_0\leq \sigma_1<\frac{d}{2}-1.$ It holds that
\begin{align*}
&
\|\langle s \rangle^{\alpha_*}(\nabla \tilde {u}, {\tilde\tau})\|^h_{\tilde{L}^\infty_t(\dot{B}^{\frac{d}{p}}_{p,1})}+\|s^{\alpha_*}{\tilde\tau}\|^h_{\tilde{L}^\infty(1,t;\dot{B}^{\frac{d}{p}+2}_{p,1})}\lesssim
E_0+\|s^{\frac{1}{2}(\frac{d}{2}-\sigma_1)-}u\|_{\tilde{L}^\infty(1,t;\dot{B}^{\frac{d}{p}+1}_{p,1})}+E_0\tilde{D}_p(t). 
\end{align*}
\end{lemma}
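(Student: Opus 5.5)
We propose to prove Lemma \ref{cl10} by running the high-frequency Lyapunov argument of Lemma \ref{lem3.2} on the error system \eqref{OBC-4}, now with source terms, and then converting the resulting differential inequality into time-weighted bounds.

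\textbf{Step 1: localized inequality with sources.} Introduce the effective tensor $\tilde W=\nabla(-\Delta)^{-1}(\frac12\tilde u+\operatorname{div}\tilde\tau)$. The triple $(\tilde u,\tilde W,\mathbb P\tilde\tau)$ satisfies \eqref{u-tau-omega} with additional forcing. Here $F=-\mathbb P(u\cdot\nabla u)$ enters the $\tilde u$ equation, and $H=-u\cdot\nabla\tau-Q(\tau,\nabla u)$ enters the $\tau$ equation. Through $\operatorname{div}$ these produce $\nabla(-\Delta)^{-1}(\frac12 F+\operatorname{div}H)$ in the $\tilde W$ equation.

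Repeat the $L^p$ energy method of Lemma \ref{lem3.2} for $k\ge k_0-1$, without the exponential weight. This gives
\[
\frac{d}{dt}\mathcal J_k+c\,\mathcal J_k\lesssim \|\nabla F_k\|_{L^p}+\|H_k\|_{L^p}+\|\nabla u\|_{L^\infty}\mathcal J_k+\text{commutators},
\]
where $\mathcal J_k\approx\|(\nabla\tilde u_k,2^{2k}\tilde W_k,2^{2k}\mathbb P\tilde\tau_k)\|_{L^p}$.

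The main point is the velocity transport term $u\cdot\nabla\nabla\tilde u$. We keep $u\cdot\nabla\tilde u$ inside the $\tilde u$ equation and handle it by a commutator estimate (Lemma 2.100 of \cite{BCD-2011} type) together with the cancellation $\int u\cdot\nabla |f|^{p}=0$. This avoids losing a derivative on $\tilde u$, which has no smoothing. The remaining pieces $u\cdot\nabla u_L$ and $\tilde u\cdot\nabla u$ are treated as sources: $u_L^h$ decays exponentially, and the source involving $\nabla u$ is controlled in $\dot B^{\frac dp}_{p,1}$ by product laws.

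\textbf{Step 2: time weights.} Multiply by $\langle t\rangle^{\alpha_*}$ and integrate with the damping $e^{-c(t-s)}$. The derivative of the weight produces $\alpha_*\langle s\rangle^{\alpha_*-1}\mathcal J_k$, which is absorbed by the damping for large $s$ (and is harmless on $[0,1]$). Summing in $k$ gives
\[
\|\langle s\rangle^{\alpha_*}(\nabla\tilde u,\tilde\tau)\|^h_{\tilde L^\infty_t(\dot B^{\frac dp}_{p,1})}\lesssim \|\langle s\rangle^{\alpha_*}(\nabla F,H)\|^h_{\tilde L^\infty_t(\dot B^{\frac dp}_{p,1})}+\|\langle s\rangle^{\alpha_*}\nabla u\|_{L^\infty}(\cdots).
\]
For the $\dot B^{\frac dp+2}$ bound on $\tilde\tau$, repeat the cutoff trick used at the end of Proposition \ref{prop1}. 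Apply parabolic maximal regularity to $\chi(t)s^{\alpha_*}\tilde\tau$ on $[1,t]$. The forcing is $\chi' s^{\alpha_*}\tilde\tau+s^{\alpha_*}(D(\tilde u)+H)$, and $D(\tilde u)$ is already controlled in $\dot B^{\frac dp}_{p,1}$.

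\textbf{Step 3: nonlinear sources.} Split each product into low/high pieces. Every term carries one factor bounded in the energy space by $E_0$ via \eqref{E-t}, and one factor decaying at rate $\langle s\rangle^{-\alpha_*}$. The decaying factor comes from one of three places:
\begin{itemize}
\item the low-frequency decay of $(u_L,\tau_L)$ (Proposition \ref{prop1}), producing the $E_0$ term;
\item $\tilde D_p$, producing $E_0\tilde D_p(t)$;
\item the full velocity in $\dot B^{\frac dp+1}_{p,1}$, producing $\|s^{\frac12(\frac d2-\sigma_1)-}u\|_{\tilde L^\infty(1,t;\dot B^{\frac dp+1}_{p,1})}$.
\end{itemize}
The third source arises in terms such as $u\cdot\nabla\tau$ and $Q(\tau,\nabla u)$, where $\nabla u$ must be placed in $L^\infty$ with high regularity. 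Since $\alpha_*\le\frac12(\frac d2-\sigma_1)-$ when $\sigma_2$ is small, those weights match.

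\textbf{Main obstacle.} We expect the key difficulty to be Step 1's treatment of $u\cdot\nabla\nabla\tilde u$ and $\nabla(\tau\nabla u)$ at the top regularity $\dot B^{\frac dp+1}$ for $\tilde u$. This is where the non-dissipated velocity loses a derivative. Our plan is to first try the commutator estimate combined with the effective-tensor reformulation. If $Q$ still costs a derivative, we place it in the $\tilde W$ equation, where the $\operatorname{div}$ is compensated by $\nabla(-\Delta)^{-1}$. Its contribution to $\nabla\tilde u$ then comes only through $\mathbb P\operatorname{div}\tilde W$, which is exactly of order $\|\tau\nabla u\|_{\dot B^{\frac dp}_{p,1}}$.
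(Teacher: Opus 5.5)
There is a genuine gap in Step~1, at the velocity transport term, and your proposed remedy does not reach it. You work directly on the error system and split $u\cdot\nabla u=u\cdot\nabla\tilde u+u\cdot\nabla u_L$. You keep the first piece on the left via the commutator and the cancellation $\int u\cdot\nabla|f|^p\,dx=0$.

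The second piece must then be a source in the equation for $\nabla\tilde u_k$. To bound $\nabla\tilde u$ in $\dot B^{\frac dp}_{p,1}$, you need $\nabla\mathbb P(u\cdot\nabla u_L^h)$ in $\dot B^{\frac dp}_{p,1}$, i.e.\ $u_L^h\in\dot B^{\frac dp+2}_{p,1}$. The linear flow does not provide this. At high frequencies $\lambda_+$ stays bounded (it tends to $-\frac{\nu_1\nu_2}{2\eta}$, which is the order-zero damping $\frac12u$ in \eqref{u-tau-omega}). So $u_L^h$ has exactly the regularity of $u_0^h$, namely $\dot B^{\frac dp+1}_{p,1}$.

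The exponential decay of $u_L^h$ that you invoke controls time behaviour, not this missing derivative. No commutator helps either, because the transported quantity ($u_L$) is not the tested one ($\nabla\tilde u_k$). Treating $\tilde u\cdot\nabla u$ as a source, as you also list, loses a derivative in the same way. Rerouting through $\tilde W$ does not help, since $u\cdot\nabla u_L$ enters the $\tilde u$ equation directly.

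The missing idea is to not work on the error system at this stage. The paper runs the effective-tensor Lyapunov argument of Lemma~\ref{lem3.2} on the full solution $(u,\tau)$. There the whole term $u\cdot\nabla u$ transports the estimated quantity itself, and it is handled by the commutator $[u\cdot\nabla,\nabla\dot\Delta_k\mathbb P]u$ together with $\operatorname{div}u=0$.

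With this, the paper proves the weighted bound for $\|\langle s\rangle^{\alpha_*}(\nabla u,\tau)\|^h_{\tilde L^\infty_t(\dot B^{\frac dp}_{p,1})}$, splitting the Duhamel integral over $[0,2]$, $[0,1]$ and $[1,s]$. It obtains the $\dot B^{\frac dp+2}_{p,1}$ bound for $\tau$ by maximal regularity applied to $t\tau$ on $[0,1]$ and to $t^{\alpha_*}\tau$ on $[1,t]$.

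Only at the end does it pass to $(\tilde u,\tilde\tau)=(u,\tau)-(u_L,\tau_L)$. For this it uses $\|e^{Ct}(\nabla u_L,\tau_L)\|^h_{\tilde L^\infty_t(\dot B^{\frac dp}_{p,1})}\lesssim E_0$ and the corresponding high-frequency bound on $\tau_L$ from Proposition~\ref{prop1}; these require only the regularity that $u_L^h$ actually has. With this reordering, your Steps~2 and~3 (time weights, source splitting, and the three origins of the decaying factor) go through essentially as you describe.
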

\begin{proof}
In order to utilize commutator estimate, we deal with the solution $(u, \tau)$ firstly, and then the proof proceeds in two steps.

{\bf Step I}.
In this part, we will show
\begin{align}\label{14167}
\|\langle s \rangle^{\alpha_*}(\nabla  {u}, { \tau})\|^h_{ \tilde {L}^\infty_t(\dot{B}^{\frac{d}{p}}_{p,1})} \lesssim
E_0+\|s^{\frac{1}{2}(\frac{d}{2}-\sigma_1)-}u\|_{\tilde{L}^\infty(1,t;\dot{B}^{\frac{d}{p}+1}_{p,1})}+E_0\tilde{D}_p(t). 
\end{align}
 Similarly to the argument of    Lemma \ref{lem3.2}, 
  there exists a constant $C>0$  such that   for $k\geq k_0-1$
	\begin{align*} 
		\frac{d}{dt} \widetilde{\mathcal{J}}_k(t)+C(\|\nabla   u_k\|_{L^p}+2^{2k}\|\mathbb{P}\tau_k\|_{L^p}+2^{2k}\|  W_k\|_{L^p})\leq C S_k,
	\end{align*}
where 
\begin{align*}
		\widetilde{\mathcal{J}}_k(t)&= 2\gamma\|\nabla \tilde u_k\|_{L^p}+\gamma\|\mathbb{P} \tau_k\|_{L^p}+\|  W_k\|_{L^p}.\end{align*}
        and \begin{align*}
		S_k= \|R_k\|_{L^p}+\|\bar{R}_k\|_{L^p}+\|H_k\|_{L^p}+2^{-k}\|F_k\|_{L^p},\ \ R_k=[u\cdot\nabla,\nabla\dot{\Delta}_k\mathbb{P} ] u,
        \end{align*}
We then obtain 
\begin{equation}\label{231}
e^{C t}\|(\nabla 
  u_k,\mathbb{P} \tau_k,  W_k)\|_{L^p}\lesssim 
 \int^t_0e^{C s}S_k(s)ds.
\end{equation}
Due to the second equality in \eqref{L1}, we have
	\begin{equation}\label{zz4}
		\|(\nabla   u_k,2^{2k} W_k,2^{2k}\mathbb{P} \tau_k)\|_{L^p}\approx	\|(\nabla   u_k,2^{2k} \tau_k)\|_{L^p}\approx \|(2^k  u_k,2^{2k} \tau_k)\|_{L^p},
	\end{equation}
	for all $k\geq k_0-1$. 
    
Then, multiplying both sides of  \eqref{231} by $\langle t \rangle^{\alpha_*}2^{j{\frac{d}{p}}}$ and employing \eqref{zz4} gives
\begin{align}\label{high fre1}
\|\langle s \rangle^{\alpha_*}(\nabla u, \tau) \|^h_{\tilde{L}^\infty_t(\dot{B}^{\frac{d}{p}}_{p,1})}\lesssim &
 \sum_{k\geq k_0-1}2^{k(d/p-1)}\sup_{0\leq s\leq t}\Bigg(\langle s \rangle^{\alpha_*}\int^s_0e^{-C(s-z)}F_k(z)dz\Bigg)\nonumber\\
&+\sum_{k\geq k_0-1}2^{k(d/p)}\sup_{0\leq s\leq t}\Bigg(\langle s \rangle^{\alpha_*}\int^s_0e^{-C(s-z)}(R_k+H_k)(z)dz\Bigg).
\end{align}

Without loss of generality one can assume that $t>2$. For clarity, we write
\begin{equation*}
    \begin{aligned}
        \mathcal L_1=&\sup_{0\leq s\leq 2}\Bigg(\langle s \rangle^{\alpha_*}\int^s_0e^{-C(s-z)}Z_k(z)dz\Bigg),\\
        \mathcal L_2=&\sup_{2\leq s\leq t}\Bigg(\langle s \rangle^{\alpha_*}\int^1_0e^{-C(s-z)}Z_k(z)dz\Bigg),\\
      \mathcal L_3=&\sup_{2\leq s\leq t}\Bigg(\langle s \rangle^{\alpha_*}\int^s_1e^{-C(s-z)}Z_k(z)dz\Bigg),
    \end{aligned}
\end{equation*}
 where $Z_k(z)=Z_{1k}(z)+Z_{2k}(z)=\big(R_k(z)+H_k(z)\big)+F_k(z)$.

Direct calculation shows 
\begin{align*}
\mathcal L_1 \lesssim\int^2_0Z_{1k}(z)dz+\int^2_0Z_{2k}(z)dz=:\mathcal L_{11}+\mathcal L_{12}
\end{align*}
and
\begin{align*}
\mathcal L_2
\lesssim \int^1_0 Z_{1k}(z)dz+\int^1_0Z_{2k}(z)dz=:\mathcal L_{21}+\mathcal L_{22}.
\end{align*}
On the other hand, one has 
\begin{align*}
\mathcal L_3 
&\lesssim \sup_{2\leqslant s\leqslant t}\sup_{1\leqslant z\leqslant s}(z^{\alpha_*}Z_k(z))\langle s \rangle^{\alpha_*}\int^s_1e^{-C(s-z)}z^{-\alpha_*}dz\\
&\lesssim \sup_{1\leqslant s\leqslant t}(s^{\alpha_*}Z_{1k}(s))+\sup_{1\leqslant s\leqslant t}(s^{\alpha_*}Z_{2k} (s))\\
&=:\mathcal L_{31}+\mathcal L_{32}.
\end{align*}

Using  Lemma \ref{app1} and product law on Besov space, one has
\begin{align*}
&\sum_{k\geq k_0+1}2^{k(\frac{d}{p}-1)}(\mathcal L_{11}+\mathcal L_{21})+\sum_{k\geq k_0+1}2^{\frac{kd}{p}}(\mathcal L_{12}+\mathcal L_{22})\\
&\lesssim \|u\cdot \nabla\tau\|^h_{\dot{B}^{\frac{d}{p}}_{p,1}}+\|\mathrm{Q}(\tau,\nabla u)\|^h_{\dot{B}^{\frac{d}{p}}_{p,1}}+\sum_{k\geq k_0-1}2^{\frac{kd}{p}}\|R_k\|_{L^p}  +\|u\otimes u\|^h_{\dot{B}^{\frac{d}{p}}_{p,1}}\\
& \lesssim\|u\|_{\tilde{L}^\infty_t(\dot{B}^{\frac{d}{p}}_{p,1})}\|\tau\|_{\tilde{L}^1_t(\dot{B}^{\frac{d}{p}+1}_{p,1})}+\|\tau\|_{\dot{B}^{\frac{d}{p}}_{p,1}}\|u\|_{\dot{B}^{\frac{d}{p}+1}_{p,1}} + \|u\|_{\dot{B}^{\frac{d}{p}}_{p,1}}\|u\|_{\dot{B}^{\frac{d}{p}}_{p,1}}+\|u\|^2_{\dot{B}^{\frac{d}{p}+1}_{p,1}}\\
&\lesssim E^2_0(t).
\end{align*}

With the help of 
\begin{align*}
\|\langle s \rangle^{\frac{1}{2}(\frac{d}{2}-\sigma_1)-}u_L\|_{\tilde{L}^\infty_t(\dot{B}^{\frac{d}{p}+1}_{p,1})}\lesssim 1+E_{0},\ \ \|\langle s \rangle^{\alpha_*}\tilde{u}\|_{\tilde{L}^\infty_t(\dot{B}^{\frac{d}{p}+1}_{p,1})}\lesssim \tilde{D}_p(t)
\end{align*}
and  $\alpha_*< \frac{d}{2}-\sigma_1$, direct computation shows that
\begin{align*}
&\sum_{k\geq k_0-1}2^{\frac{kd}{p}}\sup_{1\leqslant s\leqslant t}(s^{\alpha_*}Z_{1k}(s))\\
&\lesssim \|s^{\alpha_*} u\cdot \nabla\tau\|^h_{\dot{B}^{\frac{d}{p}}_{p,1}}+\|s^{\alpha_*}\mathrm{Q}(\tau,\nabla u)\|^h_{\dot{B}^{\frac{d}{p}}_{p,1}}+\sum_{k\geq k_0-1}2^{\frac{d}{p}k}\|s^{\alpha_*}R_k\|_{L^p}\\
&\lesssim \|s^{\alpha_*}(u\cdot \nabla \tau_L)\|^h_{\tilde{L}^\infty(1,t;\dot{B}^{\frac{d}{p}}_{p,1})}+\|s^{\alpha_*}(u\cdot \nabla \tilde{\tau})\|^h_{\tilde{L}^\infty(1,t;\dot{B}^{\frac{d}{p}}_{p,1})}\\
&\quad+\|s^{\alpha_*}(\tau_L\cdot \nabla u)\|^h_{\tilde{L}^\infty(1,t;\dot{B}^{\frac{d}{p}}_{p,1})}+\|s^{\alpha_*}(\tilde{\tau}\cdot \nabla u )\|^h_{\tilde{L}^\infty(1,t;\dot{B}^{\frac{d}{p}}_{p,1})}\\
&\quad+\|s^{\alpha_*}u\|_{\tilde{L}^\infty(1,t;\dot{B}^{\frac{d}{p}+1}_{p,1})} (\|u\|^l_{\tilde{L}^\infty(1,t;\dot{B}^{\frac{d}{2}+1}_{2,1})}+\|u\|^h_{\tilde{L}^\infty(1,t;\dot{B}^{\frac{d}{p}+1}_{p,1})})\\
&\lesssim E_0\tilde{D}_p(t)+\|\langle s \rangle^{\frac{1}{2}(\frac{d}{2}-\sigma_1)-}\tau_L\|_{\tilde{L}^\infty_t(\dot{B}^{\frac{d}{p}+1}_{p,1})}\|s^{\frac{1}{2}(\frac{d}{2}-\sigma_1)-}u\|_{\tilde{L}^\infty(1,t;\dot{B}^{\frac{d}{p}}_{p,1})}\\
&\quad+\|u\|_{\tilde{L}^\infty(1,t;\dot{B}^{\frac{d}{p}}_{p,1})}\| s ^{\alpha_*}\tilde{\tau}\|_{\tilde{L}^\infty(1,t;\dot{B}^{\frac{d}{p}+1}_{p,1})}
+\|s^{\alpha_*}u\|_{\tilde{L}^\infty(1,t;\dot{B}^{\frac{d}{p}+1}_{p,1})}\|\tau_L\|_{\tilde{L}^\infty(1,t;\dot{B}^{\frac{d}{p}}_{p,1})}\\
&\quad+\|u\|_{\tilde{L}^\infty(1,t;\dot{B}^{\frac{d}{p}+1}_{p,1})}\|\langle s \rangle^{\alpha_*}\tilde{\tau}\|_{\tilde{L}^\infty_t(\dot{B}^{\frac{d}{p}}_{p,1})}\\
&\lesssim \|s^{\frac{1}{2}(\frac{d}{2}-\sigma_1)-}u\|_{\tilde{L}^\infty(1,t;\dot{B}^{\frac{d}{p}+1}_{p,1})}+E_0\tilde{D}_p(t)
\end{align*}
and
\begin{align*}
&\sum_{k\geq j_0-1}2^{k(\frac{d}{p}-1)}\sup_{1\leqslant s\leqslant t}(s^{\alpha_*}Z_{2k}(s))\\
&\lesssim \|s^{\alpha_*}(u\otimes u)\|^h_{\tilde{L}^\infty(1,t;\dot{B}^{\frac{d}{p}}_{p,1})}\\
&\lesssim \|s^{\alpha_*}(u_L\otimes u)\|^h_{\tilde{L}^\infty(1,t;\dot{B}^{\frac{d}{p}}_{p,1})}+\|s^{\alpha_*}(\tilde{u}\otimes u)\|^h_{\tilde{L}^\infty(1,t;\dot{B}^{\frac{d}{p}}_{p,1})}\\
&\lesssim \|s^{\frac{1}{2}(\frac{d}{2}-\sigma_1)-}u\|_{\tilde{L}^\infty(1,t;\dot{B}^{\frac{d}{p}}_{p,1})}+E_0\tilde{D}_p(t).
\end{align*}

As a result, we complete the estimate of \eqref{14167}.

 {\bf Step II}.
In this part, we focus on  the high frequencies of $\tau$,
which depends on the parabolic maximal regularity for the parabolic system (see 
(3.39) in \cite{BCD-2011}):
\begin{align*}
\begin{cases}
\partial_t(t \tau)-\Delta(t \tau)= \tau+tD(  u)+tH,\\
t \tau|_{t=0}=0.
\end{cases}
\end{align*}

For   $0\leq t\leq1$,  we have 
\begin{align*}
\|t \tau\|^h_{\tilde{L}^\infty(0,1;\dot{B}^{\frac{d}{p}+2}_{p,1})}\lesssim& \|( \tau,\nabla  u)\|^h_{\tilde{L}^\infty(0,1;\dot{B}^{\frac{d}{p}}_{p,1})}+\|tH\|^h_{\tilde{L}^\infty(0,1;\dot{B}^{\frac{d}{p}}_{p,1})}\\
\lesssim& E_0+\| u\|_{\tilde{L}^\infty(0,1;\dot{B}^{\frac{d}{p}}_{p,1})}\|t\nabla \tau\|^h_{\tilde{L}^\infty(0,1;\dot{B}^{\frac{d}{p}}_{p,1})}+\|\nabla u\|_{\tilde{L}^\infty(0,1;\dot{B}^{\frac{d}{p}}_{p,1})}\|t \tau\|^h_{\tilde{L}^\infty(0,1;\dot{B}^{\frac{d}{p}}_{p,1})},
\end{align*}
which implies that
\begin{align}
\sup_{t\in [0,1]}\|t  \tau\|^h_{\dot{B}^{\frac{d}{p}+2}_{p,1}}\lesssim E_0.\label{tau-111}
\end{align}

For  $t\geq 1$, we consider the following system
\begin{align*}
\partial_t(t^{\alpha_*} \tau)-\Delta(t^{\alpha_*} \tau)=\alpha_*t^{\alpha_*-1} \tau+t^{\alpha_*}D(  u)+t^{\alpha_*}H.
\end{align*}
Based on \eqref{tau-111} we then have
\begin{align*}
\|s^{\alpha_*} \tau\|^h_{\tilde{L}^\infty(1,t;\dot{B}^{\frac{d}{p}+2}_{p,1})}&\lesssim\| \tau(1)\|^h_{\dot{B}^{\frac{d}{p}}_{p,1}}
+\|s^{\alpha_*}D(  u)\|^h_{\tilde{L}^\infty(1,t;\dot{B}^{\frac{d}{p}}_{p,1})}+\|s^{\alpha_*}H\|^h_{\tilde{L}^\infty(1,t;\dot{B}^{\frac{d}{p}}_{p,1})}\\
&\lesssim\| \tau(1)\|^h_{\dot{B}^{\frac{d}{p}}_{p,1}} +\|s^{\frac{1}{2}(\frac{d}{2}-\sigma_1)-} u\|_{\tilde{L}^\infty(1,t;\dot{B}^{\frac{d}{p}+1}_{p,1})}+E_0\tilde{D}_p(t)\\
&\lesssim E_0 +\|s^{\frac{1}{2}(\frac{d}{2}-\sigma_1)-}  u\|_{\tilde{L}^\infty(1,t;\dot{B}^{\frac{d}{p}+1}_{p,1})} +E_0\tilde{D}_p(t)
\end{align*}
which together with \eqref{14167}, Proposition \ref{prop1} and $(\tilde{u}, \tilde\tau)=( {u},  \tau)-({u}_L,  \tau_L)$  lead to the desired conclusion.
\end{proof}
\begin{lemma}\label{cl11}
Let $p$ satisfy \eqref{ppp}, then there exists large enough $t_L>0$  such that  
\begin{align*}
&
\|s^{\frac{1}{2}(\frac{d}{2}-\sigma_1)-}u\|_{\tilde{L}^\infty(1,t;\dot{B}^{\frac{d}{p}+1}_{p,1})}\lesssim
1+E_0+t_L^{-\frac{\sigma_2}{4}}\tilde{D}_p(t). 
\end{align*}
\end{lemma}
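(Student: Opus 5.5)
The plan is to split $u=u^\ell+u^h$ and, in each piece, to separate the linear solution from the error term via $u=u_L+\tilde u$. Every contribution is then controlled either by the linear bounds of Proposition \ref{prop1} or by the functional $\tilde D_p(t)$ of Proposition \ref{cob2}. The point to exploit is that $\tilde D_p$ carries a weight larger than the target weight $s^{\frac12(\frac d2-\sigma_1)-}$ by roughly $s^{\sigma_2/2}$. Cashing in part of that surplus on $s\geq t_L$ should produce the small factor $t_L^{-\sigma_2/4}$.

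\emph{Low frequencies.} By Bernstein's inequality and the embedding $\dot B^{s}_{2,1}\hookrightarrow \dot B^{s-\frac d2+\frac dp}_{p,1}$, together with the fact that low-frequency norms are monotone in the regularity index, I would bound $\|u^\ell\|_{\dot B^{\frac dp+1}_{p,1}}\lesssim \|u^\ell\|_{\dot B^{\sigma}_{2,1}}$ with $\sigma=\frac d2-\varepsilon$, where $\varepsilon>0$ is small and absorbed into the ``$-$'' of the exponent. I would then treat the two parts of $u^\ell$ separately.
\begin{itemize}
\item For $u_L^\ell$, Proposition \ref{prop1}, i.e. \eqref{wi2}, gives the decay $\langle s\rangle^{-\frac12(\frac d2-\varepsilon-\sigma_1)}$, which contributes a constant $\lesssim 1+E_0$.
\item For $\tilde u^\ell$, the definition of $\tilde D_p$ gives $\langle s\rangle^{-\frac12(\frac d2-\varepsilon-\sigma_1+\sigma_2)}\tilde D_p(t)$, which gains an extra factor $\langle s\rangle^{-\sigma_2/2}$.
\end{itemize}

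\emph{High frequencies.} Proposition \ref{prop1} gives exponential decay $e^{Cs}$ for $\nabla u_L$ in $\dot B^{\frac dp}_{p,1}$ at high frequencies, so $\|\nabla u_L\|^h_{\dot B^{\frac dp}_{p,1}}$ decays exponentially. Since $\|u\|^h_{\dot B^{\frac dp+1}_{p,1}}\approx\|\nabla u\|^h_{\dot B^{\frac dp}_{p,1}}$, the $u_L^h$ contribution is bounded by $E_0$ after any polynomial weight. For $\tilde u^h$, the definition of $\tilde D_p$ bounds $\langle s\rangle^{\alpha_*}\nabla\tilde u$ in $\dot B^{\frac dp}_{p,1}$. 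Here $\alpha_*=\frac12(\frac d2-\sigma_1+\sigma_2)-$, so $\alpha_*$ exceeds the target exponent by essentially $\sigma_2/2$.

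\emph{Time splitting.} On the interval $[t_L,t]$, the surplus weight gives $s^{-\sigma_2/2+\varepsilon}\le t_L^{-\sigma_2/4}$, once $\varepsilon$ is chosen small relative to $\sigma_2$. On the remaining interval $[1,t_L]$, I would not use $\tilde D_p$ at all. Instead I would bound the weight trivially by $t_L^{\frac12(\frac d2-\sigma_1)}$ and use the global bound \eqref{E-t}, $\|u\|^h_{\tilde L^\infty(\dot B^{\frac dp+1}_{p,1})}+\|u\|^\ell_{\tilde L^\infty(\dot B^{\frac d2-1}_{2,1})}\lesssim E_0$. The low-frequency norm $\dot B^{\frac dp+1}_{p,1}$ is then dominated by $\dot B^{\frac d2-1}_{2,1}$, because low frequencies allow lowering the index. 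This yields a constant depending on $t_L$, which is absorbed into the ``$1$''.

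\emph{Main obstacle.} The hardest point is the bookkeeping of the endpoint $\sigma=\frac d2$. The functional $\tilde D_p$ only controls $\sigma<\frac d2$, which is exactly why the target exponent carries a ``$-$''. The loss $\varepsilon$ must be chosen strictly smaller than $\sigma_2/4$ so that a net decay $t_L^{-\sigma_2/4}$ survives. A second point to handle carefully is that Chemin--Lerner norms take the supremum in time before summing over frequencies. I would deal with this blockwise: the argument above is run for each $\dot\Delta_k$ separately, since each bound used is of that type.
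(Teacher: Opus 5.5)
Your proposal is correct and is essentially the paper's own argument. You split $u=u_L+\tilde u$ into low and high frequencies and bound the linear pieces by Proposition \ref{prop1}. For $\tilde u$ you spend the surplus weight of $\tilde D_p(t)$ on $[t_L,t]$ to get $t_L^{-\sigma_2/4}$, and on $[1,t_L]$ you bound the weight by a power of $t_L$ and use \eqref{E-t}; you also make this splitting explicit for $\tilde u^\ell$, which the paper leaves implicit.

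One minor correction concerns the linear low-frequency piece. \eqref{wi2} controls the full dyadic sum pointwise in time, so it does not give the Chemin--Lerner norm of $u_L^\ell$ blockwise. Use instead the frequency-localized heat bound of Lemma \ref{ls1}, which gives $\sup_{s\geq1} s^{\alpha}2^{k(\frac d2+1)}\|\dot\Delta_k u_L(s)\|_{L^2}\lesssim 2^{k(\frac d2+1-\sigma_1-2\alpha)}\|(u_0,\tau_0)^\ell\|_{\dot B^{\sigma_1}_{2,\infty}}$. This is summable over $k\le k_0$ because $2\alpha<\frac d2+1-\sigma_1$.
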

\begin{proof}
Recall that  $u=u_L+\tilde{u}$, direct calculation and \eqref{wi1}  yield
\begin{align*}
\|s^{\frac{1}{2}(\frac{d}{2}-\sigma_1)-}u\|_{\tilde{L}^\infty(1,t;\dot{B}^{\frac{d}{p}+1}_{p,1})}=&\|s^{\frac{1}{2}(\frac{d}{2}-\sigma_1)-}u_L\|_{\tilde{L}^\infty(1,t;\dot{B}^{\frac{d}{p}+1}_{p,1})}
+\|s^{\frac{1}{2}(\frac{d}{2}-\sigma_1)-}\tilde{u}\|_{\tilde{L}^\infty(1,t;\dot{B}^{\frac{d}{p}+1}_{p,1})}\\
\lesssim& \|s^{\frac{1}{2}(\frac{d}{2}-\sigma_1)-}u^\ell_L\|_{\tilde{L}^\infty(1,t;\dot{B}^{\frac{d}{p}+1}_{p,1})}+\|s^{\frac{1}{2}(\frac{d}{2}-\sigma_1)-}u^h_L\|_{\tilde{L}^\infty(1,t;\dot{B}^{\frac{d}{p}+1}_{p,1})}\\
&+\|s^{\frac{1}{2}(\frac{d}{2}-\sigma_1)-}\tilde{u}\|_{\tilde{L}^\infty(1,t;\dot{B}^{\frac{d}{p}+1}_{p,1})}\\
\lesssim& 1+E_0+\|s^{\frac{1}{2}(\frac{d}{2}-\sigma_1)-}\tilde{u}^\ell\|_{\tilde{L}^\infty(1,t;\dot{B}^{\frac{d}{2}-}_{2,1})}+\|s^{\frac{1}{2}(\frac{d}{2}-\sigma_1)-}\tilde{u}^h\|_{\tilde{L}^\infty(1,t;\dot{B}^{\frac{d}{p}+1}_{p,1})}
\\ \lesssim& E_0+t_L^{-\frac{\sigma_2}{4}} \tilde{D}_p(t)+1,
\end{align*}
where the last inequality comes from 
\begin{align*}
\|s^{\frac{1}{2}(\frac{d}{2}-\sigma_1)-}\tilde{u}^h\|_{\tilde{L}^\infty(1,t;\dot{B}^{\frac{d}{p}+1}_{p,1})}&\lesssim t_L^{-\frac{\sigma_2}{4}}\|\langle s \rangle^{\alpha_*}(\nabla {\tilde{u}}, {\tilde{\tau}})\|^h_{\tilde{L}^\infty_{t_L,t}(\dot{B}^{\frac{d}{p}}_{p,1})}+\|\tilde{u}^h\|_{\tilde{L}^\infty(\dot{B}^{\frac{d}{p}+1}_{p,1})}\\&
\lesssim t_L^{-\frac{\sigma_2}{4}}\tilde{D}_p(t)+E_0+1.
\end{align*}
Then the proof of Lemma \ref{cl11} is completed.
\end{proof}
\subsubsection{The proof of the Proposition \ref{cob2}}
\label{ddd}
\begin{proof}
Using the interpolation inequality and \eqref{negative-bosov-of-utau} to imply
$$
\|(u_L,\tau_L,u,\tau)^\ell\|_{\dot{B}^{\sigma_1+\sigma_2}_{2,\infty}}\lesssim \|(u_L,\tau_L,u,\tau)^\ell\|^{\theta}_{\dot{B}^{\sigma_1}_{2,\infty}}
\|(u_L,\tau_L,u,\tau)^\ell\|^{1-\theta}_{\dot{B}^{\frac{d}{2}}_{2,\infty}}\lesssim C^{\theta}E_0^{1-\theta}
$$
for some $\theta\in (0,1)$.

Collecting   Lemma \ref{cl1}-\ref{cl11}, we obtain that
\begin{align*}
\tilde{D}_p(t)&\lesssim  E_0 +1+E_0\tilde{D}_p(t)+ \langle t_L\rangle^{-\frac{\sigma''}{4}} \tilde{D}^2_p(t)+ t_L^{-\frac{\sigma_2}{4}}\tilde{D}_p(t).
\end{align*}
Noting that $\sigma''$ is independent of the $\sigma$ and 
  large $t_L$  and using the standard boothstrap argument yields
 $$
 \tilde{D}_p(t)\lesssim  1+E_0 .
 $$
Thus we end the proof of Proposition \ref{cob2}.
\end{proof}

\subsection{Necessary condition}
\label{sec:necessary}

{ 
This part is devoted to establishing the necessary condition in Theorem 
\ref{Thm}, namely the ``only if'' part of the decay characterization. The 
central result is Proposition \ref{pro3.1}, which shows that if the solution 
$(u,\tau)$ to the Cauchy problem \eqref{OB-1} satisfies the upper decay bound 
\eqref{Theo1} and the two-sided bound \eqref{Theo2}, then the difference 
$(\tilde{u},\tilde{\tau}) = (u - u_L, \tau - \tau_L)$ between the nonlinear 
solution and the linear solution enjoys a strictly faster decay rate at low 
frequencies, namely
\begin{equation*}
	\|(\tilde{u},\tilde{\tau})\|^\ell_{\dot{B}^\sigma_{2,1}} \lesssim 
	\langle t \rangle^{-\frac{1}{2}(\sigma - \sigma_1 + \sigma_2)}, 
	\quad \sigma_1 < \sigma \leq \tfrac{d}{2},\ t > 0,
\end{equation*}
where $\sigma_2 > 0$ is the gain exponent defined in Proposition \ref{cob2}. 
The proof of Proposition \ref{pro3.1} proceeds by Duhamel's principle applied 
to the error system \eqref{OBC-4}, combined with the nonlinear estimates 
developed in Section \ref{sec:sufficient}. The key point is that the nonlinear 
interactions $F = -\mathbb{P}(u \cdot \nabla u)$ and $H = -u \cdot \nabla\tau 
- Q(\tau, \nabla u)$ decay faster than the linear solution, and this faster 
decay propagates to $(\tilde{u},\tilde{\tau})$ through the convolution 
inequality \eqref{key inequality 1}.

\medskip 
Once Proposition \ref{pro3.1} is established, the necessary condition follows 
in Subsection \ref{ddd} by a comparison argument. Specifically, the low-frequencies 
norm of the linear solution $(u_L,\tau_L)$ is controlled from above and 
below by
\begin{equation*}
	 \|(u,\tau)^\ell(t)\|_{\dot{B}^\sigma_{2,1}} 
	- \|(\tilde{u},\tilde{\tau})^\ell(t)\|_{\dot{B}^\sigma_{2,1}}\leq\|(u_L,\tau_L)^\ell(t)\|_{\dot{B}^\sigma_{2,1}} 
	\leq \|(u,\tau)^\ell(t)\|_{\dot{B}^\sigma_{2,1}} 
	+ \|(\tilde{u},\tilde{\tau})^\ell(t)\|_{\dot{B}^\sigma_{2,1}},
\end{equation*}
and the faster decay of $(\tilde{u},\tilde{\tau})$ established in Proposition 
\ref{pro3.1} ensures that the two-sided bound on $(u,\tau)$ transfers to 
$(u_L,\tau_L)$. Invoking the decay characterization \eqref{wi3} of Proposition 
\ref{prop1} then yields the desired conclusion $(u_0,\tau_0)^\ell \in 
\dot{\mathbf{B}}^{\sigma_1}_{2,\infty}$, completing the proof of Theorem 
\ref{Thm}.
}

\begin{proposition}\label{pro3.1}
Let $\sigma_0\leq \sigma_1<\frac{d}{2}-1$. Assume  $(u,\tau)$  be the solution to the Cauchy problem \eqref{OB-1} and satisfy \eqref{Theo1} and \eqref{Theo2}, then $(\tilde{u}, \tilde{\tau})$ has faster decay rates at low frequencies 
\begin{align*}
\|(\tilde{u}, \tilde{\tau})\|^\ell_{\dot{B}^\sigma_{2,1}}\lesssim \langle t \rangle^{-\frac{1}{2}(\sigma-\sigma_1+\sigma_2)}
\end{align*}
for $\sigma_1<\sigma\leq \frac{d}{2}$ and $t>0$, where $\sigma_2$ is defined in Proposition \ref{cob2}.
\end{proposition}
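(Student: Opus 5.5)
We will run the same Duhamel scheme as in the proof of Proposition \ref{cob2}, but with the roles reversed. Before, the decay of $(u_L,\tau_L)$ came from the assumption on the initial data. Now the decay of the full solution $(u,\tau)$ is given directly by \eqref{Theo1}--\eqref{Theo2}, and we feed that into the nonlinear terms.

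\textbf{Step 1: collect the inputs.} From \eqref{Theo2} we have
\[
\|(u,\tau)^\ell(t)\|_{\dot B^{\sigma}_{2,1}}\lesssim \langle t\rangle^{-\frac12(\sigma-\sigma_1)}, \qquad
\|(u,\tau)(t)\|^h_{\dot B^{\frac dp+1}_{p,1}}\lesssim \langle t\rangle^{-\frac12(\frac d2-\sigma_1)}
\]
for $t>t_0$, where the second bound is the choice $\sigma=\frac d2$. On $[0,t_0]$ we instead use the global bound $E(t)\lesssim E_0$ from \eqref{E-t}. We also use \eqref{Theo1}, i.e.\ uniform boundedness of $\|(u,\tau)^\ell\|_{\dot B^{\sigma_1}_{2,\infty}}$ (equivalently Proposition \ref{bdd-negative-norm}). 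Interpolating between \eqref{Theo1} and the $\dot B^{\frac d2}_{2,1}$ bound gives a uniform bound on $\|(u,\tau)^\ell\|_{\dot B^{\sigma_1+\sigma_3}_{2,\infty}}$, as in Subsection \ref{ddd}.

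\textbf{Step 2: Duhamel and the nonlinear terms.} Starting from \eqref{key ine1}, we have
\[
\|(\tilde u,\tilde\tau)(t)\|^\ell_{\dot B^\sigma_{2,1}}\lesssim \int_0^t\langle t-t'\rangle^{-\frac12(\sigma-\sigma_1+\sigma')}\|(F,H)\|^\ell_{\dot B^{\sigma_1-\sigma'}_{2,\infty}}\,dt'.
\]
The advantage over Proposition \ref{cob2} is that $F$ and $H$ are expressed only through $(u,\tau)$, so no splitting into $F_L+\tilde F_L+\tilde F$ and no bootstrap are needed. We reuse the product estimates of Lemmas \ref{cl1}, \ref{cl3} and \ref{cl5} with $(u_L,\tau_L)$ replaced by $(u,\tau)$.
\begin{itemize}
\item For $\mathbb P(u\cdot\nabla u)$ and $u\cdot\nabla\tau$, write them in divergence form and take $\sigma'=1$. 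This gives $\|(F,u\cdot\nabla\tau)\|^\ell_{\dot B^{\sigma_1-1}_{2,\infty}}\lesssim\langle t\rangle^{-\frac12(\frac d2-\sigma_1)}$, via $\|u^\ell\|_{\dot B^{\sigma_1+\sigma_3}_{2,\infty}}\|u^\ell\|_{\dot B^{\frac d2-\sigma_3}_{2,1}}$ together with the high-frequency bounds \eqref{A.3} and \eqref{4.28}--\eqref{4.32}.
\item For $Q(\tau,\nabla u)$ take $\sigma'=2\sigma''$ small and split $\tau\nabla u=\tau^\ell\nabla u^\ell+\tau^\ell\nabla u^h+\tau^h\nabla u$. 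The pieces $\tau^\ell\nabla u^\ell$ and $\tau^h\nabla u$ decay like $\langle t\rangle^{-\frac12(\frac d2+1-\sigma_1-\sigma')}$ and $\langle t\rangle^{-\frac12(d-2\sigma_1)}$ respectively.
\end{itemize}

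\textbf{Step 3: time integrals.} The convolution integrals are then evaluated exactly as in \eqref{AA10} and in the corresponding computation in Lemma \ref{cl5}. This gives the rate $\langle t\rangle^{-\frac12(\sigma-\sigma_1+\sigma_2)}$, with the same case distinctions that define $\sigma_2'$. This covers both ranges $\sigma\le\frac d2-1$ and $\frac d2-1<\sigma\le\frac d2$.

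\textbf{Main obstacle: the term $\tau^\ell\nabla u^h$.} Here $\nabla u^h$ in $\dot B^{\frac dp-1}_{p,1}$ decays only like $\langle t\rangle^{-\frac12(\frac d2-\sigma_1)}$. That is too weak once it is paired with the kernel exponent $\sigma'$ small. We handle it with the refined decomposition
\[
\tau^\ell_{ik}\partial_k u^h_j=\partial_k(\tau^\ell_{ik}u^h_j)-\partial_k\tau^\ell_{ik}\,u^h_j .
\]
\begin{itemize}
\item For the first term use $\sigma'=1$. It is bounded by $\|u^h\|_{\dot B^{\frac dp-1}_{p,1}}\|\tau^\ell\|_{\dot B^{\frac d2-1}_{2,1}}\lesssim\langle t\rangle^{-\frac12(\frac d2-\sigma_1)}$, and the time integral is handled by \eqref{AA10}.
\item For the second term use small $\sigma'$. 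It is bounded by $\|\nabla\tau^\ell\|_{\dot B^{\frac d2-1}_{2,1}}\|u^h\|_{\dot B^{\frac dp-1}_{p,1}}\lesssim\langle t\rangle^{-(\frac d2-\sigma_1)}$, and $d-2\sigma_1>2$ makes the convolution integral harmless.
\end{itemize}

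For $t\le 2$ (and on $[0,t_0]$) we bound everything by $E_0$ exactly as in Lemma \ref{cl1}. Summing the contributions proves the proposition.
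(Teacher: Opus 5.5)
Your scheme matches the paper's: the Duhamel bound \eqref{key ine1} with the nonlinear terms written through $(u,\tau)$, the hypotheses \eqref{Theo1}--\eqref{Theo2} as input with no bootstrap, $\sigma'=1$ for the transport terms, small $\sigma'$ for $Q$, and the splitting $\tau^\ell_{ik}\partial_k u^h_j=\partial_k(\tau^\ell_{ik}u^h_j)-\partial_k\tau^\ell_{ik}u^h_j$. The gap is your claim in Step 3 that the same bookkeeping also covers $\frac d2-1<\sigma\le\frac d2$.

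In a bound $\int_0^t\langle t-t'\rangle^{-\frac12(\sigma-\sigma_1+\sigma')}\langle t'\rangle^{-a}\,dt'$, the piece $t'\in[t-1,t]$ alone is comparable to $\langle t\rangle^{-a}$. So this route can never give a better rate than the source's. With $\sigma'=1$, the low--low interactions $u^\ell\otimes u^\ell$ and $u^\ell\otimes\tau^\ell$ in $\dot B^{\sigma_1}_{2,\infty}$ decay only like $\langle t\rangle^{-\frac12(\frac d2-\sigma_1)}$. This is their natural scaling rate, and your $\sigma_3$-splitting returns exactly this exponent for every $\sigma_3$. It does so only if you use the decay of $\|u^\ell\|_{\dot B^{\sigma_1+\sigma_3}_{2,\infty}}$ from \eqref{Theo2}, not just the uniform bound you record in Step 1.

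The target $\langle t\rangle^{-\frac12(\sigma-\sigma_1+\sigma_2)}$ is strictly faster once $\sigma>\frac d2-\sigma_2$, in particular at $\sigma=\frac d2$. Citing \eqref{AA10} does not help, since it holds only for $\sigma\le\frac d2-1$. The convolution inequality displayed for $\mathcal K_1$ in Lemma \ref{cl5} does not help either: it asserts the same bound and fails near $\sigma=\frac d2$ for the same reason. Your first refined piece $\partial_k(\tau^\ell_{ik}u^h_j)$, as written, has the same defect, because you record only the decay of $u^h$.

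The missing step is what the paper does in its Case II. For $\frac d2-1<\sigma\le\frac d2$, measure the low--low transport terms at higher regularity: take $\sigma'=\sigma_2$ and use
\[
\|(u^\ell\otimes u^\ell,\,u^\ell\otimes\tau^\ell)\|_{\dot B^{\sigma_1+1-\sigma_2}_{2,\infty}}\lesssim\|u^\ell\|_{\dot B^{\sigma_1+1-\sigma_2}_{2,\infty}}\|(u,\tau)^\ell\|_{\dot B^{\frac d2}_{2,1}}\lesssim\langle t\rangle^{-\frac12(\frac d2-\sigma_1+1-\sigma_2)} .
\]
This exponent exceeds $1$ because $\sigma_2<\frac d2-1-\sigma_1$, and it is at least $\frac12(\sigma-\sigma_1+\sigma_2)$ because $\sigma_2\le\frac12$. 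So \eqref{key inequality 1} returns exactly the kernel rate $\langle t\rangle^{-\frac12(\sigma-\sigma_1+\sigma_2)}$.

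For every term with a high-frequency factor, including $\tau^\ell u^h$, use the decay of both factors. This gives $\langle t\rangle^{-\frac12(d-1-2\sigma_1)}$, and you should then use the convolution estimate \eqref{1732} in place of \eqref{AA10}; it closes because $\frac d2-1-\sigma_1>\sigma_2$. With these two changes in the upper range of $\sigma$, your argument coincides with the paper's.
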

\begin{proof}
Recall that
\begin{align*}
\|(\tilde{u},\tilde{\tau})(t)\|^\ell_{\dot{B}^{\sigma}_{2,1}}\lesssim \int^t_0\langle t-t'\rangle^{-\frac{1}{2}(\sigma-\sigma_1+\sigma')}\|F,H\|^\ell_{\dot{B}^{\sigma_1-\sigma'}_{2,\infty}}dt',~~\sigma>\sigma_1.
\end{align*}

For $0<t\leq t_0$ with $t_0>0$ defined in Theorem \ref{Thm}, noticing that $\langle t \rangle\approx 1$, $\langle t-s\rangle\approx 1$ for $0\leq s\leq t\leq t_0$, we then obtain
\begin{align*}
\|F,H\|^\ell_{\dot{B}^{\sigma_1-\sigma'}_{2,\infty}}\lesssim E^2_0+E_0\|(u^\ell,\tau^\ell)\|_{\dot{B}^{\sigma_1}_{2,\infty}}.
\end{align*}
Hence,  Proposition \ref{bdd-negative-norm} and direct calculation imply
\begin{align*}
\int^t_0\langle t-t'\rangle^{-\frac{1}{2}(\sigma-\sigma_1+\sigma')}\|F,H\|^\ell_{\dot{B}^{\sigma_1-\sigma'}_{2,\infty}}dt'\lesssim \langle t\rangle^{-\frac{1}{2}(\sigma-\sigma_1+\sigma_2)}.
\end{align*}

For $t>t_0$, we proceed with two cases $\sigma_1<\sigma\leq \frac{d}{2}-1$ and $\frac{d}{2}-1<\sigma\leq \frac{d}{2}$.

\underline{\bf Case I}.
In the case of $\sigma_1<\sigma\leq \frac{d}{2}-1$, one has 
\begin{align*}
 &\int^t_0\langle t-t'\rangle^{-\frac{1}{2}(\sigma-\sigma_1+\sigma')}\|F,H\|^\ell_{\dot{B}^{\sigma_1-\sigma'}_{2,\infty}}dt'\\
 &=\int^{t_0}_0\langle t-t'\rangle^{-\frac{1}{2}(\sigma-\sigma_1+\sigma')}\|F,H\|^\ell_{\dot{B}^{\sigma_1-\sigma'}_{2,\infty}}dt'+\int^t_{t_0}\langle t-t'\rangle^{-\frac{1}{2}(\sigma-\sigma_1+\sigma')}\|F,H\|^\ell_{\dot{B}^{\sigma_1-\sigma'}_{2,\infty}}dt'.
\end{align*}

 Direct calculation shows
\begin{align*}
&\int^{t_0}_0\langle t-t'\rangle^{-\frac{1}{2}(\sigma-\sigma_1+\sigma')}\|F,H\|^\ell_{\dot{B}^{\sigma_1-\sigma'}_{2,\infty}}dt'\lesssim \langle t\rangle^{-\frac{1}{2}(\sigma-\sigma_1+\sigma_2)}.
\end{align*}

On the other hand, we choose $\sigma'=1$ and then obtain
\begin{align*}
\|(F, u\cdot \nabla \tau)\|^\ell_{\dot{B}^{\sigma_1-1}_{2,\infty}}&\lesssim (\|u^\ell\|_{\dot{B}^{\frac{d}{2}-1}_{2,1}}+\|u^h\|_{\dot{B}^{\frac{d}{p}}_{p,1}})\|u^h\|_{\dot{B}^{\frac{d}{p}-1}_{p,1}}+\|u^\ell\|_{\dot{B}^{\frac{d}{2}-1}_{2,1}}\|u^\ell\|_{\dot{B}^{\sigma_1+1}_{2,1}}\\
&\quad+\|u^h\|_{\dot{B}^{\frac{d}{p}-1}_{p,1}}\|u^\ell\|_{\dot{B}^{\frac{d}{2}-1}_{2,1}}+\|u^\ell\|_{\dot{B}^{\frac{d}{2}-1}_{2,1}}\|\tau^\ell\|_{\dot{B}^{\sigma_1+1}_{2,1}}+\|u^h\|_{\dot{B}^{\frac{d}{p}-1}_{p,1}}\|\tau^\ell\|_{\dot{B}^{\frac{d}{2}-1}_{2,1}}\\
&\quad+
(\|u^\ell\|_{\dot{B}^{\frac{d}{2}-1}_{2,1}}+\|u^h\|_{\dot{B}^{\frac{d}{p}}_{p,1}})\|\tau^h\|_{\dot{B}^{\frac{d}{p}-1}_{p,1}}\\
&\lesssim \langle t\rangle^{-\frac{1}{2}(\frac{d}{2}-\sigma_1)}.
\end{align*}

Next we deal with the term $Q(\tau, \nabla u)$. 
To proceed we write the typical term by
    \begin{equation*}
        \tau \nabla u=  \tau^\ell \nabla u^\ell+ \tau^h \nabla u^\ell + \tau^\ell  \nabla u^h+ \tau^h \nabla u^h,
    \end{equation*}
    then we bound the  corresponding  four terms respectively. 

Fristly, by using \eqref{A.2} we deduce 
\begin{align*}
\|\tau^\ell \nabla u^\ell\|^\ell_{\dot{B}^{\sigma_1-\sigma'}_{2,\infty}}\lesssim \|\tau^\ell\|_{\dot{B}^{\frac{d}{2}-\frac{\sigma'}{2}}_{2,1}}\|\nabla u^\ell\|_{\dot{B}^{\sigma_1-\frac{\sigma'}{2}}_{2,\infty}},
\end{align*}
and
\begin{align*}
\| {\tau}^h   \nabla {u}^\ell\|^\ell_{\dot{B}^{\sigma_1-\sigma'}_{2,\infty}}\lesssim \|\nabla\tilde{u}^\ell\|_{\dot{B}^{\frac{d}{2}-1}_{2,1}}\|\tau^h\|_{\dot{B}^{\frac{d}{p}-1}_{p,1}}.
\end{align*}
Using a similar argument in \eqref{aaa1}, we have
\begin{align*}
\|\tau^\ell\nabla u^h\|^\ell_{\dot{B}^{\sigma_1-\sigma'}_{2,\infty}}\leq  \|{\tau}^\ell \cdot {u}^h\|^\ell_{\dot{B}^{\sigma_1-\sigma'+1}_{2,\infty}}+\|\nabla{\tau}^\ell\|_{\dot{B}^{\frac{d}{2}-1}_{2,1}}\|{u}^h\|_{\dot{B}^{\frac{d}{p}-1}_{p,1}}
\end{align*}
and 
\begin{align*}
 \|  \tau^h \cdot \nabla {u}^h \|^\ell_{\dot{B}^{\sigma_1-\sigma'}_{2,\infty}}\leq \|u^h\|_{\dot{B}^{\frac{d}{p}+1}_{p,1}}\|\tau^h\|_{\dot{B}^{\frac{d}{p}+1}_{p,1}}.
\end{align*}

     Therefore,  let $\sigma'=2\sigma''$ be small  enough  such that $\frac{d}{2}+1-\sigma_1-\sigma'>2$ and employ \eqref{key inequality 1}, we then arrive at 
\begin{align*}
&\int^t_{t_0}\langle t-t'\rangle^{-\frac{1}{2}(\sigma-\sigma_1+\sigma')}\|Q(\tau,\nabla u)\|^\ell_{\dot{B}^{\sigma_1-\sigma'}_{2,\infty}}dt\\ & \lesssim \int^t_{t_0}\langle t-t'\rangle^{-\frac{1}{2}(\sigma-\sigma_1+\sigma')}\langle t \rangle^{-\frac{1}{2}(\frac{d}{2}+1-\sigma_1-\sigma')}\\
&\ \ \ \ +\int^t_{t_0}\langle t-t'\rangle^{-\frac{1}{2}(\sigma-\sigma_1+\sigma')}\langle t \rangle^{-\frac{1}{2}(d-2\sigma_1)}+\int^t_{t_0}\langle t-t'\rangle^{-\frac{1}{2}(\sigma-\sigma_1+1)}\langle t \rangle^{-\frac{1}{2}(\frac{d}{2}-\sigma_1)}\\
&\lesssim \langle t\rangle^{-\frac{1}{2}(\frac{d}{2}-\sigma_1)} .
\end{align*}
 
\underline{\bf Case II}. In the case of $\frac{d}{2}-1<\sigma\leq \frac{d}{2},$
  let $\sigma'=\sigma_2$ and we can calculate
\begin{align*}
&\|\mathbb{P}(u^\ell\otimes u^\ell)\|^\ell_{\dot{B}^{\sigma_1+1-\sigma_2}_{2,\infty}}+\|u^\ell\otimes {\tau}^\ell\|^\ell_{\dot{B}^{\sigma_1+1-\sigma_2}_{2,\infty}}\\
&\lesssim \|u^\ell\|_{\dot{B}^{\sigma_1+1-\sigma_2}_{2,\infty}}\|u^\ell\|_{\dot{B}^{\frac{d}{2}}_{2,\infty}}+ \|u^\ell\|_{\dot{B}^{\sigma_1+1-\sigma_2}_{2,\infty}}\|\tau^\ell\|_{\dot{B}^{\frac{d}{2}}_{2,1}}\\
&\lesssim \langle t \rangle^{\frac{d}{2}-\sigma_1+1-\sigma_2}(1+E_0),
\end{align*}
Due to  $\sigma_2=\min\{\frac{1}{2},(\frac{d}{2}-1-\sigma_1),\sigma''\}$, which indicates that $\sigma-\sigma_1+\sigma_2\leq \frac{d}{2}-\sigma_1+1-\sigma_2$, $\frac{d}{2}-\sigma_1+1-\sigma_2>2$, then one gets
\begin{align*}
\int^t_0 \langle t-t'\rangle^{-\frac{1}{2}(\sigma-\sigma_1+1)}\langle t \rangle^{-\frac{1}{2}(\frac{d}{2}-\sigma_1+1-\sigma_2)}dt'\lesssim \langle t\rangle^{-\frac{1}{2}(\sigma-\sigma_1+\sigma_2)}.
\end{align*}

For the remaining terms,  direct computations yield that 
\begin{align*}
&\|\mathbb{P}(u^h\cdot \nabla u^\ell)\|^\ell_{\dot{B}^{\sigma_1-1}_{2,\infty}}+\|\mathbb{P}(u\cdot \nabla u^h)\|^\ell_{\dot{B}^{\sigma_1-1}_{2,\infty}} +\|u^h\cdot \nabla \tau^\ell\|^\ell_{\dot{B}^{\sigma_1-1}_{2,\infty}}+\|u\cdot \nabla \tau^h\|^\ell_{\dot{B}^{\sigma_1-1}_{2,\infty}}\\
&\lesssim  \|u^h\|_{\dot{B}^{\frac{d}{p}-1}_{p,1}}\|u^\ell\|_{\dot{B}^{\frac{d}{2}-1}_{2,1}}+(\|u^\ell\|_{\dot{B}^{\frac{d}{2}-1}_{2,1}}+\|u^h\|_{\dot{B}^{\frac{d}{p}}_{p,1}})\|u^h\|_{\dot{B}^{\frac{d}{p}-1}_{p,1}}\\
&~~~\qquad+\|u^h\|_{\dot{B}^{\frac{d}{p}-1}_{p,1}}\|\tau^\ell\|_{\dot{B}^{\frac{d}{2}-1}_{2,1}}+
(\|u^\ell\|_{\dot{B}^{\frac{d}{2}-1}_{2,1}}+\|u^h\|_{\dot{B}^{\frac{d}{p}}_{p,1}})\|\tau^h\|_{\dot{B}^{\frac{d}{p}-1}_{p,1}}
\\
&\lesssim \langle t \rangle^{-\frac{1}{2}(d-1-2\sigma_1)}.
\end{align*}

 To obtain the desired upper bound, we need the following estimate:
 \begin{equation}\label{1732}
     \int^t_0 \langle t-t'\rangle^{-\frac{1}{2}(\sigma-\sigma_1+1)}\langle t \rangle^{-\frac{1}{2}(d-1-2\sigma_1)}dt' \lesssim \langle t \rangle^{-\frac{1}{2}(\sigma-\sigma_1+\sigma_2)}, \  \sigma\in (\frac{d}{2}-1,\frac{d}{2}] .
 \end{equation}
In fact, \eqref{1732} comes from the following two cases.
For the case of $\frac{1}{2}(\sigma-\sigma_1+1)\leq \frac{1}{2}(d-1-2\sigma_1)$, we have
\begin{align*}
&\int^t_0 \langle t-t'\rangle^{-\frac{1}{2}(\sigma-\sigma_1+1)}\langle t \rangle^{-\frac{1}{2}(d-1-2\sigma_1)}dt'\\
&\lesssim 
\begin{cases}
\langle t \rangle^{-\frac{1}{2}(\sigma-\sigma_1+1)},~~\qquad \qquad \text{if}~\frac{1}{2}(d-1-2\sigma_1)>1,\\
\langle t \rangle^{-\frac{1}{2}(\sigma-\sigma_1+1)-},~~\qquad\quad \ ~ \text{if}~\frac{1}{2}(d-1-2\sigma_1)=1,\\
\langle t \rangle^{-\frac{1}{2}(\sigma-\sigma_1+d-2-2\sigma_1)},\ \quad \text{if}~\frac{1}{2}(d-1-2\sigma_1)<1,
\end{cases}\\
&\lesssim \langle t \rangle^{-\frac{1}{2}(\sigma-\sigma_1+\sigma_2)},
\end{align*}
and for the case of $\frac{1}{2}(\sigma-\sigma_1+1)> \frac{1}{2}(d-1-2\sigma_1)$, we have
\begin{align*}
&\int^t_0 \langle t-t'\rangle^{-\frac{1}{2}(\sigma-\sigma_1+1)}\langle t \rangle^{-\frac{1}{2}(d-1-2\sigma_1)}dt'\\
&
\lesssim  
\begin{cases}
\langle t \rangle^{-\frac{1}{2}(d-1-2\sigma_1)},~~\qquad \qquad \text{if}~\frac{1}{2}(\sigma-\sigma_1+1)>1,\\
\langle t \rangle^{-\frac{1}{2}(d-1-2\sigma_1)-},~~\qquad \quad \ \text{if}~\frac{1}{2}(\sigma-\sigma_1+1)=1,\\
\langle t \rangle^{-\frac{1}{2}(\sigma-\sigma_1+d-2-2\sigma_1)}, ~~\ ~ \quad\text{if}~\frac{1}{2}(\sigma-\sigma_1+1)<1,
\end{cases}\\
&\lesssim \langle t \rangle^{-\frac{1}{2}(\sigma-\sigma_1+\sigma_2)}.
\end{align*}

Thus the proof of Proposition \ref{pro3.1} is completed.
\end{proof}

\textit{Completeness of the proof of  Theorem \ref{Thm}.}
According to the Proposition \ref{pro3.1},  $(\tilde{u},\tilde{\tau})$ exhibits the faster decay. Consequently, we obtain
\begin{align*}
\|(u_L,\tau_L)^\ell(t)\|_{\dot{B}^{\sigma}_{2,1}}&\lesssim 
\|(u,\tau)^\ell(t)\|_{\dot{B}^{\sigma}_{2,1}}+\|(\tilde{u},\tilde{\tau})^\ell(t)\|_{\dot{B}^{\sigma}_{2,1}}\\
&\leq \langle t \rangle^{-\frac{1}{2}(\sigma-\sigma_1)}+\langle t \rangle^{-\frac{1}{2}(\sigma-\sigma_1+\sigma_2)}\\
&\lesssim \langle t \rangle^{-\frac{1}{2}(\sigma-\sigma_1)},~~~t>t_0,~\sigma_1<\sigma\leq \frac{d}{2}.
\end{align*}
\par
On the other hand, a direct computation yields the following lower bound
\begin{align*}
\|(u_L,\tau_L)^\ell(t)\|_{\dot{B}^{\sigma}_{2,1}}&\geq 
\|(u,\tau)^\ell(t)\|_{\dot{B}^{\sigma}_{2,1}}-\|(\tilde{u},\tilde{\tau})^\ell(t)\|_{\dot{B}^{\sigma}_{2,1}}\\
&\gtrsim  C \langle t \rangle^{-\frac{1}{2}(\sigma-\sigma_1)}-\langle t \rangle^{-\frac{1}{2}(\sigma-\sigma_1+\sigma_2)}\\
&\gtrsim  \langle t \rangle^{-\frac{1}{2}(\sigma-\sigma_1)},~~~t>t_0,~\sigma_1<\sigma\leq \frac{d}{2}.
\end{align*}
In view of \eqref{wi3} in Proposition \ref{prop1}, we have $(u_0,\tau_0)^\ell\in\dot{\textbf{B}}^{\sigma_1}_{2,\infty} $.

Moreover, if the initial data $(u_0, \tau_0)$ satisfies $(u_0,{\mathbb{P}\Lambda^{-1}\mathrm{div}\tau}_0)^\ell\in \dot{\bf{B}}^{\sigma_1}_{2,\infty}$ and $\tau_0^\ell\in \dot{{B}}^{\sigma_1}_{2,\infty}$,  then by \eqref{wi4}  and  Proposition \ref{cob2}, we obtain the desired bound \eqref{6127}.

Consequently, we complete the proof of Theorem \ref{Thm}.

\vskip .2in 
\appendix
\section{Basic facts}

In this section, we recall some basic facts.
\begin{lemma}
    For any $t\geq 0$, we have
\begin{align}\label{key inequality 1}
\int^t_0\langle t-s\rangle^{-\gamma_1}\langle s \rangle^{-\gamma_2}ds=
\begin{cases}
\langle t \rangle^{-\gamma_1},~~\qquad\ if~~\gamma_2>1,\\
\langle t \rangle^{-\gamma_1-}~~\qquad\ if~~\gamma_2=1,\\
\langle t\rangle^{-\gamma_1-\gamma_2+1},~~if~~\gamma_2<1.
\end{cases}
\end{align}
for $0 \leq \gamma_1 \leq \gamma_2$.

\end{lemma}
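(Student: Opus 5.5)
The approach is to split the integral at $s=t/2$. The statement is really a two-sided comparison up to constants: read ``$=$'' as ``$\lesssim$'', and ``$\langle t\rangle^{-\gamma_1-}$'' as $\langle t\rangle^{-\gamma_1+\epsilon}$ for arbitrary small $\epsilon>0$, equivalently $\langle t\rangle^{-\gamma_1}\log(2+t)$. For $t\le 2$ every factor is comparable to $1$, so the integral is $\lesssim 1\approx\langle t\rangle^{-\gamma}$ for any exponent; assume $t\ge 2$. Write
\[
\int_0^t=\int_0^{t/2}+\int_{t/2}^t=:J_1+J_2 .
\]

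On $[0,t/2]$ we have $\langle t-s\rangle\approx\langle t\rangle$, so
\[
J_1\approx\langle t\rangle^{-\gamma_1}\int_0^{t/2}\langle s\rangle^{-\gamma_2}\,ds .
\]
The remaining elementary integral is bounded when $\gamma_2>1$. It is $\approx\log(2+t)$ when $\gamma_2=1$, which is absorbed into $\langle t\rangle^{\epsilon}$. It is $\approx\langle t\rangle^{1-\gamma_2}$ when $\gamma_2<1$. This reproduces exactly the three cases in \eqref{key inequality 1}.

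On $[t/2,t]$ we have $\langle s\rangle\approx\langle t\rangle$, so
\[
J_2\approx\langle t\rangle^{-\gamma_2}\int_0^{t/2}\langle r\rangle^{-\gamma_1}\,dr .
\]
This is bounded by $\langle t\rangle^{-\gamma_2}$ if $\gamma_1>1$, by $\langle t\rangle^{-\gamma_2}\log(2+t)$ if $\gamma_1=1$, and by $\langle t\rangle^{1-\gamma_1-\gamma_2}$ if $\gamma_1<1$. Now use the hypothesis $0\le\gamma_1\le\gamma_2$ to show $J_2$ is dominated by the bound already found for $J_1$. If $\gamma_2>1$: in each subcase $J_2$ decays at least like $\langle t\rangle^{-\gamma_1}$, since $\gamma_2\ge\gamma_1$, $\gamma_2>1$ absorbs the logarithm, and $1-\gamma_1-\gamma_2<-\gamma_1$. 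If $\gamma_2=1$: then $\gamma_1\le1$, and $J_2\lesssim\langle t\rangle^{-\gamma_1}$ or $\langle t\rangle^{-1}\log$, both fine. If $\gamma_2<1$: then $\gamma_1<1$ and $J_2\lesssim\langle t\rangle^{1-\gamma_1-\gamma_2}$, which matches.

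No step is a real obstacle. The only care needed is bookkeeping in the borderline cases $\gamma_1=1$ or $\gamma_2=1$, where a logarithm appears that must be absorbed into the ``$-$'' loss. The condition $\gamma_1\le\gamma_2$ is what guarantees that the piece near $s=t$ never decays slower than the piece near $s=0$.
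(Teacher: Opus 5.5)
Your proof is correct and complete. The paper states this lemma in the appendix as a basic fact and gives no proof, so your split at $s=t/2$ is the standard argument that supplies one. Your case-by-case use of $0\le\gamma_1\le\gamma_2$ to show that $J_2$ is dominated by the bound for $J_1$ covers every subcase, including the logarithmic borderlines $\gamma_1=1$ or $\gamma_2=1$.

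Your reading of $\langle t\rangle^{-\gamma_1-}$ as $\langle t\rangle^{-\gamma_1+\epsilon}$ is the only one under which the middle case holds. Since $J_2\ge 0$ and $J_1\approx\langle t\rangle^{-\gamma_1}\log(2+t)$ when $\gamma_2=1$, the literal reading $\langle t\rangle^{-\gamma_1-\epsilon}$ would be false.

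One small imprecision: your remark that the statement is a two-sided comparison is exact for $\gamma_2\neq1$, where $J_1$ alone gives the matching lower bound. For $\gamma_2=1$ it holds only in the logarithmic form, not the $\epsilon$-loss form. This is harmless, since only the upper bound is used.
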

 
\begin{lemma}[\cite{Xin-Xu-2021}]\label{lemma12}
	There exists a universal integer $N_0$ such that for $2\leq p\leq4$ and $s>0$, we have
	\begin{align*}
	&	\|FG^h\|^\ell_{\dot{B}^{\sigma_0}_{2,\infty}}\lesssim(\|F\|_{\dot{B}^{s}_{p,1}}+\|\dot{S}_{k_0+N_0}F\|_{L^{p^*}})\|G^h\|_{\dot{B}^{-s}_{p,\infty}},
\\
	&	\|F^hG\|^\ell_{\dot{B}^{\sigma_0}_{2,\infty}}\lesssim(\|F^h\|_{\dot{B}^{s}_{p,1}}+\|\dot{S}_{k_0+N_0}F^h\|_{L^{p^*}})\|G\|_{\dot{B}^{-s}_{p,\infty}}.
	\end{align*}
	 In particular, IF $\sigma_0\leq\sigma_1< \frac{d}{2}-1$ and $p$ satisfy  \eqref{ppp}, then one has
	\begin{equation}\label{equ34}
		\|FG^h\|^\ell_{\dot{B}^{\sigma_1}_{2,\infty}}\lesssim(\|F^\ell   \|_{\dot{B}^{\frac{d}{2}-1}_{2,1}}+\|F^h\|_{\dot{B}^{\frac{d}{p}-1}_{p,1}})\|G^h\|_{\dot{B}^{\frac{d}{p}-1}_{p,1}}, \quad \text{ for }\  2\leq p\leq d,
	\end{equation}
and
	\begin{equation}\label{equ56}
		\|FG^h\|^\ell_{\dot{B}^{-\sigma_1}_{2,\infty}}\lesssim(\|F^\ell   \|_{\dot{B}^{\frac{d}{2}-1}_{2,1}}+\|F^h\|_{\dot{B}^{\frac{d}{p}}_{p,1}})\|G^h\|_{\dot{B}^{\frac{d}{p}-1}_{p,1}}, \quad \text{ for }\  p\geq d.
	\end{equation}
\end{lemma}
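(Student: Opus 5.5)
The statement is Lemma \ref{lemma12}, cited from \cite{Xin-Xu-2021}. I would prove the general inequalities by a Bony paraproduct decomposition localized at low output frequencies. The particular cases \eqref{equ34}--\eqref{equ56} then follow by choosing $s$ and using embeddings.

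For the first inequality, I fix $j\le k_0$ and write
\[
\dot\Delta_j(FG^h)=\dot\Delta_j\big(T_F G^h + T_{G^h}F + R(F,G^h)\big).
\]
Since $G^h$ is supported at frequencies $\gtrsim 2^{k_0}$ and the output is at frequency $2^j\le 2^{k_0}$, the paraproduct $T_F G^h$ contributes only if $\dot S_{k-1}F\,\dot\Delta_k G^h$ has $k$ within $N_0$ of $j$. So only finitely many $k\approx k_0$ enter, and $\dot S_{k-1}F=\dot S_{k-1}\dot S_{k_0+N_0}F$. By Hölder with $\frac12=\frac1{p^*}+\frac1p$,
\[
\|\dot\Delta_j T_FG^h\|_{L^2}\lesssim \|\dot S_{k_0+N_0}F\|_{L^{p^*}}\sum_{|k-k_0|\le N_0}\|\dot\Delta_kG^h\|_{L^p}.
\]
Multiplying by $2^{j\sigma_0}\le C$ for $j$ near $k_0$ and using $\|\dot\Delta_kG^h\|_{L^p}\lesssim 2^{ks}\|G^h\|_{\dot B^{-s}_{p,\infty}}$ with $k\approx k_0$ (a fixed constant) gives the bound. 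The term $T_{G^h}F$ is handled the same way after swapping roles: again $k\approx k_0$, and $\|\dot S_{k-1}G^h\|_{L^p}$ involves only boundedly many blocks.

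The remainder is the main piece:
\[
\dot\Delta_jR(F,G^h)=\sum_{k\ge j-2}\dot\Delta_j(\dot\Delta_kF\,\tilde\Delta_kG^h).
\]
I use Bernstein from $L^{p/2}$ to $L^2$, which requires $p\le4$, costing $2^{jd(\frac2p-\frac12)}=2^{-j\sigma_0}$. This is exactly why the exponent is $\sigma_0$. Then
\[
2^{j\sigma_0}\|\dot\Delta_jR\|_{L^2}\lesssim\sum_{k\ge j-2}\|\dot\Delta_kF\|_{L^p}\|\tilde\Delta_kG^h\|_{L^p}\lesssim \sum_k 2^{ks}\|\dot\Delta_kF\|_{L^p}\,2^{-ks}\|\tilde\Delta_k G^h\|_{L^p}\le\|F\|_{\dot B^s_{p,1}}\|G^h\|_{\dot B^{-s}_{p,\infty}}.
\]
Here $s>0$ is not needed for this step, but it is used for the paraproduct pieces to place the weight on $F$. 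Taking the supremum over $j\le k_0$ gives the first estimate. The second estimate is symmetric: $F^h$ now carries the high frequency, so the low-frequency part of $F^h$ is $\dot S_{k_0+N_0}F^h$.

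For \eqref{equ34}, I use $\sigma_1\ge\sigma_0$ together with the low-frequency embedding $\|\cdot\|^\ell_{\dot B^{\sigma_1}_{2,\infty}}\lesssim\|\cdot\|^\ell_{\dot B^{\sigma_0}_{2,\infty}}$, and apply the first estimate with $s=\frac dp-1$ when $p<d$. Then $\|F\|_{\dot B^{d/p-1}_{p,1}}\lesssim \|F^\ell\|_{\dot B^{d/2-1}_{2,1}}+\|F^h\|_{\dot B^{d/p-1}_{p,1}}$ by the embedding $\dot B^{d/2-1}_{2,1}\hookrightarrow\dot B^{d/p-1}_{p,1}$. Also $\|\dot S_{k_0+N_0}F\|_{L^{p^*}}$ is controlled because $\dot B^{d/p}_{2,1}\hookrightarrow L^{p^*}$, and at low frequencies $d/p\ge d/2-1$ by \eqref{ppp}. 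Finally, $\|G^h\|_{\dot B^{1-d/p}_{p,\infty}}\lesssim\|G^h\|_{\dot B^{d/p-1}_{p,1}}$ since $1-\frac dp\le\frac dp-1$ at high frequencies.

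The endpoint $p=d$, where $s=0$, is the delicate step. There I would avoid $s=0$ by taking a tiny $s>0$ with the weight on $F$, or argue directly through $L^{d/2}\hookrightarrow\dot B^{\sigma_0}_{2,\infty}$ and Hölder, as in the body of the paper. For \eqref{equ56} with $p\ge d$, I take $s=1-\frac dp\ge0$ and put $\dot B^{d/p-1}$ on $G^h$. Then $\|F\|_{\dot B^{1-d/p}_{p,1}}$ is bounded by $\|F^\ell\|_{\dot B^{d/2-1}_{2,1}}$, using the low-frequency embedding together with $\frac d2-1\le 1+\sigma_0$ (valid for $p\le4$), and by $\|F^h\|_{\dot B^{d/p}_{p,1}}$. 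The main obstacle I expect is the careful bookkeeping of these embedding index inequalities and the endpoint cases $p=d$ and $p=4$ with $d=2$, which \eqref{ppp} excludes.
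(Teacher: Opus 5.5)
Your approach is the standard proof behind the result the paper cites from Xin--Xu; the paper itself gives no proof. It uses the Bony decomposition at output frequencies $j\le k_0$, Hölder $L^{p^*}\times L^p\to L^2$ for $T_FG^h$ via $\dot S_{k-1}F=\dot S_{k-1}\dot S_{k_0+N_0}F$, and Bernstein $L^{p/2}\to L^2$ (equivalently $\dot B^0_{p/2,\infty}\hookrightarrow\dot B^{\sigma_0}_{2,\infty}$, which is where $2\le p\le4$ enters) for the other pieces. Your one deviation is to treat $T_{G^h}F$ by spectral localization near $k_0$ rather than by the general paraproduct bound. That is valid, and it shows your proof of the first inequality never uses the sign of $s$, contrary to your remark that $s>0$ is needed for the paraproducts there.

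Several steps are wrong as written and need repair.

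(i) The second inequality is not symmetric in the term $T_GF^h$. There $\dot S_{k-1}G=\sum_{k'\le k-2}\dot\Delta_{k'}G$ contains all low blocks of $G$, so ``boundedly many blocks'' fails. This is exactly where $s>0$ is needed: $\|\dot S_{k-1}G\|_{L^p}\le\sum_{k'\le k-2}2^{k's}\|G\|_{\dot B^{-s}_{p,\infty}}\lesssim 2^{ks}\|G\|_{\dot B^{-s}_{p,\infty}}$, followed by Hölder in $L^{p/2}$ and Bernstein $L^{p/2}\to L^2$. The $L^{p^*}$ norm of $\dot S_{k_0+N_0}F^h$ belongs with $T_{F^h}G$.

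(ii) At $p=d$, your first option (a tiny $s=\epsilon>0$ with the weight on $F$) fails, because $\|F^h\|_{\dot B^{\epsilon}_{d,1}}$ is not controlled by $\|F^h\|_{\dot B^{0}_{d,1}}$. Use the Hölder route instead ($L^{d/2}\hookrightarrow\dot B^{\sigma_0}_{2,\infty}$, valid since $d=p\le4$), or note that your localized proof of the first inequality works verbatim with $s=0$. The same applies to \eqref{equ56} at $p=d$, where $s=1-\frac dp=0$.

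(iii) The inequality $\frac d2-1\le 1+\sigma_0$ is equivalent to $p\ge d$; it is not a consequence of $p\le 4$, and it fails for $d=3$, $p=2$. It holds here only because \eqref{equ56} assumes $p\ge d$.

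(iv) In bounding $\|\dot S_{k_0+N_0}F\|_{L^{p^*}}$, the embedding $\dot B^{d/p}_{2,1}\hookrightarrow L^{p^*}$ handles only $F^\ell$. The finitely many blocks $\dot\Delta_kF^h$ with $k_0-1\le k<k_0+N_0$ must be treated by Bernstein $L^p\to L^{p^*}$, which is legitimate since $p^*\ge p$ exactly when $p\le 4$. This gives a bound by $\|F^h\|_{\dot B^{d/p-1}_{p,1}}$, resp.\ $\|F^h\|_{\dot B^{d/p}_{p,1}}$.

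Finally, your reduction through $\sigma_1\ge\sigma_0$ proves \eqref{equ56} with $\dot B^{\sigma_1}_{2,\infty}$ on the left, which is how the paper uses it. The index $-\sigma_1$ in the displayed statement is a sign slip from Xin--Xu's convention and does not follow from this reduction, e.g.\ for $d=p=4$ and $0<\sigma_1<1$.
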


\begin{lemma}\label{app1}
Let \( 1 \leq p, r \leq \infty \). Then
\begin{align}
&\|FG\|_{\dot{B}_{p,r}^s} \lesssim \|F\|_{L^\infty} \|G\|_{\dot{B}_{p,r}^s} + \|G\|_{L^\infty} \|F\|_{\dot{B}_{p,r}^s}, \quad \text{if } s > 0,\label{ap1} \\
&\|FG\|_{\dot{B}_{p,1}^{s_1+s_2-\frac{d}{p}}} \lesssim \|F\|_{\dot{B}_{p,1}^{s_1}} \|G\|_{\dot{B}_{p,1}^{s_2}}, \quad \text{if } s_1, s_2 \leq \frac{d}{p} \text{ and } s_1 + s_2 > d \max(0, \frac{2}{p} - 1), \label{ap2}\\
&\|FG\|_{\dot{B}_{p,\infty}^{s_1+s_2-\frac{d}{p}}} \lesssim \|F\|_{\dot{B}_{p,1}^{s_1}} \|G\|_{\dot{B}_{p,\infty}^{s_2}}, \quad \text{if } s_1 \leq \frac{d}{p}, s_2 < \frac{d}{p} \text{ and } s_1 + s_2 \geq d \max(0, \frac{2}{p} - 1).\label{ap3}
\end{align}
\end{lemma}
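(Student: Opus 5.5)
We prove the three product laws of Lemma \ref{app1} with Bony's paraproduct decomposition
\begin{equation*}
FG=T_FG+T_GF+R(F,G),\qquad T_FG=\sum_{j}\dot S_{j-1}F\,\dot\Delta_jG,\qquad R(F,G)=\sum_{|j-j'|\le 1}\dot\Delta_jF\,\dot\Delta_{j'}G.
\end{equation*}
Each piece is estimated using the spectral localization of its building blocks together with Bernstein's inequality. The first two inputs are the standard continuity properties of $T$ and $R$ on homogeneous Besov spaces, as in \cite{BCD-2011}, Theorems 2.47 and 2.52. The first is $\|T_FG\|_{\dot B^{s+t}_{p,r}}\lesssim\|F\|_{\dot B^{t}_{\infty,\infty}}\|G\|_{\dot B^{s}_{p,r}}$ for $t<0$, together with the endpoint $\|T_FG\|_{\dot B^{s}_{p,r}}\lesssim\|F\|_{L^\infty}\|G\|_{\dot B^s_{p,r}}$. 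The second is $\|R(F,G)\|_{\dot B^{s_1+s_2}_{p,r}}\lesssim\|F\|_{\dot B^{s_1}_{p_1,r_1}}\|G\|_{\dot B^{s_2}_{p_2,r_2}}$ whenever $s_1+s_2>0$, with $\frac1p=\frac1{p_1}+\frac1{p_2}$ and $\frac1r\le\frac1{r_1}+\frac1{r_2}$. When $s_1+s_2=0$ and $r=\infty$, the same bound holds provided $\frac1{r_1}+\frac1{r_2}\ge1$.

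For \eqref{ap1} with $s>0$, we bound $T_FG$ and $T_GF$ by the $L^\infty$ endpoint estimate. For the remainder, each $\dot\Delta_k R$ only involves $\dot\Delta_jF\,\dot\Delta_{j'}G$ with $j\ge k-N$, so
\begin{equation*}
2^{ks}\|\dot\Delta_kR\|_{L^p}\lesssim\sum_{j\ge k-N}2^{(k-j)s}\,\|F\|_{L^\infty}\,2^{js}\|\dot\Delta_{j'}G\|_{L^p}.
\end{equation*}
Since $s>0$, Young's inequality for series closes the estimate in $\ell^r$.

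For \eqref{ap2} and \eqref{ap3}, Bernstein's inequality gives the embedding $\dot B^{s_1}_{p,1}\hookrightarrow\dot B^{s_1-\frac dp}_{\infty,1}$. When $s_1<\frac dp$ we apply the negative-index paraproduct bound to $T_FG$, which lands in $\dot B^{s_1+s_2-\frac dp}_{p,r}$. In the endpoint case $s_1=\frac dp$, the $\ell^1$ summability gives $\|\dot S_{j-1}F\|_{L^\infty}\lesssim\|F\|_{\dot B^{d/p}_{p,1}}$ directly. The term $T_GF$ is handled symmetrically: in \eqref{ap2} we use $s_2\le\frac dp$, while in \eqref{ap3} the strict inequality $s_2<\frac dp$ is essential, because $G$ is only in an $\ell^\infty$-type space and the low-frequency sum must converge geometrically. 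For the remainder we estimate $R(F,G)$ in $\dot B^{s_1+s_2}_{p/2,r}$ when $p\ge2$, and then use the Bernstein embedding $\dot B^{s_1+s_2}_{p/2,r}\hookrightarrow\dot B^{s_1+s_2-\frac dp}_{p,r}$, which requires $s_1+s_2>0$. When $p<2$ we instead estimate $R(F,G)$ in $L^1$-based spaces and embed back into $L^p$-based spaces, which costs $d(\frac2p-1)$ derivatives. This is exactly the origin of the condition $s_1+s_2>d\max(0,\frac2p-1)$.

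The main obstacle is the borderline case $s_1+s_2=d\max(0,\frac2p-1)$, which \eqref{ap3} allows. There the remainder sum $\sum_{j\ge k-N}2^{(k-j)(s_1+s_2)}\cdots$ no longer has a decaying geometric factor. We plan to use the fact that $F\in\dot B^{s_1}_{p,1}$ supplies an $\ell^1$ sequence while $G$ supplies an $\ell^\infty$ sequence, so the product sequence is summable in $j$ uniformly in $k$. This gives an $\ell^\infty$ bound, but not an $\ell^1$ bound, and that is precisely why \eqref{ap3} is stated with third index $\infty$ on the left-hand side. The same $\ell^1\times\ell^\infty$ bookkeeping also handles the endpoint $s_1=\frac dp$ in the paraproduct $T_FG$.
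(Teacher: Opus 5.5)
Your proposal is correct and is the standard Bony-decomposition argument; the paper gives no proof of this lemma, listing it among the appendix's basic facts and relying, as it says in the main text, on the same paraproduct and remainder bounds (Theorems 2.47 and 2.52 of \cite{BCD-2011}) that you invoke, so the two approaches coincide. One small wording point: the condition $s_1+s_2>0$ is needed for the remainder bound itself, not for the Bernstein embedding $\dot B^{s_1+s_2}_{p/2,r}\hookrightarrow\dot B^{s_1+s_2-\frac dp}_{p,r}$.
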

\begin{lemma}[\cite{Xu-2019}]\label{app2}
Let the real numbers \( s_1, s_2, p_1 \) and \( p_2 \) be such that
\[
s_1 + s_2 > 0, \quad s_1 \leq \frac{d}{p_1}, \quad s_2 \leq \frac{d}{p_2}, \quad s_1 \geq s_2, \quad \frac{1}{p_1} + \frac{1}{p_2} \leq 1.
\]
Then it holds that
\[
\|FG\|_{\dot{B}_{q,1}^{s_2}} \lesssim \|F\|_{\dot{B}_{p_1,1}^{s_1}} \|G\|_{\dot{B}_{p_2,1}^{s_2}}, \quad \text{with} \quad \frac{1}{q} = \frac{1}{p_1} + \frac{1}{p_2} - \frac{s_1}{d}.
\]
Additionally, for \( s > 0 \) and \( 1 \leq p_1, p_2, q \leq \infty \) satisfying
\[
\frac{d}{p_1} + \frac{d}{p_2} - d \leq s \leq \min\left(\frac{d}{p_1}, \frac{d}{p_2}\right) \quad \text{and} \quad \frac{1}{q} = \frac{1}{p_1} + \frac{1}{p_2} - \frac{s}{d},
\]
one has
\[
\|FG\|_{\dot{B}_{q,\infty}^{-s}} \lesssim \|F\|_{\dot{B}_{p_1,1}^{s}} \|G\|_{\dot{B}_{p_2,\infty}^{-s}}.
\]
\end{lemma}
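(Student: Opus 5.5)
The plan is to prove both inequalities of Lemma~\ref{app2} with Bony's decomposition
\[
FG=T_FG+T_GF+R(F,G),\qquad T_FG=\sum_k \dot S_{k-1}F\,\dot\Delta_kG,\qquad R(F,G)=\sum_{|k-k'|\le1}\dot\Delta_kF\,\dot\Delta_{k'}G.
\]
We estimate each piece block by block, using Hölder's inequality and Bernstein's lemma. The Bernstein step is always used in the admissible direction, from lower to higher integrability. The summation over blocks is closed with Young's inequality for series. Each hypothesis of the lemma corresponds to exactly one of these steps.

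\emph{First inequality.} Write $\frac1r=\frac1{p_1}-\frac{s_1}{d}$, so $r\ge p_1$ since $s_1\le d/p_1$, and $\frac1q=\frac1r+\frac1{p_2}$.

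For $T_FG$, Bernstein gives
\[
\|\dot S_{k-1}F\|_{L^r}\lesssim\sum_{j<k}2^{js_1}\|\dot\Delta_jF\|_{L^{p_1}}\le\|F\|_{\dot B^{s_1}_{p_1,1}}.
\]
Hölder then yields $2^{ks_2}\|\dot\Delta_k T_FG\|_{L^q}\lesssim \|F\|_{\dot B^{s_1}_{p_1,1}}\,2^{ks_2}\|\dot\Delta_kG\|_{L^{p_2}}$, which is summable in $k$.

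For $T_GF$, put $\dot\Delta_kF$ in $L^r$, where $\|\dot\Delta_kF\|_{L^r}\lesssim 2^{ks_1}\|\dot\Delta_kF\|_{L^{p_1}}$, and put $\dot S_{k-1}G$ in $L^{p_2}$. Two cases arise.
\begin{itemize}
\item If $s_2<0$, the sum $\sum_{j<k}\|\dot\Delta_jG\|_{L^{p_2}}$ is geometric and is $\lesssim 2^{-ks_2}\|G\|_{\dot B^{s_2}_{p_2,1}}$.
\item If $s_2\ge0$, we split integrability differently: $\dot S_{k-1}G$ goes in $L^{r'}$ with $\frac1{r'}=\frac1{p_2}-\frac{s_2}{d}$, which is legitimate because $s_2\le d/p_2$. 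Then $\dot\Delta_kF$ goes in the complementary space, which has regularity cost $2^{ks_2}$. The inequality $s_1\ge s_2$ ensures this is dominated by $2^{ks_1}$ up to the correct power.
\end{itemize}

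For the remainder, note that $s_1\ge s_2$ and $s_1+s_2>0$ force $s_1>0$. Hölder puts $\dot\Delta_jF\,\tilde{\dot\Delta}_jG$ in $L^a$ with $\frac1a=\frac1{p_1}+\frac1{p_2}\le1$, and Bernstein gives $\|\dot\Delta_k\cdot\|_{L^q}\lesssim2^{ks_1}\|\cdot\|_{L^a}$. This leads to
\[
2^{ks_2}\|\dot\Delta_kR\|_{L^q}\lesssim\sum_{j\ge k-2}2^{(k-j)(s_1+s_2)}\,2^{js_1}\|\dot\Delta_jF\|_{L^{p_1}}\,2^{js_2}\|\dot\Delta_jG\|_{L^{p_2}}.
\]
Since $s_1+s_2>0$, Young's inequality gives the $\ell^1$ bound.

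\emph{Second inequality.} Here the target is $\ell^\infty$ in $k$. Put $\frac1r=\frac1{p_1}-\frac sd$.
\begin{itemize}
\item For $T_FG$, bound $\|\dot S_{k-1}F\|_{L^r}\lesssim\|F\|_{\dot B^s_{p_1,1}}$, which uses $s\le d/p_1$. This gives $2^{-ks}\|\dot\Delta_kT_FG\|_{L^q}\lesssim\|F\|_{\dot B^s_{p_1,1}}\|G\|_{\dot B^{-s}_{p_2,\infty}}$.
\item For $T_GF$, since $s>0$ we have $\|\dot S_{k-1}G\|_{L^{p_2}}\lesssim 2^{ks}\|G\|_{\dot B^{-s}_{p_2,\infty}}$ by a geometric sum. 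Also $\|\dot\Delta_kF\|_{L^r}\lesssim2^{ks}\|\dot\Delta_kF\|_{L^{p_1}}$. Together these give $2^{-ks}\|\dot\Delta_kT_GF\|_{L^q}\lesssim 2^{ks}\|\dot\Delta_kF\|_{L^{p_1}}\|G\|$.
\item For the remainder, the regularities sum to exactly $0$, so there is no decay factor. This is precisely why $F$ is taken in $\ell^1$. If $\frac1{p_1}+\frac1{p_2}\le1$, Hölder into $L^a$ and Bernstein to $L^q$ give the factor $2^{k(d/a-d/q)}=2^{ks}$. Hence $2^{-ks}\|\dot\Delta_kR\|_{L^q}\lesssim\sum_{j\ge k-2}2^{js}\|\dot\Delta_jF\|_{L^{p_1}}\,2^{-js}\|\dot\Delta_jG\|_{L^{p_2}}\le\|F\|_{\dot B^s_{p_1,1}}\|G\|_{\dot B^{-s}_{p_2,\infty}}$.
\end{itemize}

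The main obstacle is the remainder when $\frac1{p_1}+\frac1{p_2}>1$, which is allowed by the second set of hypotheses. In that case Hölder cannot land in an $L^a$ with $a\ge1$. I would first raise the integrability of $\dot\Delta_jG$ by Bernstein to $L^{p_1'}$, where $\frac1{p_1'}=1-\frac1{p_1}$, so that the product lies in $L^1$. This costs a factor $2^{jd(\frac1{p_1}+\frac1{p_2}-1)}$. Then $\dot\Delta_k$ maps $L^1$ into $L^q$ at cost $2^{kd(1-\frac1q)}$. The condition $s\ge \frac d{p_1}+\frac d{p_2}-d$, equivalently $q\ge1$, is exactly what makes the $j$- and $k$-exponents combine into a factor $2^{(k-j)\theta}$ with $\theta\ge0$. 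Because $j\ge k-2$, this factor is bounded, so the $\ell^1\times\ell^\infty$ summation still closes. Checking this exponent bookkeeping carefully, together with the endpoint cases $s=d/p_1$ and $s_1=d/p_1$ where $r=\infty$, is the delicate part of the argument.
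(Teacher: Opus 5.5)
Your Bony-decomposition argument is the standard route (the paper gives no proof of this lemma and only cites Xu's work). It is complete for the first inequality, and for the second inequality whenever $\frac1{p_1}+\frac1{p_2}\le1$.

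One small fix is needed in the $T_GF$ step for $s_2\ge0$. There $\dot\Delta_kF$ goes into $L^m$ with $\frac1m=\frac1{p_1}-\frac{s_1-s_2}{d}$, and the Bernstein cost is $2^{k(s_1-s_2)}$, not $2^{ks_2}$. The hypothesis $s_1\ge s_2$ gives $m\ge p_1$, while $s_2\ge0$ and $s_1\le d/p_1$ give $m\le\infty$. After the weight $2^{ks_2}$ you then get exactly $2^{ks_1}\|\dot\Delta_kF\|_{L^{p_1}}$.

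The genuine gap is your last paragraph: the remainder in the second inequality when $\frac1{p_1}+\frac1{p_2}>1$. Redo the exponents. With $\theta:=d\bigl(\frac1{p_1}+\frac1{p_2}-1\bigr)>0$ one has $d\bigl(1-\frac1q\bigr)=s-\theta$. So the factor multiplying $2^{js}\|\dot\Delta_jF\|_{L^{p_1}}\,2^{-js}\|\tilde{\dot\Delta}_jG\|_{L^{p_2}}$ is
\[
2^{-ks}\,2^{kd(1-\frac1q)}\,2^{j\theta}=2^{(j-k)\theta},
\]
and $s$ has cancelled out. Over the range $j\ge k-2$ this factor is unbounded, not bounded. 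The condition $q\ge1$ only makes the Bernstein step $L^1\to L^q$ legitimate; it does not make the exponent favorable. Passing through any other $L^b$ with $b\ge1$ gives $2^{(j-k)d(\frac1{p_1}+\frac1{p_2}-\frac1b)}$, which is equally bad.

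In fact no argument can work, because the estimate is false in this regime. Take:
\begin{itemize}
\item $d=3$, $p_1=p_2=\frac32$, $s=\frac32$, $q=\frac65$, which satisfy all the stated hypotheses and give $\theta=1$;
\item $F=G=f_0$, with $f_0$ real, Schwartz, nonzero, and $\hat f_0$ supported in an annulus, so $f_0$ lies in every homogeneous Besov space.
\end{itemize}
Since $\widehat{f_0^2}(0)=\|f_0\|_{L^2}^2>0$, one has $\|\dot\Delta_k(f_0^2)\|_{L^q}\approx\|f_0\|_{L^2}^2\,2^{kd(1-\frac1q)}$ as $k\to-\infty$. Hence $2^{-ks}\|\dot\Delta_k(f_0^2)\|_{L^q}\approx2^{-k\theta}=2^{-k}\to\infty$. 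Put differently, the low frequencies of the product behave like those of $\|f_0\|_{L^2}^2\,\delta_0$, which is not in $\dot B^{-s}_{q,\infty}$.

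So the second estimate must be read, or restated, with the hypothesis $\frac1{p_1}+\frac1{p_2}\le1$ carried over from the first part. The lower bound $s\ge\frac d{p_1}+\frac d{p_2}-d$ is then automatic. Under that hypothesis your first three bullets already form a complete proof. The case you singled out as the main obstacle should be excluded, not proved.
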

\begin{corollary}\label{app3}
Let \(\sigma_0 \leq \sigma < \frac{d}{2} - 1\) and \(p\) satisfy \eqref{ppp}. It holds that
\begin{align}
\|FG\|_{\dot{B}_{2,\infty}^\sigma} &\lesssim \|F\|_{\dot{B}_{p,1}^{\frac{d}{p}}} \|G\|_{\dot{B}_{2,1}^\sigma}, \label{A.2} \\
\|FG\|_{\dot{B}_{2,\infty}^{\sigma + \frac{d}{p} - \frac{d}{2}}} &\lesssim \|F\|_{\dot{B}_{p,1}^{\sigma + \frac{d}{p} - \frac{d}{2}}} \|G\|_{\dot{B}_{2,1}^{\frac{d}{p}}}. \label{A.3}
\end{align}
In addition, we have
\begin{equation}
\|FG\|_{\dot{B}_{2,\infty}^{\sigma_0}}^\ell \lesssim \|F\|_{\dot{B}_{p,1}^{\frac{d}{p}-1}}^\ell \|G\|_{\dot{B}_{p,1}^{1-\frac{d}{p}}}^\ell \label{A.4}
\end{equation}
for \(2 \leq p \leq d\).
\end{corollary}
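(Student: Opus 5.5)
The three estimates of Corollary \ref{app3} are all proved the same way. We use Bony's decomposition $FG=T_FG+T_GF+R(F,G)$ and estimate each dyadic block by Hölder's and Bernstein's inequalities. We then pass from an $L^q$-based Besov space to an $L^2$-based one through the embedding $\dot B^{s+d/q-d/2}_{q,r}\hookrightarrow \dot B^{s}_{2,r}$ for $q\le 2$. Throughout we use that $p$ satisfies \eqref{ppp}, so $\frac d2-1\le \frac dp$ and $\frac1{p^*}:=\frac12-\frac1p\ge 0$. We also use the Sobolev-type embeddings $\dot B^{d/p}_{p,1}\hookrightarrow L^\infty$ and $\dot B^{d/p}_{2,1}\hookrightarrow L^{p^*}$.

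For \eqref{A.2} we have $\sigma<\frac d2-1\le \frac dp$. The term $T_FG$ is bounded by $\|F\|_{L^\infty}\|G\|_{\dot B^\sigma_{2,1}}$. For $T_GF$ we put $\dot S_{k-1}G$ in $L^{p^*}$ with weight $2^{k(\sigma-d/p)}$, which is admissible because $\sigma-\frac dp<0$ and $\dot B^\sigma_{2,1}\hookrightarrow\dot B^{\sigma-d/p}_{p^*,1}$. We put $\dot\Delta_kF$ in $L^p$ with weight $2^{kd/p}$. The product then lies in $L^2$ with weight $2^{k\sigma}$. For the remainder, Hölder's inequality puts $R(F,G)$ in $\dot B^{\sigma+d/p}_{q,\infty}$ with $\frac1q=\frac12+\frac1p\le1$; this needs $\sigma+\frac dp\ge \sigma_0+\frac dp=\frac d2-\frac dp\ge0$. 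The borderline value $0$, which occurs when $p=2$ and $\sigma=\sigma_0$, is acceptable only because the target has third index $\infty$ while both factors are in $\ell^1$. The embedding $\dot B^{\sigma+d/p}_{q,\infty}\hookrightarrow\dot B^{\sigma}_{2,\infty}$ concludes.

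For \eqref{A.3} set $s=\sigma+\frac dp-\frac d2$. Then $s\le \frac dp-1<\frac dp$, and $s+\frac dp\ge \sigma_0+\frac{2d}p-\frac d2=0$. The term $T_GF$ is handled with $G\in\dot B^{d/p}_{2,1}\hookrightarrow L^{p^*}$ and $\dot\Delta_kF\in L^p$. For $T_FG$ we use $\dot S_{k-1}F\in \dot B^{s-d/p}_{\infty,1}$, which has negative index, together with $\dot\Delta_kG\in L^2$ carrying weight $2^{kd/p}$. The remainder again lies in $\dot B^{s+d/p}_{q,\infty}$ with $\frac1q=\frac12+\frac1p$, and it embeds into $\dot B^{s}_{2,\infty}$. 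Here too the endpoint $s+\frac dp=0$ is the one delicate point and is absorbed by the $\ell^\infty$ summation.

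For \eqref{A.4}, with $2\le p\le d$, the indices of the two factors sum to $0$, so we work only at low frequencies. When $p<d$ we apply the second estimate of Lemma \ref{app2} with $s=\frac dp-1>0$ and $p_1=p_2=p$. Its hypothesis $\frac{2d}p-d\le s\le \frac dp$ reduces to $\frac dp\le d-1$, which holds. It gives $FG\in\dot B^{-s}_{q,\infty}$ with $\frac1q=\frac1p+\frac1d$. Since $p\le \frac{2d}{d-2}$ we have $q\le2$, and the low-frequency embedding into $\dot B^{-s-d/q+d/2}_{2,\infty}$ has exponent $1-\frac{2d}p+\frac d2-1=\sigma_0$. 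The case $p=d$ is the main obstacle, since then $s=0$ and Lemma \ref{app2} no longer applies. There I would use $\dot B^0_{d,1}\hookrightarrow L^d$, Hölder's inequality to get $FG\in L^{d/2}$, and $L^{d/2}\hookrightarrow\dot B^{0}_{d/2,\infty}\hookrightarrow \dot B^{2-d/2}_{2,\infty}=\dot B^{\sigma_0}_{2,\infty}$, which needs $\frac d2\le2$; for larger $d$ one has $p=d>\frac{2d}{d-2}$, so this case does not arise. This is the same device the paper uses in \eqref{4.29}.
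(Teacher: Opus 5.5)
Your argument is correct. The paper gives no proof of Corollary~\ref{app3}; it is listed among the basic facts right after Lemma~\ref{app2}. So the natural comparison is with the derivation its label suggests.

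For \eqref{A.2} and \eqref{A.3}, that derivation is a direct substitution into the first estimate of Lemma~\ref{app2}. Take $(s_1,p_1)=(\frac dp,p)$ and $(s_2,p_2)=(\sigma,2)$ for \eqref{A.2}, and $(s_1,p_1)=(\frac dp,2)$ and $(s_2,p_2)=(\sigma+\frac dp-\frac d2,p)$ for \eqref{A.3}. In both cases $\frac1q=\frac12$, and one even gets a $\dot B_{2,1}$ norm on the left.

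That lemma, however, requires $s_1+s_2>0$. Here this means $\sigma>-\frac dp$ for \eqref{A.2} and $\sigma>\sigma_0$ for \eqref{A.3}. It therefore misses the endpoint $p=2$, $\sigma=\sigma_0$ in \eqref{A.2}, and the endpoint $\sigma=\sigma_0$ (for every admissible $p$) in \eqref{A.3}. The statement allows both. Your direct Bony decomposition estimates the remainder in $\dot B^{0}_{q,\infty}$ at the endpoint via the $\ell^1\times\ell^1$ sum and then embeds, so it covers the full stated range. The lemma route is shorter but only yields the open range.

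For \eqref{A.4} you follow exactly the route the label suggests: the second estimate of Lemma~\ref{app2} with $p_1=p_2=p$, then the embedding that uses $p\le\frac{2d}{d-2}$. Your treatment of $p=d$ is the device of \eqref{4.29}.

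Two small remarks.

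First, in the case $p=d$ the embedding is $\dot B^{0}_{d/2,\infty}\hookrightarrow\dot B^{\frac d2-2}_{2,\infty}$, since the index is $0-\frac{d}{d/2}+\frac d2$. You wrote $\dot B^{2-\frac d2}_{2,\infty}$; the two agree only for $d=4$. But $\frac d2-2$ is indeed $\sigma_0$ when $p=d$, so your conclusion is unaffected.

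Second, you prove \eqref{A.4} with the full norms $\|F\|_{\dot B^{d/p-1}_{p,1}}\|G\|_{\dot B^{1-d/p}_{p,1}}$ on the right-hand side, and that is the correct reading. Taken literally, with the superscript $\ell$ on the right, \eqref{A.4} fails. Choose $F$ spectrally supported in a thin annulus far above $2^{k_0}$ and $G=\bar F$: the right side vanishes, while $|F|^2$ has nonzero low frequencies. The paper itself applies the estimate with untruncated norms, e.g.\ in \eqref{clfw2}.
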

\begin{corollary}\label{app5}
Let \(\sigma_0 \leq \sigma < \frac{d}{2} - 1\) and \(p\) satisfy \eqref{ppp}. It holds that
\begin{align}\label{A6}
\|FG\|_{\dot{B}_{2,\infty}^{\sigma+\frac{d}{p}-\frac{d}{2}}} \lesssim \|F\|_{\dot{B}_{p,1}^{\frac{d}{p}-1}} \|G\|_{\dot{B}_{2,\infty}^{\sigma+\frac{d}{p}-\frac{d}{2}+1}}. 
\end{align}
\end{corollary}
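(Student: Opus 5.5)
We prove the estimate \eqref{A6} directly through Bony's paraproduct decomposition
\[
FG=T_FG+T_GF+R(F,G),
\]
which is how product laws such as \eqref{ap3} and Lemma \ref{app2} are obtained. Set $s=\sigma+\frac{d}{p}-\frac{d}{2}$, so the target is $\|FG\|_{\dot B^{s}_{2,\infty}}\lesssim \|F\|_{\dot B^{\frac dp-1}_{p,1}}\|G\|_{\dot B^{s+1}_{2,\infty}}$. The regularity bookkeeping closes exactly: $(\frac dp-1)+(s+1)-\frac dp=s$. We will bound each of the three pieces by the right-hand side, using the two standing hypotheses in the following places:
\begin{itemize}
\item $\sigma<\frac d2-1$ (strict) controls the paraproduct $T_GF$;
\item $\sigma\ge\sigma_0$ controls the remainder.
\end{itemize}

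\textbf{Step 1: the paraproduct $T_FG$.} By Bernstein, $\dot B^{\frac dp-1}_{p,1}\hookrightarrow \dot B^{-1}_{\infty,1}$. Since the index $-1$ is negative, the low-frequency truncation satisfies $\|\dot S_{k-1}F\|_{L^\infty}\lesssim 2^{k}\|F\|_{\dot B^{-1}_{\infty,1}}$. This gives
\[
2^{ks}\|\dot S_{k-1}F\,\dot\Delta_kG\|_{L^2}\lesssim \|F\|_{\dot B^{-1}_{\infty,1}}\,2^{k(s+1)}\|\dot\Delta_kG\|_{L^2}.
\]
Taking the supremum in $k$ yields $\|T_FG\|_{\dot B^{s}_{2,\infty}}\lesssim\|F\|_{\dot B^{\frac dp-1}_{p,1}}\|G\|_{\dot B^{s+1}_{2,\infty}}$.

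\textbf{Step 2: the paraproduct $T_GF$.} Let $\frac1{p^*}=\frac12-\frac1p$; this exponent is admissible since $p\ge2$. Bernstein gives $\dot B^{s+1}_{2,\infty}\hookrightarrow\dot B^{s+1-\frac dp}_{p^*,\infty}$, and
\[
s+1-\frac dp=\sigma+1-\frac d2<0
\]
precisely because $\sigma<\frac d2-1$. The negative index gives $\|\dot S_{k-1}G\|_{L^{p^*}}\lesssim 2^{-k(\sigma+1-\frac d2)}\|G\|_{\dot B^{s+1}_{2,\infty}}$. Hölder's inequality $L^{p^*}\times L^p\to L^2$ then yields
\[
2^{ks}\|\dot S_{k-1}G\,\dot\Delta_kF\|_{L^2}\lesssim \|G\|_{\dot B^{s+1}_{2,\infty}}\,2^{k(\frac dp-1)}\|\dot\Delta_kF\|_{L^p},
\]
where the powers of $2^k$ cancel identically. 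The sum over $k$ is controlled by the $\ell^1$ norm of $F$.

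\textbf{Step 3: the remainder $R(F,G)$ (main obstacle).} Let $\frac1q=\frac1p+\frac12$, so $q\le 2$. Hölder gives
\[
\|\dot\Delta_kR\|_{L^q}\le\sum_{k'\ge k-N}\|\dot\Delta_{k'}F\|_{L^p}\|\widetilde{\Delta}_{k'}G\|_{L^2}.
\]
The total regularity of the two factors is
\[
\Big(\frac dp-1\Big)+(s+1)=\sigma-\sigma_0\ \ge0 .
\]
When $\sigma>\sigma_0$, the usual geometric-sum argument puts $R$ in $\dot B^{\sigma-\sigma_0}_{q,\infty}$. At the endpoint $\sigma=\sigma_0$ this sum degenerates, and this is the point I expect to require care. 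Because $F$ carries an $\ell^1$ summability index, the bound still closes: pulling out $\sup_{k'}2^{k'(s+1)}\|\widetilde\Delta_{k'}G\|_{L^2}$ leaves $\sum_{k'}2^{k'(\frac dp-1)}\|\dot\Delta_{k'}F\|_{L^p}$, which is finite. So $R\in\dot B^{\sigma-\sigma_0}_{q,\infty}$ in all cases.

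We then apply the embedding $\dot B^{\sigma-\sigma_0}_{q,\infty}\hookrightarrow\dot B^{\sigma-\sigma_0-d(\frac1q-\frac12)}_{2,\infty}$. The new index is
\[
\sigma-\sigma_0-\frac dp=\sigma+\frac dp-\frac d2=s,
\]
which concludes. Finally, we check that the restrictions \eqref{ppp} on $p$ only enter through $q\le2$ and $p^*\le\infty$, both of which hold automatically.
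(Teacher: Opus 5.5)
Your proof is correct. The paper states \eqref{A6} in the appendix without proof, next to the product laws of Lemmas~\ref{app1} and~\ref{app2}, so your Bony-decomposition argument is a self-contained proof rather than a reconstruction of one. It also fills a real gap, because the listed lemmas do not give \eqref{A6} directly:
\begin{itemize}
\item \eqref{ap3} needs both factors in the same $L^p$, whereas here the target is $L^2$-based and $F$ is $L^p$-based.
\item The second estimate of Lemma~\ref{app2} applies only at the endpoint $\sigma=\sigma_0$ with $p<d$, where the regularities $\frac dp-1$ and $1-\frac dp$ are opposite. It gives $FG\in\dot B^{1-\frac dp}_{q,\infty}$ with $\frac1q=\frac12+\frac1d$, and Bernstein then lands in $\dot B^{-\frac dp}_{2,\infty}$.
\end{itemize}

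Your argument covers the whole range $\sigma_0\le\sigma<\frac d2-1$ and shows where each hypothesis is used:
\begin{itemize}
\item the strict inequality $\sigma<\frac d2-1$ makes the low-frequency sum $\dot S_{k-1}G$ converge in $L^{p^*}$ in $T_GF$;
\item $\sigma\ge\sigma_0$ handles the remainder;
\item the $\ell^1$ index on $F$ is needed only at $\sigma=\sigma_0$.
\end{itemize}
For $\sigma>\sigma_0$, $F\in\dot B^{\frac dp-1}_{p,\infty}$ would suffice. This is because each dyadic block $\dot\Delta_j$ of the two paraproducts only involves $|k-j|\le 4$, so in Step 2 you need a supremum over $k$, not a sum.

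Your closing remark should be corrected. Of \eqref{ppp}, only $p\ge 2$ is used. It is what gives $q\ge1$ and $p^*\in[2,\infty]$. The condition $q\le 2$ holds for every $p$ and is not the relevant constraint. The upper bound on $p$ plays no role beyond making the $\sigma$-range nonempty ($p<2d$).

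The points you leave implicit are routine:
\begin{itemize}
\item the spectral localization of the paraproduct blocks;
\item the fact that the remainder series $\sum_{k'}\dot\Delta_{k'}F\,\widetilde\Delta_{k'}G$ defines an element of $\mathcal{S}_h'$. At $\sigma=\sigma_0$ it converges absolutely in $L^q$ by your own computation. For $\sigma>\sigma_0$ this holds because $\sigma-\sigma_0<\frac{2d}p-1<\frac dq$.
\end{itemize}
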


\section*{Acknowledgements.}
Z. Chen was supported by the National Natural Science Foundation of China (No. 12501294) Start-up funds for doctoral research of Anhui Normal University (No. 762350).
M. Fei is supported partly by the National Science Foundation of China (No.12271004, No.12471222) and the Natural Science Foundation of Anhui Province of China (No.2308085J10). L. Liu was supported by Natural Science Foundation of China (No.12571243) and Anhui Provincial Natural Science Foundation (No.2508085Y004). J. Wu was supported by the National Science Foundation of the United States (DMS 2104682 and DMS 2309748).

\section*{Declaration }
\textbf{Conflict of interest:} The authors declared that they have no conflicts of interest to this work.


\end{document}